\numberwithin{equation}{section}
  \def\ss{\smallskip}
\newtheorem{theorem}{Theorem}
\newtheorem{lemma}{Lemma}[section]
\newtheorem{remark}{Remark}
\newcommand{\subsubsubsection}[1]{\paragraph{#1}}
\title{
	A Direct Sampling Method and Its Integration with Deep Learning for Inverse Scattering Problems with Phaseless Data}
\author{Jianfeng Ning \thanks{School of Mathematics and Statistics, Wuhan University, Wuhan, China. ({ningjf@whu.edu.cn}).} \and Fuqun Han \thanks{Department of Mathematics, University of Califonia, Los Angeles, USA.  ({fqhan@math.ucla.edu}).}  \and Jun Zou \thanks{Department of Mathematics, The Chinese University of Hong Kong, Shatin, N.T., Hong Kong. The work of this author was substantially supported by the Hong Kong RGC General Research Fund (projects
		14306623,  14308322 and 14306921). ({zou@math.cuhk.edu.hk}).}}
\date{}
\begin{document}
	\maketitle
	\begin{abstract}
		We consider in this work an inverse acoustic scattering problem when only phaseless data is available. 
		The inverse problem is highly nonlinear and ill-posed due to the lack of the phase information. 
		Solving inverse scattering problems with phaseless data is important in applications as the collection of physically acceptable phased data is usually difficult and expensive. A novel direct sampling method (DSM) will be developed to effectively estimate the locations and geometric shapes of the unknown scatterers from phaseless data generated by a very limited number of incident waves. With a careful theoretical analysis of the behavior of the index function and some representative numerical examples, the new DSM is shown to be computationally efficient, easy to implement, robust to large noise, and does not require any prior knowledge of the unknown scatterers. Furthermore, to fully exploit the index functions obtained from the DSM, we also propose 
		to integrate the DSM with a deep learning technique (DSM-DL) to achieve high-quality reconstructions. Several challenging and representative numerical experiments are carried out to demonstrate the accuracy and robustness of reconstructions by DSM-DL. The DSM-DL networks trained by phased data are further theoretically and numerically shown to be able to solve problems with phaseless data. Additionally, our numerical experiments also show the DSM-DL can solve inverse scattering problems with mixed types of scatterers, which renders its applications in many important practical scenarios. 
	\end{abstract}
	\section{Introduction}
	
	Inverse scattering problems involve the recovery of the locations, shapes, and physical properties of unknown scatterers with some measurement data. These problems have wide applications in various fields such as 
	identification of oil reservoirs,  
	medical imaging, geophysical prospection, and microwave remote sensing \cite{persico2014introduction,bulyshev2004three}. Numerous numerical methods have been developed in the literature over the past few decades to address inverse scattering problems with phased data. These methods include contrast source inversion method\cite{van2021forward}, recursive linearization methods \cite{bao2015inverse}, reverse time migration methods \cite{chen2013reverse,chen2013reverse2}, subspace optimization methods \cite{chen2009subspace}, linear sampling methods \cite{cakoni2011linear}, direct sampling methods \cite{ito2012direct,ito2013direct}, etc.
	
	However, phased data is usually difficult and expensive to obtain in real applications \cite{d2008phaseless,ivanyshyn2011inverse}, especially at frequencies beyond tens of gigahertz. 
	In these scenarios, since collecting high-accuracy phaseless data is much easier and more cost-effective, phaseless reconstruction is usually preferred in many important practical applications even the phaseless reconstruction is more ill-posed and non-linear compared to phased reconstruction. To tackle the inverse scattering problems with phaseless data, several optimization and iterative methods have been developed in the literature (see, e.g.,\cite{agaltsov2018iterative,bao2013numerical,pan2010subspace,zhang2017recovering}). While optimization and iterative methods can generally provide accurate reconstructions of unknown scatterers, they are mostly very expensive, require reasonably good initial guesses as well as some prior knowledge of the scatterers, which may not be easy to acquire in many scenarios. 
    To achieve fast reconstruction, several non-iterative methods have gained popularity recently in inverse scattering problems with phaseless data. In \cite{chen2017phaseless}, the reverse time migration for phased reconstruction was extended to reconstruct unknown scatterers from phaseless total field data. An approximate factorization method was proposed in \cite{zhang2020approximate} to recover the shapes and locations of scatterers from the phaseless total field data induced by incident plane waves. In \cite{zhang2018fast}, by superpositions of plane waves at a fixed frequency, a direct imaging approach was proposed to recover scattering obstacles from phaseless far-field data without suffering the translation-invariance obstruction.  Moreover, introducing a reference ball into the scattering system also offers an alternative method to break the translation-invariance for phaseless far-field data \cite{ dong2018reference}. By introducing the concept of scattering coefficients, the algorithms for phased and phaseless reconstructions in the linearized case were proposed and analyzed in \cite{ammari2016phased}. In \cite{klibanov2015two}, two reconstruction schemes were proposed for a 3D phaseless inverse scattering problem based on inverse Radon transform and integral geometry.   
Moreover, some inverse phaseless reconstruction problems were studied for the Maxwell's system 
in \cite{dedok2019numerical,romanov2020phaseless}. For some theoretical results including the uniqueness and stability concerning inverse scattering problems with phaseless data, we refer to \cite{klibanov2014phaseless, novikov2015formulas, klibanov2016reconstruction, xu2018uniqueness}.
	
	In addition to the severely non-linear and ill-posed challenges of phaseless reconstruction, another challenge in some applications is that measurement data is available only from one or a few incident waves. The direct sampling methods (DSMs) introduced in \cite{ito2012direct, ito2013direct, li2013direct} have been developed for phased reconstruction and can provide reasonable estimations of the locations and shapes of unknown scatterers using only limited incident waves, without requiring any prior knowledge of the scatterers. In addition to their data efficiency, the DSMs are also computationally efficient, involving only scalar products, completely parallel, and highly robust to large noise. Therefore, it is desirable to extend the DSMs to phaseless reconstruction. We also refer to some developments of the DSMs for other inverse problems in mathematical imaging \cite{chow2014direct,chow2021direct,chow2022direct,chow2021directRadon}. 
	
	The main aims of this work are fourfold. The first one is to develop a novel DSM for inverse scattering problems with only one set of phaseless total field data. The implementation of the new DSM is as simple as that of the phased DSMs while it is 
	still capable of providing a robust estimation of shapes and locations of scatterers. On the other hand, 
	it is known that the DSMs have a limitation that 
	they can not generate very accurate reconstructions in some important scenarios, 
	e.g., for scatterers with complex geometrical shapes. So the second aim of this work is to alleviate this main limitation, 
	by combining the DSMs with some special machine learning techniques. 
	Furthermore, we will show by theoretical analysis and numerical experiments 
	that the DSM-DL network trained by phased data can also be applied to solve problems 
	with only phaseless data available. Finally, we further exploit the advantages of our novel algorithm to demonstrate that without any prior knowledge of the scatterers and with only one trained neural network, the DSM-DL with phaseless data can also solve inverse problems with inhomogeneous media or obstacles in the computational domain, as well as their mixed problems.

	Machine learning techniques, especially deep learning ones, have shown promising potential recently for solving various PDEs \cite{raissi2019physics,li2020fourier,han2018solving} and inverse problems of PDEs \cite{arridge2019solving,tanyu2022deep}. In particular, there are several advantages of solving inverse problems by employing neural networks to learn the inverse maps. Firstly, once the network is trained, the forward pass of the network is very fast and a real-time reconstruction is usually achievable. Secondly, in classical iterative methods, a prior knowledge of the unknown targets is incorporated into a regularization term to make the inversion more stable, whereas it is often challenging to choose a proper regularization and its balancing parameter. On the other hand, neural networks can automatically learn the distribution of unknown targets from training data, which helps improve the reconstruction quality. 
	
	Over the last few years, we have witnessed many successes of deep learning methods in solving inverse scattering problems \cite{ning2023direct,wei2018deep,khoo2019switchnet,gao2022artificial}, and we refer to the review papers \cite{chen2020review,guo2023physics} for more comprehensive studies. Some deep-learning approaches have also been proposed for inverse scattering problems with phaseless data. For instance, a rough image is initially obtained from the phaseless total field using the contrast source inversion scheme with a few iterations \cite{xu2020deep}, followed by refinement using neural networks to produce a high-quality image. Another two-stage method presented in \cite{luo2021cascaded} involves a cascaded complex U-Net compromising a phase retrieval net and an image reconstruction net. In \cite{yin2020neural}, a two-layer sequence-to-sequence neural network is proposed to recover obstacles with limited phaseless far-field data, where parameters representing the boundary curve of the impenetrable obstacles are utilized. 
	Given the success of deep learning techniques in inverse scattering problems, we will first derive a novel DSM for phaseless reconstruction in this work, and then combine the new DSM with deep learning to produce more desirable reconstructions. This combination is a natural extension of our previous work DSM-DL \cite{ning2023direct}, where the inverse medium scattering problem with phased data was considered. The DSM-DL in \cite{ning2023direct} was shown to be very robust to noise, easy to implement, and computationally efficient.

	The remaining part of this paper is organized as follows. In Section \ref{Problem formula}, we introduce the inverse scattering problems addressed in this work. Section \ref{DSMPS} is dedicated to a review of the DSMs and the development of a novel DSM for phaseless reconstruction. The combination of the DSM with deep learning is discussed in Section \ref{sec:DSMDL}. In Section \ref{sec:numerical}, we present numerical experiments for phaseless reconstruction using the DSM approach with a limited number of incident fields, as well as the imaging results achieved through DSM-DL. Finally, Section \ref{sec:conclusion} concludes the paper with some closing remarks.
	
	\section{Problem formulations}
	\label{Problem formula}
	We shall consider the inverse scattering problems to recover the unknown obstacles or inhomogeneous media in the domain of interest with phaseless total fields. 
	Let $u^{i}(x,d)=e^{\mathbf{i}kx\cdot d}$ be the incident plane wave, with wavenumber $k$ and incident direction 
	$d\in \mathbb{S}^{N-1}$,  
	or $u^{i}(x,x_s)=G(x,x_s), x_s\in \mathbb{R}^{N}$ be the incident source wave where $G(x,y)$ is the free-space Green's function associated with the Helmholtz equation and is given by
 \begin{equation}
	   G(x,y)=\left\{
	   \begin{aligned}
		       \frac{\mathbf{i}}{4}H_{0}^{(1)}(k|x-y|), \quad N=2\,,\\
		       \frac{\exp(\mathbf{i}k|x-y|)}{4\pi|x-y|}, \quad N=3\,,
		   \end{aligned}
	   \right.
	\end{equation}
where $H_{0}^{(1)}$ refers to the zeroth-order Hankel function of the first kind.
Assuming that a bounded domain $D \subset \mathbb{R}^N$ is the support of obstacles or inhomogeneous media, then the total field $u=u^i+u^s$ induced by impenetrable obstacles satisfies the following Helmholtz equation\cite{colton2019inverse}
	\begin{equation}
		\Delta u + k^2 u = 0 \quad \text{in}\quad \mathbb{R}^N\backslash D,
	\end{equation}
	\begin{equation}\label{eq:radiation} 	\lim_{r\rightarrow\infty}r^{(N-1)/2}(\frac{\partial u^s}{\partial r}-\mathbf{i}ku^s)=0,\quad r=|x|,
	\end{equation}
	with a boundary condition
	\begin{equation}\left\{
		\begin{aligned}
			&u=0 \quad\qquad\qquad \text{on}\quad \partial D \quad \text{for sound-soft obstacles},\\
			&\frac{\partial u}{\partial \nu}=0 \qquad\qquad \text{on} \quad\partial D \quad\text{for sound-hard obstacles },		\\
			&\frac{\partial u}{\partial \nu} +\mathbf{i}k\lambda u =0  \quad \text{on} \quad\partial D  \quad \text{with} \quad \lambda \in C(\partial D)\quad\text{for impedance obstacles}.
		\end{aligned}
		\right.
	\end{equation}
	
	For the scattering by inhomogeneous media, the total field satisfies 
	\begin{equation}
		\Delta u + k^2n(x)u=0 \quad \text{in} \quad\mathbb{R}^N\,,
	\end{equation}
	with the same radiation \eqref{eq:radiation}, where $n(x)$ is the refractive index. 
	For the scattered field $u^s$, it holds that \cite{colton2019inverse}
	\begin{equation}
		u^{s}(x) = \frac{\exp(\mathbf{i}k|x|)}{|x|^{(N-1)/2}}\bigg\{u^{\infty}(\hat{x}) + \mathcal{O}(1/|x|)\bigg\}, \quad |x|\rightarrow\infty\,,
		\label{Asym1}
	\end{equation}
	which holds uniformly for all $\hat{x}=x/|x|\in \mathbb{S}^{N-1}$, where $u^{\infty}$ is called the far-field pattern of $u^{s}$.

The inverse scattering problem of our interest involves recovering unknown scatterers 
from noisy measurements of the phaseless total field, corresponding to one or a few incident fields. 
Specifically, we propose a direct sampling method to stably and efficiently recover the scatterer's geometry and then 
combine this method with a deep learning technique to identify the type of scatterer and recover the refractive index $n(x)$ of inhomogeneous media.
 	
\section{A direct sampling method for phaseless imaging}
\label{DSMPS}
\subsection{Direct sampling methods for phased data}
Consider a sampling domain $\Omega$ in $\mathbb{R}^N$, and a measurement surface $\Gamma_{r}$, which is assumed to be the circle or ball of radius $R_r$ in the sequel. We now first recall the direct sampling method (DSM) for the reconstruction of inhomogeneous medium inclusions and then extend it for the reconstruction of impenetrable scatterers. 
	
{\bf Reconstruction of inhomogeneous medium inclusions}. 
The DSM was proposed in \cite{ito2012direct} for the inverse medium scattering problem with phased data from one incident wave:
\begin{equation}
    \begin{split}
        I_{\text{DSM}}(z):=|\langle G(z,\cdot), u^s \rangle_{L^{2}(\Gamma_{r})}|
        =\bigg|\int_{\Gamma_{r}}G(z,x_r)\overline{u^{s}(x_r)}ds(x_r)\bigg|, \quad z \in \Omega .
    \end{split}
    \label{DSM}
\end{equation}

The inhomogeneous medium is recovered based on the values of the index function: 
if it attains an extreme value at a point $z$, the point lies most likely within the support of the inhomogeneous inclusion, 
whereas if the index function takes small values, the point $z$ likely lies outside the support. The DSM is shown to be computationally efficient, easy to implement, and does not require any a priori knowledge of the unknown inclusions. For the reconstruction using the far-field pattern, the index function in DSM is defined as  \cite{li2013direct}
\begin{equation}
    I_{DSM}^{\infty}(z):=|\langle G^{\infty}(z,\cdot), u^\infty \rangle_{L^{2}(\mathbb{S}^{N-1})}|,  \quad z \in \Omega\,,
\end{equation}
where $G^{\infty}(z,\cdot)$ is the far-field pattern associated with the fundamental solution $G(z,x_r)$, given by
\begin{equation}
    G^{\infty}(z,\hat{x}_r)=\left\{
    \begin{aligned}
        &\frac{\exp(\mathbf{i}\pi/4)}{\sqrt{8k\pi}}\exp(-\mathbf{i}k\hat{x}_r\cdot z), \quad &N=2,\\
        &\frac{1}{4\pi}\exp(-\mathbf{i}k\hat{x}_r\cdot z), \quad &N=3.
    \end{aligned}\right .
\end{equation}
And we have
\begin{equation}
    G(z,x_r)=\dfrac{e^{\mathbf{i}kR_r}}{R_r^{(N-1)/2}}\bigg\{G^{\infty}(z,\hat{x}_r) + \mathcal{O}(R_r^{-1})\bigg\}.
    \label{Asym2}
\end{equation}

	By combining \eqref{Asym1} \eqref{Asym2} and \eqref{DSM}, we see the asymptotic behavior of 
	the index function $I_{DSM}(z)$: 
	\begin{equation}
		\label{eqn_phaseless_far}
		I_{DSM}(z)=C_N\bigg|\int_{\mathbb{S}^{N-1}}G^{\infty}(z,\hat{x})\overline{u^{\infty}(\hat{x},d)}ds(\hat{x})\bigg| + \mathcal{O}(R_r^{-1})=C_{N}I_{DSM}^{\infty}(z)+\mathcal{O}(R_r^{-1})\,,
	\end{equation}
	where $I_{DSM}^{\infty}(z)$ is independent of $R_r$ and $C_{N}$ depends only on  the dimension $N$.
	
	\ss
	{\bf Reconstruction of impenetrable scatterers}. 
	We now extend the above index function to help reconstruct impenetrable scatterers. 
	This may be derived by using the fact that an impenetrable scatterer can be considered as the limit of the medium scatterer with vanishing or singular material properties \cite{li2013direct}. 
	Alternatively, we provide a simple and new derivation below by using the representation formula of the scattered wave.
	
	For the sound-soft obstacle $D$, the scattered field satisfies the Huygens’ principle \cite{colton2019inverse}
	\begin{equation}
		u^s(x)=-\mathcal{S}_D\bigg(\frac{\partial u}{\partial \nu}\bigg)(x):=-\int_{\partial D}\frac{\partial u}{\partial \nu}(y) G(x,y)dy\approx \sum_{j}\omega_j G(x,y_j), \quad x\in \mathbb{R}^{N}\backslash D\,,
		\label{equ:ussound}
	\end{equation}
	where $\mathcal{S}_D$ refers to the single-layer potential,  $\{y_j\}$ is a general set of discrete integration points 
	on $\partial D$. 
	When the radius of the measurement surface $\Gamma_{r}$ is large, we have the following approximation \cite{ito2012direct}
	\begin{equation}
		\int_{\Gamma_{r}}G(x_r,z)\overline{G(x_r,y_j)}ds(x_r)\approx k^{-1}\Im(G(z,y_j)) = k^{-1}\begin{cases}
			\frac{1}{4}J_0(k|z-y_j|)\,,\quad N = 2\,;\\
			\frac{1}{4\pi}\frac{\sin(k|z-y_j|)}{|z-y_j|}\,,\quad N =3\,
		\end{cases}  
	\end{equation}
	for  $z \in \Omega$,  $y_j\in \partial D.$
	Then by combining the above equation with \eqref{DSM} and \eqref{equ:ussound}, we derive an approximate relation for 
	the sound-soft obstacles: 
	\begin{equation}
		\int_{\Gamma_{r}}\overline{G(z,x_r)}u^{s}(x_r)ds(x_r) \approx k^{-1} \sum_{j}\omega_j \Im (G(y_j,z))\,.
		\label{equ:sum}
	\end{equation}
	We notice that  $\Im(G(z,y_j))$ ) takes a relatively large value if a point $z$ is close to some boundary point $y_j$ and decays quickly if $z$ moves away from the scatterers. This observation provides a heuristic justification of the DSM 
	for sound-soft obstacles. 
	
	For sound-hard and impedance obstacles, similar arguments to the sound-soft obstacles can be made by writing the scattered field as a single layer potential \cite{colton2013integral}:
	\begin{equation}
		u^s(x)=\int_{\partial D}\phi(y)G(x,y)dy, \quad x\in \mathbb{R}^{N}\backslash D,
	\end{equation}
	where the density $\phi\in C(\partial D)$ is a solution to the integral equation
	\begin{equation}
		\phi - \mathcal{K}^T_D\phi - \mathbf{i}k\lambda\mathcal{S}_D\phi=2\frac{\partial u^i}{\partial \nu}+2\mathbf{i}k\lambda u^i,
	\end{equation}
	with $\lambda=0$ for sound-hard obstacles, where $\mathcal{K}_D$ is the double-layer potential and $\mathcal{K}^T_D$ is the dual of $\mathcal{K}_D$ with respect to the dual form $\langle \cdot, \cdot \rangle_{\partial D}$. By conducting the same approximation as the case for sound-soft obstacles, we can verify the DSM for sound-hard and impedance obstacles.

	\subsection{Formulation and verification of DSM for phaseless imaging}
	We are now ready to develop a new DSM for reconstruction with phaseless data. Motivated by the corrected data adopted 
	in \cite{chen2017phaseless},  we propose the following novel index function to reconstruct obstacles or inhomogeneous media 
	with phaseless total field data	
	\begin{equation}
		\label{equ:DSMPS}
		I_{\text{DSM}}^{\text{phaseless}}(z): =\bigg|\int_{\Gamma_{r}}G(z,x_r)\Delta(x_r,d)ds(x_r)\bigg|, \quad \forall\, z\in \Omega,
	\end{equation}
	where $\Delta(x_r,d)$ is given by 
	\begin{equation}
		\Delta(x_r,d) = \frac{|u(x_r,d)|^2-|u^i(x_r,d)|^2}{u^{i}(x_r,d)}.
		\label{corrected data}
	\end{equation}
	The field $\Delta(x_r,d)$ above is called the corrected phaseless data. We remark that the work 
	in \cite{chen2017phaseless} concerns the phaseless imaging by making use of the reverse time migration 
	with a large number of point source incident fields in $\mathbb{R}^2$.
	Very different from \cite{chen2017phaseless}, we focus in this work on some special important physical scenarios,  
	namely when highly limited incident fields are available, e.g., 
	either a few incident source waves or a few incident plane waves for $\mathbb{R}^N, N=2,3$. 
	More importantly, we shall combine the proposed DSM and the deep learning strategy to achieve high-quality imaging reconstructions, even for scatterers with complicated geometric shapes and mixed types (i.e., 
	penetrable and impenetrable inhomogeneous inclusions coexist).
	
	The motivation for using the corrected phaseless data is that the leading order term in $I_{\text{DSM}}^{\text{phaseless}}(z)$ will be the same as $I_{\text{DSM}}(z)$ when the measurement surface is far away from $D$, as shown below. 
	
	We now make a resolution analysis on the index function $I_{\text{DSM}}^{\text{phaseless}}$ defined in \eqref{equ:DSMPS} and will show that
	\begin{equation}
		\label{eqn_phaseless_near}
		I_{\text{DSM}}^{\text{phaseless}}(z)= I_{\text{DSM}}(z)+\mathcal{O}(R_r^{(1-N)/2}) \quad \text{in} \quad \mathbb{R}^N\,.
	\end{equation}
	To do so, we first rewrite $\Delta(x_r,d)$ as 
	\begin{equation}
		\Delta(x_r,d)=\overline{u^s(x_r,d)}+\dfrac{|u^s(x_r,d)|^2}{u^i(x_r,d)}+\dfrac{u^s(x_r,d)\overline{u^i(x_r,d)}}{u^i(x_r,d)}\,, 
	\end{equation}
	using this relation, we can rewrite $I_{\text{DSM}}^{\text{phaseless}}(z)$ for any $z\in \Omega$ as 
	\begin{equation}
		\label{eqn:I_split}
		\begin{split}			I_{\text{DSM}}^{\text{phaseless}}(z)=& \bigg|\int_{\Gamma_{r}}G(z,x_r)\overline{u^s(x_r,d)}ds(x_r)\\
			&+\int_{\Gamma_{r}}G(z,x_r)\dfrac{|u^s(x_r,d)|^2}{u^i(x_r,d)}ds(x_r) + \int_{\Gamma_{r}}G(z,x_r)\dfrac{u^s(x_r,d)\overline{u^i(x_r,d)}}{u^i(x_r,d)}ds(x_r)\bigg |\,.
		\end{split}	
	\end{equation}
	
	{\bf Planar incident waves}. We first consider the case of the incident plane wave $u^{i}(x_r,d)=e^{\mathbf{i}kx\cdot d}$.
	In the following Lemmas \ref{Lemma_us2} to \ref{Lemma_usui}, we show that the last two terms at the right-hand side of the equation \eqref{eqn:I_split} are small when $R_r\gg 1$.
	For simplification, we use $C$ to denote a generic constant independent of $R_r$ 
	while its exact values may vary at different occasions.
	Then we first have an estimate of the second term at the right-hand side of \eqref{eqn:I_split}, 
	which can be derived directly by using the asymptotic formulas \eqref{Asym1}, \eqref{Asym2} and the fact that $|u^{i}(x_r,d)|=1$.
	
	\begin{lemma}
		\label{Lemma_us2}
		For any $z\in \Omega$, we have
		\begin{equation}
			\bigg|\int_{\Gamma_{r}}G(z,x_r)\dfrac{|u^s(x_r,d)|^2}{u^i(x_r,d)}ds(x_r)\bigg|\le C R_r^{(1-N)/2}\,.
		\end{equation}
	\end{lemma}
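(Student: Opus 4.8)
The plan is to prove the bound by a direct pointwise estimation of the integrand followed by a counting of powers of $R_r$, exactly as the lemma's preamble suggests. First I would bring the modulus inside the integral via the triangle inequality, so that it suffices to bound $\int_{\Gamma_{r}}|G(z,x_r)|\,|u^s(x_r,d)|^2\,|u^i(x_r,d)|^{-1}\,ds(x_r)$. The strategy is then to treat the three factors separately on $\Gamma_{r}$, where every point satisfies $|x_r|=R_r$, and to insert the two asymptotic formulas available to us.

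For the numerator, I would apply \eqref{Asym1}: since the far-field pattern $u^{\infty}(\cdot,d)$ is continuous on $\mathbb{S}^{N-1}$ and hence bounded, we get $|u^s(x_r,d)|\le C R_r^{-(N-1)/2}$ uniformly in $\hat{x}_r$, and squaring gives $|u^s(x_r,d)|^2\le C R_r^{-(N-1)}$. For the kernel, I would apply \eqref{Asym2}: because $|G^{\infty}(z,\hat{x}_r)|$ is a bounded function of $(z,\hat{x}_r)$ for $z$ in the bounded sampling domain $\Omega$, we obtain $|G(z,x_r)|\le C R_r^{-(N-1)/2}$ uniformly for $z\in\Omega$. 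The third factor is simply $|u^i(x_r,d)|^{-1}=1$ for the plane wave, which is precisely where the hypothesis $|u^i(x_r,d)|=1$ enters.

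Multiplying these bounds gives an integrand of size $C R_r^{-3(N-1)/2}$ uniformly over $\Gamma_{r}$. Since the surface measure of $\Gamma_{r}$ scales as $|\Gamma_{r}|=C R_r^{N-1}$ (the circumference when $N=2$ and the sphere area when $N=3$), integrating yields $C R_r^{-3(N-1)/2+(N-1)}=C R_r^{(1-N)/2}$, which is the claimed estimate.

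The only point requiring care, and thus the main obstacle, is the \emph{uniformity} of the two asymptotic expansions: the $\mathcal{O}(1/|x|)$ remainder in \eqref{Asym1} and the $\mathcal{O}(R_r^{-1})$ remainder in \eqref{Asym2} must be controlled uniformly in the integration variable $\hat{x}_r\in\mathbb{S}^{N-1}$ (and, for $G$, uniformly in $z\in\Omega$), so that the constants absorbed into $C$ do not grow as the surface $\Gamma_{r}$ expands. This uniformity is exactly what \eqref{Asym1} and \eqref{Asym2} furnish, so once it is invoked the remaining argument reduces to the routine power count above.
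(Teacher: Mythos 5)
Your proof is correct and follows essentially the same route as the paper's: pointwise bounds $|G(z,x_r)|=\mathcal{O}(R_r^{(1-N)/2})$ and $|u^s(x_r,d)|=\mathcal{O}(R_r^{(1-N)/2})$ from \eqref{Asym1} and \eqref{Asym2}, the fact that $|u^i(x_r,d)|=1$, and the surface measure scaling $|\Gamma_r|=\mathcal{O}(R_r^{N-1})$, combined by the same power count. Your explicit remark on the uniformity of the asymptotic remainders in $\hat{x}_r$ and $z\in\Omega$ is a point the paper leaves implicit, but it does not change the argument.
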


 \begin{proof}
     By the asymptotic formulas \eqref{Asym1} and \eqref{Asym2}, we have
     \begin{equation*}
         |G(z,x_r)|=\mathcal{O}(R_r^{(1-N)/2}), \quad |u^s(x_r,d)|=\mathcal{O}(R_r^{(1-N)/2}),
     \end{equation*}
     which, together with the fact that $|u^{i}(x_r,d)|=1$ and $|\Gamma_r|=2^{N-1}\pi R_r^{N-1},$ we derive 
     \begin{equation*}  \bigg|\int_{\Gamma_{r}}G(z,x_r)\dfrac{|u^s(x_r,d)|^2}{u^i(x_r,d)}ds(x_r)\bigg|\le C R_r^{(1-N)/2}\,,
     \end{equation*}
     where $C$ is a constant independent of $R_r$.
 \end{proof}

	Now we turn to estimating the third term at the right-hand side of \eqref{eqn:I_split}. For this purpose, we need the following 
	estimates of some oscillatory integrals from \cite[Lemma~3.9]{chen2017phaseless}.
	\begin{lemma}
		For any $-\infty <a<b<\infty$ and real-valued function $u\in C^2[a,b]$ thats satisfies $|u'(t)|\ge 1$ for $t\in (a,b)$. Assume that $a=x_0<x_1<\cdots<x_M=b$ is a division of $(a,b)$ such that $u'$ is monotone in each interval $(x_{i-1},x_i),i=1,...,M$. Then it holds 
		for any function $\phi$ defined on $(a, b)$ with integrable derivative and for any $\lambda>0$ that 
		\begin{equation}
			\bigg|\int_{a}^{b}e^{\mathbf{i}\lambda u(t)}\phi(t)dt\bigg|\le (2M+2)\lambda^{-1}\bigg[|\phi(b)|+\int_{a}^{b}|\phi'(t)|dt\bigg].
		\end{equation}
		\label{oscillatory}
	\end{lemma}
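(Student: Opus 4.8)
The plan is to prove the estimate by a single integration by parts, using the phase derivative itself as the integrating factor, and to let the partition $\{x_i\}$ enter only through a total-variation bound at the very end. First I would record two consequences of the hypothesis $|u'(t)|\ge 1$ on $[a,b]$: since $u'$ is continuous and never vanishes, it keeps a constant sign on the whole interval, so the function $g:=1/u'$ is well defined; and because $u\in C^2[a,b]$, $g$ is in fact $C^1[a,b]$ with $g'=-u''/(u')^2$, while $|g|=1/|u'|\le 1$ throughout. The crucial identity is $e^{\mathbf{i}\lambda u}=\frac{1}{\mathbf{i}\lambda}\,g\,\frac{d}{dt}e^{\mathbf{i}\lambda u}$, which holds precisely because $g\,u'=1$.

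Next I would substitute this identity and integrate by parts once over the whole interval $[a,b]$; this is carried out globally rather than piecewise, which is important because $g$ is $C^1$ on all of $[a,b]$ and so no interior boundary contributions appear:
\[
\int_a^b e^{\mathbf{i}\lambda u}\phi\,dt=\frac{1}{\mathbf{i}\lambda}\Big[\phi\,g\,e^{\mathbf{i}\lambda u}\Big]_a^b-\frac{1}{\mathbf{i}\lambda}\int_a^b e^{\mathbf{i}\lambda u}\,(\phi g)'\,dt.
\]
Taking absolute values and using $|g|\le 1$, the boundary term is bounded by $\lambda^{-1}(|\phi(b)|+|\phi(a)|)$, while expanding $(\phi g)'=\phi'g+\phi g'$ gives the pointwise bound $|(\phi g)'|\le|\phi'|+|\phi|\,|g'|$. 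Integrability of $\phi'$ makes $\phi g$ absolutely continuous, so the integration by parts is legitimate.

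The main step, and the only place the partition is used, is to control $\int_a^b|\phi|\,|g'|\,dt$. I would set $\Phi:=|\phi(b)|+\int_a^b|\phi'|\,dt$ and note from $\phi(t)=\phi(b)-\int_t^b\phi'$ that $\|\phi\|_{L^\infty(a,b)}\le\Phi$. For $g'$, the assumption that $u'$ is monotone on each $(x_{i-1},x_i)$ makes $g=1/u'$ monotone there as well (since $u'$ stays of one sign), so the total variation of $g$ splits as $\int_a^b|g'|\,dt=\sum_{i=1}^M|g(x_i)-g(x_{i-1})|$; because every value $g(x_i)$ lies in $[-1,1]$, each summand is at most $2$, whence $\int_a^b|g'|\,dt\le 2M$. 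This is exactly where the factor $M$ is produced and is the crux of the argument: the $\phi g'$ term is the genuine obstacle, since a naive bound would reintroduce $u''$ into the estimate, and it is the piecewise monotonicity that converts $\int_a^b|g'|$ into the clean geometric count $2M$.

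Finally I would collect the pieces. Combining $\int_a^b|\phi'g|\le\int_a^b|\phi'|$ with $\int_a^b|\phi g'|\le 2M\Phi$ and the boundary bound $|\phi(b)|+|\phi(a)|\le|\phi(b)|+\Phi$, and then using $|\phi(b)|+\int_a^b|\phi'|=\Phi$, the right-hand side collapses to $\lambda^{-1}(\Phi+\Phi+2M\Phi)=(2M+2)\lambda^{-1}\Phi$, which is the claimed inequality with the stated constant. Nothing beyond elementary integration by parts, the sign and boundedness of $g$, and the total-variation count is required, so I expect the entire difficulty to be concentrated in the treatment of the $\phi g'$ term described above.
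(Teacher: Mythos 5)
Your proof is correct. Note first that the paper itself does not prove this lemma at all: it is quoted verbatim from the reference [Chen--Huang, Lemma 3.9] (\verb|chen2017phaseless|), so your argument supplies a self-contained proof where the paper offers only a citation.

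Compared with the standard proof of this van der Corput--type estimate (the one in the cited source), your route is slightly different and a bit more economical. The classical argument proceeds in two steps: first the amplitude-free bound $\big|\int_a^x e^{\mathbf{i}\lambda u}\,dt\big|\le (2M+2)\lambda^{-1}$ for every $x\in[a,b]$, obtained by integrating by parts against $g=1/u'$ and using $|g|\le 1$ together with the piecewise-monotonicity count $\int|g'|\le 2M$; then the amplitude $\phi$ is introduced by a \emph{second} integration by parts against $F(x)=\int_a^x e^{\mathbf{i}\lambda u}\,dt$, which produces the factor $|\phi(b)|+\int_a^b|\phi'|$ directly since $F(a)=0$. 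You instead perform a single integration by parts with $\phi g$ as the amplitude, and absorb the resulting term $\int|\phi|\,|g'|$ by the elementary bound $\|\phi\|_{L^\infty}\le |\phi(b)|+\int_a^b|\phi'|$; your bookkeeping ($\Phi+\Phi+2M\Phi$) lands on exactly the same constant $(2M+2)$. Both arguments hinge on the same two facts — $|1/u'|\le 1$ and the total-variation count $\int_a^b|g'|\le 2M$ from piecewise monotonicity — and you correctly identify the $\phi g'$ term as the only genuine obstacle. The two-step version buys, at no extra cost, the uniform bound over all subintervals $[a,x]$ (occasionally useful elsewhere), while yours is shorter. Two cosmetic remarks: since $g$ has constant sign, each increment satisfies $|g(x_i)-g(x_{i-1})|\le 1$, so you could even get $\int|g'|\le M$ and the sharper constant $M+2$; and, as you note, "integrable derivative" must be read as absolute continuity of $\phi$ for both the integration by parts and the representation $\phi(t)=\phi(b)-\int_t^b\phi'$ — the same convention the cited lemma uses.
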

	
	With the above Lemma, we can have the following estimate for the last term in \eqref{eqn:I_split}.  	
	\begin{lemma}
		\label{Lemma_usui}
		For any $z\in \Omega$, we have
		\begin{equation}
			\bigg|\int_{\Gamma_{r}}G(z,x_r)\dfrac{u^s(x_r,d)\overline{u^i(x_r,d)}}{u^i(x_r,d)}ds(x_r)\bigg|\le C R_r^{(1-N)/2}.
		\end{equation}
	\end{lemma}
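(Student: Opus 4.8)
```latex
\paragraph{Proof proposal.}
The plan is to reduce the surface integral over $\Gamma_r$ to one-dimensional oscillatory
integrals to which Lemma~\ref{oscillatory} can be applied. I would begin by substituting
the asymptotic expansions \eqref{Asym1} and \eqref{Asym2} for $u^s(x_r,d)$ and $G(z,x_r)$,
and use $|u^i(x_r,d)|=1$ together with $\overline{u^i(x_r,d)}/u^i(x_r,d)=e^{-2\mathbf{i}kx_r\cdot d}$
for the plane wave. After pulling out the common amplitude factor
$e^{\mathbf{i}kR_r}R_r^{(1-N)/2}$ from $G$ and a matching factor from $u^s$, the integrand
splits into a smooth (slowly varying) amplitude multiplied by a purely oscillatory phase of
the form $e^{\mathbf{i}kR_r\,\varphi(\theta)}$, where $\theta$ parametrizes $\Gamma_r$ and
$\varphi$ collects the phase contributions from $G^\infty(z,\hat x_r)$, $u^\infty(\hat x_r,d)$,
and the incident-wave factor $e^{-2\mathbf{i}kx_r\cdot d}$. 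The key point is that the large
parameter is $\lambda = kR_r$, so establishing decay in $\lambda$ yields the claimed
$\mathcal{O}(R_r^{(1-N)/2})$ bound once the leading $R_r^{(1-N)/2}$ amplitude prefactor is
accounted for.

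Next I would verify the hypotheses of Lemma~\ref{oscillatory} for the phase $\varphi$. The
dominant oscillation comes from the factor $e^{-2\mathbf{i}kx_r\cdot d}$, since
$x_r\cdot d = R_r\,(\hat x_r\cdot d)$ grows linearly with $R_r$ and dominates the
$z$-dependent phases $\hat x_r\cdot z$ (which are $\mathcal{O}(1)$ because $z\in\Omega$ is
bounded). Writing $\hat x_r\cdot d = \cos\theta$ in the two-dimensional case, the derivative
of the phase with respect to $\theta$ vanishes only at the finitely many stationary points
$\theta=0,\pi$; away from a fixed neighborhood of these points one has
$|\varphi'(\theta)|\ge c>0$, after which rescaling by $c$ puts us exactly in the setting
$|u'|\ge 1$ of Lemma~\ref{oscillatory}. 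The division points $x_0<\dots<x_M$ are chosen so
that $\varphi'$ is monotone on each subinterval, which is possible with $M$ bounded
independently of $R_r$ because $\varphi$ is real-analytic with a fixed number of critical
points. One then bounds $|\phi(b)|$ and $\int_a^b|\phi'(t)|\,dt$ for the smooth amplitude
$\phi$ uniformly in $R_r$, using that the $z$-dependent and far-field factors and their
derivatives are bounded on $\Omega\times\mathbb{S}^{N-1}$.

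The main obstacle I expect is the treatment of the stationary points of the phase, where
$\varphi'$ vanishes and the nonstationary-phase estimate of Lemma~\ref{oscillatory} does not
directly apply. Near $\theta=0,\pi$ the bound $|\varphi'|\ge 1$ fails, so I would isolate
small arcs of length $\mathcal{O}(R_r^{-1/2})$ around each stationary point and estimate the
contribution there trivially by the length of the arc times the bounded amplitude, giving a
contribution of size $\mathcal{O}(R_r^{-1/2})$; combined with the amplitude prefactor this is
absorbed into the target order. On the complementary region Lemma~\ref{oscillatory} supplies
a factor $\lambda^{-1}=(kR_r)^{-1}$, which is even smaller. In the three-dimensional case the
surface integral over $\mathbb{S}^1\times[0,\pi]$ (or equivalent parametrization of $\Gamma_r$)
is reduced to an iterated integral, applying the one-dimensional lemma in the variable
carrying the dominant oscillation and integrating the resulting uniform bound over the
remaining variable. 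Collecting the stationary-arc contribution with the nonstationary
estimate and multiplying by the $R_r^{(1-N)/2}$ amplitude prefactor yields the stated
$\mathcal{O}(R_r^{(1-N)/2})$ bound.
```
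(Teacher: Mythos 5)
Your overall strategy and your treatment of the two-dimensional case follow the paper's proof quite closely: after inserting \eqref{Asym1}--\eqref{Asym2} and using $\overline{u^i}/u^i=e^{-2\mathbf{i}kx_r\cdot d}$, the paper likewise excises arcs of width $\delta=(kR_r)^{-1/2}$ around the stationary points of $\cos\theta$ and applies Lemma \ref{oscillatory} on the complement, treating $G^{\infty}u^{\infty}$ as the amplitude. However, two points need correction. First, your prefactor bookkeeping is off: the factors $R_r^{(1-N)/2}$ from $G$ and from $u^s$ are exactly cancelled by the surface measure $ds(x_r)=R_r^{N-1}ds(\hat x)$, so there is no leftover amplitude prefactor; the spherical integral $\int_{\mathbb{S}^{N-1}}G^{\infty}(z,\hat x)u^{\infty}(\hat x,d)e^{-2\mathbf{i}kR_r\hat x\cdot d}ds(\hat x)$ must by itself supply the entire $O(R_r^{(1-N)/2})$ decay. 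Second, your claim that the nonstationary region yields the full factor $\lambda^{-1}=(kR_r)^{-1}$ is incorrect: once you excise arcs of width $(kR_r)^{-1/2}$, the phase derivative on the complement is bounded below only by $c\,(kR_r)^{1/2}$ (not $c\,kR_r$), so the first-derivative test of Lemma \ref{oscillatory} gives $(kR_r)^{-1/2}$. In two dimensions this slip is harmless, since $O(R_r^{-1/2})$ is exactly the target and both regions contribute at that order.

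In three dimensions, however, this same misconception creates a genuine gap. There the target is $R_r^{(1-N)/2}=R_r^{-1}$, and your plan of iterating the one-dimensional lemma with a stationary-arc splitting cannot reach it: the arcs contribute $O(R_r^{-1})$ (thanks to the Jacobian $\sin\theta$), but the nonstationary region contributes only $O(R_r^{-1/2})$ by the first-derivative test; even optimizing the arc width gives $O(R_r^{-2/3})$, and a dyadic decomposition in $\sin\theta$ still leaves a logarithmic loss. The paper avoids stationary-phase splitting entirely in 3D by observing that the spherical Jacobian vanishes precisely at the stationary points and turns the integrand into an exact derivative,
\begin{equation*}
e^{-2\mathbf{i}kR_r\cos\theta}\sin\theta=\Bigl(\tfrac{1}{2\mathbf{i}kR_r}\,e^{-2\mathbf{i}kR_r\cos\theta}\Bigr)',
\end{equation*}
so that a single integration by parts in $\theta$ yields the full $O(R_r^{-1})$ bound with no loss, using only that $\phi=G^{\infty}u^{\infty}$ and $\partial_\theta\phi$ are bounded independently of $R_r$. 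You need this exact integration by parts (or an equivalent argument exploiting the vanishing amplitude at the poles) to close the $N=3$ case; the first-derivative test alone proves a strictly weaker estimate than the lemma asserts.
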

	\begin{proof}
		By the asymptotic relations in \eqref{Asym1}, \eqref{Asym2} and change of variables, we obtain  
		\begin{equation}\label{eq:green_estimate}
			\bigg |\int_{\Gamma_{r}}G(z,x_r)\dfrac{u^s(x_r,d)\overline{u^i(x_r,d)}}{u^i(x_r,d)}ds(x_r)\bigg|=C\bigg|\int_{\mathbb{S}^{N-1}}G^{\infty}(z,\hat{x})u^{\infty}(\hat{x},d)e^{-2\mathbf{i}kR_r\hat{x}\cdot d}ds(\hat{x})\bigg|+\mathcal{O}(R_r^{-1}).
		\end{equation}
		
		We now estimate the first term in \eqref{eq:green_estimate} for the cases of $N=2$ and $N=3$ separately. 
		For simplicity, we write $\phi(\hat{x})=G^{\infty}(z,\hat{x})u^{\infty}(\hat{x},d)$.
		First for $N=2$, without loss of generality we take $d=(1,0)$ and let $\hat{x}=(\cos\theta,\sin\theta)$, then we can write 
		\begin{equation}
			\int_{\mathbb{S}^{1}}\phi(\hat{x})e^{-2\mathbf{i}kR_r\hat{x}\cdot d}ds(\hat{x})=\int_{0}^{2\pi}\phi(\hat{x})e^{-2\mathbf{i}kR_r\cos\theta}d\theta\,.
		\end{equation}
		Denoting $\delta=(kR_r)^{-1/2}$ and $Q_\delta=\{\theta\in (0,2\pi):|\theta-m\pi|<\delta,m=0,1,2\}$, it is straightforward to derive
		\begin{equation}
			\bigg|\int_{Q_\delta}\phi(\theta)e^{-2\mathbf{i}kR_r\cos\theta}d\theta\bigg|\le CR_r^{-1/2}\,.
		\end{equation}
		For $\theta\in (0,2\pi)\backslash Q_\delta$, we can easily see 
		\begin{equation}
			2(kR_r)^{1/2}|(\cos\theta)'|=	2(kR_r)^{1/2}|\sin\theta|\ge 	2(kR_r)^{1/2}|\sin\delta|\ge 1\,,
		\end{equation}
		and obviously $(\cos\theta)'$ is piecewise monotone in $(0,2\pi)$. Then it follows from Lemma \ref{oscillatory} that
		\begin{equation}
			\bigg|\int_{(0,2\pi)\backslash Q_\delta}\phi(\theta)e^{-2\mathbf{i}kR_r\cos\theta}d\theta\bigg|\le CR_r^{-1/2}\,.
		\end{equation}
		
		Next, for $N=3$, without loss of generality, we assume $d=(0,0,1)$ and $\hat{x}=(\sin\theta\cos\varphi,\sin\theta\sin\varphi,\cos\theta)$. Then we can write 
		\begin{equation}
			\int_{\mathbb{S}^2}\phi(\hat{x})e^{-2\mathbf{i}kR_r\hat{x}\cdot d}ds(\hat{x})=\int_{0}^{2\pi}\int_{0}^{\pi}\phi(\theta,\varphi)e^{-2\mathbf{i}kR_r\cos\theta}\sin\theta d\theta d\varphi.
		\end{equation}
		Since $(\frac{1}{2\mathbf{i}kR_r}e^{-2\mathbf{i}kR_r\cos\theta})'= -e^{-2\mathbf{i}kR_r\cos\theta}\sin\theta$,
		by integration by parts, we obtain that 
		\begin{equation}
			\int_{0}^{\pi}\phi(\theta,\varphi)e^{-2\mathbf{i}kR_r\cos\theta}\sin\theta d\theta=-\dfrac{1}{2\mathbf{i}kR_r}\bigg[e^{-2\mathbf{i}kR_r\cos\theta}\phi(\theta,\varphi)\bigg]_{0}^{\pi}+\dfrac{1}{2\mathbf{i}kR_r}\int_{0}^{\pi}\frac{\partial \phi(\theta,\varphi)}{\partial \theta}e^{-2\mathbf{i}kR_r\cos\theta}d\theta.
		\end{equation}
		The proof is then completed by observing that $\phi$ and $\frac{\partial \phi}{\partial\theta}$ are bounded and independent of $R_r$.
	\end{proof}
	
	Finally, we summarize the main results in this section by combining the above lemmas and \eqref{eqn:I_split}. 
	These results indicate that when $R_r$ is large enough, the index function with phaseless data can reconstruct the scatterers accurately and stably as $I_{DSM}$ for the inverse scattering problems with phased data.
	\begin{theorem}
		With the incident plane wave $u^i(x,d)=e^{\mathbf{i}kx\cdot d}$ and $z\in \Omega$, it holds that 
		\begin{equation}
			I_{\text{DSM}}(z) = C_{N}I_{DSM}^{\infty}(z)+\mathcal{O}(R_r^{-1}) \,,
		\end{equation}
		\begin{equation}
			I_{\text{DSM}}^{\text{phaseless}}(z)=I_{\text{DSM}}(z)+\mathcal{O}(R_r^{(1-N)/2})=C_{N}I_{\text{DSM}}^{\infty}(z)+\mathcal{O}(R_r^{(1-N)/2})\,.
		\end{equation}
		\label{theorem1}
	\end{theorem}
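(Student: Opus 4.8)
The plan is to prove the two displayed identities in sequence, noting that the first one has already been derived as equation \eqref{eqn_phaseless_far}, so the substantive work lies in establishing the near-field expansion \eqref{eqn_phaseless_near} and then chaining the two together. For the first identity I would simply invoke the computation leading to \eqref{eqn_phaseless_far}: substituting the asymptotic formulas \eqref{Asym1} and \eqref{Asym2} into the definition \eqref{DSM} and absorbing the resulting cross terms yields $I_{\text{DSM}}(z)=C_N I_{DSM}^{\infty}(z)+\mathcal{O}(R_r^{-1})$ directly, with $I_{DSM}^{\infty}$ independent of $R_r$.

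For the second identity, I would start from the decomposition \eqref{eqn:I_split}, which writes $I_{\text{DSM}}^{\text{phaseless}}(z)$ as the modulus of a sum of three integrals. The crucial observation is that the first of these integrals, $\int_{\Gamma_r}G(z,x_r)\overline{u^s(x_r,d)}\,ds(x_r)$, is precisely the quantity whose modulus defines $I_{\text{DSM}}(z)$ in \eqref{DSM}. Labeling the three integrals $T_1,T_2,T_3$ so that $|T_1|=I_{\text{DSM}}(z)$, I would apply the reverse triangle inequality
\begin{equation*}
\bigl|\,I_{\text{DSM}}^{\text{phaseless}}(z)-I_{\text{DSM}}(z)\,\bigr|=\bigl|\,|T_1+T_2+T_3|-|T_1|\,\bigr|\le |T_2|+|T_3|,
\end{equation*}
and then bound $|T_2|$ by Lemma \ref{Lemma_us2} and $|T_3|$ by Lemma \ref{Lemma_usui}, each of which contributes a term of order $R_r^{(1-N)/2}$. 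This gives $I_{\text{DSM}}^{\text{phaseless}}(z)=I_{\text{DSM}}(z)+\mathcal{O}(R_r^{(1-N)/2})$.

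To finish, I would substitute the first identity into this relation to obtain $I_{\text{DSM}}^{\text{phaseless}}(z)=C_N I_{DSM}^{\infty}(z)+\mathcal{O}(R_r^{-1})+\mathcal{O}(R_r^{(1-N)/2})$, and then verify that the two error rates merge: for $N=2$ we have $R_r^{(1-N)/2}=R_r^{-1/2}$, which dominates $R_r^{-1}$, while for $N=3$ we have $R_r^{(1-N)/2}=R_r^{-1}$, so in both cases $\mathcal{O}(R_r^{-1})+\mathcal{O}(R_r^{(1-N)/2})=\mathcal{O}(R_r^{(1-N)/2})$, which yields the last equality claimed in the theorem.

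The proof is essentially an assembly of the two lemmas and the far-field expansion, so no single step is genuinely hard once those are in hand; the real analytic difficulty has already been absorbed into Lemma \ref{Lemma_usui}, whose oscillatory-integral estimate via Lemma \ref{oscillatory} (and, for $N=3$, integration by parts) controls the most delicate cross term. The only point requiring a little care is the handling of the outer modulus, where one must use the reverse triangle inequality rather than naively expanding, together with keeping track of which exponent dominates when collapsing the two $\mathcal{O}$-terms.
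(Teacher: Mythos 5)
Your proposal is correct and follows essentially the same route as the paper: the paper obtains the first identity exactly as the derivation of \eqref{eqn_phaseless_far}, and establishes the second by combining the decomposition \eqref{eqn:I_split} with Lemmas \ref{Lemma_us2} and \ref{Lemma_usui}, which is precisely your argument (the paper leaves the reverse triangle inequality and the merging of the $\mathcal{O}(R_r^{-1})$ and $\mathcal{O}(R_r^{(1-N)/2})$ terms implicit, and you have filled in those details correctly).
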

	
	{\bf Incident point source waves}. We now consider the case of incident point source waves 
	$u^{i}(x,x_s)=G(x,x_s)$
	for $x_s\in \mathbb{R}^{N}$ with $R_s=|x_s|$ as the radius of the surface where point sources are located. Without loss of generality, we assume $R_r=\tau R_s$ for $\tau$$\ge$ 1 where $R_r$ is the radius of the measurement surface. 
	For the forward scattering problems, it is known that the solution to the scattering problem and its derivatives depend continuously on the incident field in the maximum norm 
	\cite[Theorems~3.11, 3.12, 3.16, 8.7]{colton2019inverse}. With these well-posedness results for different types of scatterers, the following estimates can be derived and are used repeatedly in our subsequent analysis.
  
 \begin{lemma}
     For the incident point source wave
	$u^{i}(x,x_s)=G(x,x_s)$, we have the following estimates for $u^s$ and $u^\infty$:
 \begin{align}
		\Vert u^\infty(\hat{x},x_s)\Vert_{\infty,\mathbb{S}^{N-1}}&\le \tilde{C} \Vert u^i\Vert_{\infty,\Omega}\le CR_s^{(1-N)/2}\,,
		\label{A1}\\
		u^s(x_r,x_s)&= \dfrac{e^{\mathbf{i}kR_r}}{R_r^{(N-1)/2}}\bigg\{u^\infty(\hat{x}_r,x_s)+\mathcal{O}(R_s^{(1-N)/2}R_r^{-1})\bigg\}\,,
		\label{A2}\\
		|u^s(x_r,x_s)|&\le C (R_s R_r)^{(1-N)/2}\,,
		\label{A3}
	\end{align}
 where $\tilde{C}$ and $C$ are constants independent on $R_s$ and $R_r$.
 \end{lemma}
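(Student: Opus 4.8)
The three bounds will be proved in the order \eqref{A1}, \eqref{A2}, \eqref{A3}, the last of which is a direct consequence of the first two.

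For \eqref{A1}, the plan is to couple the continuous dependence of the scattered field on the incident data with the explicit decay of a point source. The first inequality is precisely the well-posedness estimate quoted just before the lemma: the far-field pattern depends continuously, in the maximum norm, on the incident field, so $\Vert u^\infty(\hat x,x_s)\Vert_{\infty,\mathbb{S}^{N-1}}\le \tilde C\Vert u^i\Vert_{\infty,\Omega}$. It then remains to estimate $\Vert u^i\Vert_{\infty,\Omega}=\Vert G(\cdot,x_s)\Vert_{\infty,\Omega}$. Since $\Omega$ is bounded and $|x_s|=R_s$, for every $x\in\Omega$ we have $|x-x_s|\ge R_s-\max_{x\in\Omega}|x|\ge cR_s$ once $R_s$ is large. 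Inserting this into the explicit form of $G$ and using the large-argument asymptotics of $H_0^{(1)}$ when $N=2$ yields $|G(x,x_s)|=\mathcal{O}(R_s^{-1/2})$ for $N=2$ and $|G(x,x_s)|=\mathcal{O}(R_s^{-1})$ for $N=3$; in both cases this is $\mathcal{O}(R_s^{(1-N)/2})$, which closes \eqref{A1}.

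The main obstacle is \eqref{A2}, since there I must make the remainder in the standard far-field expansion \eqref{Asym1} explicit in its dependence on the incident field. The plan is to represent the radiating scattered field through a potential: a single-layer potential $u^s(x)=\int_{\partial D}\phi(y)G(x,y)\,ds(y)$ for impenetrable scatterers, as in \eqref{equ:ussound}, or the analogous volume potential in the inhomogeneous-media case. In either setting the density solves a boundary (respectively Lippmann--Schwinger) integral equation whose operator is boundedly invertible, so $\Vert\phi\Vert\le C\Vert u^i\Vert_{\infty,\Omega}\le CR_s^{(1-N)/2}$ by the estimate already obtained for \eqref{A1}. I would then substitute the asymptotic expansion \eqref{Asym2} of $G(x_r,y)$ as $|x_r|=R_r\to\infty$, which holds uniformly for $y$ in the bounded set $\partial D$, and factor out $e^{\mathbf{i}kR_r}/R_r^{(N-1)/2}$: the leading integral reproduces exactly the far-field pattern $u^\infty(\hat x_r,x_s)=\int_{\partial D}\phi(y)G^{\infty}(y,\hat x_r)\,ds(y)$, while the next-order contribution is controlled by $CR_r^{-1}\Vert\phi\Vert_{L^1(\partial D)}\le CR_s^{(1-N)/2}R_r^{-1}$. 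This gives \eqref{A2}. The delicate points here are the uniformity of \eqref{Asym2} over $y\in\partial D$ and the fact that the representing density inherits the $R_s^{(1-N)/2}$ scaling of the incident field.

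Finally, \eqref{A3} follows by taking absolute values in \eqref{A2} and inserting \eqref{A1}. Using $1/R_r^{(N-1)/2}=R_r^{(1-N)/2}$, one gets $|u^s(x_r,x_s)|\le R_r^{(1-N)/2}\big(|u^\infty(\hat x_r,x_s)|+CR_s^{(1-N)/2}R_r^{-1}\big)\le C(R_sR_r)^{(1-N)/2}(1+R_r^{-1})$, and the last factor is bounded since $R_r\ge R_s\ge 1$. This completes the plan.
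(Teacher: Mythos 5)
Your proposal is correct in substance, and \eqref{A1} and \eqref{A3} are handled essentially as in the paper, but your treatment of \eqref{A2} takes a genuinely different route. The paper does not use layer or volume potentials there: it writes $u^s$ via the Green's (Kirchhoff--Helmholtz) representation formula over the boundary of an auxiliary ball $B$ containing the whole scatterer, $u^s(x)=\int_{\partial B}\{u^s(y)\,\partial_\nu G(x,y)-\partial_\nu u^s(y)\,G(x,y)\}\,ds(y)$, inserts the large-$|x|$ asymptotics of $G$ and $\partial_\nu G$ (uniform for $y$ on the fixed surface $\partial B$), and bounds the remainder by $\Vert u^s\Vert_{\infty,\partial B}+\Vert \partial_\nu u^s\Vert_{\infty,\partial B}$, which the quoted well-posedness theorems control by $\Vert u^i\Vert_{\infty,\Omega}=\mathcal{O}(R_s^{(1-N)/2})$; both \eqref{A1} and \eqref{A2} then fall out of this single identity. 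Your route instead represents $u^s$ by a single-layer potential as in \eqref{equ:ussound} (or the Lippmann--Schwinger volume potential for media), and transfers the $R_s^{(1-N)/2}$ scaling to the density through bounded invertibility of the associated integral equation before applying \eqref{Asym2}. This works, but it costs a case-by-case argument (sound-soft, sound-hard/impedance, medium) and carries two caveats you should make explicit: the right-hand side of the boundary integral equation involves $\partial u^i/\partial\nu$ as well as $u^i$, so you also need $\Vert\nabla_x G(\cdot,x_s)\Vert_{\infty,\Omega}=\mathcal{O}(R_s^{(1-N)/2})$ (true, but it must be stated); and the pure single-layer ansatz for exterior problems is boundedly invertible only away from irregular frequencies (interior eigenvalues of $D$), so one should either assume $k^2$ is not such an eigenvalue or switch to a combined-field equation. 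The paper's auxiliary-ball device sidesteps both issues and treats all scatterer types uniformly, which is precisely why its proof is phrased through $\partial B$ rather than $\partial D$; your version, in exchange, makes the scaling of the representing density explicit and connects more directly to the potential-theoretic formulas already introduced in Section 3.1.
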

 \begin{proof}
     It is known that the scattered field has the following Green's formula \cite{colton2019inverse}:
\begin{equation}
    u^s(x) = \int_{\partial B}\bigg\{ u^s(y)\frac{\partial G(x,y)}{\partial \nu}-\frac{\partial u^s}{\partial\nu}(y)G(x,y) \bigg\} ds(y),
\end{equation}
where $B$ is any bounded domain of class $C^2$ containing the whole scatterer, and we assume $B$ is the unit ball without loss of generality. Combining the Green's formula and the following two asymptotic expressions
\begin{equation}
		G(x,y)=\dfrac{e^{\mathbf{i}k|x|}}{|x|^{(N-1)/2}}\bigg\{G^{\infty}(\hat{x},y) + \mathcal{O}(|x|^{-1})\bigg\},
	\end{equation}

 \begin{equation}
     \frac{\partial G(x,y)}{\partial \nu}=\dfrac{e^{\mathbf{i}k|x|}}{|x|^{(N-1)/2}}\bigg\{\frac{\partial G^\infty(x,y)}{\partial \nu} + \mathcal{O}(|x|^{-1})\bigg\},
 \end{equation}
we have
\begin{equation}
    u^{s}(x) = \frac{e^{\mathbf{i}k|x|}}{|x|^{(N-1)/2}}\bigg\{u^{\infty}(\hat{x}) + \mathcal{O}\big((\Vert u^s\Vert_{\infty,\partial B}+\Vert \frac{\partial u^s}{\partial\nu}\Vert_{\infty,\partial B})|x|^{-1}\big)\bigg\}\,,
\end{equation}
where
\begin{equation}
     u^\infty(\hat{x}) = \int_{\partial B} \bigg\{u^s(y)\frac{\partial G^\infty(\hat{x},y)}{\partial \nu}-\frac{\partial u^s}{\partial\nu}(y)G^\infty(\hat{x},y)\bigg\} ds(y).
\end{equation}
By the well-posedness results \cite[Theorems~3.11, 3.12, 3.16, 8.7]{colton2019inverse}, we then have
\begin{equation}
    \Vert u^s\Vert_{\infty,\partial B}+\Vert \frac{\partial u^s}{\partial\nu}\Vert_{\infty,\partial B} =\mathcal{O}(\Vert u^i\Vert_{\infty,\Omega})=\mathcal{O} (R_s^{(1-N/2)}),
\end{equation}
hence 
\begin{equation}
    \Vert u^\infty(\hat{x},x_s)\Vert_{\infty,\mathbb{S}^{N-1}}=\mathcal{O}(\Vert u^i\Vert_{\infty,\Omega})=\mathcal{O} (R_s^{(1-N/2)}),
\end{equation}

\begin{equation}
    u^s(x_r,x_s)= \dfrac{e^{\mathbf{i}kR_r}}{R_r^{(N-1)/2}}\bigg\{u^\infty(\hat{x}_r,x_s)+\mathcal{O}(R_s^{(1-N)/2}R_r^{-1})\bigg\}\,,
\end{equation}
which have proved the two estimates \eqref{A1}-\eqref{A2}. Estimate \eqref{A3} can be directly derived from \eqref{A1} and \eqref{A2}.

 \end{proof}
	To further the resolution analysis for the phaseless index function with incident source waves, we now show a helpful estimate. 
	\begin{lemma}
		\label{Lemma_us2_s}
		The following estimate holds for the source incident wave $u^i(x,x_s)=G(x,x_s)$: 
		\begin{equation}
			\bigg|\int_{\Gamma_{r}}G(z,x_r)\dfrac{|u^s(x_r,x_s)|^2}{u^i(x_r,x_s)}ds(x_r)\bigg|\le C R_s^{1-N}, \quad z\in \Omega\,.
		\end{equation}
	\end{lemma}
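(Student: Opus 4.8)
The plan is to mirror the proof of Lemma~\ref{Lemma_us2} for the plane-wave case: bound the integrand pointwise by the product of its factors and multiply by the surface measure $|\Gamma_r|=2^{N-1}\pi R_r^{N-1}$. The one genuinely new feature is that the incident field $u^i(x_r,x_s)=G(x_r,x_s)$ now sits in the denominator and no longer has unit modulus, so instead of invoking $|u^i|=1$ I must produce a uniform \emph{lower} bound for $|u^i|$ over $x_r\in\Gamma_r$.

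First I would dispose of the two numerator factors. For fixed $z\in\Omega$ and $x_r\in\Gamma_r$, the asymptotic relation \eqref{Asym2} gives $|G(z,x_r)|\le C R_r^{(1-N)/2}$, while squaring the estimate \eqref{A3} yields $|u^s(x_r,x_s)|^2\le C(R_sR_r)^{1-N}$, with constants independent of $R_s,R_r$. The crucial step is the lower bound on $|u^i(x_r,x_s)|=|G(x_r,x_s)|$. Since $|x_r|=R_r$, $|x_s|=R_s$ and $R_r=\tau R_s\ge R_s$, the triangle inequality gives $|x_r-x_s|\le R_r+R_s=(\tau+1)R_s$. Because $|G(x,y)|$ depends only on $|x-y|$ and decays monotonically like $|x-y|^{(1-N)/2}$ for large argument (exactly $\tfrac{1}{4\pi|x-y|}$ when $N=3$, and when $N=2$ via the asymptotics and the monotone, non-vanishing modulus of $H_0^{(1)}$), its minimum over $\Gamma_r$ is attained at the maximal separation, so $|G(x_r,x_s)|\ge C(R_r+R_s)^{(1-N)/2}\ge C R_s^{(1-N)/2}$ uniformly in $x_r$. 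Hence $1/|u^i(x_r,x_s)|\le C R_s^{(N-1)/2}$.

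Finally I would assemble the pieces:
\begin{equation*}
\bigg|\int_{\Gamma_r}G(z,x_r)\frac{|u^s(x_r,x_s)|^2}{u^i(x_r,x_s)}ds(x_r)\bigg|
\le C\,R_r^{N-1}\cdot R_r^{(1-N)/2}\cdot (R_sR_r)^{1-N}\cdot R_s^{(N-1)/2}
= C\,(R_rR_s)^{(1-N)/2}.
\end{equation*}
Collecting exponents, the powers of $R_r$ telescope to $R_r^{(1-N)/2}$ and those of $R_s$ to $R_s^{(1-N)/2}$. Since $R_r\ge R_s$ and $(1-N)/2<0$, I bound $R_r^{(1-N)/2}\le R_s^{(1-N)/2}$, giving $(R_rR_s)^{(1-N)/2}\le R_s^{1-N}$, which is the claimed estimate.

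The main obstacle is the lower bound on $|u^i|$. In three dimensions it is immediate from the explicit form of $G$, but in two dimensions one must justify $|H_0^{(1)}(t)|\ge C\,t^{-1/2}$ uniformly over the relevant range $t=k|x_r-x_s|\in[k(\tau-1)R_s,\,k(\tau+1)R_s]$, which relies on the non-vanishing and monotone modulus of the Hankel function rather than the trivial $|u^i|=1$ used in Lemma~\ref{Lemma_us2}; note that the possible coincidence $x_r=x_s$ at $\tau=1$ causes no trouble, since there $|u^i|$ only grows and hence $1/|u^i|$ stays controlled. Everything else is the same bookkeeping of exponents as in the plane-wave case.
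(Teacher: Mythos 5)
Your proposal is correct and takes essentially the same route as the paper's proof: bound the numerator factors via the asymptotics \eqref{Asym2} and \eqref{A3}, bound $1/|u^i|$ from above using the decay of the Green's function modulus (in 2D via the Hankel-function estimate, which the paper phrases as $|H_0^{(1)}(t)|^{-1}\le C_1t^{1/2}+C_2$, and in 3D via the explicit $|u^i|^{-1}=4\pi|x_r-x_s|$), and multiply by the surface measure so that the exponents telescope to $R_s^{1-N}$. One small caution: your displayed chain $C(R_r+R_s)^{(1-N)/2}\ge C R_s^{(1-N)/2}$ is backwards as a pure monotonicity statement (the exponent is negative); it is rescued only because you invoke $R_r+R_s=(\tau+1)R_s$ with $\tau$ fixed, so the factor $(\tau+1)^{(1-N)/2}$ is absorbed into a $\tau$-dependent constant, whereas the paper keeps the bound on $\int_{\Gamma_r}|u^i|^{-1}ds$ in powers of $R_r$, which cancel exactly against $R_r^{3(1-N)/2}$ and give constants uniform in the ratio $\tau$.
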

	\begin{proof}
		By \eqref{Asym2} and \eqref{A3}, we have
		\begin{equation}\label{eq:gammar}
			\bigg|\int_{\Gamma_{r}}G(z,x_r)\dfrac{|u^s(x_r,x_s)|^2}{u^i(x_r,x_s)}ds(x_r)\bigg|\le C R_s^{1-N}R_r^{3(1-N)/2}\int_{\Gamma_{r}}\dfrac{1}{|u^i(x_r,x_s)|}ds(x_r)\,.
		\end{equation}
		
		Next, we consider the case $N=2$ and $N=3$ separately. 
		Firstly, when $N=2$, since $|H_0^{(1)}(t)|^{-1}$ is bound in $(0,1/2)$ and $t^{-\frac{1}{2}}|H_0^{(1)}(t)|^{-1}$ is decreasing for $t>0$, there exist constants $C_1$ and $C_2$ such that
		\begin{equation}
			|H_0^{(1)}(t)|^{-1}\le C_1 t^{1/2} +C_2, \quad t \in(0,\infty).
		\end{equation}
		Then by the condition $R_r\ge \tau R_s$ for $\tau>1$ and $|u^i(x_r,x_s)|=|\frac{\textbf{i}}{4}H_0^{(1)}(k|x_r-x_s|)|$, we have $|x_r-x_s|\le 2R_r$, and 
		\begin{equation}
  \begin{split}
      	\int_{\Gamma_{r}}\dfrac{1}{|u^i(x_r,x_s)|}ds(x_r)&\le 4\int_{\Gamma_{r}} \bigg\{C_1(k|x_r-x_s|)^{1/2} +C_2\bigg\}ds(x_r)\\
       &\le 4\int_{\Gamma_{r}} \bigg\{C_1(2kR_r)^{1/2} +C_2\bigg\}ds(x_r) \le C R_r^{3/2},
  \end{split}		
		\end{equation}
		which, along with \eqref{eq:gammar}, yields 
		\begin{equation}
			\bigg|\int_{\Gamma_{r}}G(z,x_r)\dfrac{|u^s(x_r,x_s)|^2}{u^i(x_r,x_s)}ds(x_r)\bigg| \le CR_s^{-1}\,.
		\end{equation}
		
		Finally, for $N=3$, we can see that $|u^i(x_r,x_s)|^{-1}= 4\pi|x_r-x_s|\le CR_r $. Then we can similarly obtain    
		\begin{equation}
			\bigg|\int_{\Gamma_{r}}G(z,x_r)\dfrac{|u^s(x_r,x_s)|^2}{u^i(x_r,x_s)}ds(x_r)\bigg| \le CR_s^{-2}.
		\end{equation}
	\end{proof}
	
	Next, we derive some results similar to those in Lemma \ref{Lemma_usui}, but for the incident source waves. For this purpose, we first recall the following important mixed reciprocity relation\cite{colton2019inverse,potthast2001point}
	\begin{lemma}
		Denote $u^{\infty}(d,x_s)$ as the far field pattern of the scattering problem with incident wave $u^i(x)=G(x,x_s)$, and $w^s(x,-d)$ as the scattering solution with incident wave $w^i(x)=e^{-\mathbf{i}kx\cdot d}$, then we have 
		\begin{equation}
			u^{\infty}(d,x_s)=\gamma_N w^s(x_s,-d),\quad x_s \in \mathbb{R}^{N}\backslash D, d\in \mathbb{S}^{N-1},
		\end{equation}
		where $\gamma_2=\frac{1}{4\pi}$ and  $\gamma_3=\frac{e^{\mathbf{i}\pi/4}}{\sqrt{8k\pi}}$. 
		\label{lemma:mix}
	\end{lemma}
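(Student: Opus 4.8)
The plan is to reduce the identity to the vanishing of a single boundary integral over $\partial D$ by combining the far-field representation of the point-source scattered field with the Green representation of the plane-wave scattered field. Throughout I write $u=u^i+u^s$ with $u^i(\cdot)=G(\cdot,x_s)$ for the point-source problem, and $w=w^i+w^s$ with $w^i(x)=e^{-\mathbf{i}kx\cdot d}$ for the plane-wave problem, and I introduce the antisymmetric bilinear boundary form
\begin{equation}
[f,g]:=\int_{\partial D}\Big(f\frac{\partial g}{\partial \nu}-\frac{\partial f}{\partial \nu}g\Big)\,ds\,,\qquad [f,g]=-[g,f]\,.
\end{equation}

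First I would rewrite both sides of the claim as integrals over $\partial D$. Applying the Green representation theorem to the radiating field $u^s(\cdot,x_s)$ and replacing $G(x,\cdot)$ by its far-field expansion \eqref{Asym2} (so that $G^\infty(\cdot,\hat x)=\gamma_N e^{-\mathbf{i}k\hat x\cdot(\cdot)}$ furnishes the constant $\gamma_N$), then evaluating at $\hat x=d$ and noting $e^{-\mathbf{i}kd\cdot y}=w^i(y)$, yields $u^\infty(d,x_s)=\gamma_N[u^s,w^i]$. Applying the Green representation theorem to the radiating field $w^s(\cdot,-d)$ at the exterior point $x_s$ and using the symmetry $G(x_s,y)=G(y,x_s)=u^i(y)$ gives $w^s(x_s,-d)=-[u^i,w^s]$. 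Hence the assertion $u^\infty(d,x_s)=\gamma_N w^s(x_s,-d)$ is equivalent to the single relation $[u^s,w^i]+[u^i,w^s]=0$.

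The core step is to expand $[u,w]$ by bilinearity into $[u^i,w^i]+[u^i,w^s]+[u^s,w^i]+[u^s,w^s]$ and to kill the two diagonal terms. The term $[u^i,w^i]$ vanishes by Green's second identity over $D$, since both $u^i$ (smooth in $D$ because $x_s\notin D$) and $w^i$ solve the Helmholtz equation there. The term $[u^s,w^s]$ vanishes by Green's second identity over the exterior of $D$ truncated at a large sphere $S_R$: the only surviving contribution is $\int_{S_R}(u^s\partial_r w^s-\partial_r u^s\,w^s)\,ds$, and rewriting the integrand as $u^s(\partial_r w^s-\mathbf{i}k w^s)-(\partial_r u^s-\mathbf{i}k u^s)w^s$ shows, via the radiation condition \eqref{eq:radiation} together with $u^s,w^s=\mathcal{O}(R^{(1-N)/2})$ from \eqref{Asym1}, that it is $o(R^{1-N})$, so the integral tends to $0$ as $R\to\infty$. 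Therefore $[u,w]=[u^s,w^i]+[u^i,w^s]$, and it only remains to prove $[u,w]=0$.

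The equality $[u,w]=0$ is where the obstacle type finally enters, and it follows pointwise from the boundary conditions: for sound-soft obstacles $u=w=0$ on $\partial D$; for sound-hard obstacles $\partial u/\partial\nu=\partial w/\partial\nu=0$; and for impedance obstacles $\partial u/\partial\nu=-\mathbf{i}k\lambda u$ and $\partial w/\partial\nu=-\mathbf{i}k\lambda w$, so that $u\,\partial_\nu w-\partial_\nu u\,w=-\mathbf{i}k\lambda uw+\mathbf{i}k\lambda uw=0$. In every case the integrand of $[u,w]$ vanishes, giving $[u^s,w^i]+[u^i,w^s]=0$ and hence the mixed reciprocity relation. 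The main difficulty I anticipate is bookkeeping rather than conceptual: fixing the orientation of $\nu$ and the signs in the two representation formulas so that $\gamma_N$ comes out exactly as claimed, and rigorously justifying the vanishing of the $S_R$-integral for $[u^s,w^s]$ from the radiation condition, which is a standard but delicate Rellich-type estimate. The penetrable-medium case is handled in the same way, replacing the boundary-condition step by the transmission conditions together with an interior Green identity that again forces the analogue of $[u,w]=0$.
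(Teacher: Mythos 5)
Your proof is correct, but note that the paper itself does not prove this lemma at all: it is recalled from the literature (the citations to Colton--Kress and Potthast), so there is no internal proof to compare against. What you have reconstructed is precisely the classical argument behind the cited result: reduce both sides to boundary integrals over $\partial D$ via the exterior Green representation and the far-field expansion \eqref{Asym2}, so that the claim becomes $[u^s,w^i]+[u^i,w^s]=0$ in your bracket notation; then expand $[u,w]$ by bilinearity, kill $[u^i,w^i]$ by Green's second identity in $D$ (valid since $x_s$ lies outside $\overline{D}$), kill $[u^s,w^s]$ by the radiation condition \eqref{eq:radiation} together with the decay from \eqref{Asym1} on a large truncating sphere, and finally kill $[u,w]$ itself using the boundary condition, each of the three obstacle types making the integrand vanish pointwise, with an interior Green identity playing that role in the penetrable case. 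All steps, including the sign and orientation bookkeeping you were worried about, are sound, and the truncation estimate for $[u^s,w^s]$ is the standard Rellich-type argument.

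One genuinely useful by-product of your derivation: since the only place a dimensional constant enters is the far-field expansion of $G$, your proof forces $\gamma_N$ to equal the constant appearing in the paper's $G^\infty$, i.e. $\gamma_2=e^{\mathbf{i}\pi/4}/\sqrt{8k\pi}$ and $\gamma_3=1/(4\pi)$. The lemma as stated in the paper has these two constants interchanged (compare with the paper's own definition of $G^{\infty}(z,\hat{x}_r)$), which is evidently a typo that your argument detects and corrects.
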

	
	With Lemma \ref{lemma:mix}, equations \eqref{Asym1} and \eqref{A2}, we then have the following estimate for $u^s(x_r,x_s)$:
	
	\begin{equation}
		u^s(x_r,x_s)=\frac{\gamma_N e^{\mathbf{i}k(R_r+R_s)}}{R_r^{(N-1)/2}R_s^{(N-1)/2}}w^{\infty}(\hat{x}_s,-\hat{x}_r)+\mathcal{O}(R_s^{(1-N)/2}R_r^{-(1+N)/2}+R_s^{-(1+N)/2}R_r^{(1-N)/2}),
		\label{equa:mix2}
	\end{equation}
	where $w^{\infty}(\hat{x}_s,-\hat{x}_r)$ is the far field pattern corresponding to the incident wave $w^i(x)=e^{-\mathbf{i}kx\cdot \hat{x}_r}$.

	\begin{lemma}
		\label{Lemma_usui_s}
		With source incident wave $u^i(x,x_s)=G(x,x_s)$, we have
		\begin{equation}
			\bigg|\int_{\Gamma_{r}}G(z,x_r)\dfrac{u^s(x_r,x_s)\overline{u^i(x_r,x_s)}}{u^i(x_r,x_s)}ds(x_r)\bigg|\le C R_s^{(1-N)}\,.
		\end{equation}
	\end{lemma}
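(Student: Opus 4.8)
The plan is to mirror the structure of the proof of Lemma \ref{Lemma_usui}, with the incident plane wave replaced by a point source and, crucially, with the oscillatory phase $\hat{x}\cdot d$ replaced by the distance function $|x_r-x_s|$. First I would insert the three available asymptotics into the integrand: the far-field expansion \eqref{Asym2} for $G(z,x_r)$, the mixed-reciprocity expansion \eqref{equa:mix2} for $u^s(x_r,x_s)$, and the large-argument asymptotics of the Green's function for $u^i(x_r,x_s)=G(x_r,x_s)$. A direct computation gives $\overline{u^i}/u^i=e^{-2\mathbf{i}k|x_r-x_s|}\big(1+\mathcal{O}(|x_r-x_s|^{-1})\big)$ in both dimensions (for $N=2$ one uses $\overline{H_0^{(1)}}=H_0^{(2)}$ and the standard Hankel asymptotics). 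Collecting the phases $e^{\mathbf{i}kR_r}$, $e^{\mathbf{i}k(R_r+R_s)}$, $e^{-2\mathbf{i}k|x_r-x_s|}$ and pulling out the $\hat{x}_r$-independent constant $e^{\mathbf{i}k(2R_r+R_s)}$, the leading part reduces, after changing variables to the unit sphere (so that $ds(x_r)=R_r^{N-1}ds(\hat{x}_r)$ cancels the amplitude decay $R_r^{-(N-1)}$), to
\begin{equation*}
\frac{\gamma_N}{R_s^{(N-1)/2}}\int_{\mathbb{S}^{N-1}} G^{\infty}(z,\hat{x}_r)\, w^{\infty}(\hat{x}_s,-\hat{x}_r)\, e^{-2\mathbf{i}k|x_r-x_s|}\, ds(\hat{x}_r)\,,
\end{equation*}
up to remainders, where I set $\phi(\hat{x}_r)=G^{\infty}(z,\hat{x}_r)w^{\infty}(\hat{x}_s,-\hat{x}_r)$, which together with its derivatives is bounded independently of $R_r$ and $R_s$.

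Before attacking the oscillatory integral I would dispose of the remainder terms, which is routine: each remainder either carries an extra factor $R_r^{-1}\le CR_s^{-1}$ relative to the leading amplitude, or already decays like $R_s^{-(N+1)/2}$ through \eqref{equa:mix2}; in every case the integrated remainder is $\mathcal{O}(R_s^{-(N+1)/2})\le\mathcal{O}(R_s^{1-N})$ for $N=2,3$ and large $R_s$. The heart of the matter is therefore the stationary-phase estimate of the displayed integral. Writing the phase as $2kR_r\,g(\hat{x}_r)$ with $g(\hat{x}_r)=|\hat{x}_r-\epsilon\hat{x}_s|$ and $\epsilon=R_s/R_r\in(0,1/\tau]$, I would observe that the tangential gradient of $g$ vanishes exactly at $\hat{x}_r=\pm\hat{x}_s$, and that near these points the quadratic coefficient of $g$ is of size $\epsilon/(1-\epsilon)$, so that $2kR_r$ times it is of order $kR_s$. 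Thus the effective large parameter governing the decay is $kR_s$, not $kR_r$; this is precisely what produces the extra factor $R_s^{-(N-1)/2}$ needed to reach the target bound $R_s^{1-N}$.

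For $N=2$ I would, exactly as in Lemma \ref{Lemma_usui}, take $\hat{x}_s=(1,0)$, $\hat{x}_r=(\cos\theta,\sin\theta)$, and split $(0,2\pi)$ into the neighborhoods $Q_\delta=\{|\theta-m\pi|<\delta,\ m=0,1,2\}$ of the stationary points and its complement, now with $\delta=(kR_s)^{-1/2}$. On $Q_\delta$ the integral is bounded trivially by $C\delta=CR_s^{-1/2}$; on the complement one checks $\big|\tfrac{d}{d\theta}(2kR_r g)\big|=2kR_s|\sin\theta|/g\ge c(kR_s)^{1/2}$, so after the same normalization used in the plane-wave case Lemma \ref{oscillatory} yields $CR_s^{-1/2}$. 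Multiplying by the prefactor $R_s^{-1/2}$ gives $R_s^{-1}=R_s^{1-N}$. For $N=3$ I would place $\hat{x}_s$ at the north pole and integrate by parts in the polar angle $\theta$, using $\tfrac{d}{d\theta}e^{-2\mathbf{i}kR_r g}=-2\mathbf{i}kR_s(\sin\theta/g)\,e^{-2\mathbf{i}kR_r g}$, so that the Jacobian factor $\sin\theta$ is absorbed and the boundary and interior terms are each $\mathcal{O}((kR_s)^{-1})=\mathcal{O}(R_s^{-1})$ (here $g\ge 1-\epsilon>0$ and the smoothness of $\phi$ and $g$ are used); the prefactor $R_s^{-1}$ then gives $R_s^{-2}=R_s^{1-N}$.

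The step I expect to require the most care is the transition from the plane-wave phase $\hat{x}\cdot d$ to the distance phase $|x_r-x_s|$: one must verify that the stationary set is exactly $\{\pm\hat{x}_s\}$, that $g$ stays bounded away from $0$ (which needs $\tau>1$ so that $\epsilon$ is bounded away from $1$), and, most importantly, that the governing large parameter is $kR_s$ rather than $kR_r$, since it is this bookkeeping that distinguishes the source-wave bound $R_s^{1-N}$ from the plane-wave bound $R_r^{(1-N)/2}$. Once these points are secured, the remaining manipulations are direct adaptations of Lemmas \ref{Lemma_usui} and \ref{oscillatory}.
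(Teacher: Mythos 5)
Your proposal is correct and follows essentially the same route as the paper's own proof: the same reduction of the integrand via \eqref{Asym2}, the mixed-reciprocity expansion \eqref{equa:mix2} and the Hankel phase asymptotics, the same key observation that the phase derivative scales like $kR_s$ rather than $kR_r$, the same $\delta=(kR_s)^{-1/2}$ splitting near the stationary points $\pm\hat{x}_s$ combined with Lemma \ref{oscillatory} for $N=2$, and the same integration by parts in the polar angle (absorbing the Jacobian $\sin\theta$) for $N=3$. The only differences are cosmetic: you normalize the phase as $2kR_r g$ with $g=|\hat{x}_r-\epsilon\hat{x}_s|$ where the paper writes $2kR_s v(\theta_r)$, and your $N=3$ step assumes $\tau>1$ so that $g$ is bounded away from zero, whereas the paper's version also covers $\tau=1$ since the boundary term vanishes and $(\phi v)'$ stays bounded there.
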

	\begin{proof}
		We carry out the proof in two separate cases, $N=2$ and $N=3$. 
		
		First for $N=2$, without loss of generality, we assume $x_s=(R_s,0)$. Denoting $\delta=(kR_s)^{-1/2}$, $\Theta_\delta=\{\theta\in (0,2\pi): |\theta \pm m\pi|\le\delta, m=0,1,2\}$ and $Q_\delta=\{x_r\in \Gamma_{r}: |\theta_r\in \Theta_\delta\}$, then it follows easily from \eqref{Asym2}, \eqref{A3} and  $|Q_\delta|\le CR_rR_s^{-1/2} $ that
		\begin{equation}
			\bigg|\int_{Q_\delta}G(z,x_r)\dfrac{u^s(x_r,x_s)\overline{u^i(x_r,x_s)}}{u^i(x_r,x_s)}ds(x_r)\bigg|\le C R_s^{-1}\,.
		\end{equation}
		For $x_r\in \Gamma_{r}\backslash Q_\delta$, since $\sin t\ge t/2$ for $t\in(0,\pi/2)$, we have
		\begin{equation}
			k|x_r-x_s|\ge 2kR_s\bigg|\sin\frac{\theta_r-\theta_s}{2}\bigg|\ge 2kR_s\sin\frac{\delta}{2}\ge\frac{1}{2}(kR_s)^{1/2}\,.
		\end{equation}
		Thus, by the asymptotic behavior of the Hankel functions \cite{colton2019inverse}
		\begin{equation}
			H_n^{(1)}(t)=\bigg(\frac{2}{\pi t}\bigg)^{1/2}e^{\mathbf{i}(t-\frac{n\pi}{2}-\frac{\pi}{4})}+R_n(t),\quad|R_n(t)|\le Ct^{-3/2},\quad t>0\,,
		\end{equation}
		we can write 
		\begin{equation}
			\frac{\overline{u^i(x_r,x_s)}}{u^i(x_r,x_s)}=e^{-2\mathbf{i}k|x_r-x_s|+\mathbf{i}\frac{\pi}{2}}+\rho_1(x_r,x_s)\,,
			\label{A15}
		\end{equation}
		where $|\rho_1(x_r,x_s)|\le CR_s^{-1/2}$. Then, by combining \eqref{Asym2}, \eqref{A15}, and \eqref{equa:mix2}, we obtain 
		\begin{equation}
			\begin{split}
				&\bigg|\int_{\Gamma_r\backslash Q_\delta}G(z,x_r)\dfrac{u^s(x_r,x_s)\overline{u^i(x_r,x_s)}}{u^i(x_r,x_s)}ds(x_r)\bigg|\\
				\le&C R_s^{-1/2}\bigg| \int_{(0,2\pi)\backslash \Theta_\delta}G^{\infty}(\hat{x}_r,z)w^{\infty}(\hat{x}_s,-\hat{x}_r)e^{-2\mathbf{i}k|x_r-x_s|}d\theta_r\bigg |+\mathcal{O}(R_s^{-1})\\
				=& C R_s^{-1/2}\bigg|\int_{(0,2\pi)\backslash \Theta_\delta}G^{\infty}(\hat{x}_r,z)w^{\infty}(\hat{x}_s,-\hat{x}_r)e^{2\mathbf{i}kR_sv(\theta_r)}d\theta_r\bigg|+\mathcal{O}(R_s^{-1})\,,
			\end{split}
		\end{equation}
		where $v(\theta_r)=-\sqrt{1+\tau^2-2\tau\cos\theta_r}$. For small $\delta$, The derivative of $v$ can be computed explicitly as
		\begin{equation}
			|v'(\theta_r)|\ge \tau|\sin\delta|/|v(\theta_r)|\ge\frac{\tau}{1+\tau}\frac{\delta}{2}\ge \frac{1}{4}(kR_s)^{-1/2}\,.
		\end{equation}
		Employing the above two estimates and utilizing Lemma \eqref{oscillatory}, we have 
		\begin{equation}
			\bigg|\int_{\Gamma_r\backslash Q_\delta}G(z,x_r)\dfrac{u^s(x_r,x_s)\overline{u^i(x_r,x_s)}}{u^i(x_r,x_s)}ds(x_r)\bigg| \le C R_s^{-1}\,.
		\end{equation}
		
		Now we analyze for the case $N=3$. We first see from \eqref{Asym2} and \eqref{equa:mix2} that 
		\begin{equation}\label{eq:casen=3}
			\begin{split}
				&\bigg|\int_{\Gamma_r}G(z,x_r)\dfrac{u^s(x_r,x_s)\overline{u^i(x_r,x_s)}}{u^i(x_r,x_s)}ds(x_r)\bigg|\\
				=&CR_s^{-1}\bigg|\int_{\Gamma_{r}}G^{\infty}(\hat{x}_r,z)w^{\infty}(\hat{x}_s,-\hat{x}_r)e^{-2\mathbf{i}k|x_r-x_s|}d(x_r)\bigg|+\mathcal{O}(R_s^{-2}).
			\end{split}
		\end{equation}
		Without loss of generality, we assume $x_s=(0,0,R_s)$ and let $x_r=R_r(\sin\theta\cos\varphi,\sin\theta\sin\varphi,\cos\theta)$, $\phi(\hat{x}_r)=G^{\infty}(\hat{x}_r,z)w^{\infty}(\hat{x}_s,-\hat{x}_r)$, we can then rewrite the leading order term in \eqref{eq:casen=3} as
		\begin{equation}
			\int_{\Gamma_{r}}\phi(\hat{x}_r)e^{-2\mathbf{i}k|x_r-x_s|}d(x_r)=\int_{0}^{2\pi}\int_{0}^{\pi}\phi(\hat{x}_r)e^{-2\mathbf{i}kR_s\sqrt{\tau^2+1-2\tau\cos\theta}}\sin\theta d\theta d\varphi\,.
		\end{equation}
		Denoting $v(\theta)= \sqrt{\tau^2+1-2\tau\cos\theta}$ and knowing $v'(\theta)=\tau\sin\theta/v(\theta)$, 
		by integration by parts, we obtain that
		$$ 
		\int_{0}^{\pi}\phi(\hat{x}_r)e^{-2\mathbf{i}kR_s v(\theta)}\sin\theta d\theta
		=-\dfrac{1}{2\mathbf{i}k\tau R_s}\bigg(\bigg[\phi(\hat{x}_r)v(\theta)e^{-2\mathbf{i}kR_sv(\theta)}\bigg]_{\theta=0}^{\theta=\pi}
		-\int_{0}^{\pi}e^{-2\mathbf{i}kR_sv(\theta)}\frac{\partial(\phi(\hat{x}_r)v(\theta))}{\partial\theta}d\theta\bigg)\,.
		$$ 
		We then have
		\begin{equation}
			\bigg|\int_{\Gamma_{r}}G^{\infty}(\hat{x}_r,z)w^{\infty}(\hat{x}_s,-\hat{x}_r)e^{-2\mathbf{i}k|x_r-x_s|}d(x_r)\bigg|\le CR_s^{-1}\,.
		\end{equation}
		Then the desired estimate follows from this and \eqref{eq:casen=3}.
	\end{proof}
	
	By substituting the estimates in Lemmas \ref{Lemma_us2_s} and \ref{Lemma_usui_s} into the decomposition of the index function in \eqref{eqn:I_split}, we come to the following conclusions, 
	where the second result comes from replacing $u^i(x,d)$ with $u^i(x,x_s)$ in \eqref{equ:DSMPS} and \eqref{corrected data}.
	
	\begin{theorem}
		\label{theorem:source}
		With the incident source wave $u^i(x,x_s)=G(x,x_s)$, and assuming $R_r=\tau R_s$ for $\tau$$\ge$ 1, we then have for $z\in \Omega$:
		\begin{equation}
			I_{\text{DSM}}(z) =  C_NI_{DSM}^{\infty}(z)+\mathcal{O}(R_s^{(1-N)/2}R_r^{-1})= \mathcal{O}(R_s^{(1-N)/2})\,.
		\end{equation}
		and
 		\begin{equation}
			I_{\text{DSM}}^{\text{phaseless}}(z)=I_{\text{DSM}}(z)+ \mathcal{O}(R_s^{1-N})=C_NI_{DSM}^{\infty}(z)+\mathcal{O}(R_s^{1-N})\,.
		\end{equation}
	\end{theorem}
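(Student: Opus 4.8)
The plan is to assemble this theorem directly from the asymptotic expansions and the two preceding source-wave lemmas, since all the delicate oscillatory-integral work has already been carried out in Lemmas \ref{Lemma_us2_s} and \ref{Lemma_usui_s}; what remains is essentially bookkeeping. To establish the first displayed identity, I would expand both factors in the integral defining $I_{\text{DSM}}(z)$, inserting the asymptotic formula \eqref{Asym2} for $G(z,x_r)$ and \eqref{A2} for $u^s(x_r,x_s)$. The phases $e^{\mathbf{i}kR_r}$ and $e^{-\mathbf{i}kR_r}$ then cancel and the common prefactor becomes $R_r^{-(N-1)}$, so after the change of variables $ds(x_r)=R_r^{N-1}ds(\hat{x}_r)$ onto $\mathbb{S}^{N-1}$ the leading term is $\int_{\mathbb{S}^{N-1}}G^{\infty}(z,\hat{x}_r)\overline{u^{\infty}(\hat{x}_r,x_s)}\,ds(\hat{x}_r)$, which is exactly $C_N I_{DSM}^{\infty}(z)$ by the same identification used in the phased case \eqref{eqn_phaseless_far}. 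Each of the three remainder products carries an extra factor $R_r^{-1}$ relative to the leading term and, using $G^{\infty}=\mathcal{O}(1)$ together with the far-field bound \eqref{A1}, integrates to $\mathcal{O}(R_s^{(1-N)/2}R_r^{-1})$. This yields the first equality, and the second, $I_{\text{DSM}}(z)=\mathcal{O}(R_s^{(1-N)/2})$, follows immediately since \eqref{A1} forces $I_{DSM}^{\infty}(z)=\mathcal{O}(R_s^{(1-N)/2})$ while $R_r^{-1}\le 1$.

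For the second displayed identity, I would start from the three-term decomposition \eqref{eqn:I_split}, now written with the source incident wave $u^i(x_r,x_s)$ in place of $u^i(x_r,d)$. The first integral there is precisely the unmodulated integrand of $I_{\text{DSM}}(z)$, while Lemma \ref{Lemma_us2_s} bounds the second integral and Lemma \ref{Lemma_usui_s} bounds the third, each by $\mathcal{O}(R_s^{1-N})$. Applying the reverse triangle inequality to \eqref{eqn:I_split} then gives $|I_{\text{DSM}}^{\text{phaseless}}(z)-I_{\text{DSM}}(z)|\le\mathcal{O}(R_s^{1-N})$, which is the first half of the claim; substituting the first identity produces $I_{\text{DSM}}^{\text{phaseless}}(z)=C_N I_{DSM}^{\infty}(z)+\mathcal{O}(R_s^{(1-N)/2}R_r^{-1})+\mathcal{O}(R_s^{1-N})$.

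The only point requiring genuine care, and the one I would treat as the main obstacle, is the order comparison that collapses these two remainder scales into the single rate $\mathcal{O}(R_s^{1-N})$. Under the standing assumption $R_r=\tau R_s$ with $\tau\ge 1$, the first remainder becomes $\mathcal{O}(R_s^{-(N+1)/2})$, and comparing exponents shows $-(N+1)/2\le 1-N$ precisely when $N\le 3$, with equality at $N=3$. Hence for the two dimensions of interest, $N=2$ and $N=3$, the term $\mathcal{O}(R_s^{-(N+1)/2})$ is absorbed into $\mathcal{O}(R_s^{1-N})$, which finishes the proof. I regard this dimensional check as the crux: it is exactly what guarantees that the phaseless correction terms remain asymptotically negligible against the leading DSM signal under source-wave illumination.
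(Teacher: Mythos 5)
Your proposal is correct and takes essentially the same route as the paper: the paper's (very terse) proof consists precisely of substituting the bounds of Lemmas \ref{Lemma_us2_s} and \ref{Lemma_usui_s} into the decomposition \eqref{eqn:I_split} with the source wave in place of the plane wave, and of obtaining the leading term $C_N I_{DSM}^{\infty}(z)$ from the asymptotics \eqref{Asym2} and \eqref{A1}--\eqref{A2} exactly as you describe. Your explicit verification that $\mathcal{O}(R_s^{(1-N)/2}R_r^{-1})=\mathcal{O}(R_s^{-(N+1)/2})$ is absorbed into $\mathcal{O}(R_s^{1-N})$ when $R_r=\tau R_s$ and $N\le 3$ is a detail the paper leaves implicit, and your check is correct.
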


\begin{remark}
    Theorems \ref{theorem1} and \ref{theorem:source} indicate that $I_{\text{DSM}}^{\text{phaseless}}$ converges to $I_{\text{DSM}}(z)$ and $I_{\text{DSM}}^\infty(z)$ when $R_r$ (and $R_s$ for incident point source waves) is sufficiently large. On the other hand, it was theoretically and numerically shown in \cite{ito2012direct,li2013direct} that $I_{\text{DSM}}(z)$ and $I_{\text{DSM}}^\infty(z)$ can take large values when $z$ is near the scatterer and small values when $z$ is away from the scatterer so that they can provide an approximation for the location and shape of the scatterer. Thus, by the convergence properties stated in Theorems \ref{theorem1} and \ref{theorem:source}, $I_{\text{DSM}}^{\text{phaseless}}$ can also be applied to recover the geometry of the scatterer for sufficiently large $R_r$ and $R_s$.
\end{remark}
	
\begin{remark}
As we have seen from the above two theorems that $I_{\text{DSM}}^{phaseless}$ converges to  $I_{\text{DSM}}$ as $R_r$ (and $R_s$ for point incident waves) increases. However, increasing $R_r$ will pose challenges in practice. Firstly, take the incident plane wave as an example, we observe that $|u^s(x_r,d)|\le CR_r^{(1-N)/2}$, while $\lim_{R_r\rightarrow\infty}|u(x_r, \theta)|=1$. 
Thus, as $R_r$ increases, the incident field dominates in the total field, and the phaseless reconstruction would be more sensitive to noise. Secondly, as $R_r$ increases, the phaseless data $|u(x_r,\theta)|$ when $|x_r| = R_r$ become more oscillatory in terms of $\theta$ and thus more receivers are required to collect the data. 
 Hence, it is important to choose an appropriate $R_r$ in applications. In section \ref{sec:numerical}, our numerical experiments show that the reconstructions are still quite acceptable with a reasonable range of choices $R_r$, e.g., for wavelength $\lambda=0.75$, the DSM can still provide reasonable reconstructions with $R_r=4$.
\label{remark}
\end{remark}

	\section{The direct sampling-based deep learning approach}
	\label{sec:DSMDL}
		
	In this section, we present a direct sampling-based deep learning approach (DSM-DL) that combines DSM for phaseless data, as described in section \ref{DSMPS}, with deep learning techniques to perform phaseless reconstruction for both penetrable and impenetrable scatterers, as well as their mixed problems. 
	We apply deep learning techniques to address a common trade-off in classical methods for solving inverse scattering problems, which often involves a compromise between computation time and accuracy. Deep learning has the potential to overcome this limitation by leveraging large datasets and optimization algorithms during the training process. A well-trained neural network can offer rapid, stable, and highly accurate image reconstructions. Additionally, a key advantage of the deep learning approach is its ability to alleviate the need for prior knowledge about unknown scatterers, as it can learn the distribution information from training data.
	
	The DSM-DL consists of two steps: we first train a neural network, denoted as $\mathcal{G}_\Theta$, to learn the mapping between the index functions  $\{I_{\text{DSM}, i}^{\text{phaseless}}\}_{i=1}^{N_i}$ and the medium profile $n(x)$, where $N_i$ is the number of incidences. This mapping is relatively easier to learn compared to the highly nonlinear inverse mapping from phaseless scattered data to $n(x)$ because  $\{I_{\text{DSM}, i}^{\text{phaseless}}\}_{i=1}^{N_i}$ provides a stable and rough estimate for $n(x)$. Furthermore, both the index function and $n(x)$ may be regarded to be in the same function space, e.g., $L^2(\mathbb{R}^n)$.
	
	Subsequently, to solve inverse problems, we will first compute $\{I_{\text{DSM},i}^{\text{phaseless}}\}_{i=1}^{N_i}$ by \eqref{equ:DSMPS} and feed them into the neural network $\mathcal{G}_\Theta$ that we obtained above to have an image $\tilde{n}(x)=\mathcal{G}_\Theta(\{I_{\text{DSM},i}^{\text{phaseless}}\}_{i=1}^{N_i})$. The detailed algorithm for training is summarized in Algorithms \ref{AlgorithmDLDSM} .

	\begin{algorithm}[t]
		\caption{DSM-DL for Phaseless Data (Training)}
		\label{AlgorithmDLDSM}
		\KwIn{ \\
			\quad$\bullet$ Given fixed $N_i$ incident fields $\{u_{p}^{i}\}_{p=1}^{N_i}$.\\
			\quad$\bullet$ $N$ true refractive functions$\{n_{j}(x)\}_{j=1}^{N}$(for impenetrable scatterer $D$, we set $n(x)=0$ for point $x$ inside the scatterer $D$  ) defined in a selected sampling domain $\Omega$ and their corresponding phaseless total-field data$\{|u|_{p,j}, p=1,\cdots,N_{i}\}_{j=1}^{N}$ measured on a surface $\Gamma_r$.\\
			\quad$\bullet$ The free-space Green's function $\{G(x,y),x\in \Omega, y\in \Gamma_r\}$.\\
			\quad$\bullet$ Number of epochs: $L$; Batch number: $Q$;  Learning rates: $\{\tau_{q}\}_{q=1}^{L}$.
		}
		Compute the index functions $I_{p,j}^{Phaseless}(z)=\bigg|\int_{\Gamma_{r}}G(z,x_r)\Delta(x_r,d_p)ds(x_r)\bigg|, \quad \forall\, z\in \Omega; j=1,\cdots,N;p=1,\cdots,N_{i}$. where $\Delta(x_r,d_p)$ is given by 
		$\Delta(x_r,d_p) = \frac{|u(x_r,d_p)|^2_{p,j}-|u^i(x_r,d_p)|^2}{u^{i}(x_r,d_p)}.$
		\\Introduce a loss function $Loss$ such as \eqref{equa:lossfunc}.
		\\
		Construct and initialize a network $\mathcal{G}_{\Theta}$, where $\Theta$ denotes the parameters in the neural network.\\
		\For{$q=1,2,\cdots,L$}{
			\For{$l=1,2,\cdots,Q$}{
				Update $\Theta\leftarrow\Theta-\frac{1}{|\mathcal{I}_l|}\sum_{j\in \mathcal{I}_l}\tau_q\nabla_{\Theta}Loss\big(\mathcal{G}_\Theta(\{I_{p,j}^{Phaseless}\}_{p=1}^{N_{i}}),n_{j}\big)$.  
				\\
				\tcc{ $\mathcal{I}_{l}$ refers to the index set for the $l_{th}$ batch, $|\mathcal{I}_{l}|$ denotes the number of elements in $\mathcal{I}_{l}$, and $\tau_{q}$ refers to learning rate for the $q_{th}$ epoch.}
			}
		}
	\end{algorithm}

	\begin{figure}[htp]
		\centering
		\includegraphics[width=1.0\linewidth]{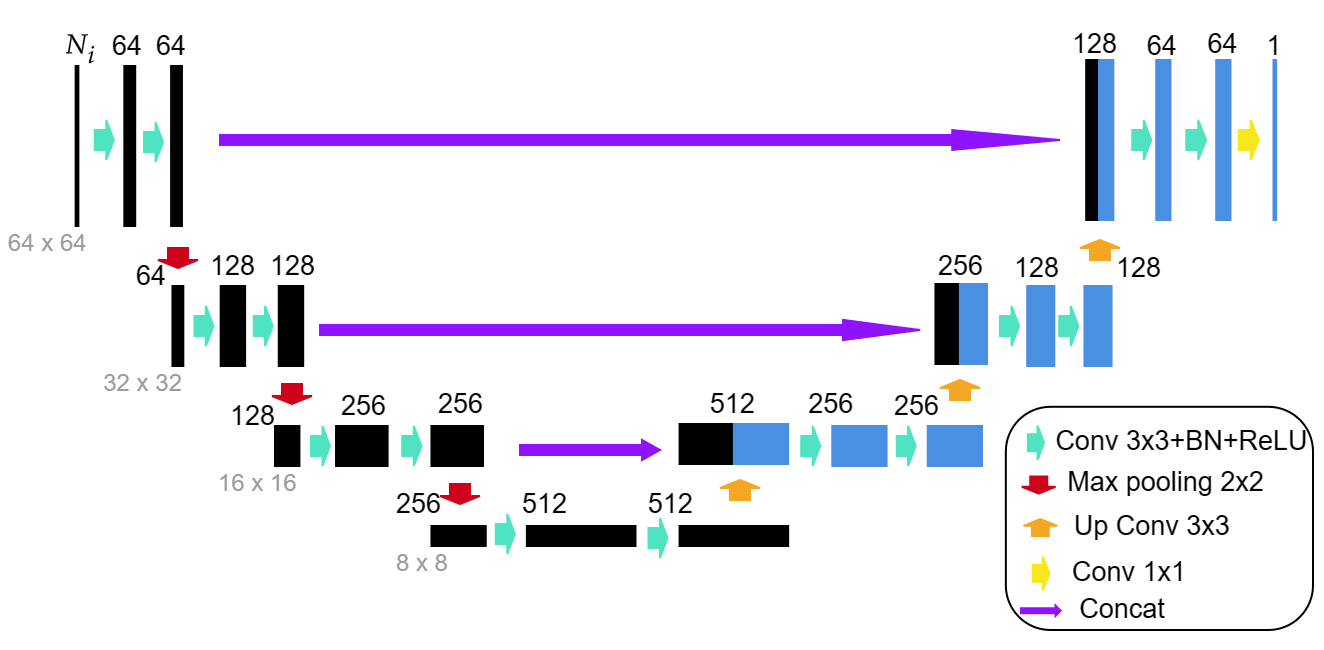}
		\caption{The architecture of the U-Net used in the numerical experiments.}
		\label{UNET}
	\end{figure}
	
	In our DSM-DL approach, we employ the U-Net architecture \cite{ronneberger2015u} as the neural network, as shown in Fig. \ref{UNET}. The U-Net architecture is suitable for our task because it can handle multi-channel input, which in our case corresponds to the index functions. The U-Net follows a symmetric U-shaped structure. The left-hand side, known as the contracting path, involves repeated convolutions, batch normalization, rectified linear unit (ReLU), and max pooling operations. And the right-hand side of the U-Net forms the expansive path, which is similar to the contracting path but employs $3\times3$ up-convolutions instead of $2\times2$ max pooling. Skip connections are used to recover lost spatial information during down-sampling by transferring features from the contracting path to the expansive path. Since the index functions have larger values near the scatterers and relatively smaller values away from the scatterers, the relationship between the index functions and the true contrast images is likely to be mainly local which is rather consistent with the local nature of CNNs. 
	
	While it is possible to employ extended versions of the classical U-Net to potentially enhance the results, it is important to note this is an endless pursuit of improvement and is not the primary focus of this work. Therefore, we chose the standard U-Net architecture for our neural networks, which proved to be able to provide quite satisfactory reconstructions, as shown by our numerical experiments in the sequel.
	
	We propose the following loss function to generate an effective network for numerical reconstructions: 
	\begin{equation}
		Loss = \frac{1}{T_n}\sum_{i=1}^{T_n}\bigg(\Vert X_i-Y_i\Vert^2_2 + \alpha_1 \text{TV}(X_i) + \alpha_2(1-\text{SSIM}(X_i,Y_i)\bigg)\,,
		\label{equa:lossfunc}
	\end{equation}
	where $T_n$ is the total number of the training data, $X_i$ and $Y_i$ are the output of the neural network and the true image, respectively, while $\alpha_1$ and $\alpha_2$ are the regularization parameter and weight coefficient.
	The first term in \eqref{equa:lossfunc} is a mean square error to ensure the accuracy of reconstruction, the second term is a total variation regularization term, with 
	$\text{TV}(u):=\int_{\Omega}|\nabla u| dx$,  
	that is added to detect edges of images more efficiently which is critical for many practical situations as the boundary information of scatters is usually most demanding.  
	The third term is a structural similarity function (SSIM) \cite{wang2003multiscale} 
	to help enhance the accuracy of the reconstruction regarding its luminance and contrast of true and reconstructed images.

	\ss
	We now list several attractive advantages of the DSM-DL approach for solving inverse scattering problems. 
	\begin{itemize}
		\item Firstly, once the neural network is trained, the DSM-DL method requires only inexpensive computations of the index functions and a forward pass through the network, enabling real-time estimation of the unknown scatterers. 
		\item  The DSM-DL method is very robust to large noise, as the noise is generally significantly smoothed out during the DSM stage.
		\item  Moreover, the DSM-DL method does not require a prior knowledge of the physical properties of the scatterers. This feature enables the applications of the DSM-DL to the mixed problem, where both medium scatterers and impenetrable scatterers exist within the domain of interest.
		\item Importantly, Theorems \ref{theorem1} and \ref{theorem:source} demonstrate that with sufficient large $R_r$ a DSM-DL network trained using phased data can be also applied to problems with only phaseless data available which will be illustrated in numerical experiments in the next section. 
		\item  Additionally, numerical demonstrations confirm that the DSM-DL method possesses a high potential for generalization, allowing it to solve problems that differ significantly from the training data, which is crucial in the applications of deep learning to general inverse scattering problems.
		\item The DSM-DL can generally do a much better reconstruction than the classical DSM , namely the latter can only recover the locations and shapes of the scatterers, while 
		the former is also able to recover the refractive index values of the scatterers. 
	\end{itemize}
	
	\section{Numerical Experiments}
	\label{sec:numerical}
	We present several representative numerical examples to illustrate the performance of the DSM and DSM-DL for reconstructing the unknown scatterers from noisy phaseless data. The wavelength is chosen as $\lambda=0.75$ and the wavenumber $k$ will be $2\pi/\lambda \approx 8.37$. The noisy data $|u_\delta|$ are generated point-wisely by the formula
	\begin{equation}
		|u_\delta(x)|=|u(x)|+\delta\zeta(x)\Vert u\Vert_2, \quad x \in \Gamma_r,
	\end{equation}
	where $\Vert u\Vert_2=(\frac{1}{N}\sum_{i=1}^N|u(x_i)|^2)^{1/2}$,   $\delta$ refers to the relative noise level, and $\zeta(x)$ follows the standard normal distribution. 
	
	We take a sampling domain to be $\Omega=[-1,1]^2$, which may contain both inhomogeneous media 
	and impenetrable scatterers. 
	Moreover, to reconstruct impenetrable scatterers with DSM-DL, a fundamental question is a proper representation of scatterers using pixel values. Since the scattering property of impenetrable scatterers depends solely on their boundaries, we can assign a pixel value of $0$ to points inside the scatterers and $1$ to points outside the scatterers to represent them. 
	Remarkably, our numerical experiments demonstrate that a single trained DSM-DL network can simultaneously solve 
	both inverse medium and obstacle scattering problems, as well as their mixed problems (see section \ref{NE_mixed}).
	We first present the numerical results for the new DSM with phaseless data in Section \ref{numeriDSM}, 
	then the numerical results for DSM-DL in Section \ref{numeriDSMDL}.
	
	\subsection{Numerical results for phaseless reconstruction by DSM}\label{numeriDSM}
	
	For phaseless reconstruction by DSM, we are interested in the important case when very limited data 
	is available, e.g., only one or a few incident fields. Unless otherwise specified, one incident plane wave with incident direction $d=(\cos(\frac{\pi}{4}),\sin(\frac{\pi}{4}))$ is employed, and the data $|u|$ is measured at 100 points uniformly distributed on a circle of radius 4. We normalize the index function $I(z)$ by $\hat{I}(z)=I(z)/\max_{z}I(z)$ so that its maximum value is 1. 
	
	\textbf{Example 1.} This first example recovers a circular sound-hard scatterer of radius 0.15 located at the origin.
	
	The numerical reconstructions are presented in Fig.\,\ref{fig:one scatterer}. We observe that for sampling points near the scatterer, the index function is indeed relatively larger than points outside the scatterer. From this example, we can see the index function can provide a reliable and visible indicator for locating the scatterer, even when the noise level is $\delta=5\%$ or $\delta=10\%$ in the phaseless data.
	
	\begin{figure}[htp]
		\centering
		\includegraphics[width=1.0\linewidth]{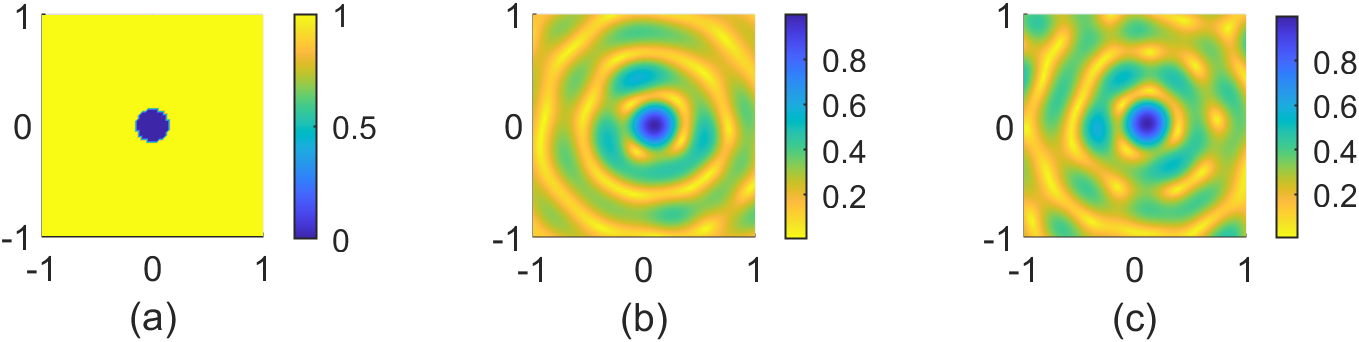}
		\caption{Example   1: (a) ground truth,  (b) reconstruction using noisy data with $\delta=5\%$ and (c)  reconstruction using noisy data with $\delta=10\%$. }
		\label{fig:one scatterer}
	\end{figure}

	\textbf{Example 2.} In this example, we consider two small square sound-soft scatterers of width 0.15. The two scatterers are located at $(-0.5,0.5)$ and $(0.5,-0.5)$ which are well separated, respectively; see Fig.\,\ref{fig:twoscatterer1}.
	
	From the plots in Fig.\,\ref{fig:twoscatterer1}, we can see that the index function can provide satisfactory reconstructions, with only one incident field, and is quite robust under the noise level $\delta = 5\%$ or and $10\%$.
	
	\begin{figure}[htp]
		\centering
		\includegraphics[width=1.0\linewidth]{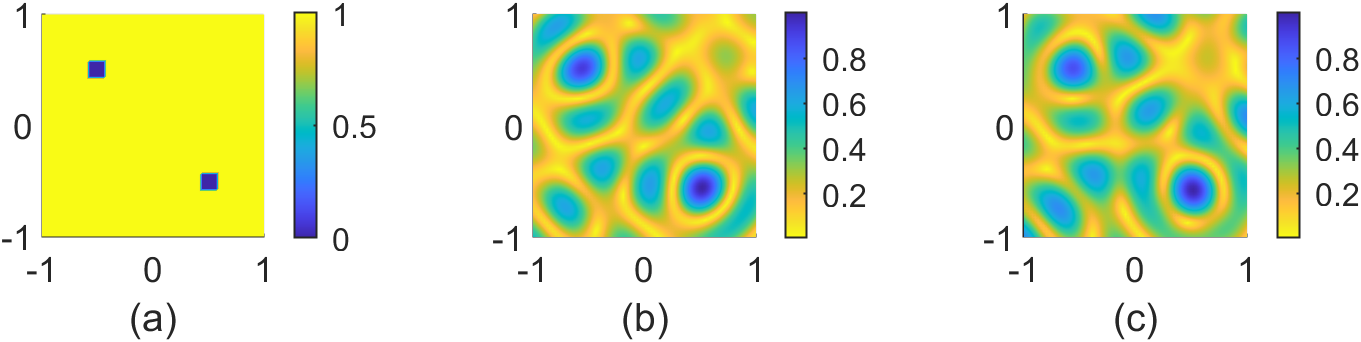}
		\caption{Example   2: (a) ground truth, (b) reconstruction using noisy data with $\delta=5\%$ and (c)  reconstruction using noisy data with $\delta=10\%$.}
		\label{fig:twoscatterer1}
	\end{figure}
	
	\textbf{Example 3.} This example considers two inhomogeneous scatterers with width $0.15$ located at $(0.1,0.1)$ and $(-0.1,-0.1)$, respectively. The distance between them is less than one-half of the wavelength ($\lambda/2$), which is known to be very challenging for imaging reconstruction. Nonetheless, the index function can still allow us to distinguish two close scatterers (cf. Fig.\,\ref{fig:twoscatterer2}).
	
	\begin{figure}[htp]
		\centering
		\includegraphics[width=1.0\linewidth]{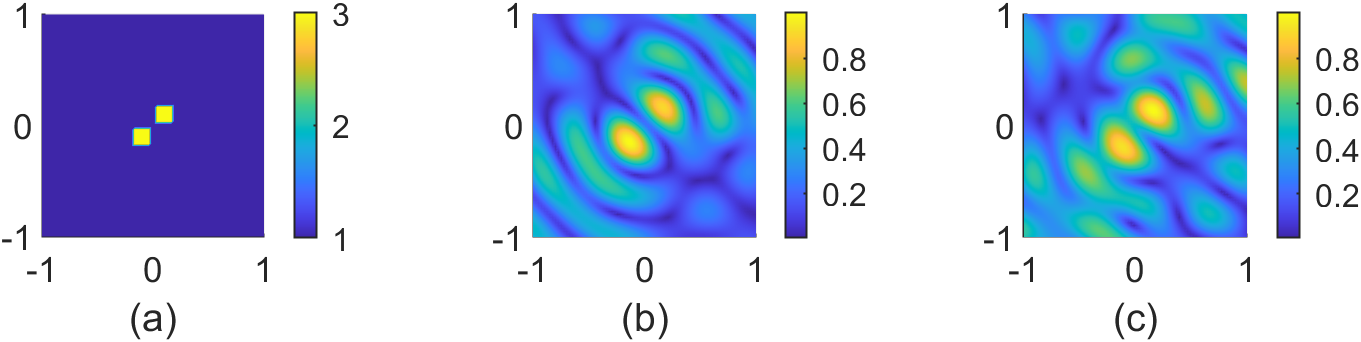}
		\caption{Example   3: (a) ground truth,   (b) reconstruction using noisy data with $\delta=5\%$, and (c)  reconstruction using noisy data with $\delta=10\%$. }
		\label{fig:twoscatterer2}
	\end{figure}
	
	\textbf{Example 4.} In this example, we consider an inhomogeneous scatterer which is a ring located at the origin, with the outer and the inner boundaries of the ring very close, with the radius being 0.3 and 0.25, respectively. The coefficient $n(x)$ of the scatterer is 3.
	
	The ring-shaped scatterer is known to be one of the most challenging objects to recover in inverse scattering problems, especially in the case when the thickness of the ring is very small. From Fig.\,\ref{fig:Ring}, we notice that employing one single incident wave is only able to recover some parts of the ring structure which is reasonable as the lack of measurement data.
	We then try to use more incident waves $\{e^{ikx\cdot d_j}\}_{j=1}^{N_i}$, where $d_j = (\cos(\theta_j), \sin(\theta_j)), \theta_j=\frac{2\pi (j-1)}{N_{i}}+\frac{\pi}{4}$, with the following average index function
	\begin{equation}
		I(z)=\dfrac{1}{N_{i}}\sum_{j=1}^{N_{i}}I_j(z),
	\end{equation}
	where $I_{j}(z)$ denotes the index function corresponding to the $j$th incident wave. We investigate $N_{i}=1,3$ and $5$ and observe that with more incident waves the method can produce more accurate and stable reconstructions for this challenging case, as shown in Fig.\,\ref{fig:Ring}.
	
	\begin{figure}[h]
 \vspace{-1.0cm}
		\centering
		\includegraphics[width=1.0\linewidth]{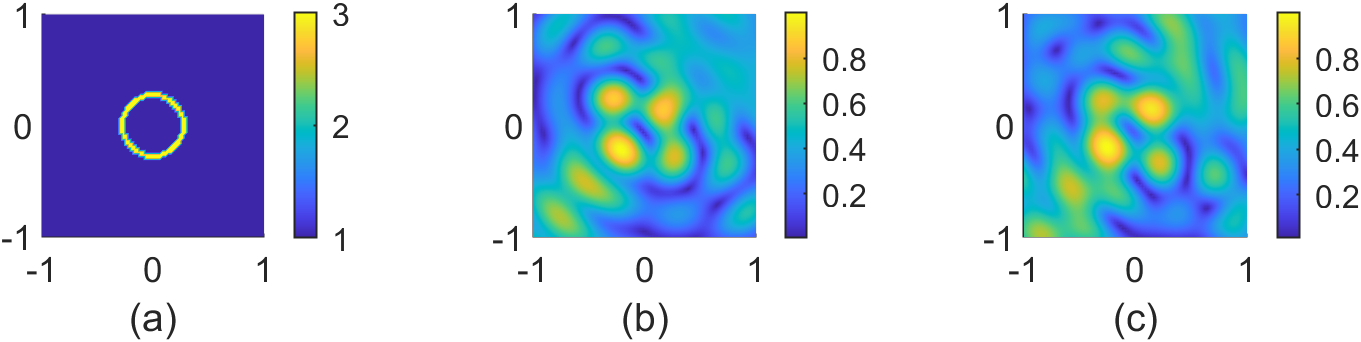}
		\includegraphics[width=1.0\linewidth]{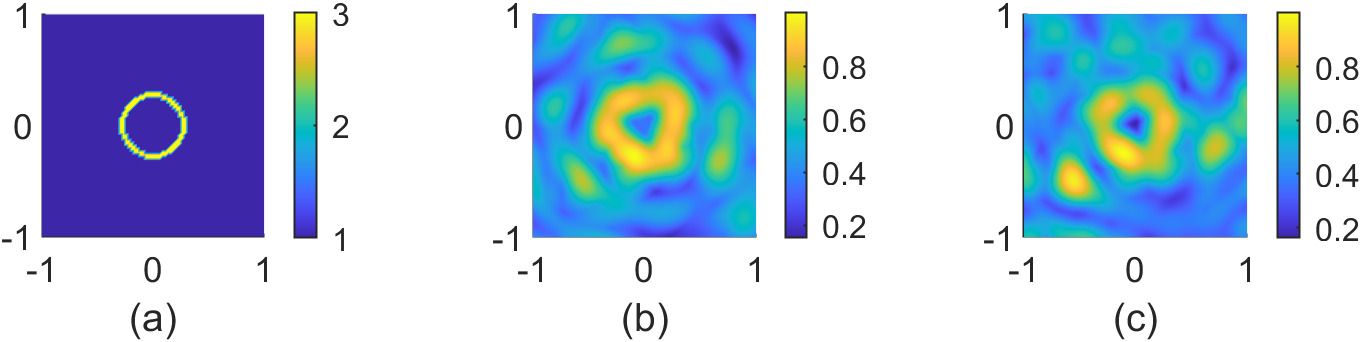}
		\includegraphics[width=1.0\linewidth]{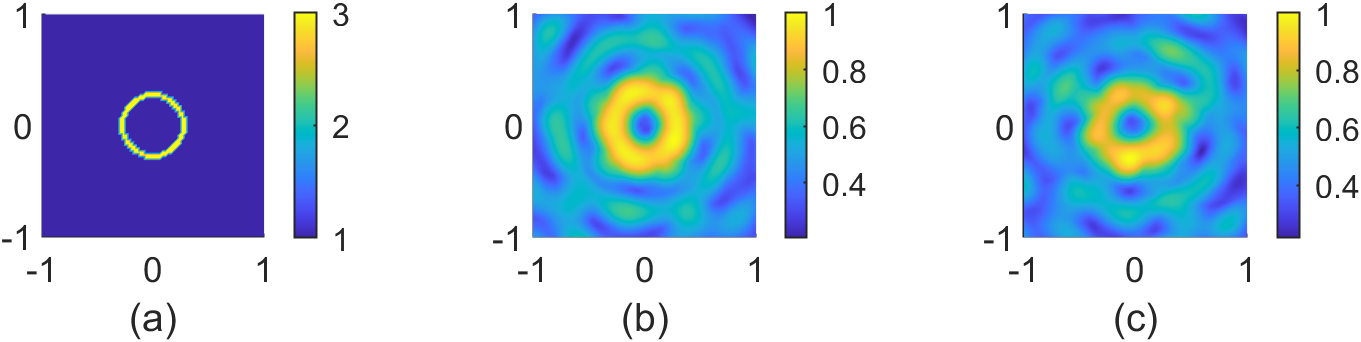}
		\caption{Example   4: (a) ground truth,  (b) reconstruction using $5\%$ noisy data and (c)  reconstruction using 
			$10\%$ noisy data. The first, second, and third rows are reconstructions with 1, 3, and 5 incidences respectively.}
		\label{fig:Ring}
	\end{figure}

\subsection{Numerical results of phaseless reconstruction by DSM-DL}\label{numeriDSMDL}

In this subsection, we evaluate the performance of the DSM-DL scheme by training the neural networks with three very different datasets: a polygon dataset, a MNIST dataset, and a mixed circle dataset. We discretize the index functions and the true images by $64\times 64$ pixels.  We scale all the inputs (index functions) of the neural networks by multiplying a constant $2/W$ in the numerical experiments, where $W$ is the maximum value of the index functions among all training data. 
The Adam optimizer is used to minimize the loss function \eqref{equa:lossfunc} and $\alpha_1 = \alpha_2 =0.5$ which is determined by grid-search as described in section \ref{sec:DSMDL}. We add $1\%$ Gaussian noise to the phaseless data in the training stage, while in the testing stage, much higher noise levels are considered. A server with GeForce GTX 1080Ti and 256 GiB RAM is used in the training stage. It takes about 0.5 hours for training the network, and less than 1.0 seconds for reconstructing the scatterers using the trained network. To evaluate the quality of the recovered images, we employ the relative L2 error:
\begin{equation}
	R_e(X,Y) = \frac{\Vert X-Y\Vert_2}{\Vert Y\Vert_2},
\end{equation}
where $X$ is the output of the network and $Y$ is the true image. We also employ the following accuracy metric for examples in \ref{examp:polygon}:
\begin{equation}
	Acc(X,Y):=\frac{1}{N_{d_x}N_{d_y}}\sum_{i=1}^{N_{d_x}}\sum_{j=1}^{N_{d_y}}a_{i,j},\quad \text{where}\quad a_{i,j}=1 \quad \text{if}\quad X_{i,j}=Y_{i,j}, \quad \text{else} \quad a_{i,j}=0.
\end{equation}
The metric $Acc(X, Y)$ is suitable for impenetrable scatterers as it is independent of the pixel values we choose, and is effective when we are concerned with only the locations of the unknown scatterers.

\subsubsection{Training with polygon dataset}
In the first example, we use a regular polygon with a random location to simulate either a medium scatterer or a sound-soft obstacle in each image. The number of the sides of each polygon is randomly set between 3 and 6, and the circumradius of each polygon is taken from the uniform distribution $U(0.3,0.5)$. For medium scatterers, the coefficient $n(x)$ inside the scatterer is set to 3. For the sound-soft obstacles, the pixel values of the points inside the obstacles are set to 0. In addition, we randomly rotate the polygon to enhance the diversity of the data. In this example, We employ 7000 images as the training data and 200 images as the testing data. The batch size is set to 10 and a total of 30 epochs is used in the training. For each batch of training data, there are 5 examples concerning medium scatterers and 5 examples concerning sound-soft obstacles. The learning rate starts at 0.001 and decreases by a factor of 0.5 every 3 epochs. We will employ $N_i$ incident plane waves with $N_i=1$ or $4$, and employ $100$ receivers that are equally distributed on a circle of radius 4 centered at the origin.

\subsubsubsection{Tests with Polygon testing data}
\label{examp:polygon}
After the training, we use the networks to test the examples from the testing data of the Polygon dataset. In this example, to compute the metric $Acc(X,Y)$, we further apply cutoff values 0.5 and 2.0 to the output of neural network so that the pixel values of the images only take from $\{0,1,3\}$. In Fig.\,\ref{Polygon}, we present reconstructed images with different noise levels and number of incidences for 5 examples, where 3 images are for medium scatterers and 2 images are for sound-soft obstacles. We observe that for $\delta=2\%$, even with only one single incident wave, the DSM-DL can still exactly identify the type of the scatterer and provide a very accurate estimation for the shapes and locations of the scatterers. As we increase the noise level to $10\%$, the method with one incidence can still identify the type and location of the scatterers, while the shapes of reconstructed scatterers are distorted, especially for medium scatterers. But when we use multiple incidents, even still a small number, e.g., 4 incident fields, 
higher-quality reconstructions can be obtained. The average accuracy metric $Acc$ over the testing data is listed in Table.\,\ref{tab:Polygon}. We notice that for $N_i=1$, the accuracy is decreased by $1.77\%$ when the noise level is increased from $2\%$ to $10\%$, while for $N_i=4$ the accuracy is only decreased by $0.61\%$. This indicates that employing more incidences can make the reconstruction more robust against noise.

\subsubsection{Training with MNIST dataset}
In this subsection, we employ a modification of the well-known MNIST dataset to model the inhomogeneous medium and train the neural networks. The resolution of each image in the MNIST dataset is $28 \times 28$ and the pixel values range from 0 and 1. The resolution is then rescaled to $64 \times 64$ in our numerical experiments and a threshold is applied with value 0.3 so that the pixel values only take 0 and 1. To increase the diversity of the data, the digits are randomly rotated, and a circle with a radius from $U(0.2,0.4)$ is added to each image. The coefficient $n(x)$ of the digits and the circles are randomly taken from $U(1.2, 1.7)$. In this example, We employ 10000 images as the training data and 200 images as the testing data. The batch size is set to 10 and a total of 30 epochs is used in the training. The learning rate starts at 0.001 and decreases by a factor of 0.5 every 3 epochs. We will employ $N_i$ incident plane waves with $N_i=4$ or $16$, and employ $100$ receivers that are equally distributed on a circle of radius 4 centered at the origin.
\begin{figure}[htbp]\small
	\begin{center}
		\begin{tblr}
			{colspec = {X[-1,m]X[c,h]X[c,h]X[c,h]X[c,h]X[c,h]},
				stretch = 0,
				rowsep = 0pt,}
			{Ground\\ Truth}&
			\includegraphics[width=0.18\textwidth]{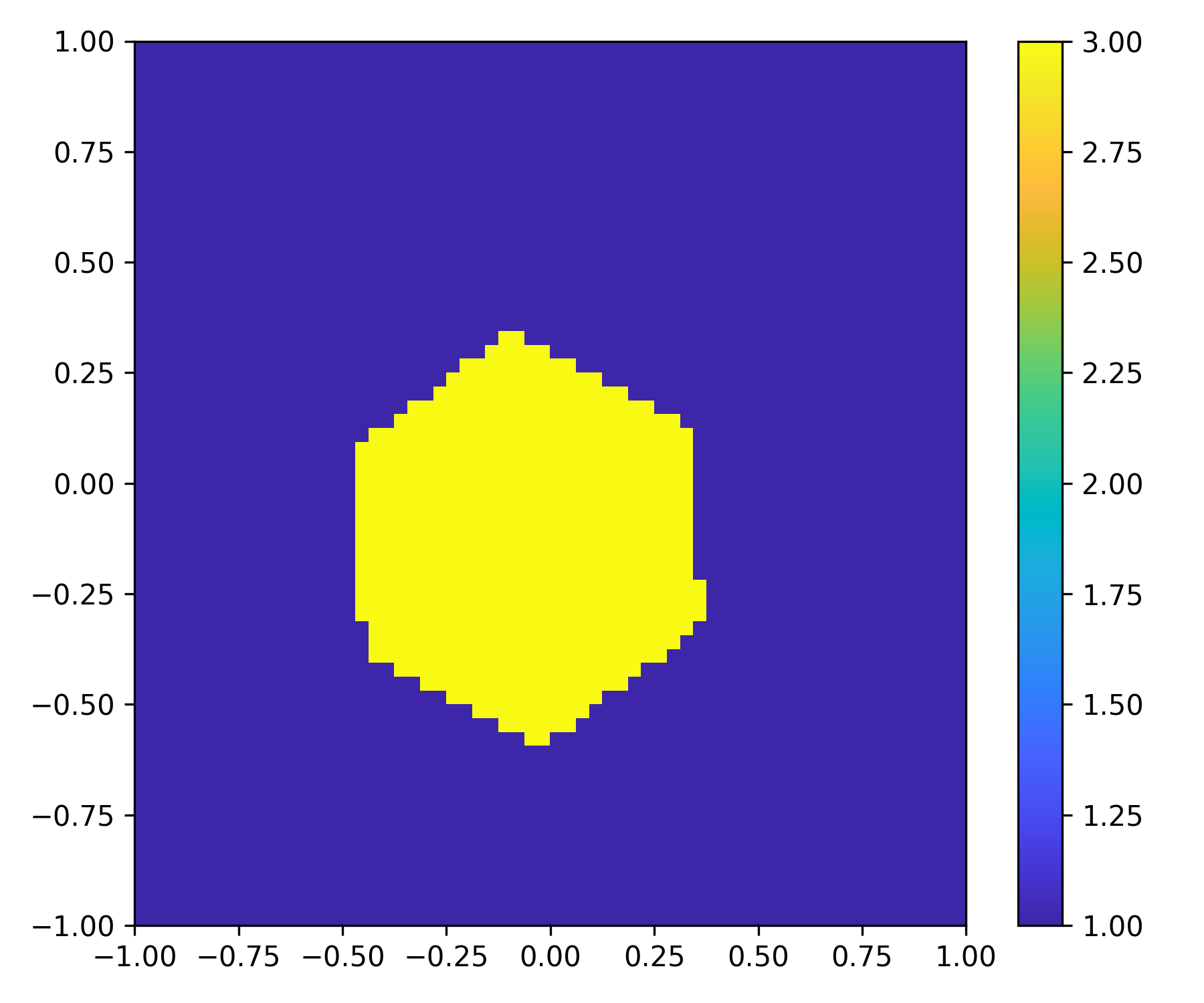}&
			\includegraphics[width=0.18\textwidth]{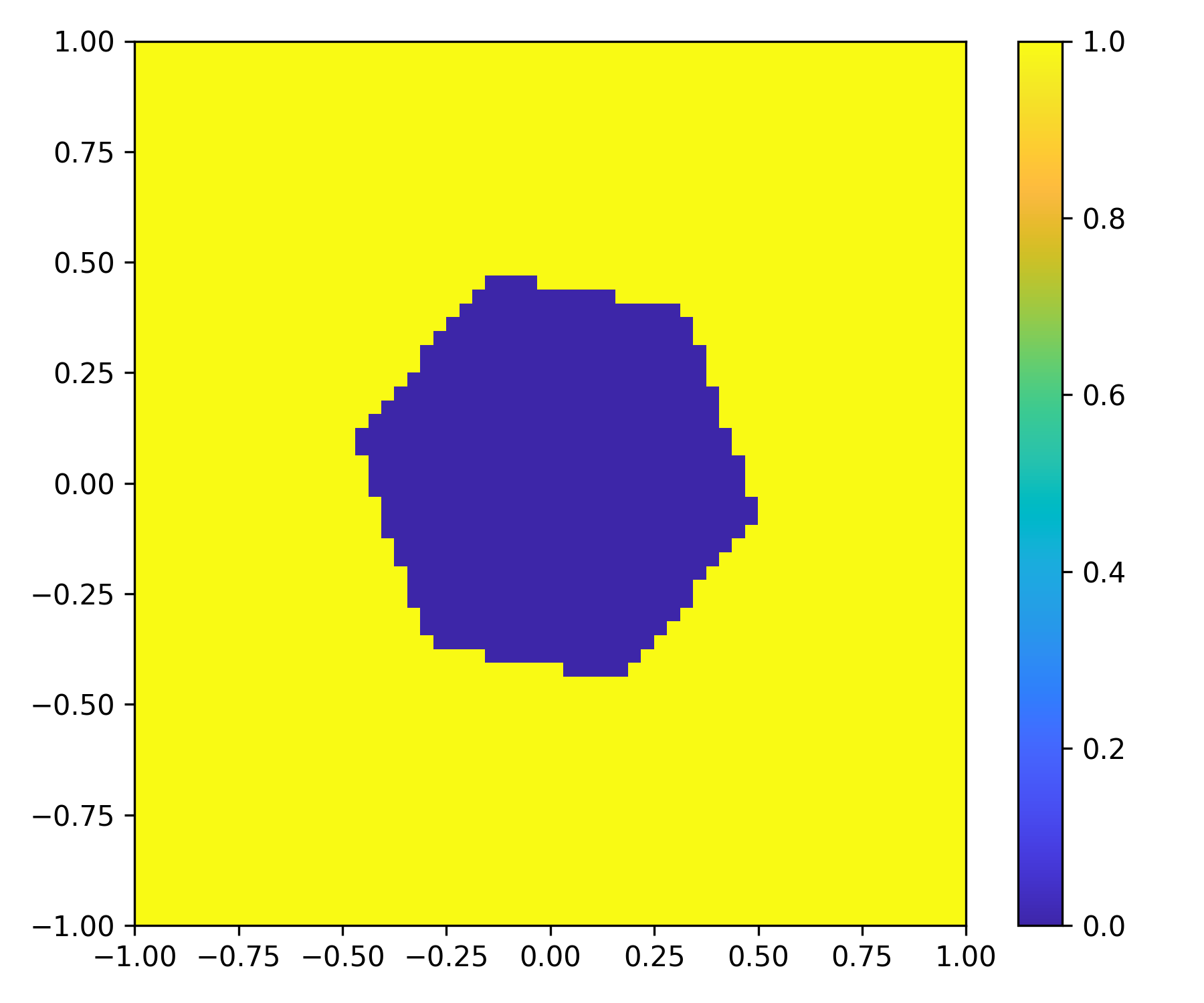} &\includegraphics[width=0.18\textwidth]{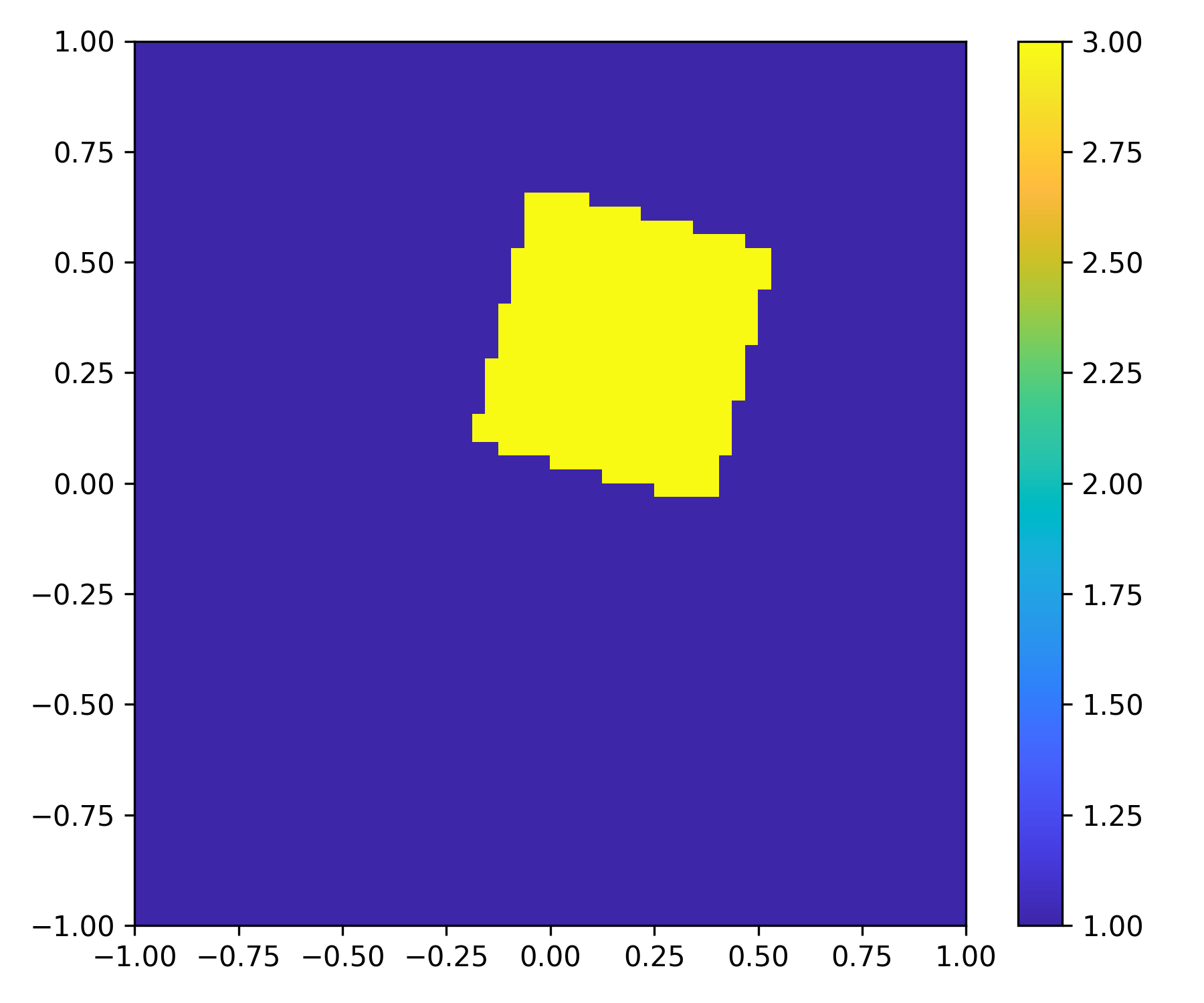}
			&\includegraphics[width=0.18\textwidth]{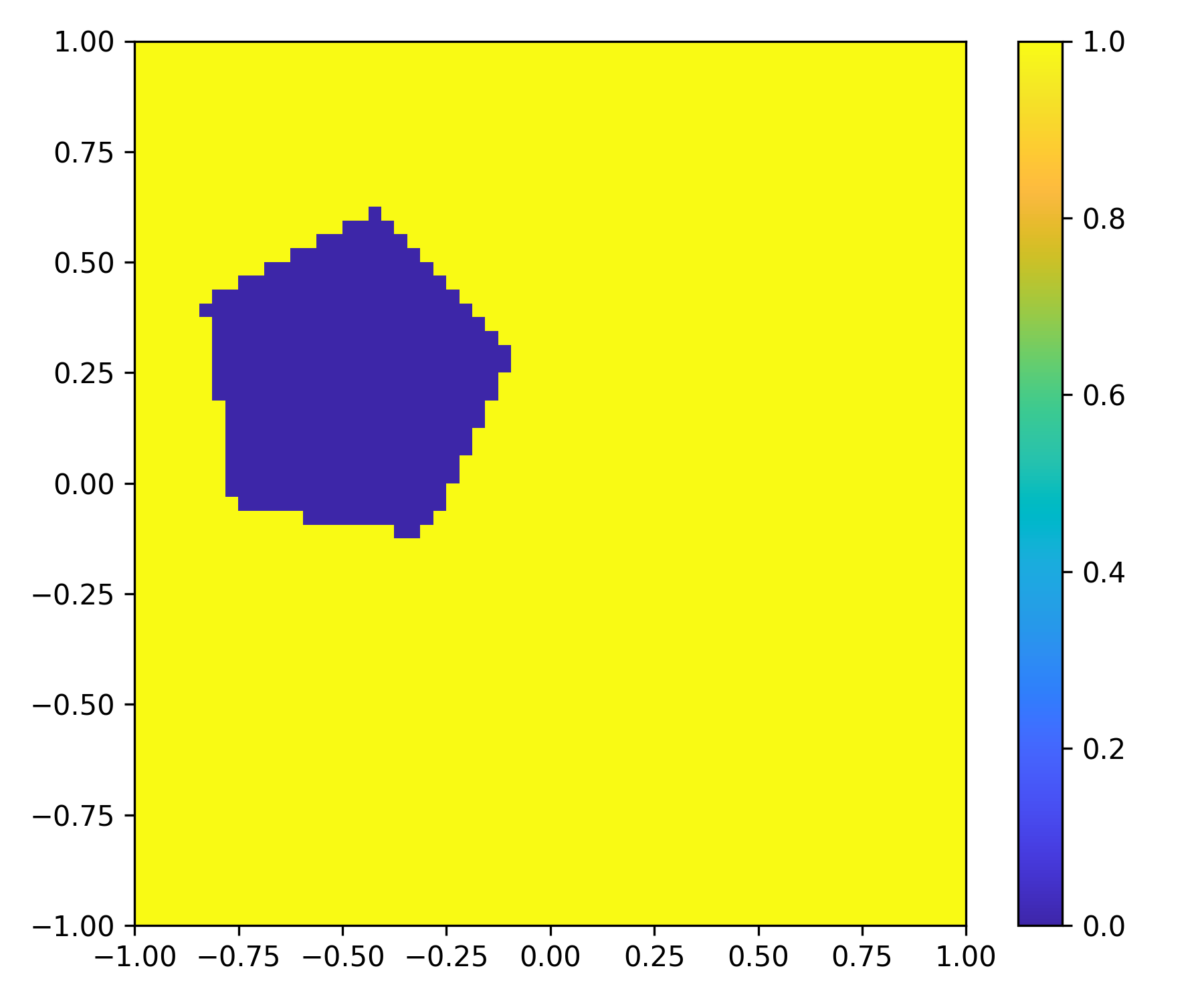}
			&\includegraphics[width=0.18\textwidth]{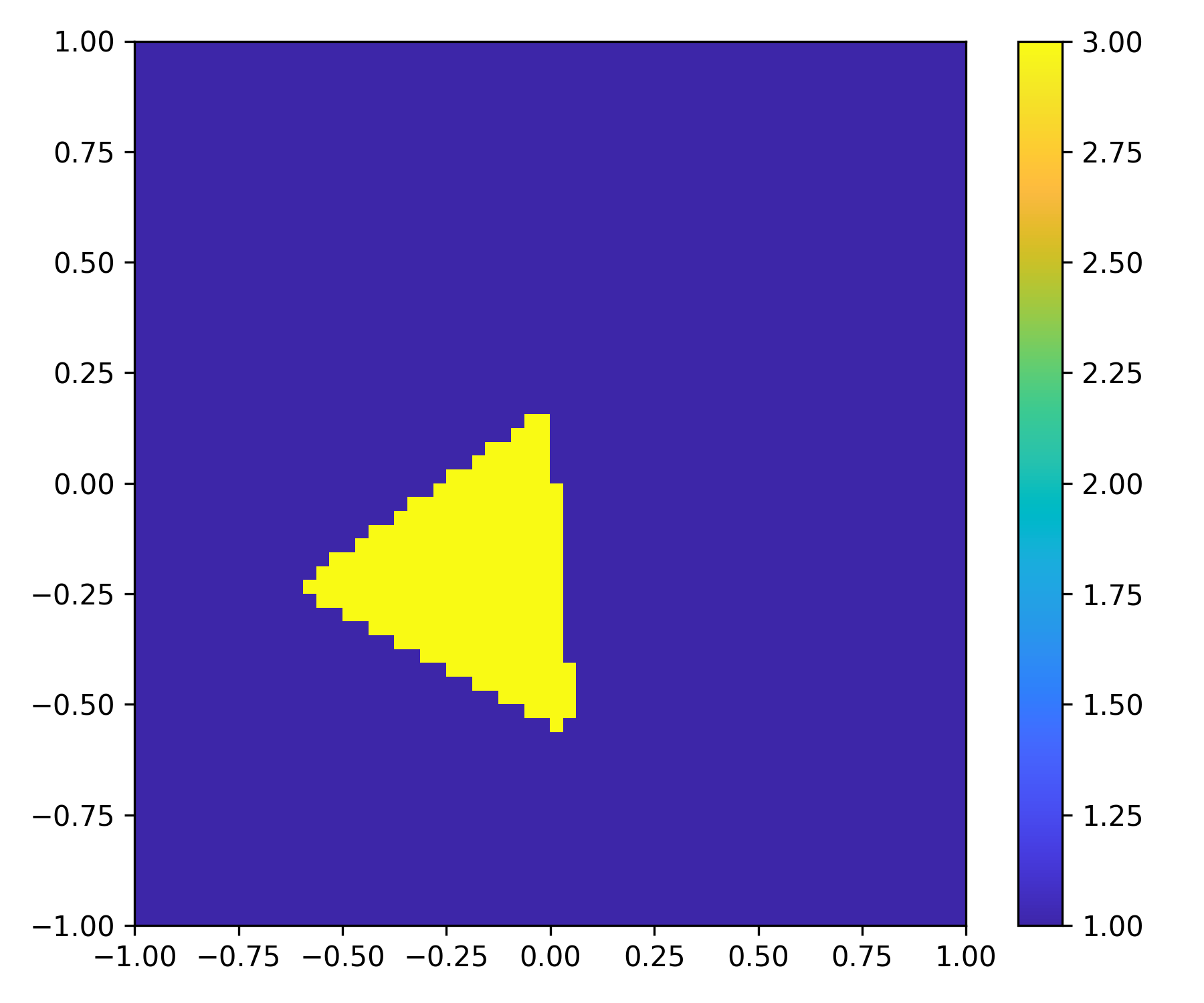}\\
			{$N_i=1$\\ $\delta=2\%$}&
			\includegraphics[width=0.18\textwidth]{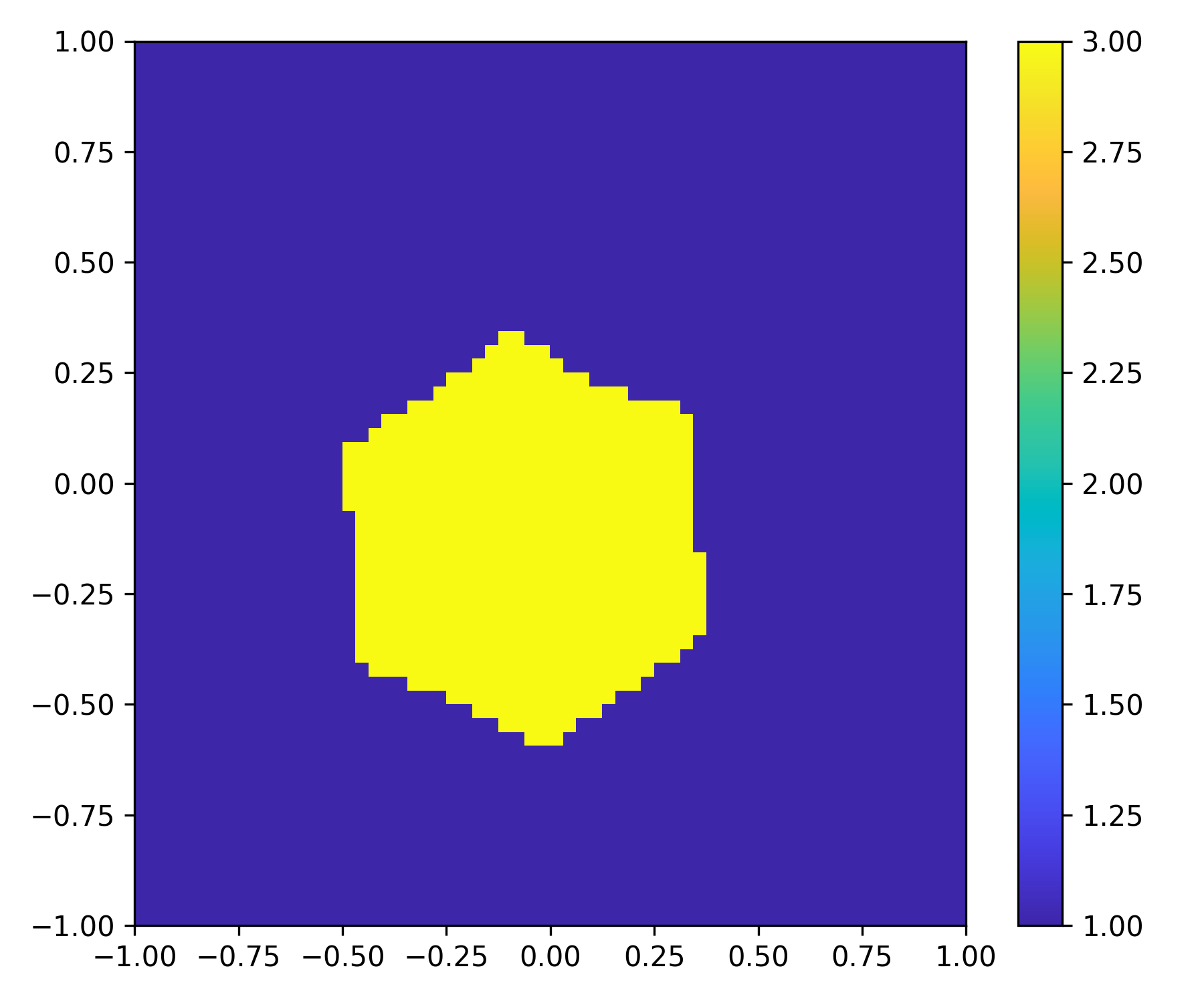}&\includegraphics[width=0.18\textwidth]{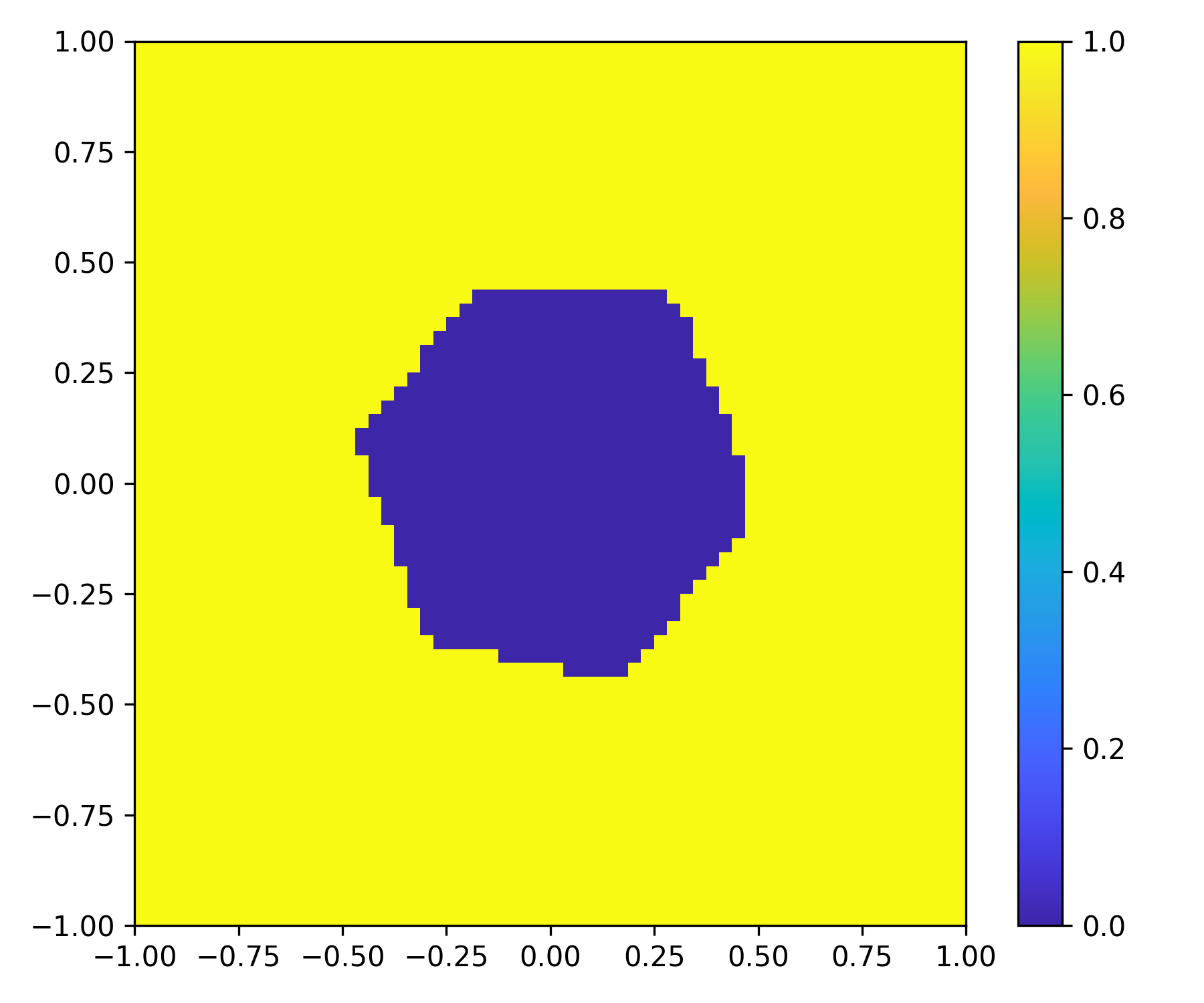} &\includegraphics[width=0.18\textwidth]{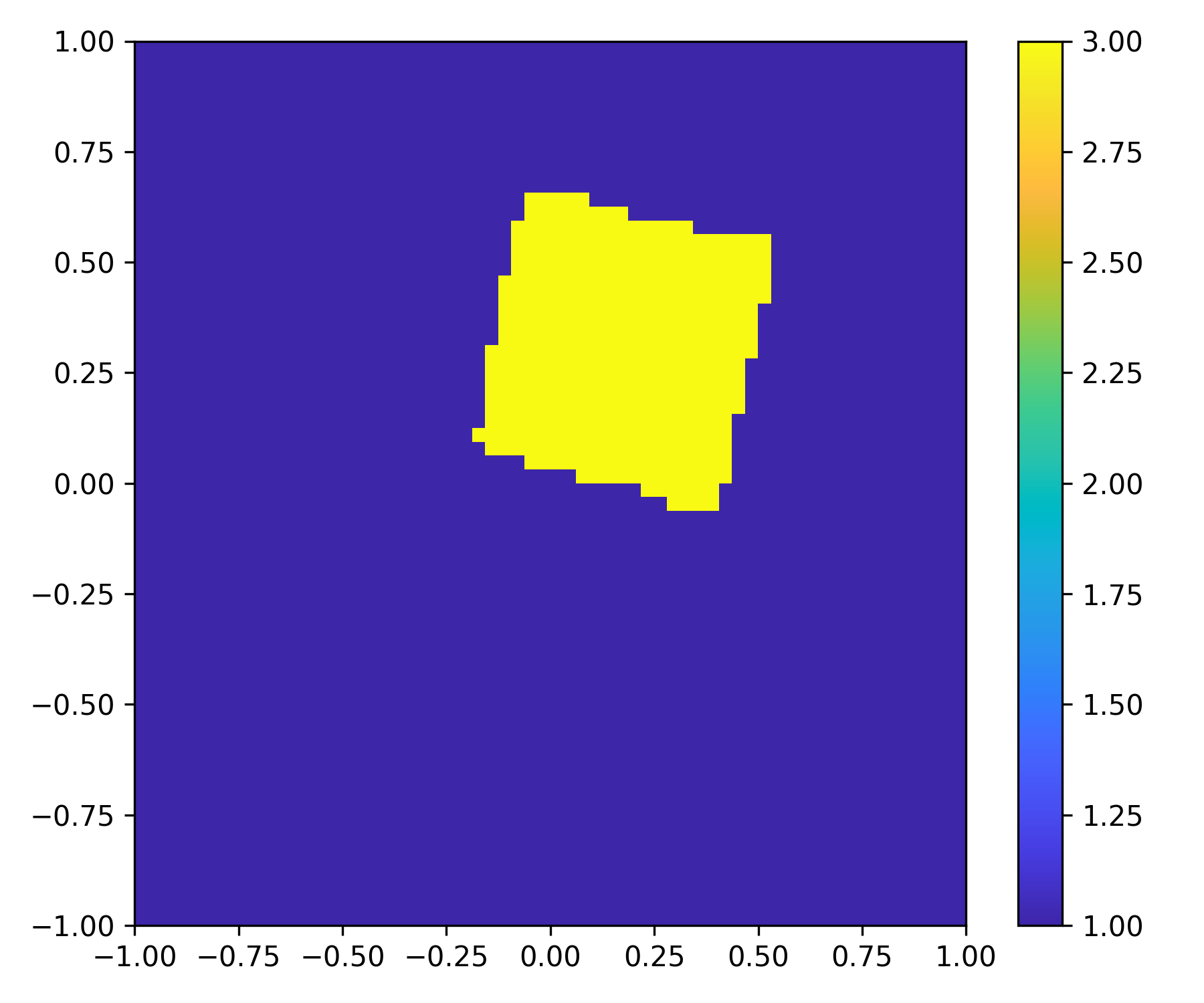}
			&\includegraphics[width=0.18\textwidth]{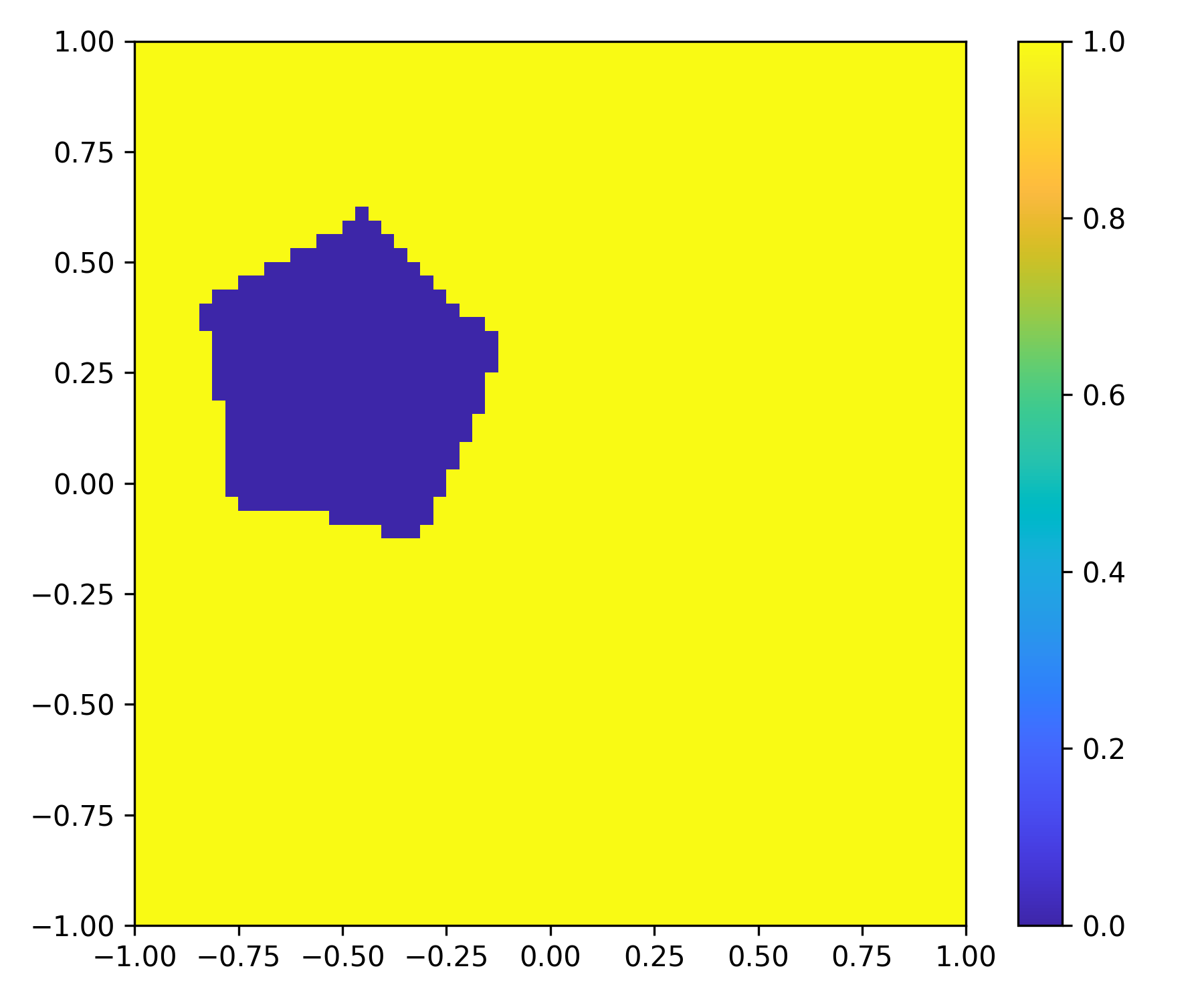}
			&\includegraphics[width=0.18\textwidth]{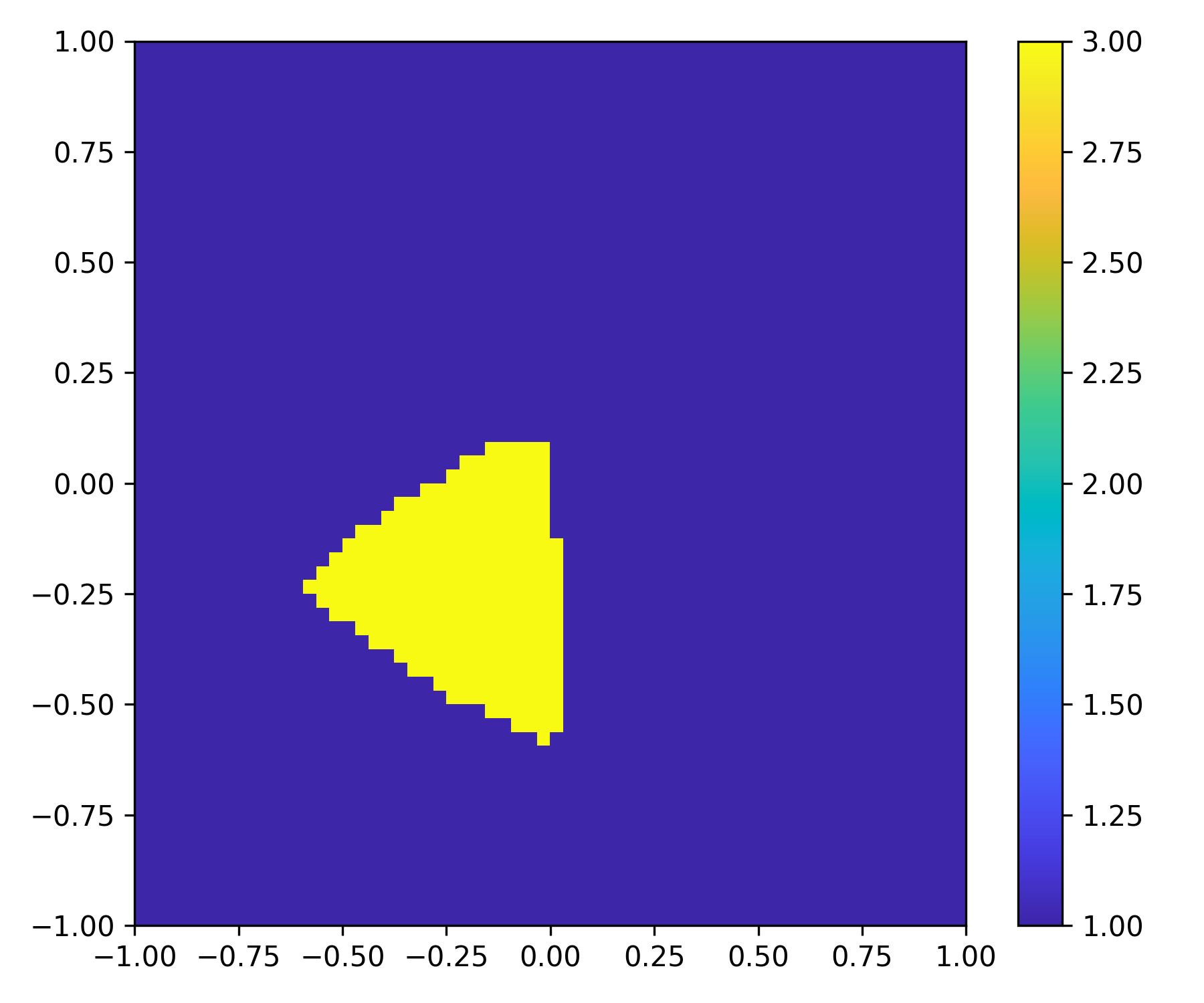}\\
			{$N_i=1$\\ $\delta=10\%$}&
			\includegraphics[width=0.18\textwidth]{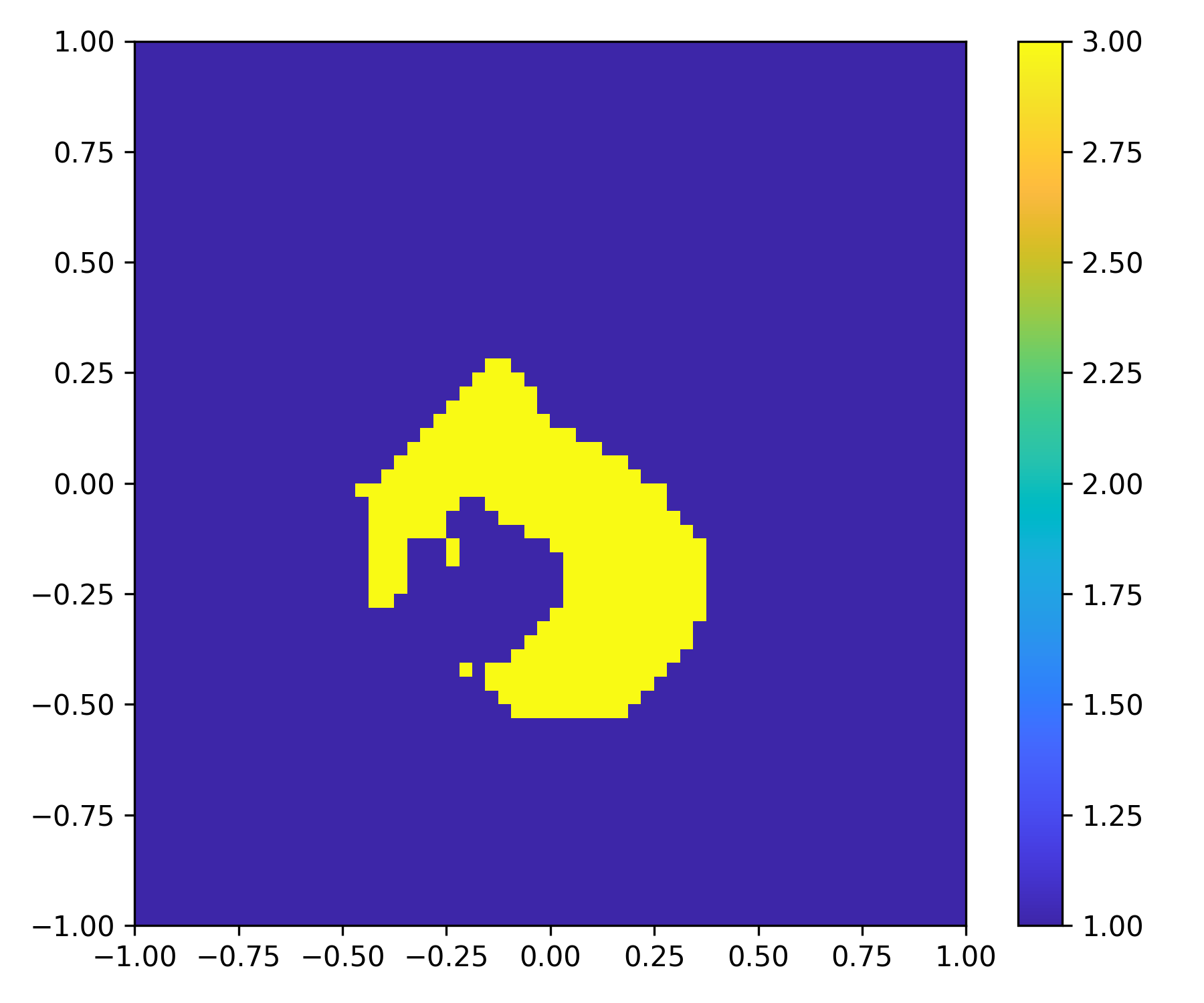}&\includegraphics[width=0.18\textwidth]{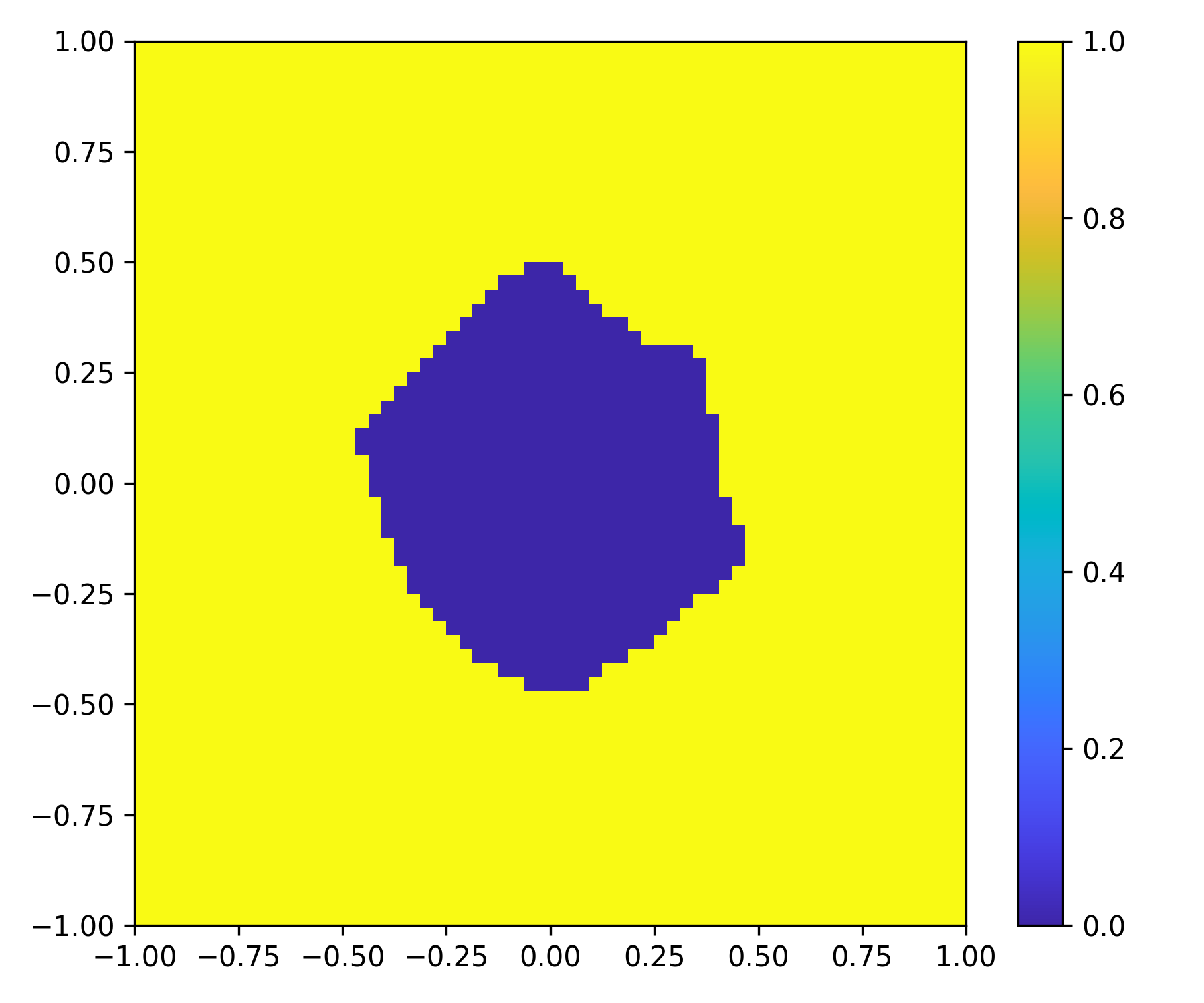} &\includegraphics[width=0.18\textwidth]{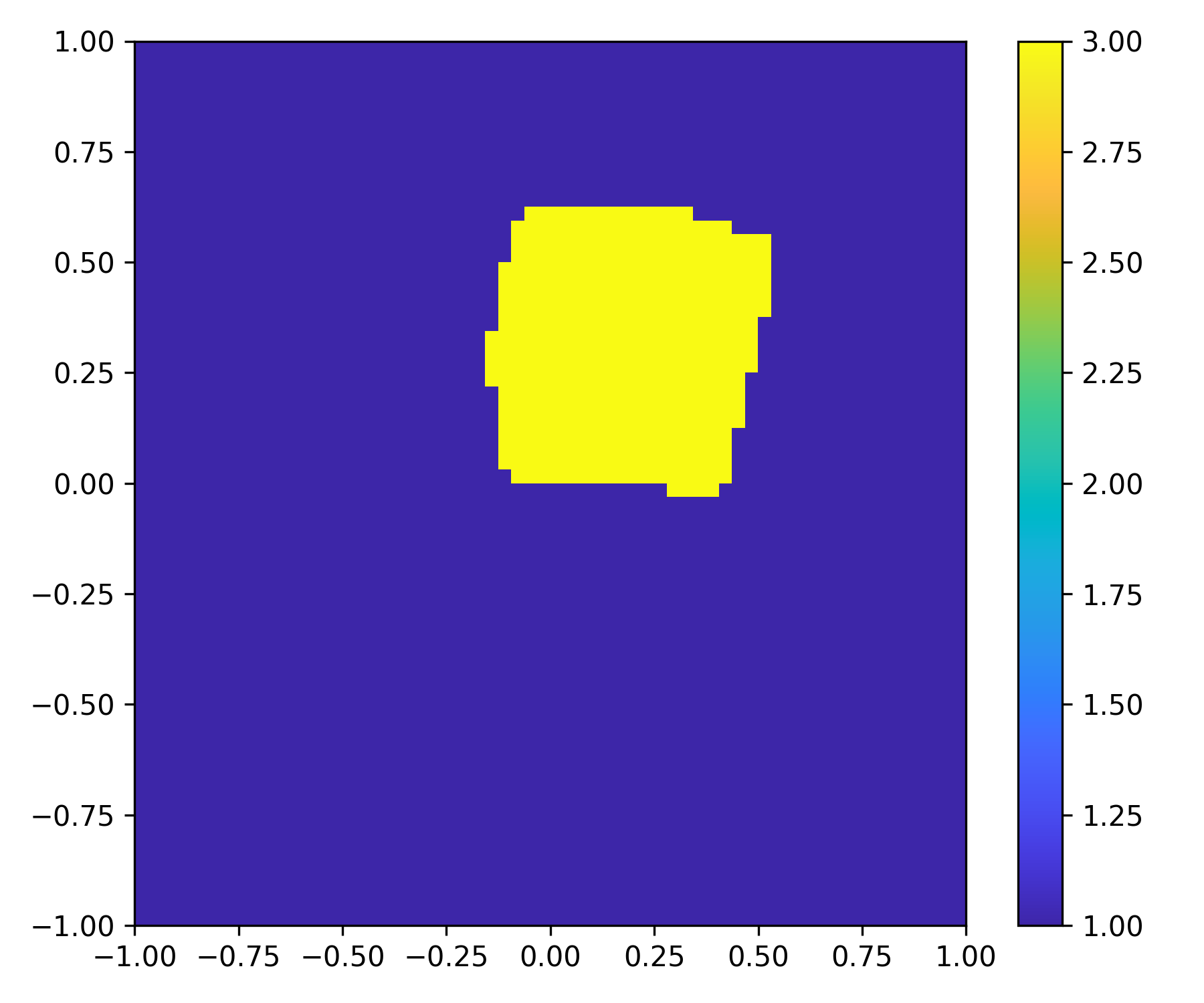}
			&\includegraphics[width=0.18\textwidth]{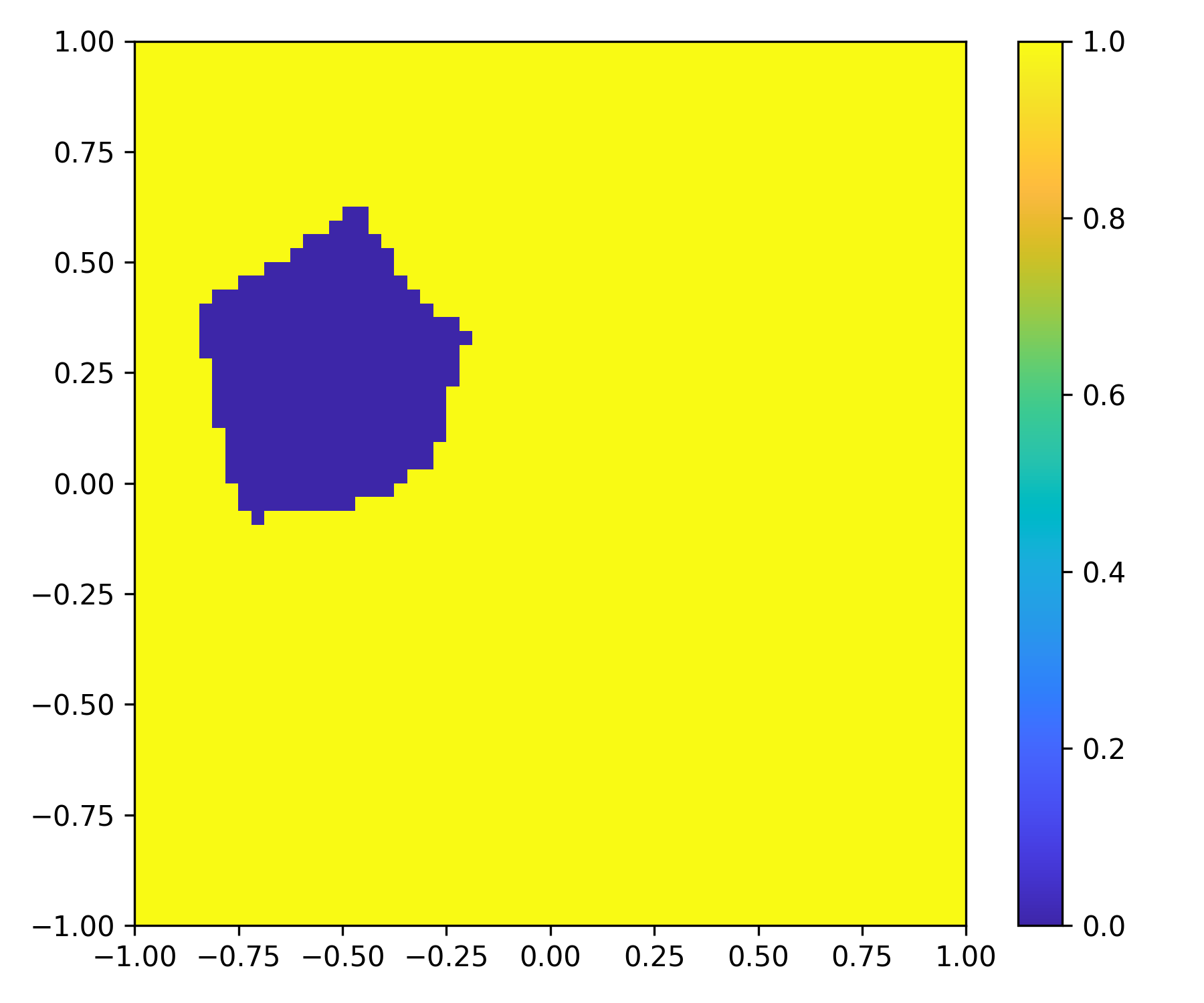}
			&\includegraphics[width=0.18\textwidth]{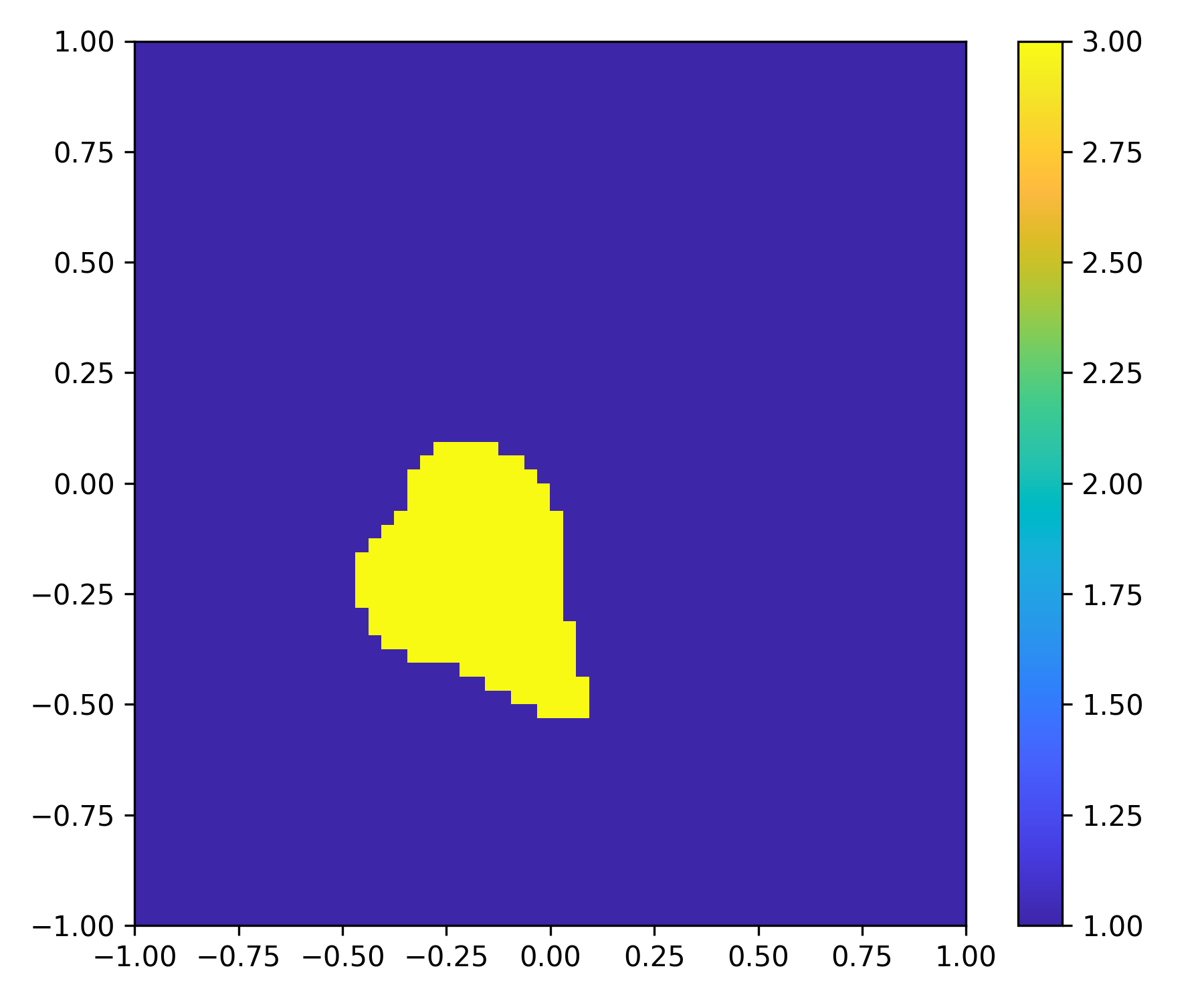}\\
			{$N_i=4$\\ $\delta=2\%$}&
		\includegraphics[width=0.18\textwidth]{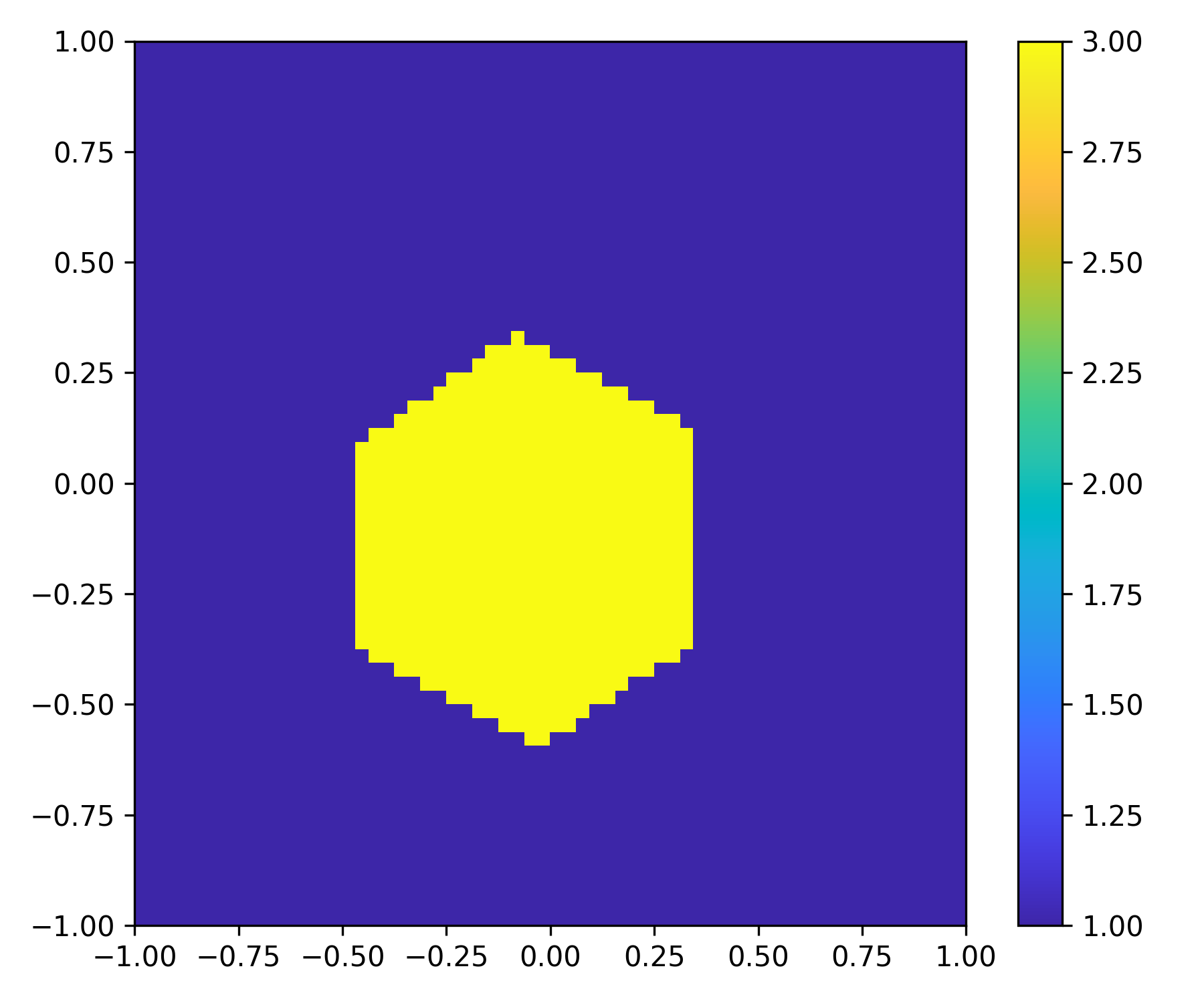}&\includegraphics[width=0.18\textwidth]{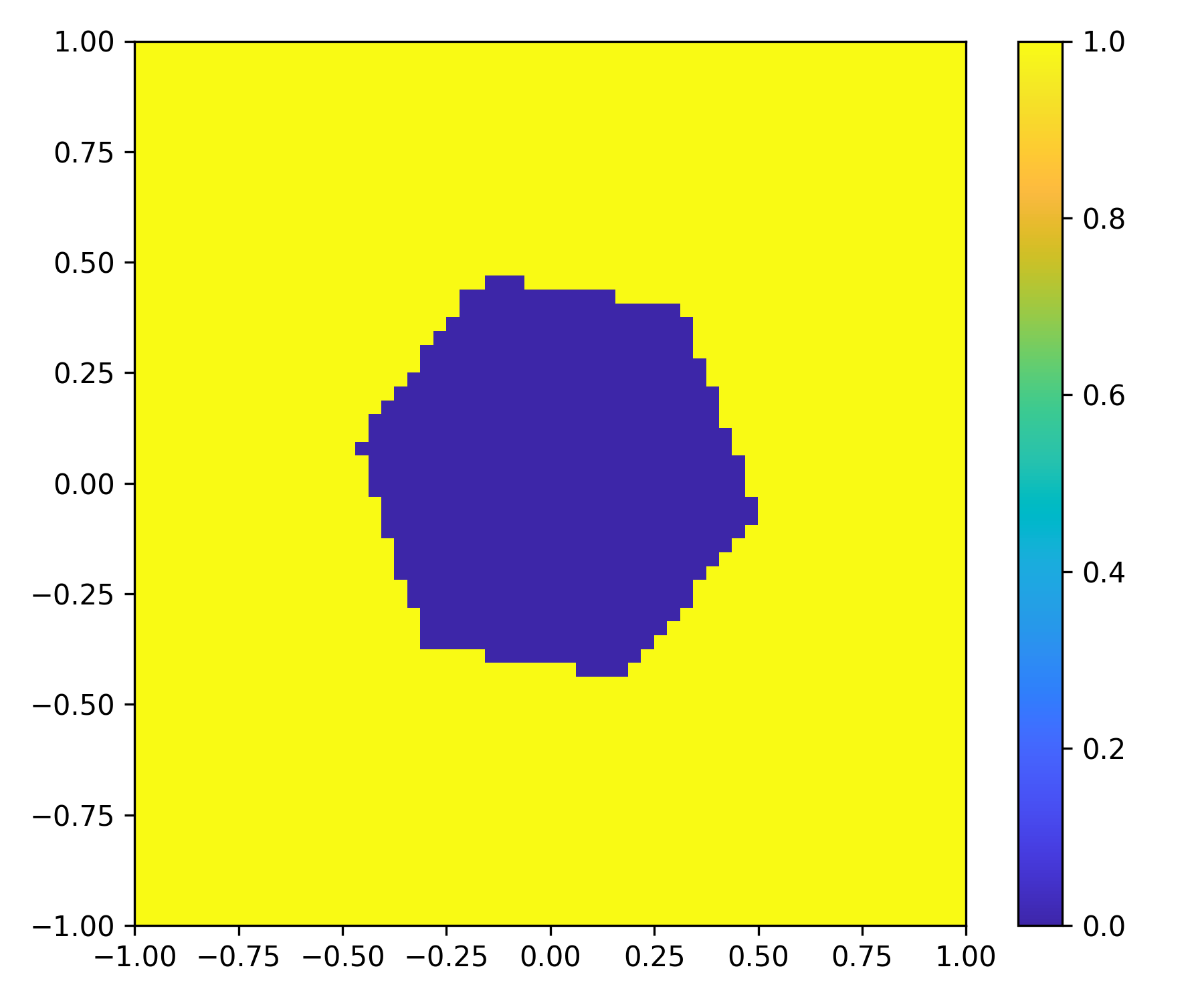} &\includegraphics[width=0.18\textwidth]{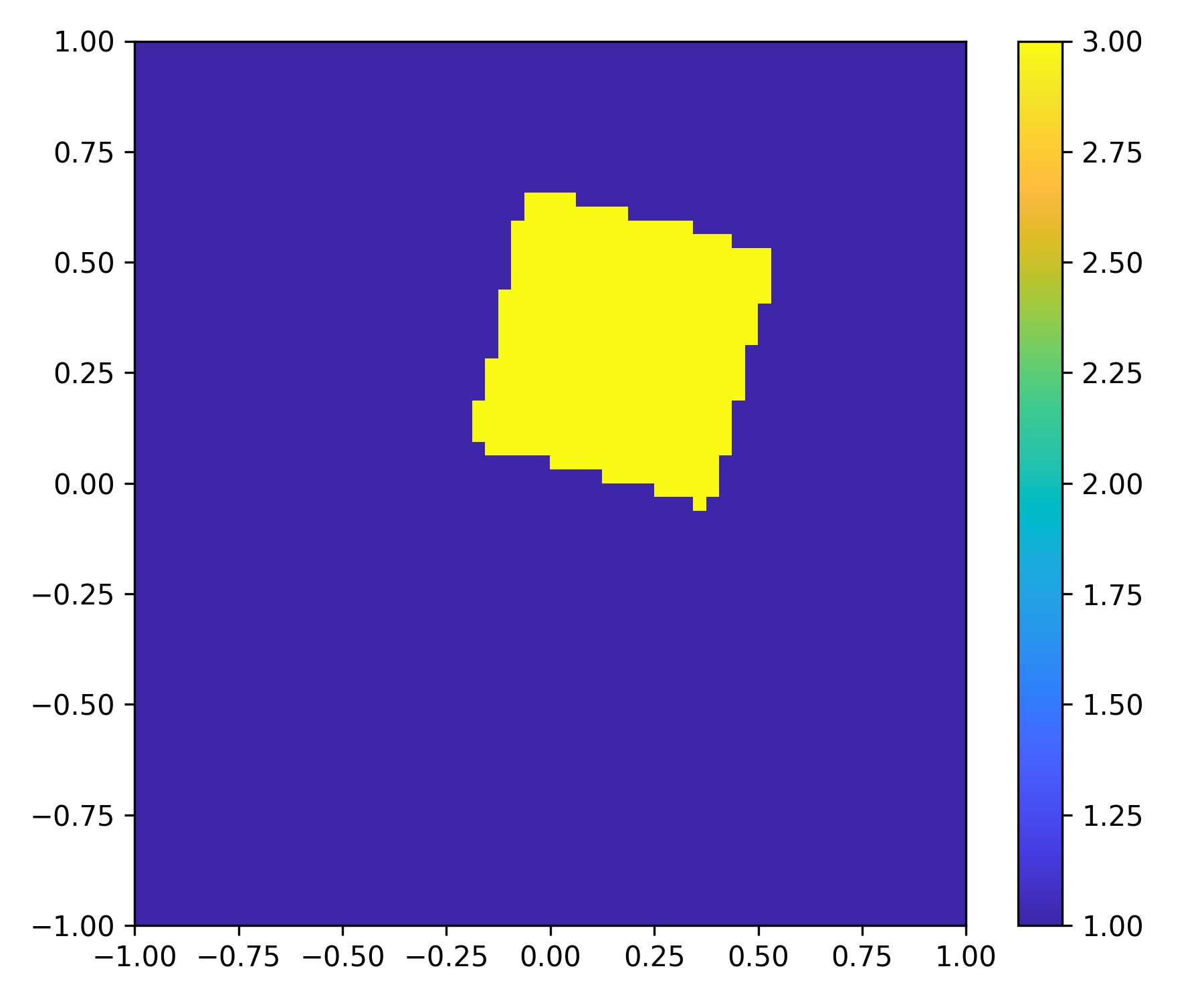}
		&\includegraphics[width=0.18\textwidth]{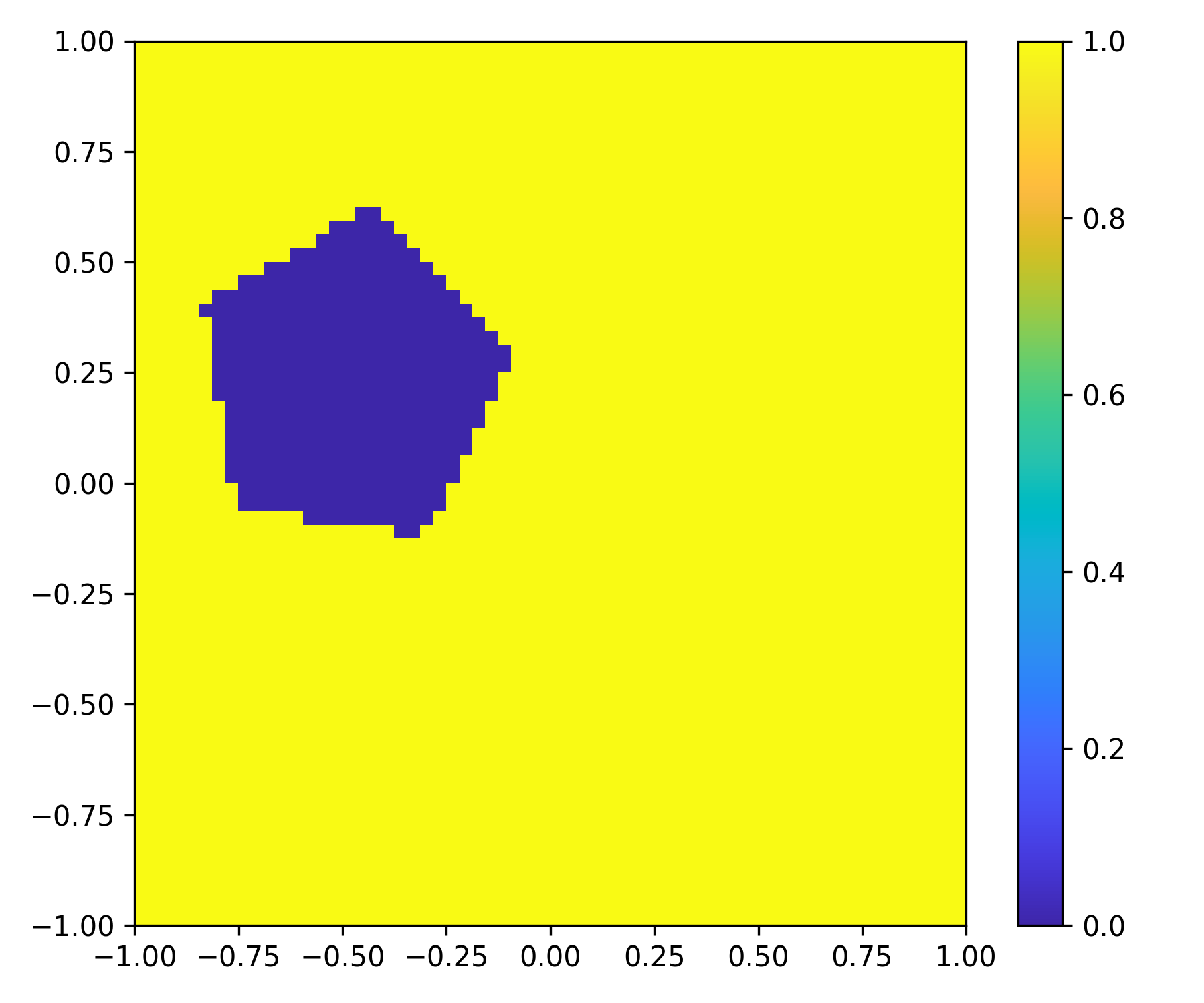}
		&\includegraphics[width=0.18\textwidth]{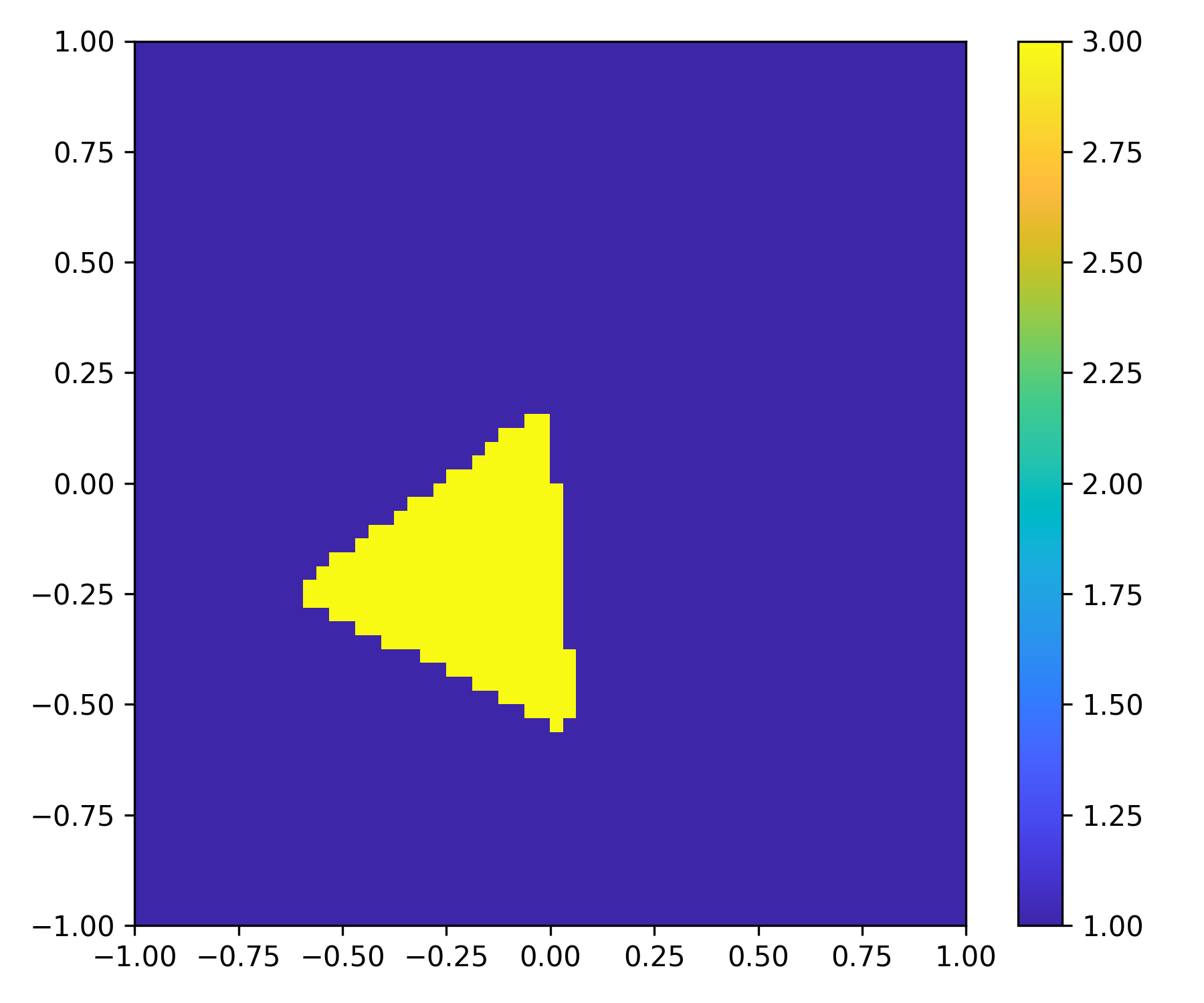}\\
		{$N_i=4$\\ $\delta=10\%$}&
		\includegraphics[width=0.18\textwidth]{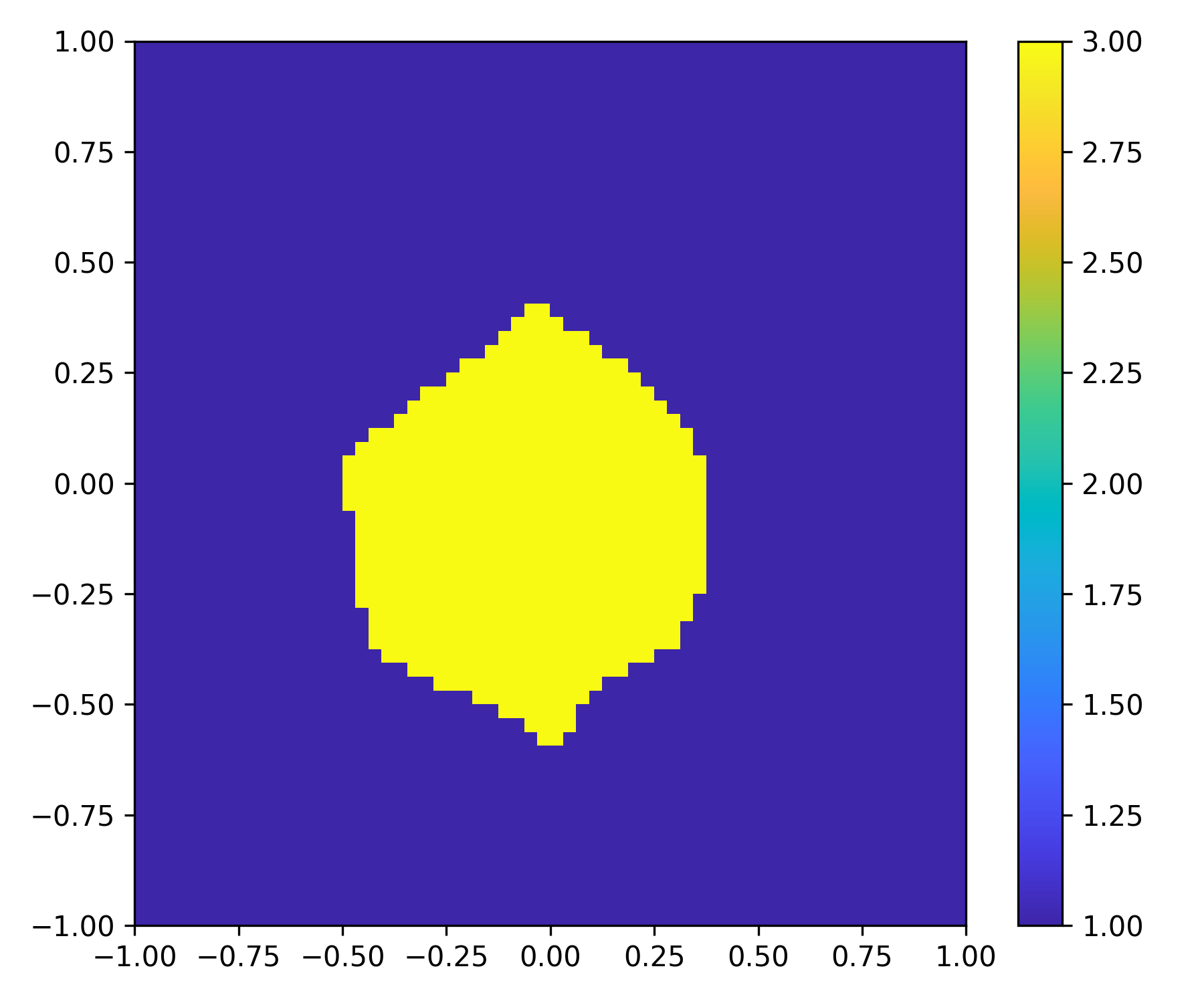}&\includegraphics[width=0.18\textwidth]{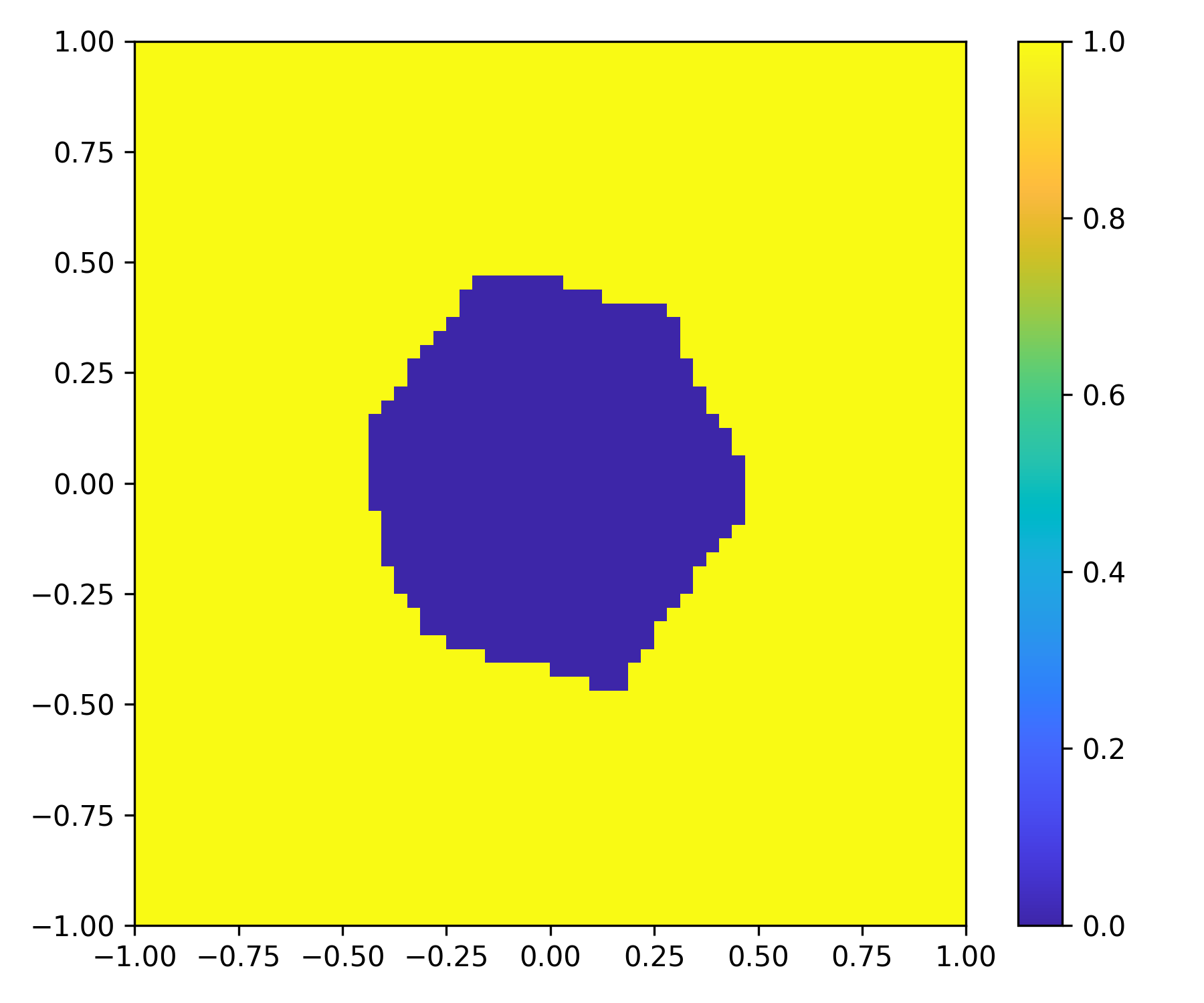} &\includegraphics[width=0.18\textwidth]{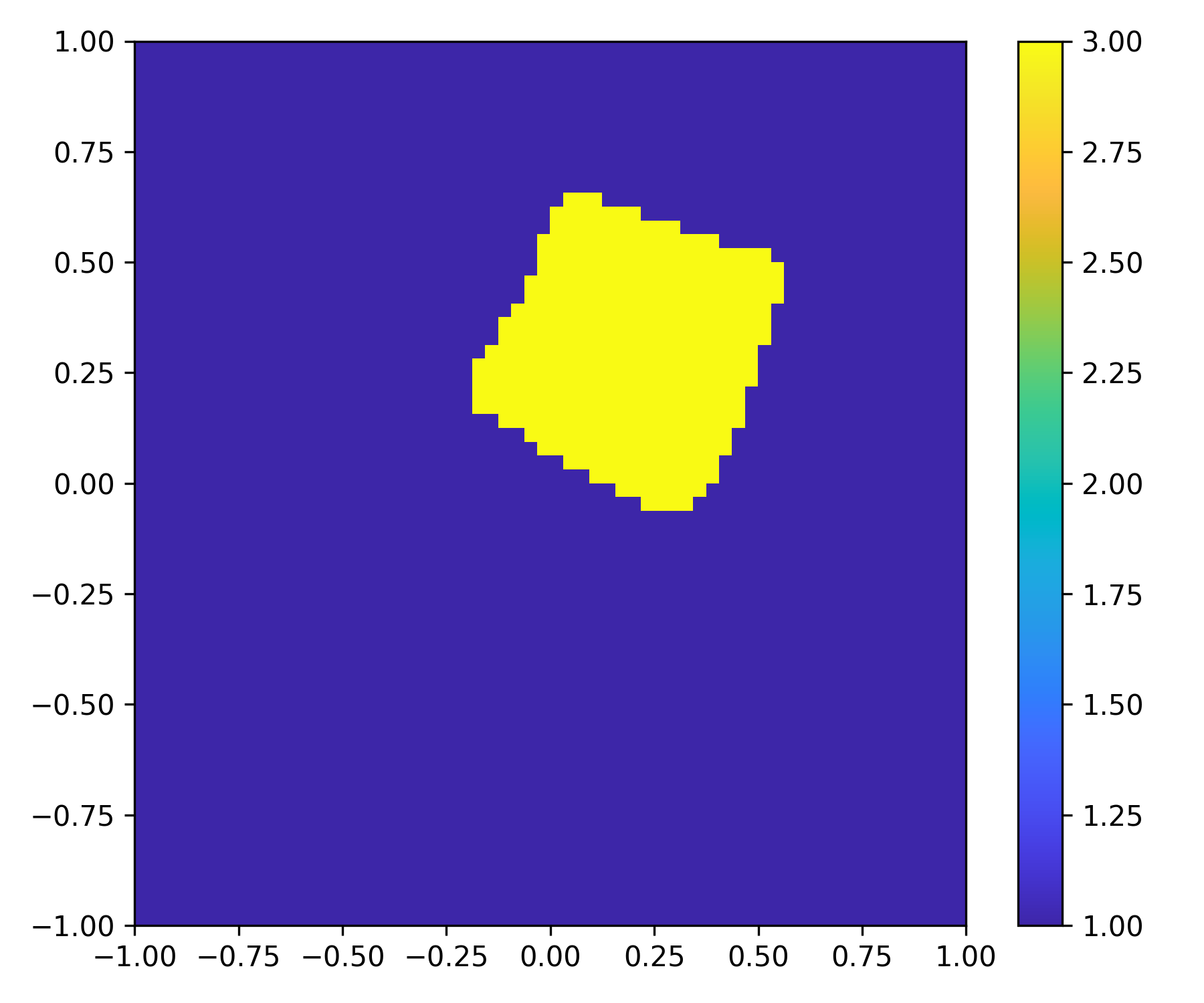}
		&\includegraphics[width=0.18\textwidth]{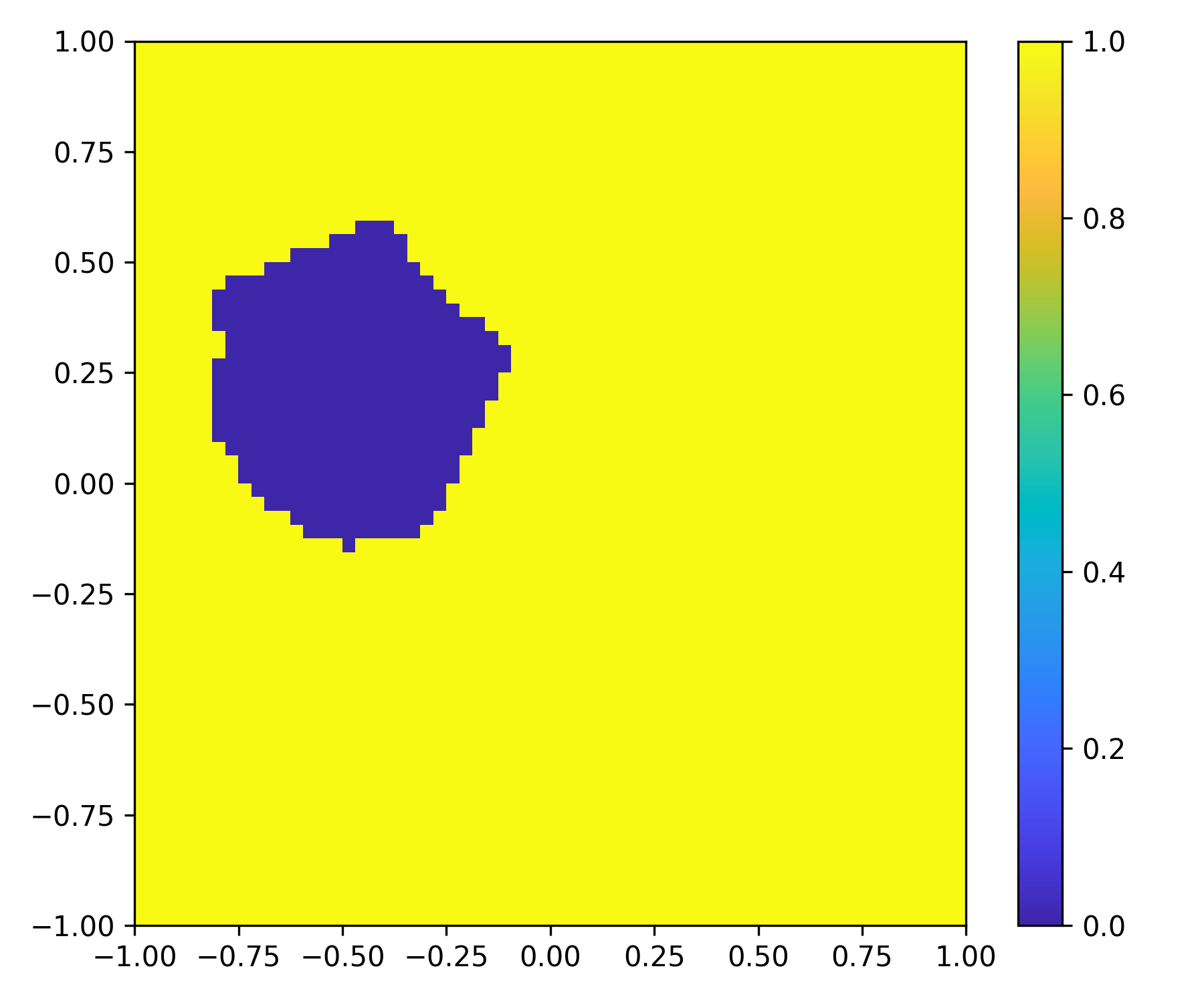}
		&\includegraphics[width=0.18\textwidth]{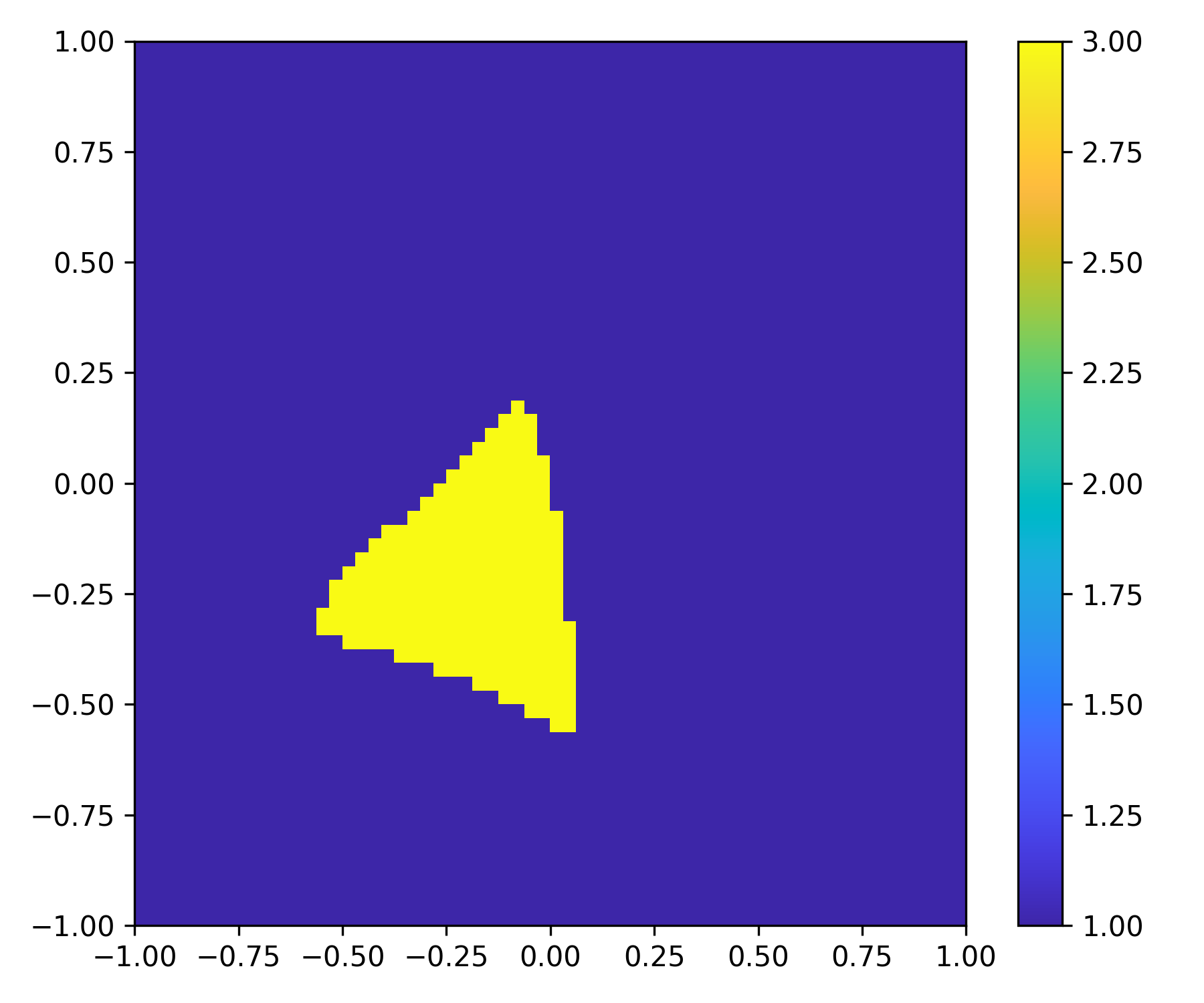}\\
		\end{tblr}	
		\caption{Example  \ref{examp:polygon}: Reconstructed images by using the networks trained by the Polygon dataset. 
		Row 1: true images; other rows: reconstructions with different incidences and noise levels.}
		\label{Polygon}
	\end{center}
\end{figure}

	\begin{table}[htbp]
	\begin{center}
		\begin{tabular}{c|c|c|c|c}
			\hline
			\textbf{Example} & $\mathbf{N_i=1,\delta=2\%}$ & $\mathbf{N_i=1,\delta=10\%}$ &
			$\mathbf{N_i=4,\delta=2\%}$ &
			$\mathbf{N_i=4,\delta=10\%}$	
			\\			
			\hline
			\textbf{Polygon} & $99.49\%$ & $97.72\% $& $99.77\%$ &$99.16\%$ \\
			\bottomrule
		\end{tabular}
	\end{center}
	\captionsetup{skip=1pt}
	\caption{Examples \ref{examp:polygon}: The accuracy $Acc$ for the polygon example.}
	\label{tab:Polygon}
\end{table}

\subsubsubsection{Tests with MNIST testing data.} 
\label{examp:mnist}
To evaluate the performance of the trained networks, we first test the examples from MNIST testing data. In Fig.\,\refeq{Mnist}, reconstructions of five examples from the MNIST testing data with different noise levels and number of incidences are presented. With $N_i=4$, the method can produce some reasonable reconstructions and can distinguish the hand-written digit and the circle while the accuracy is not very high. However, if 16 incidences are employed, high-quality and stable reconstructions can be achieved. This shows that multiple data are important for recovering complicated scatterers and the DSM-DL can fully make use of multiple data. The average relative error for the testing data is presented in Table.\,\ref{tab:MNIST}.
	
\begin{figure}[h]\small
		\begin{center}
			\begin{tblr}
				{colspec = {X[-1,m]X[c,h]X[c,h]X[c,h]X[c,h]X[c,h]},
					stretch = 0,
					rowsep = 0pt,}
				{Ground\\Truth}&
				\includegraphics[width=0.18\textwidth]{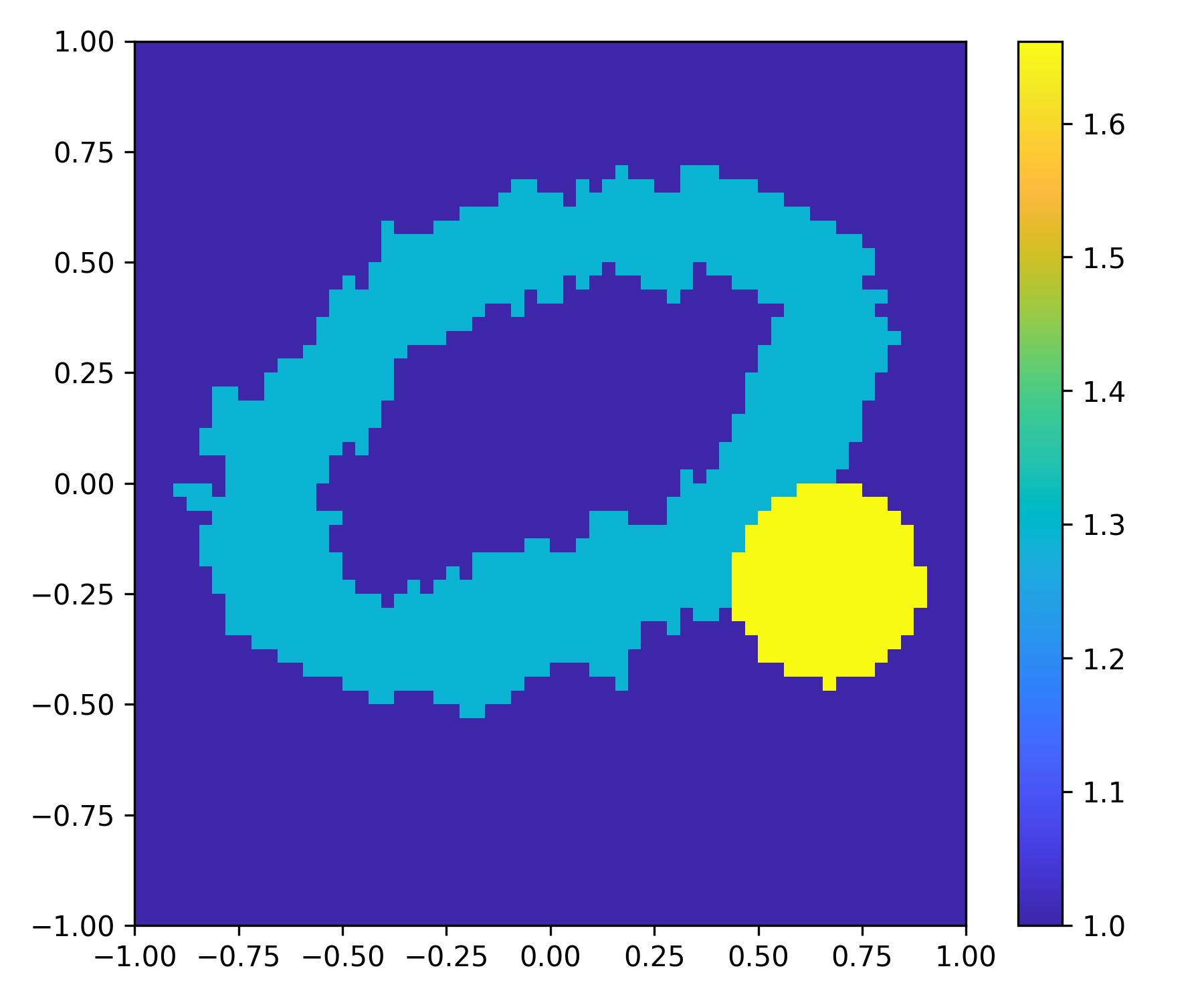}&
				\includegraphics[width=0.18\textwidth]{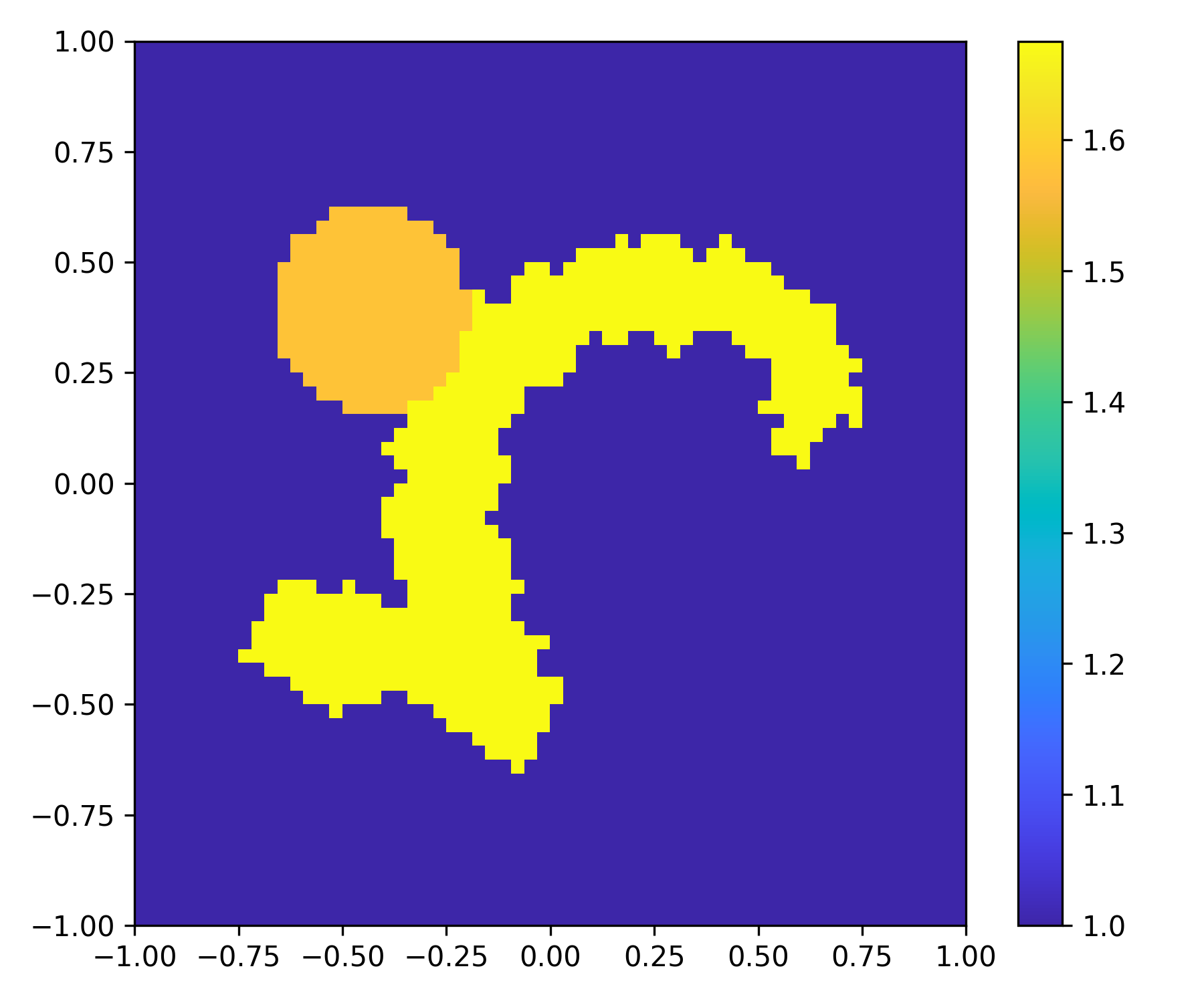} &\includegraphics[width=0.18\textwidth]{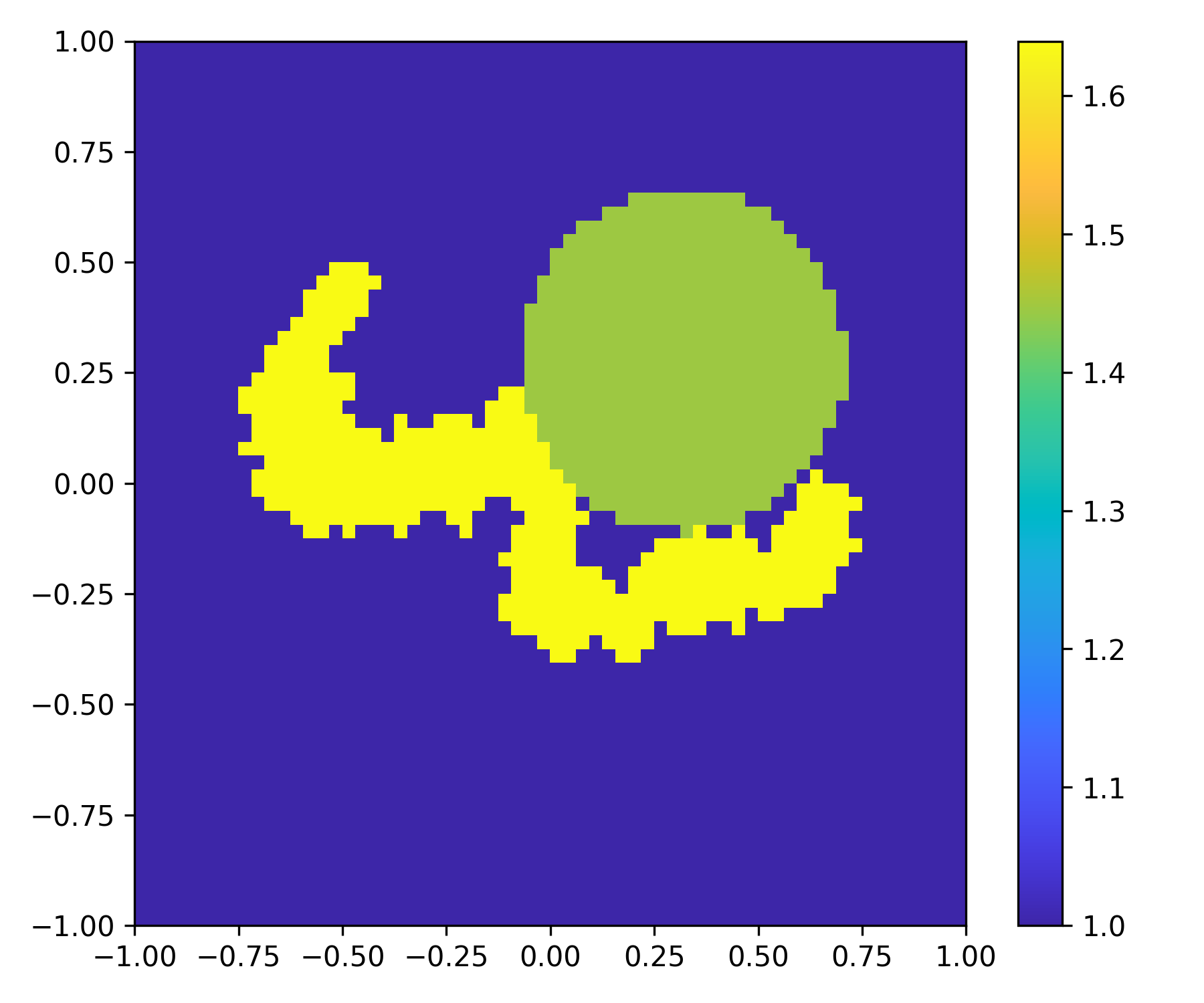}
				&\includegraphics[width=0.18\textwidth]{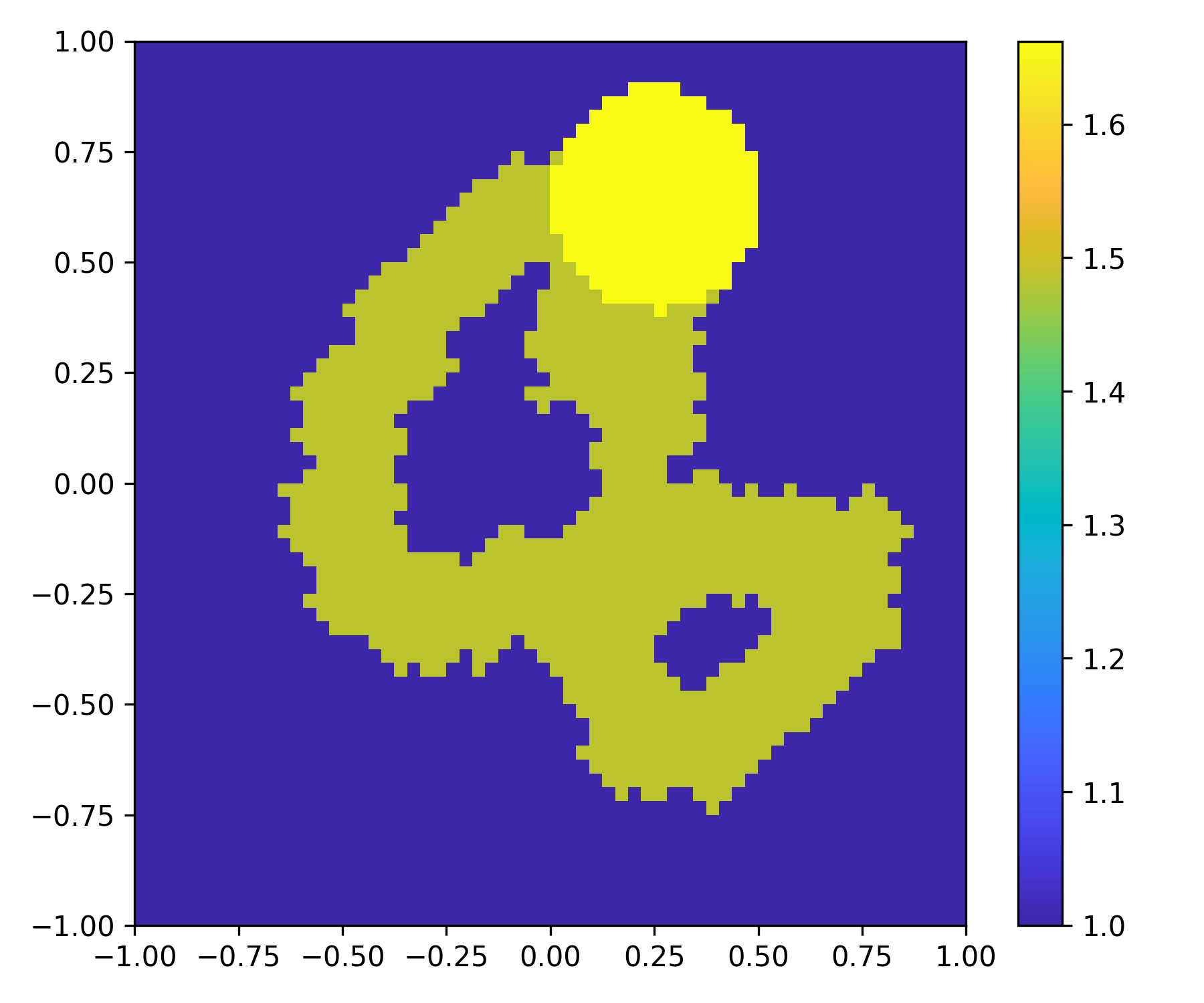}
				&\includegraphics[width=0.18\textwidth]{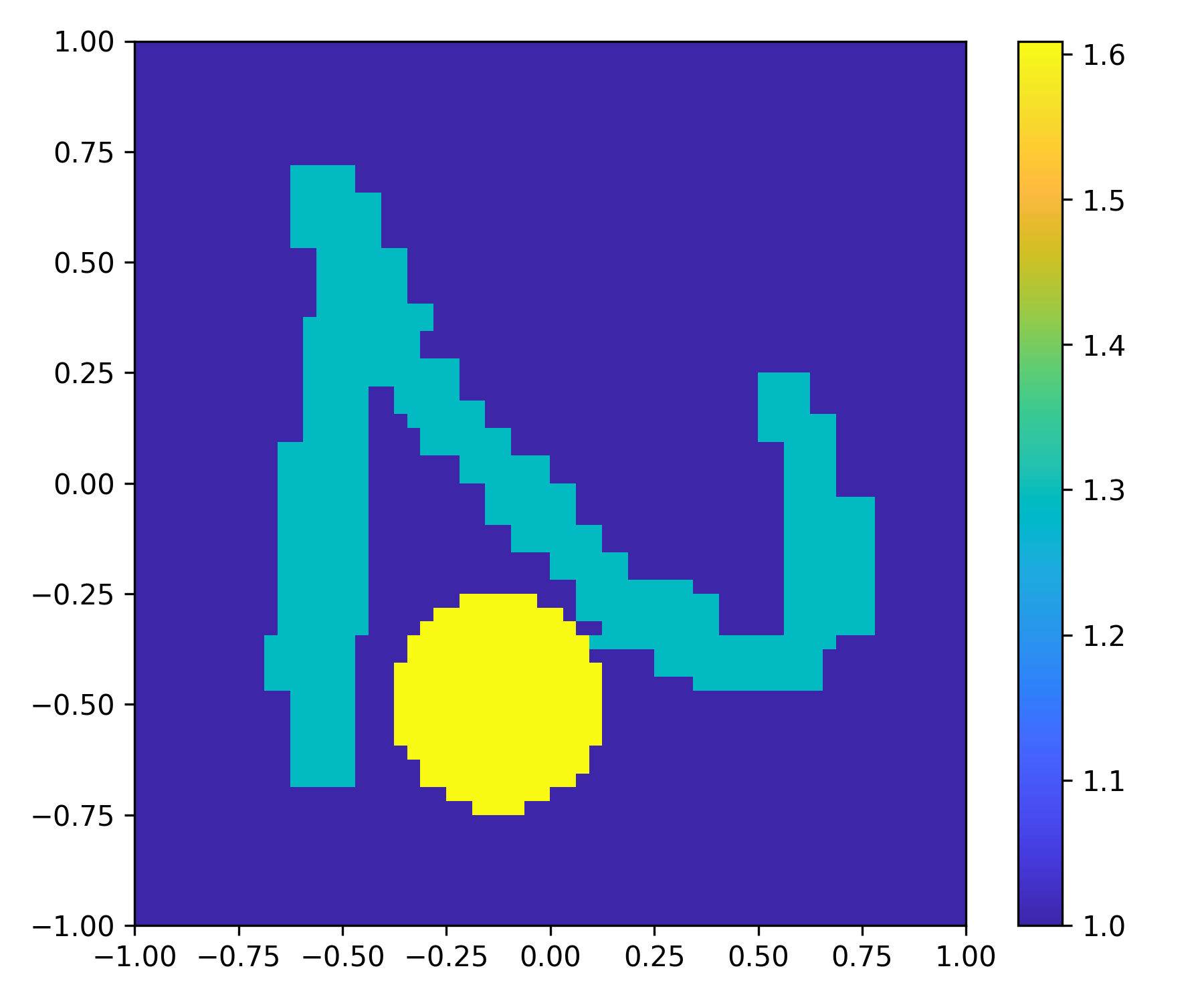}\\
				{$N_i=4$\\ $\delta=5\%$}&
				\includegraphics[width=0.18\textwidth]{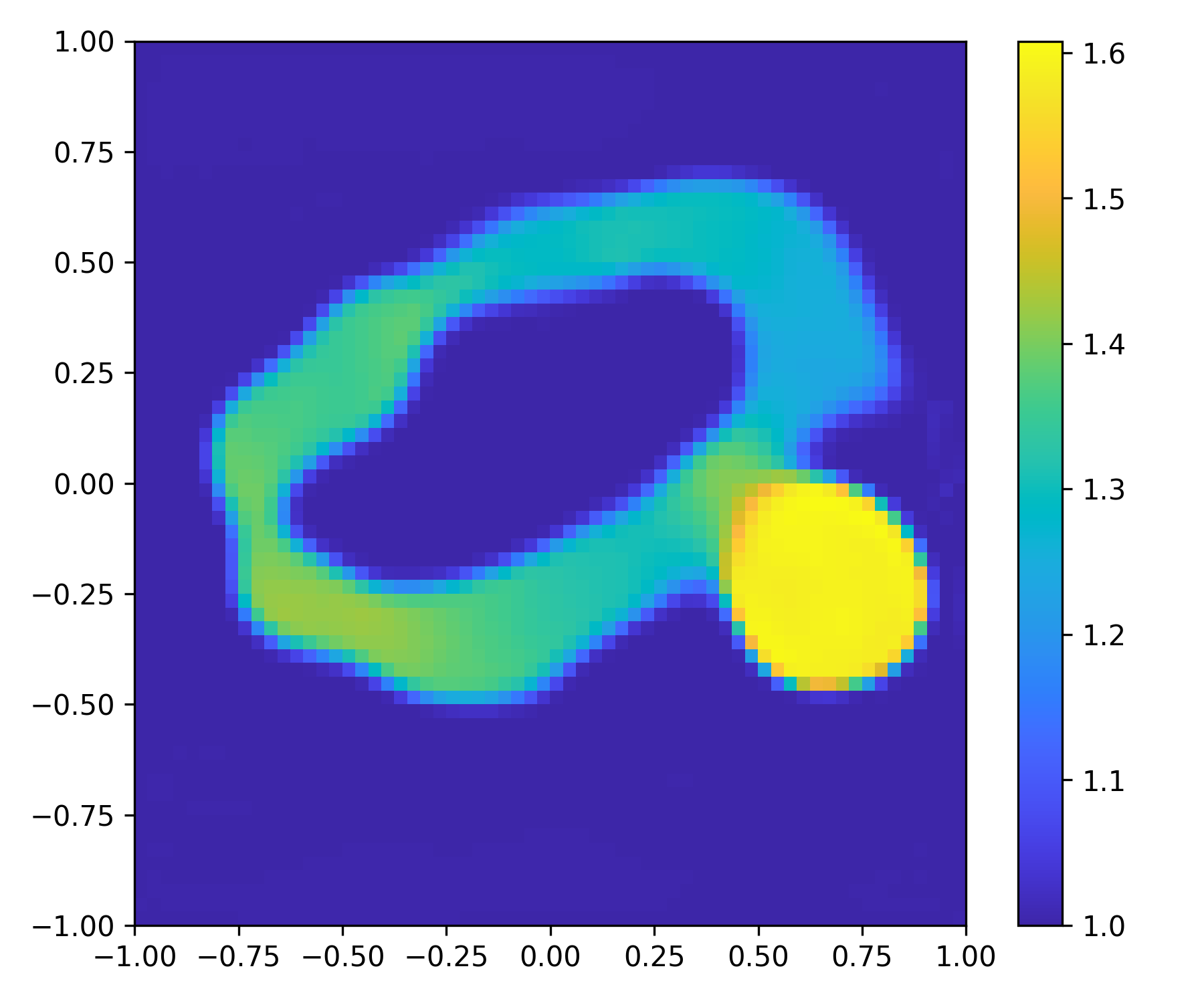}&\includegraphics[width=0.18\textwidth]{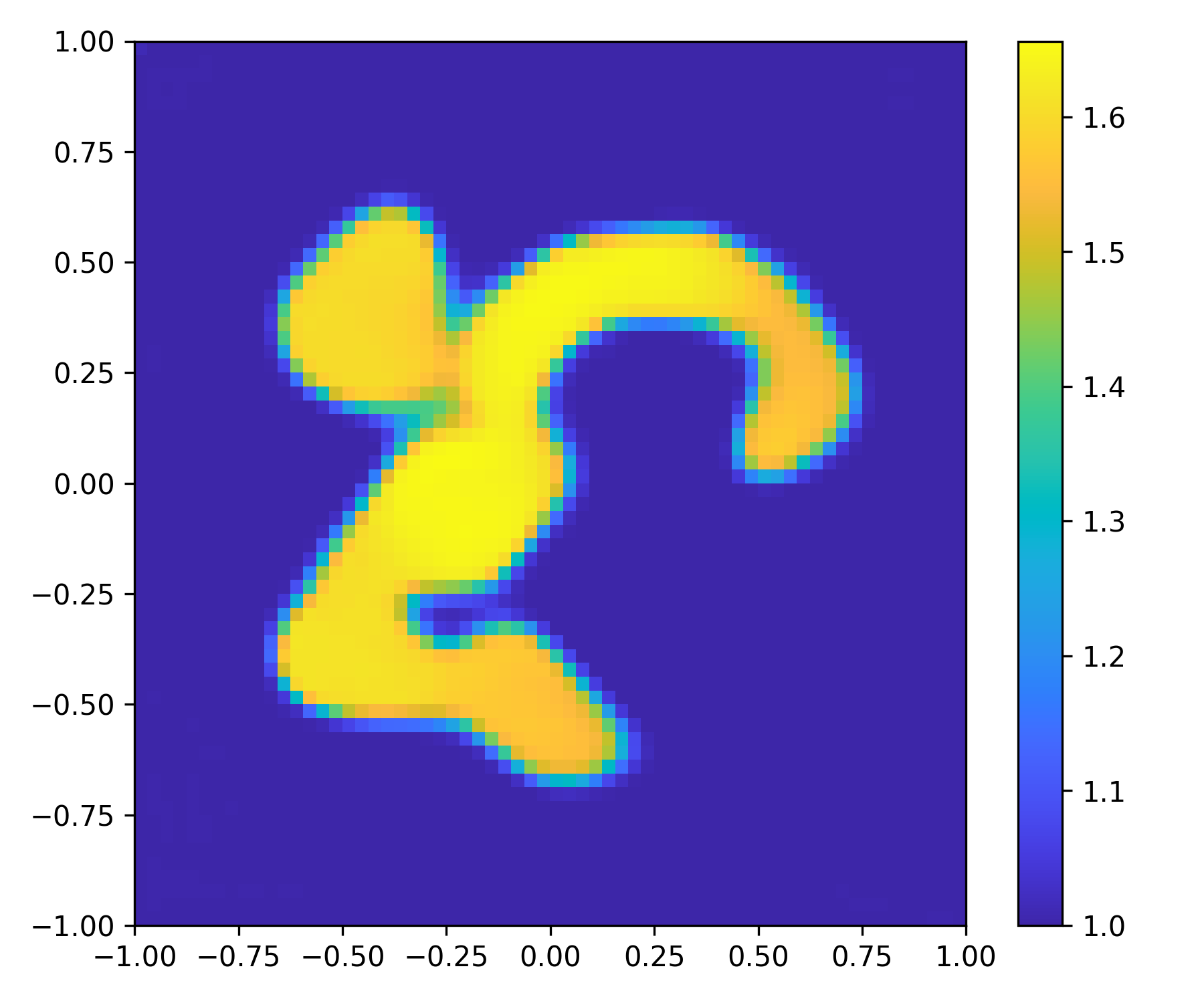} &\includegraphics[width=0.18\textwidth]{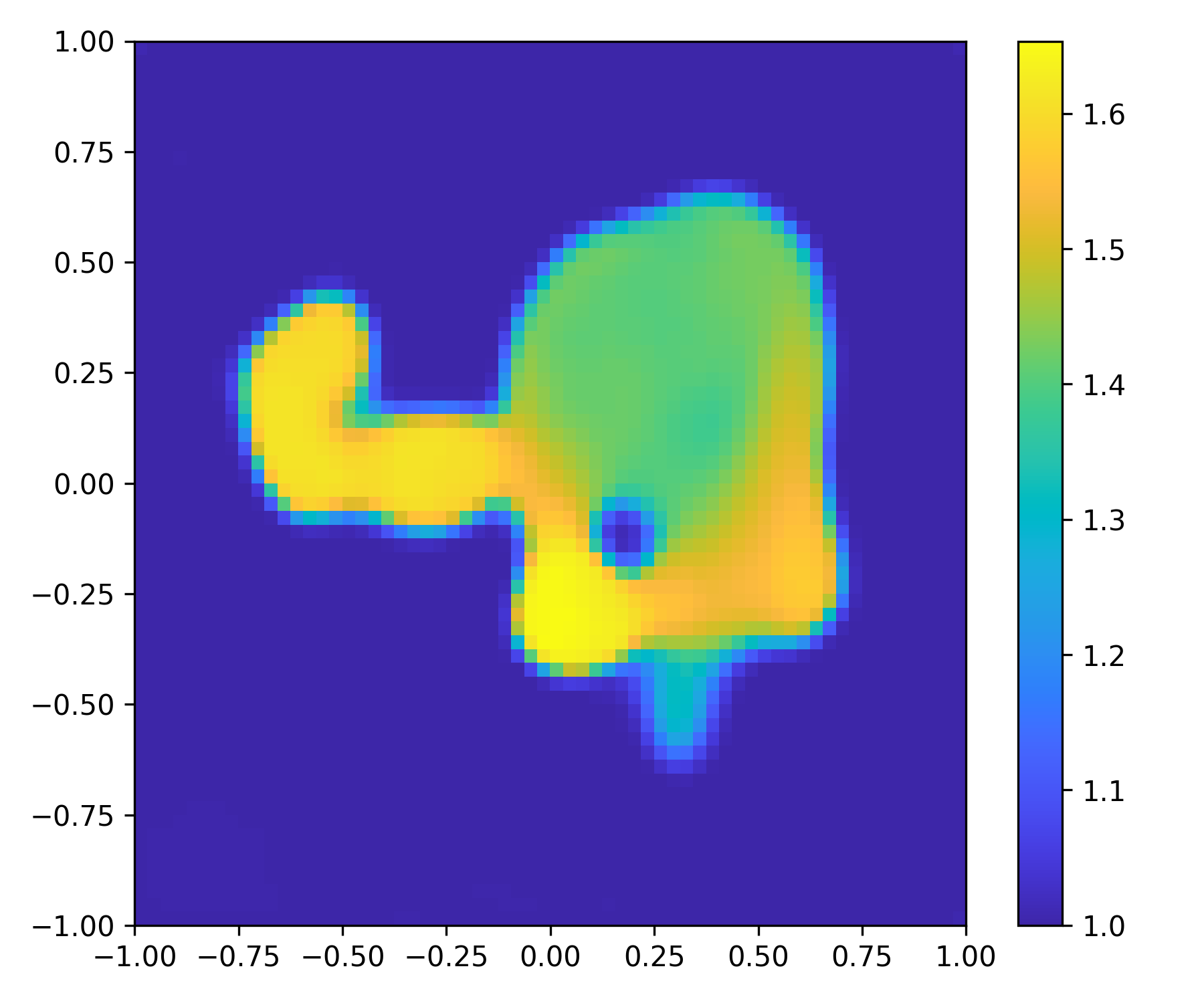}
				&\includegraphics[width=0.18\textwidth]{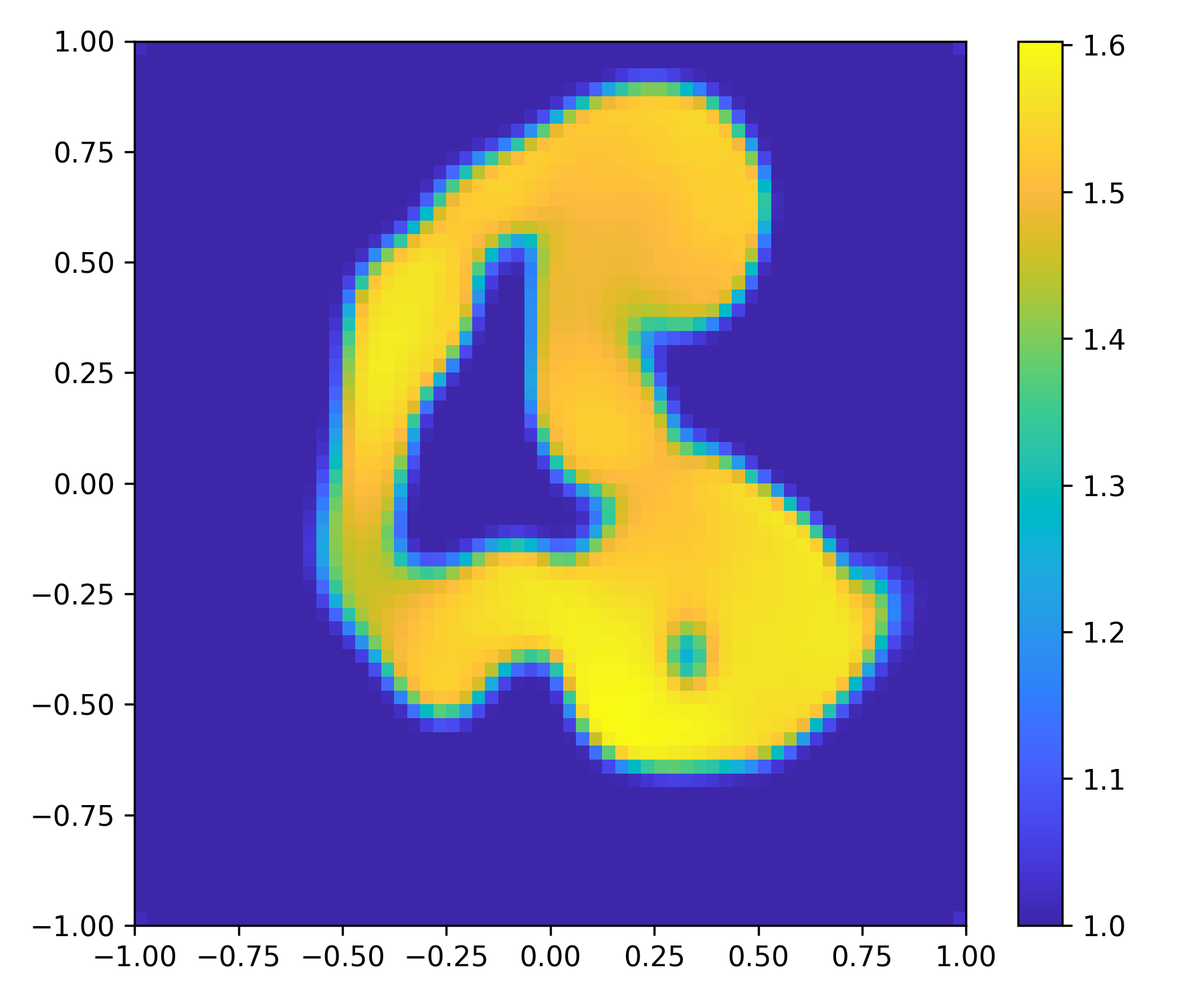}
				&\includegraphics[width=0.18\textwidth]{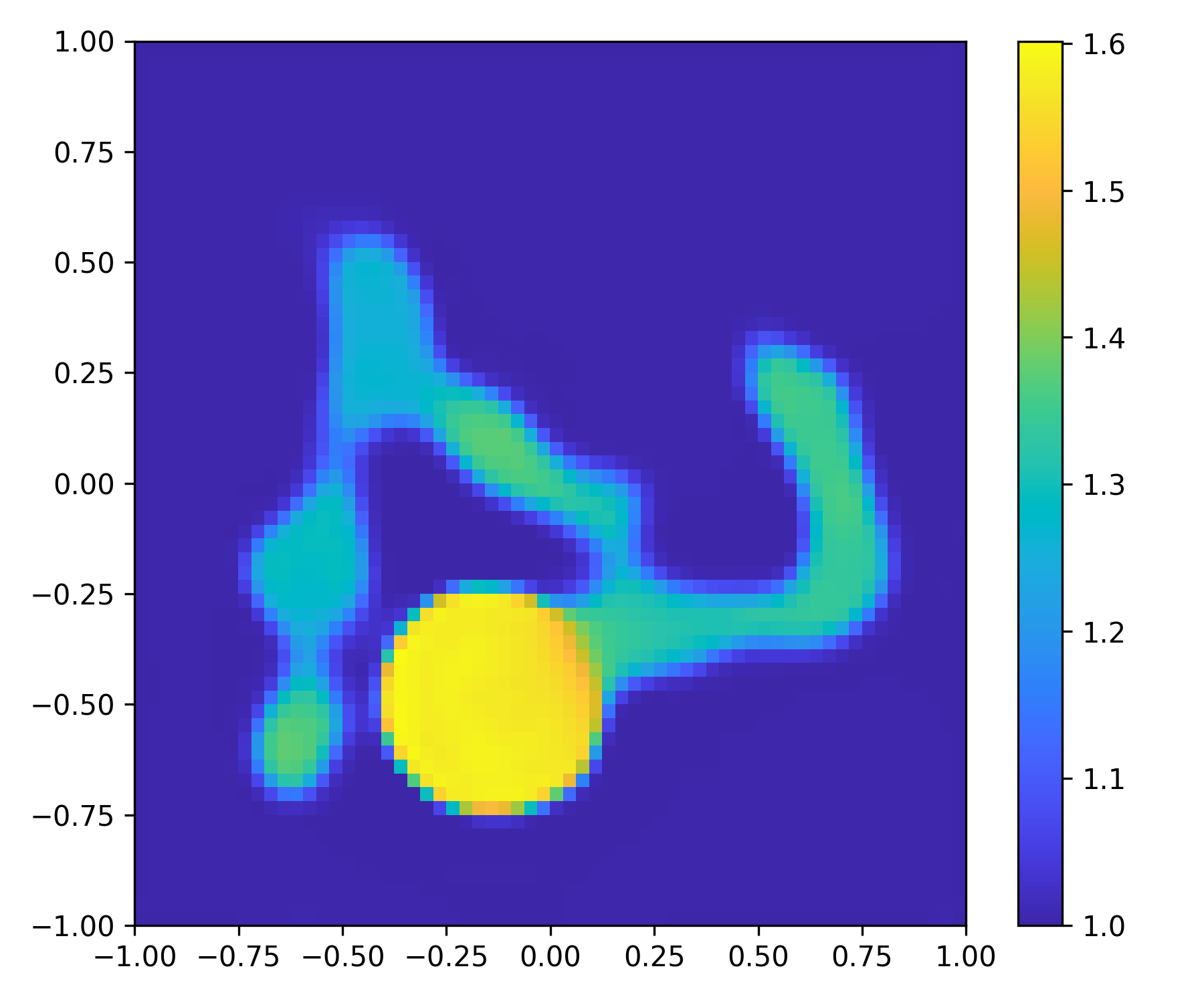}\\
				{$N_i=4$\\ $\delta=10\%$}&
				\includegraphics[width=0.18\textwidth]{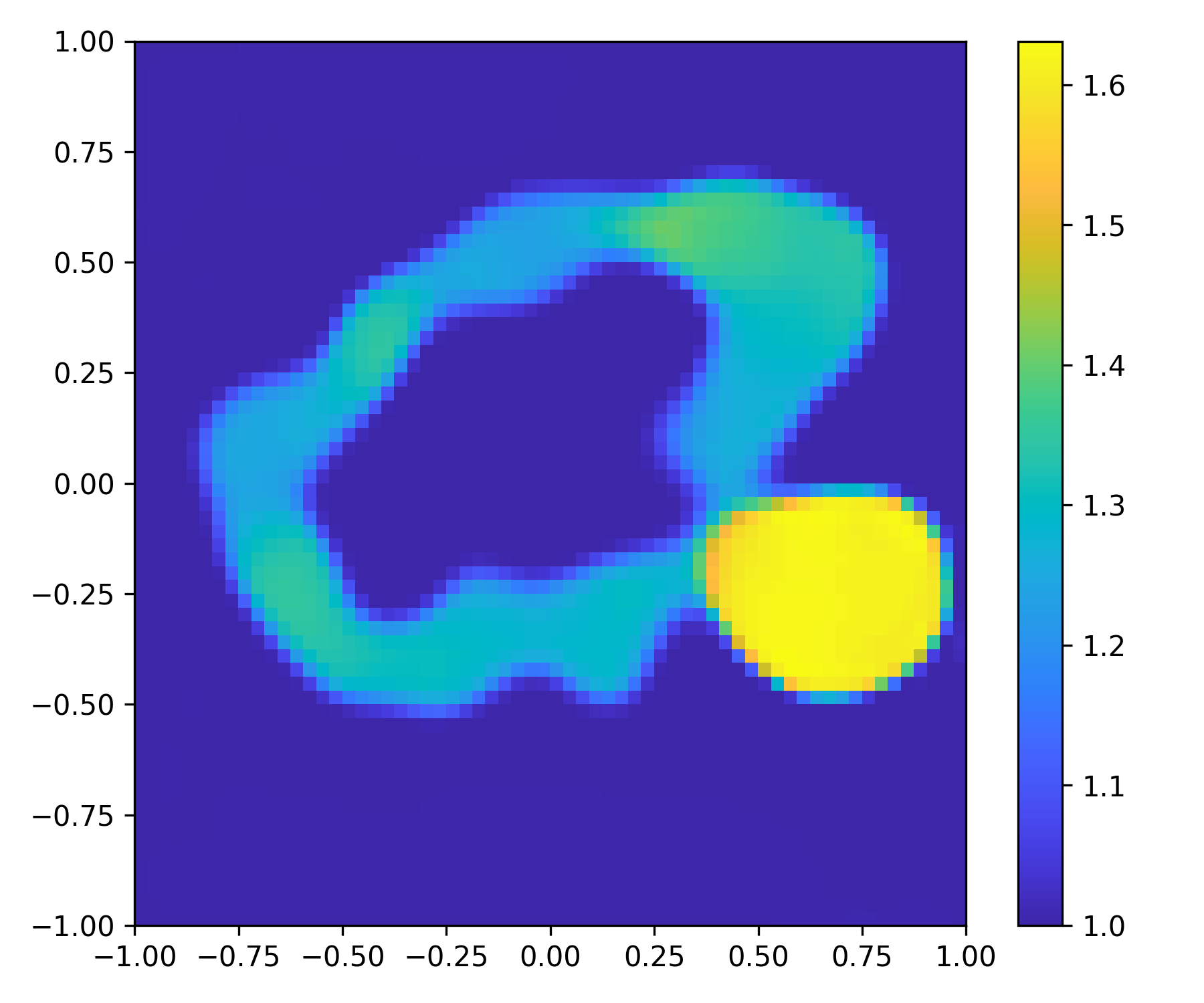}&\includegraphics[width=0.18\textwidth]{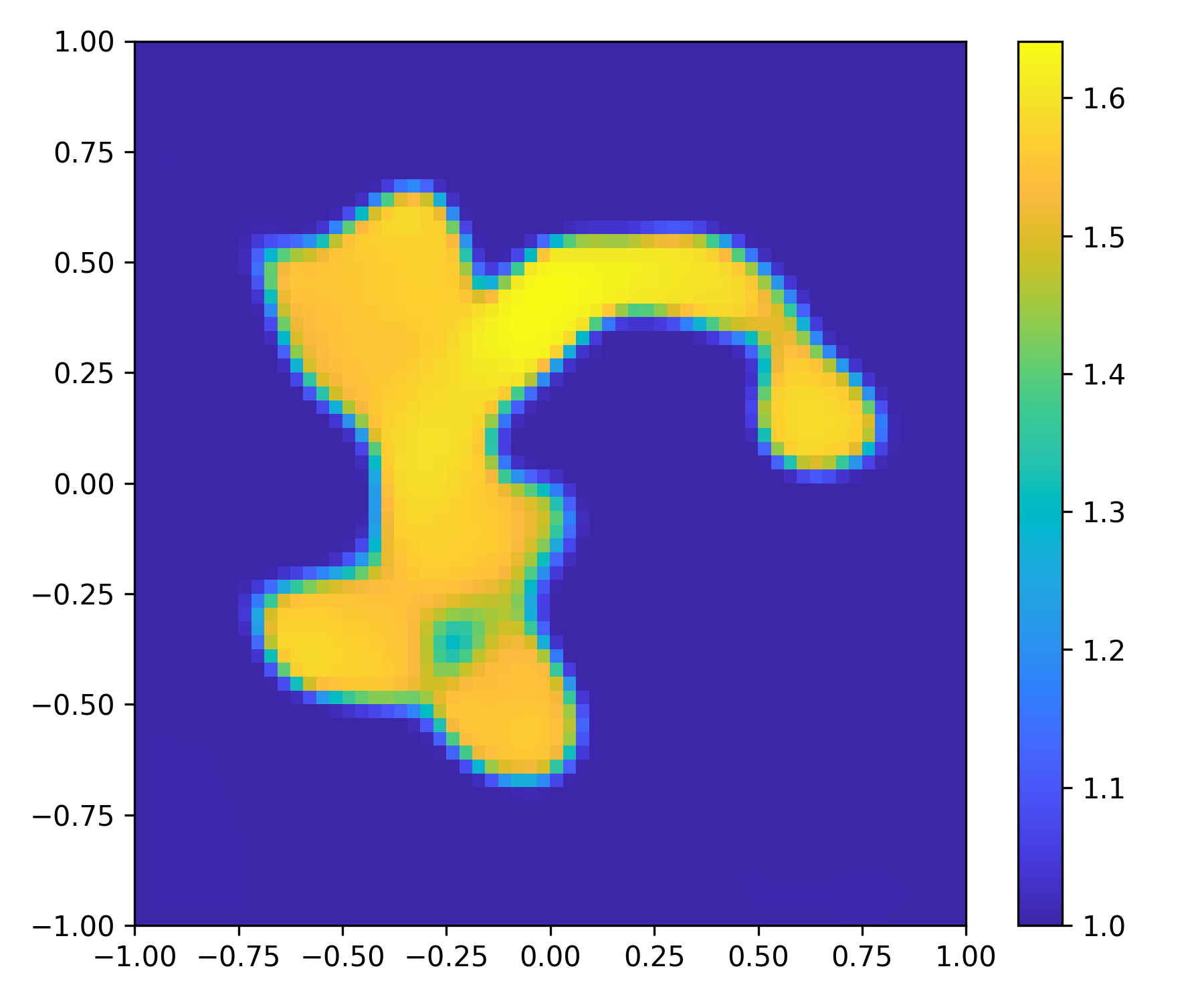} &\includegraphics[width=0.18\textwidth]{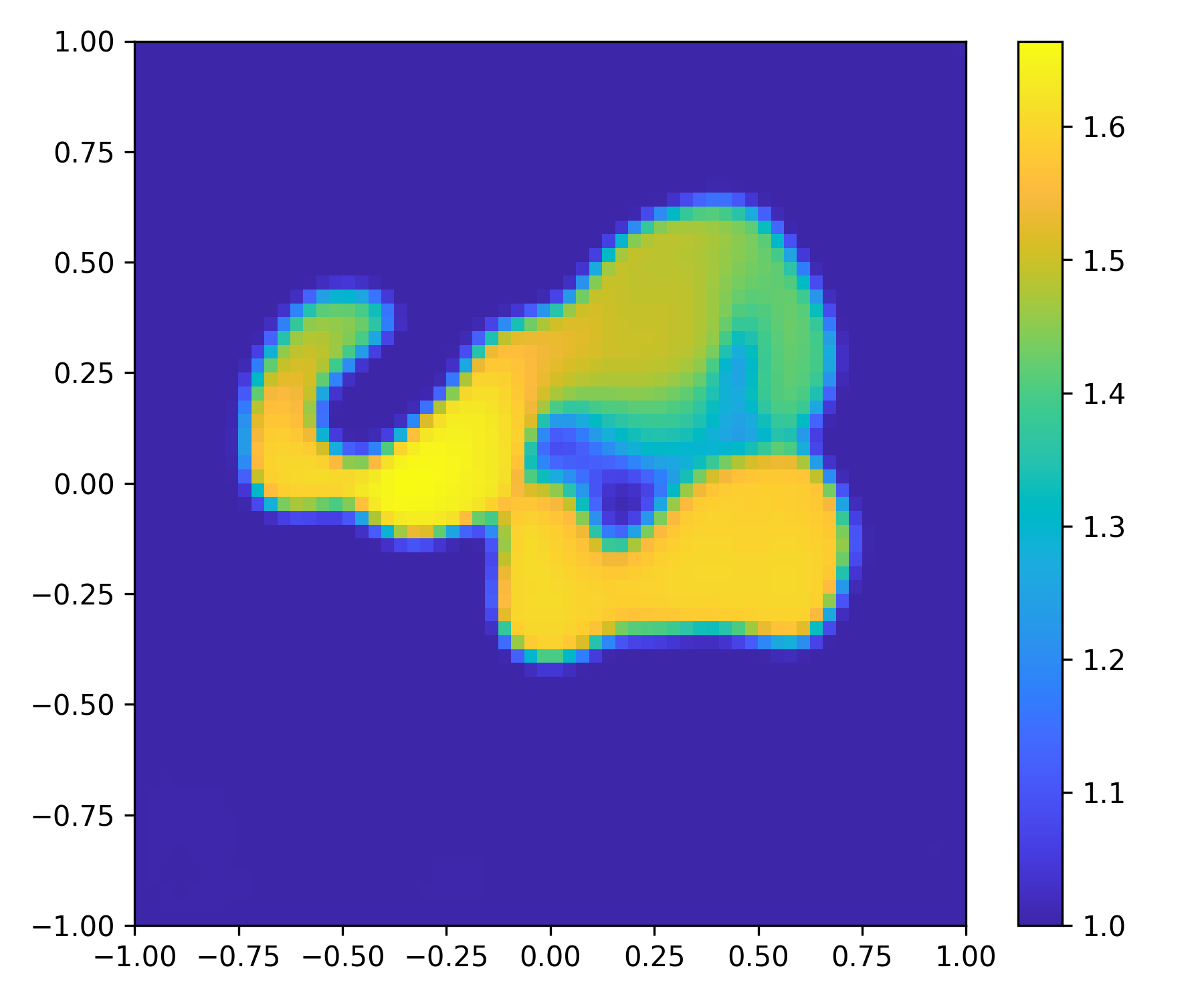}
				&\includegraphics[width=0.18\textwidth]{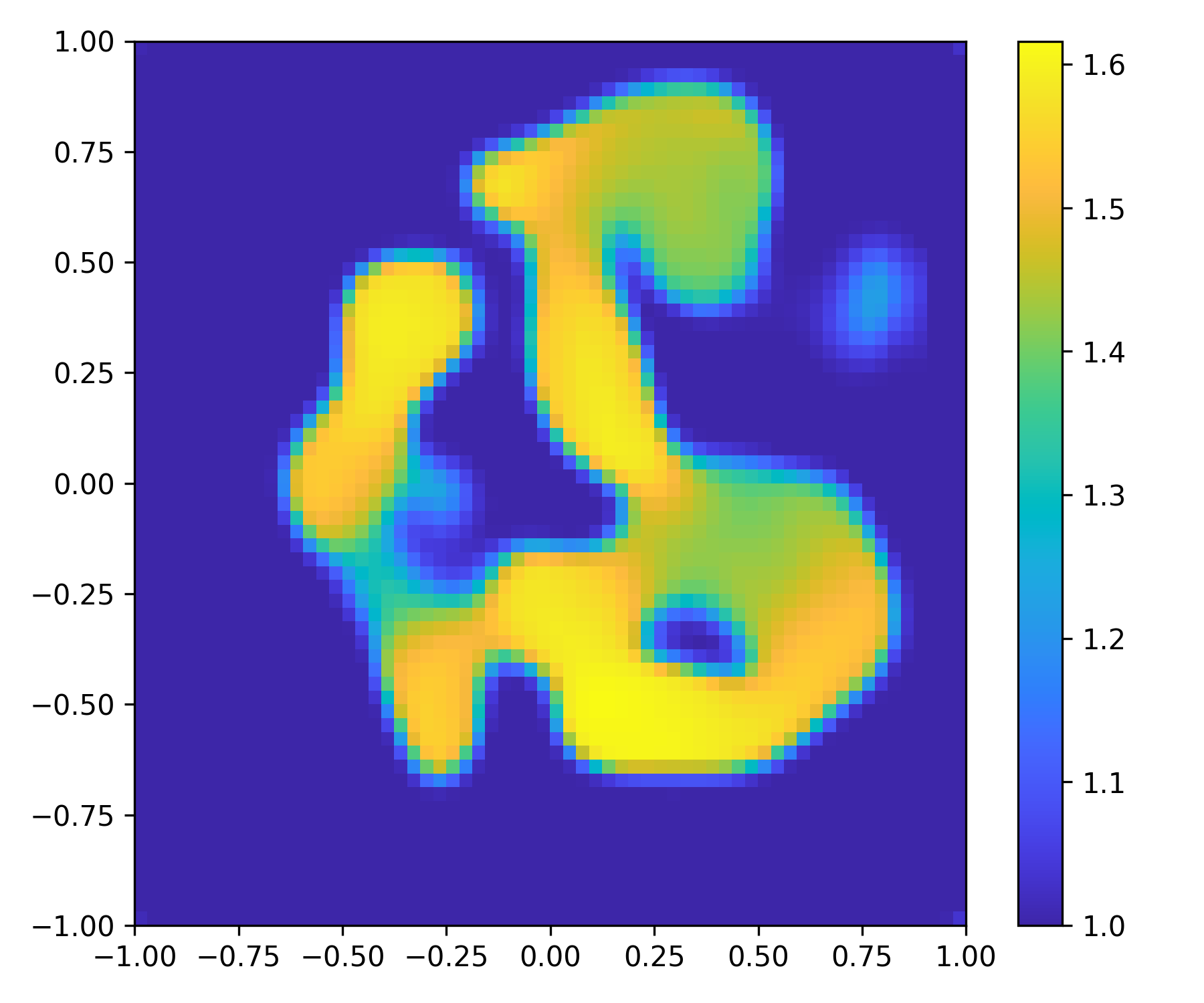}
				&\includegraphics[width=0.18\textwidth]{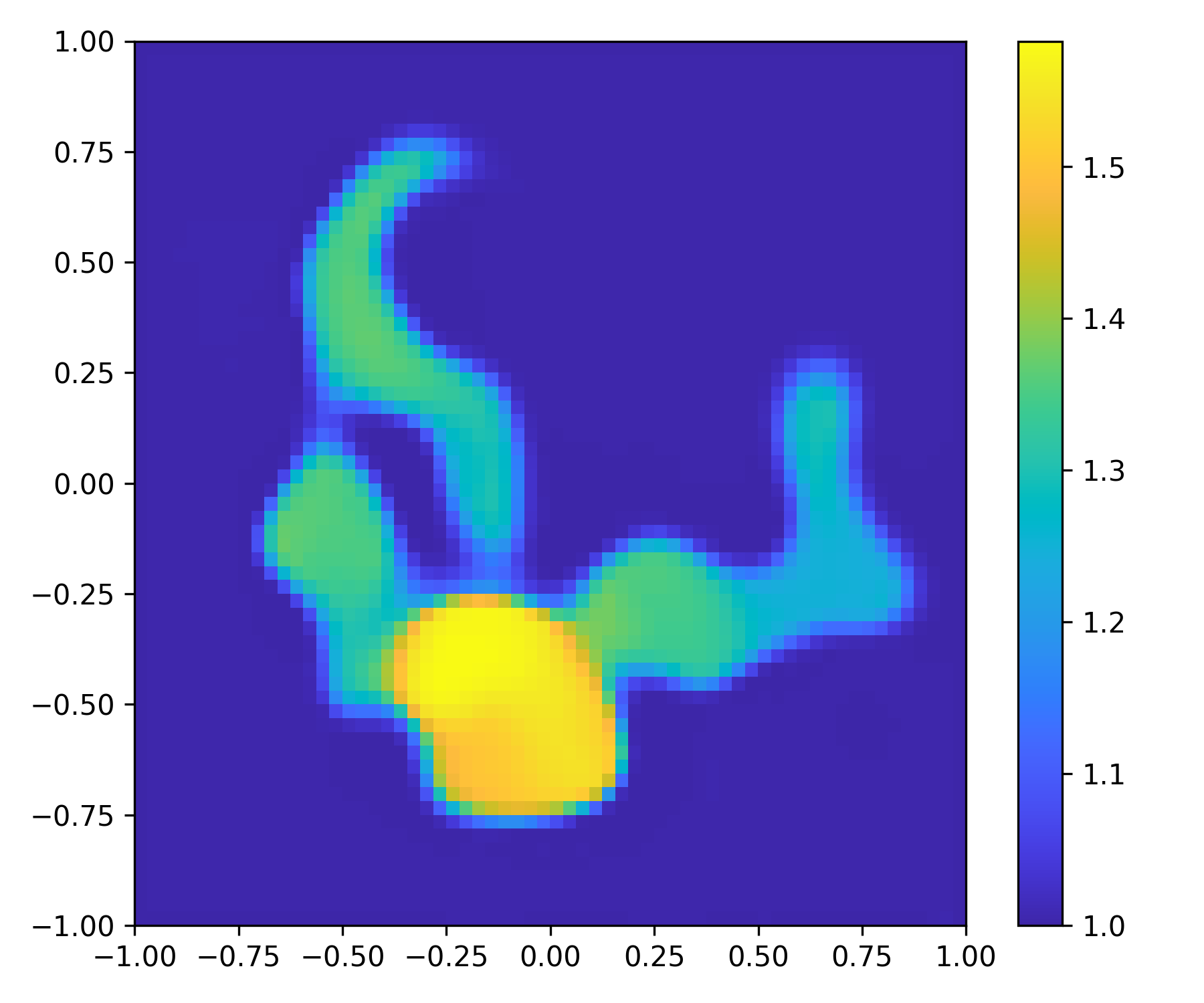}\\
				
			{$N_i=16$\\ $\delta=5\%$}&
			\includegraphics[width=0.18\textwidth]{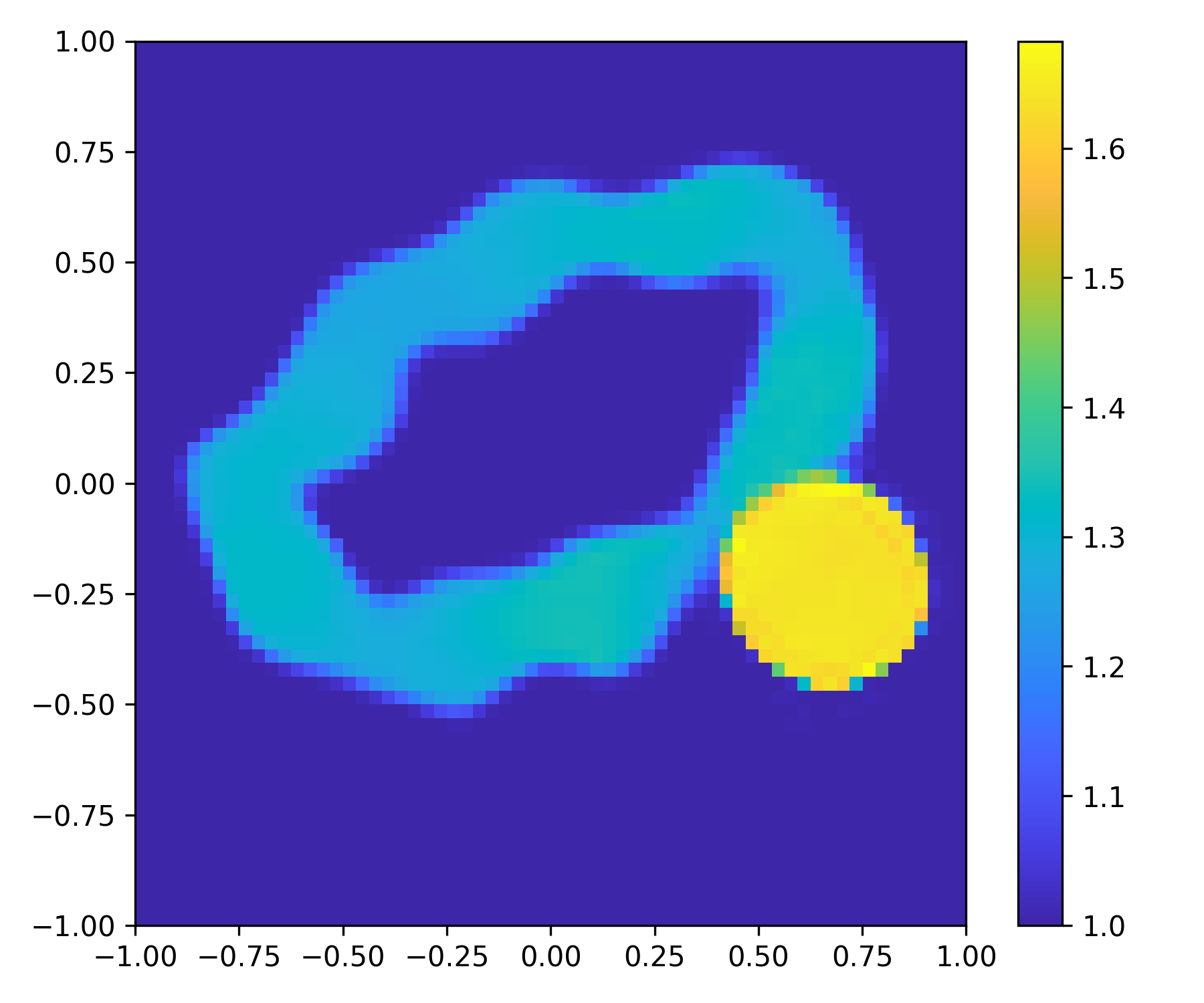}&\includegraphics[width=0.18\textwidth]{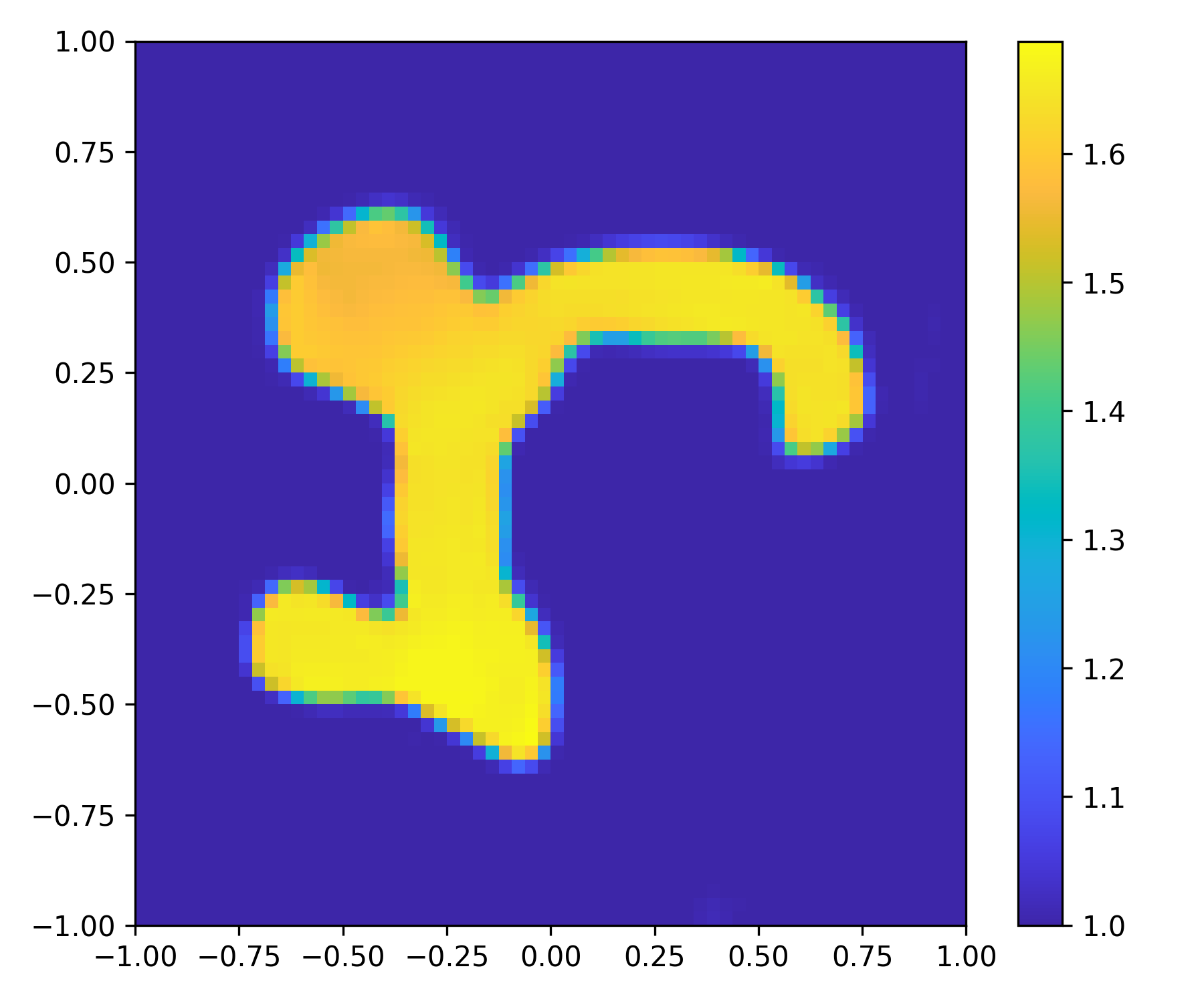} &\includegraphics[width=0.18\textwidth]{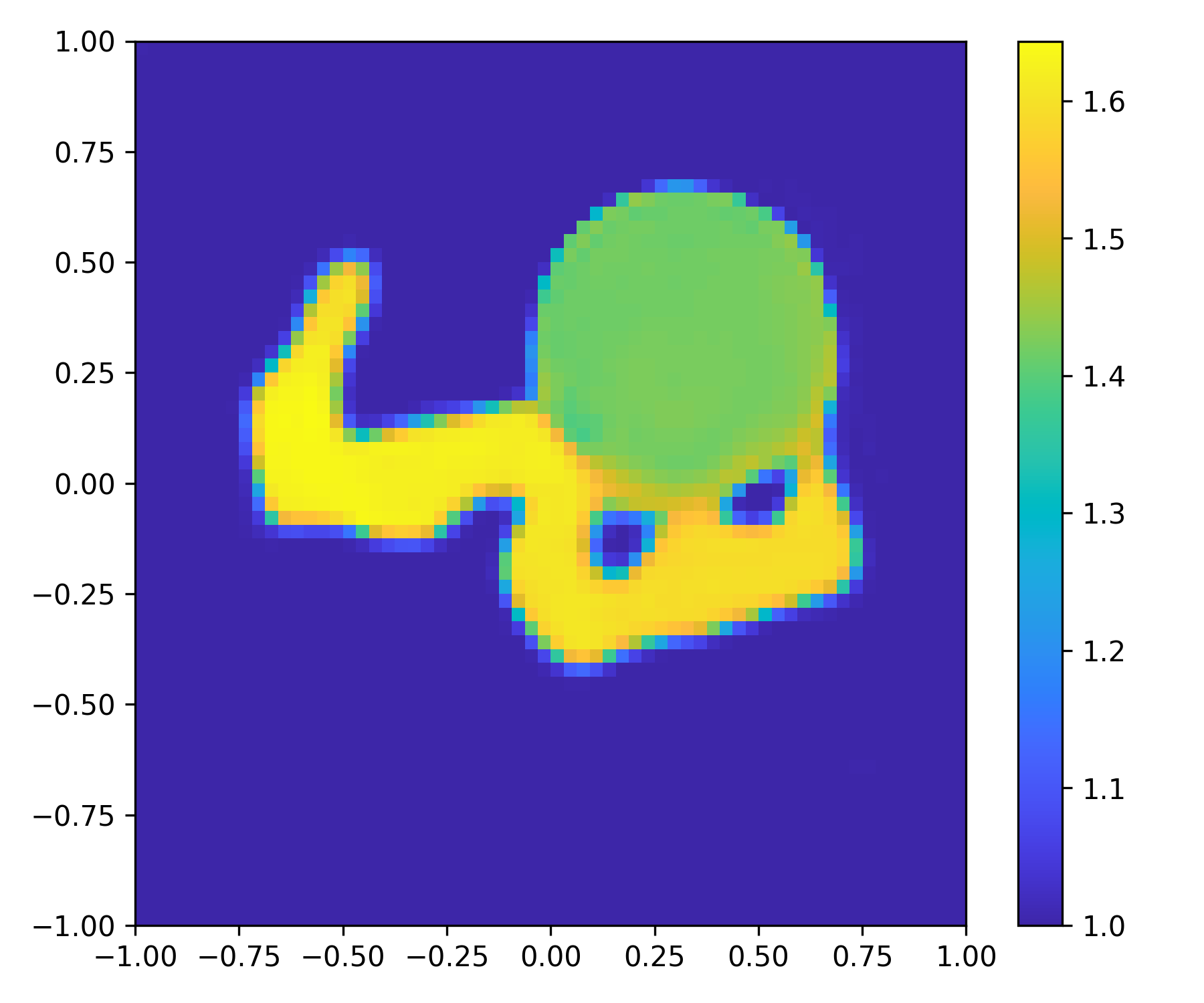}
			&\includegraphics[width=0.18\textwidth]{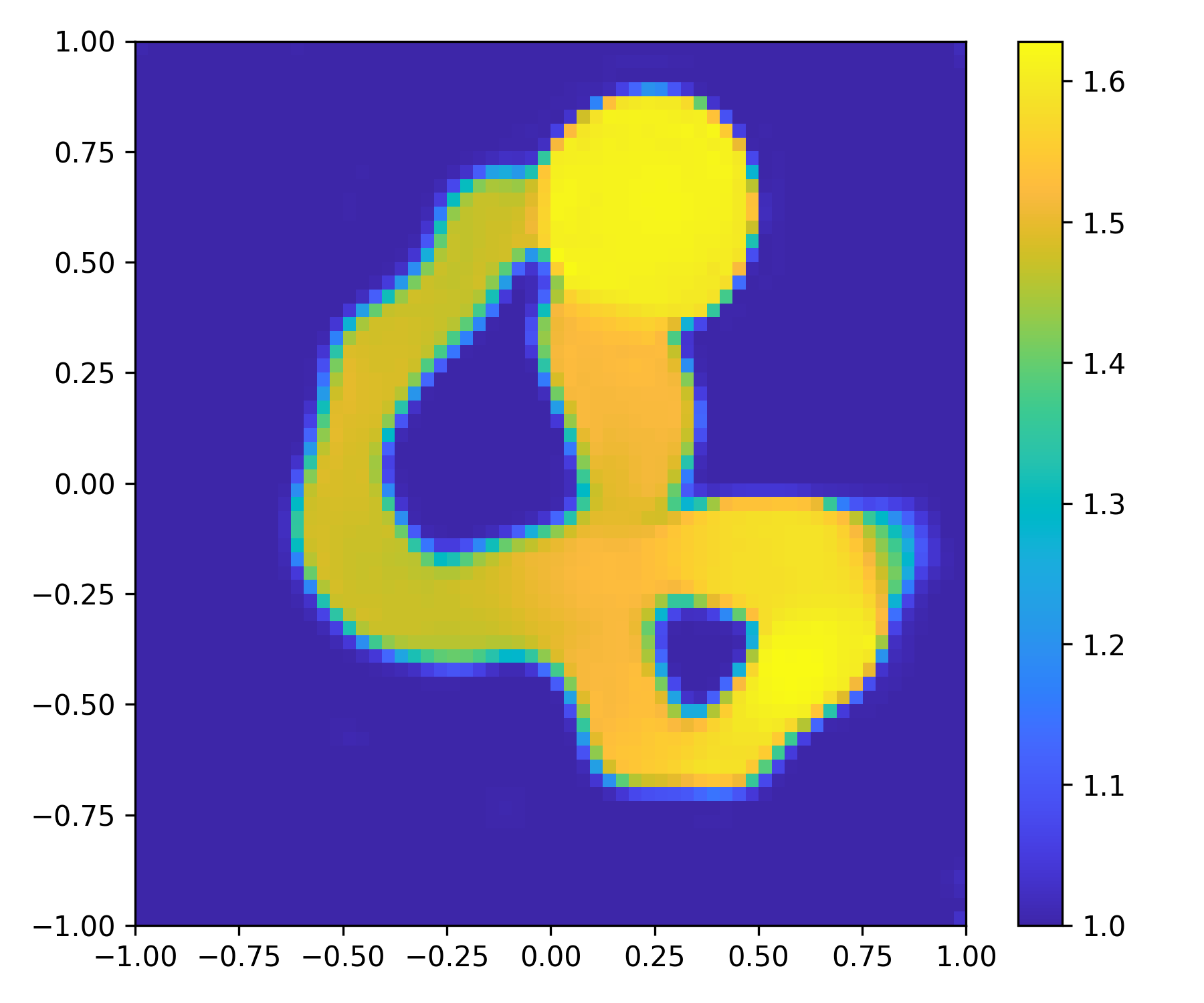}
			&\includegraphics[width=0.18\textwidth]{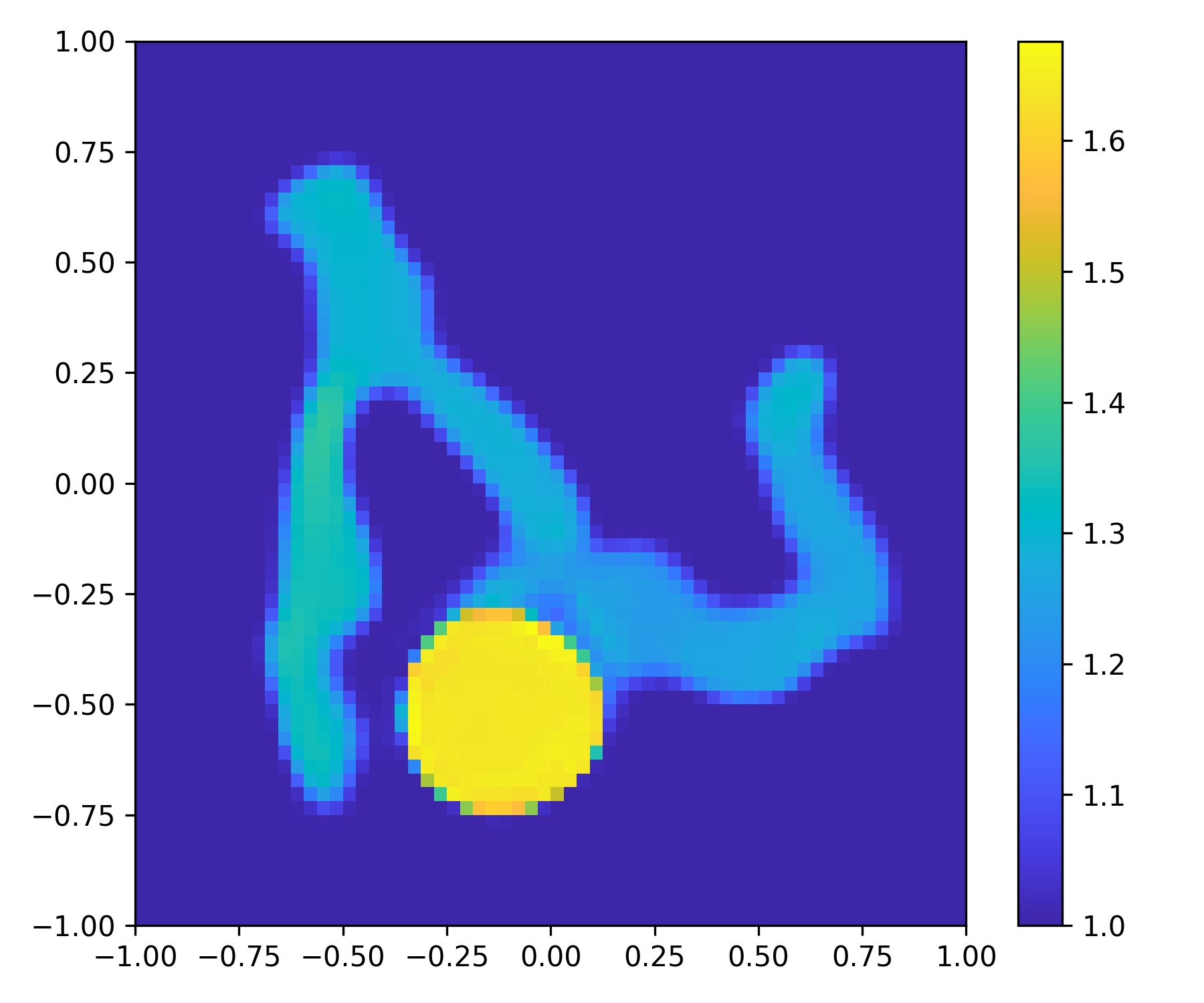}\\
			{$N_i=16$\\ $\delta=10\%$}&
			\includegraphics[width=0.18\textwidth]{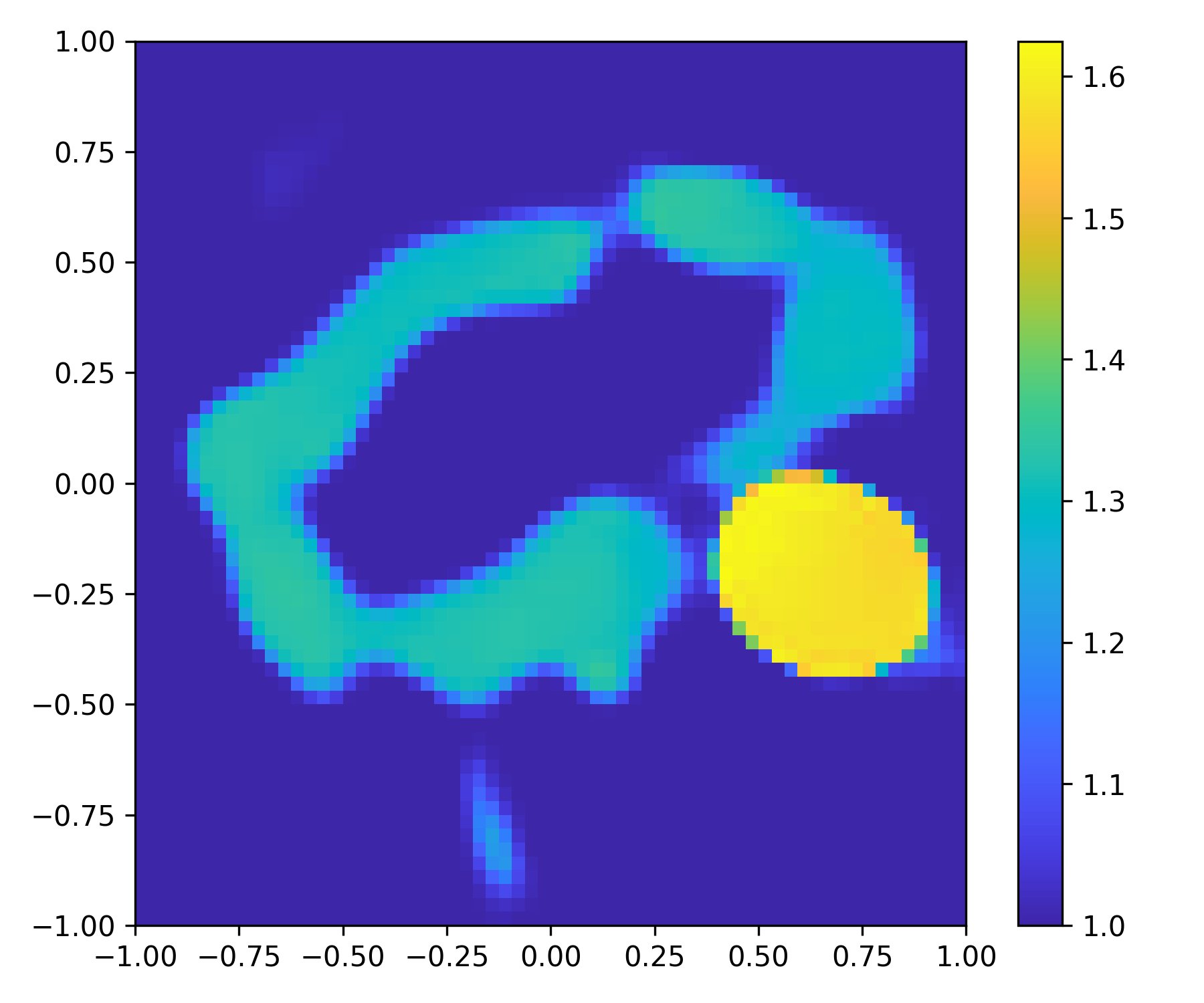}&\includegraphics[width=0.18\textwidth]{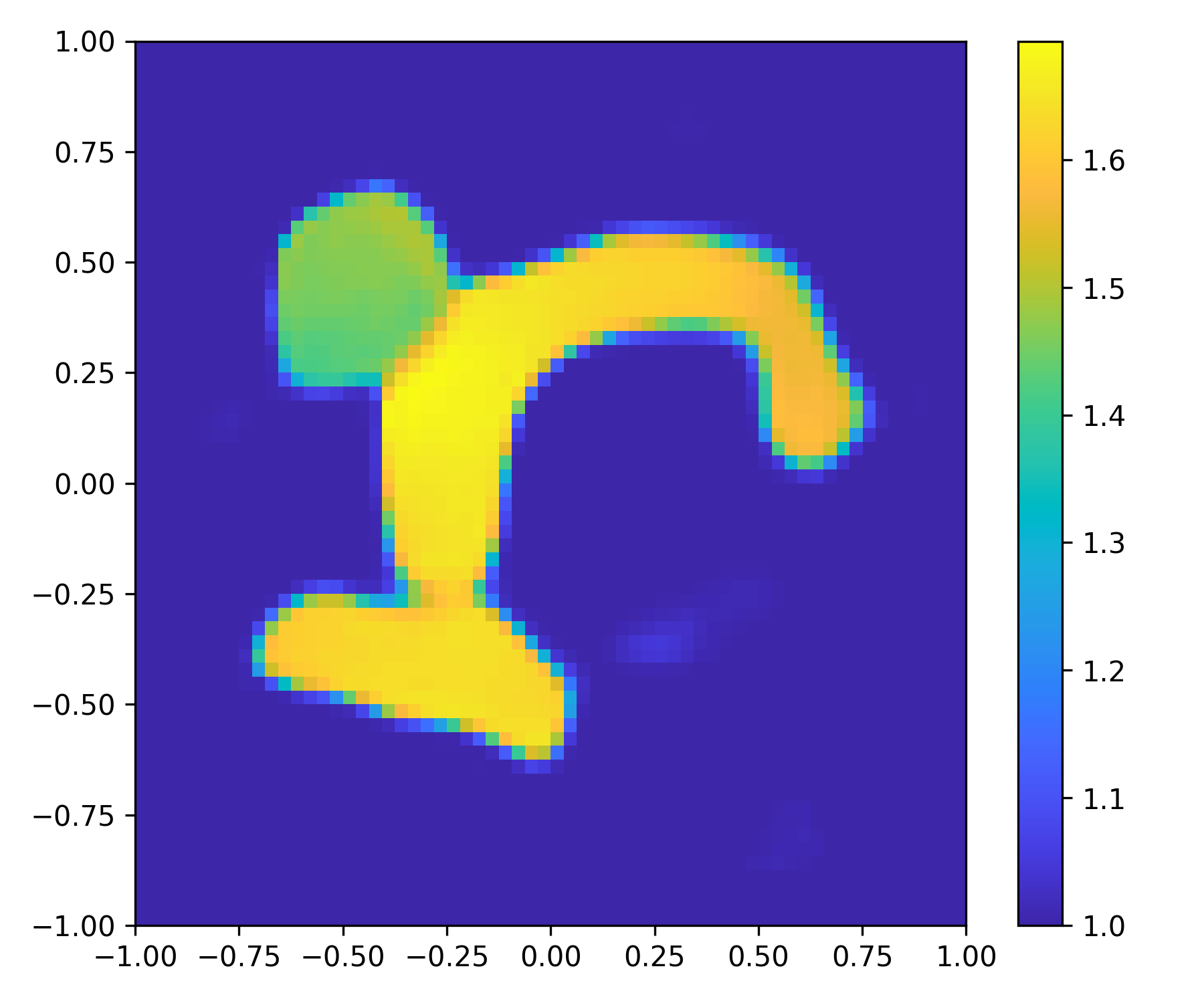} &\includegraphics[width=0.18\textwidth]{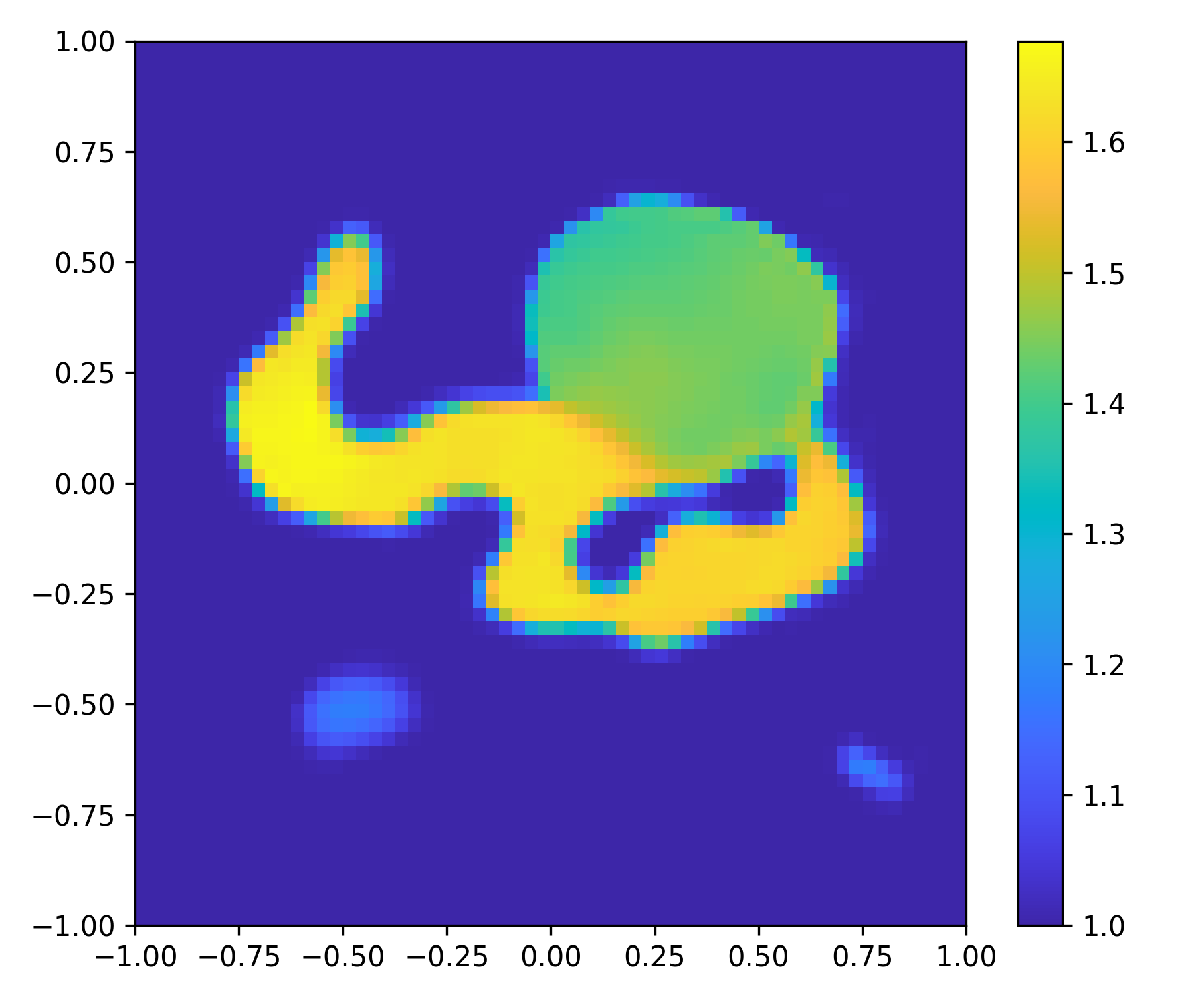}
			&\includegraphics[width=0.18\textwidth]{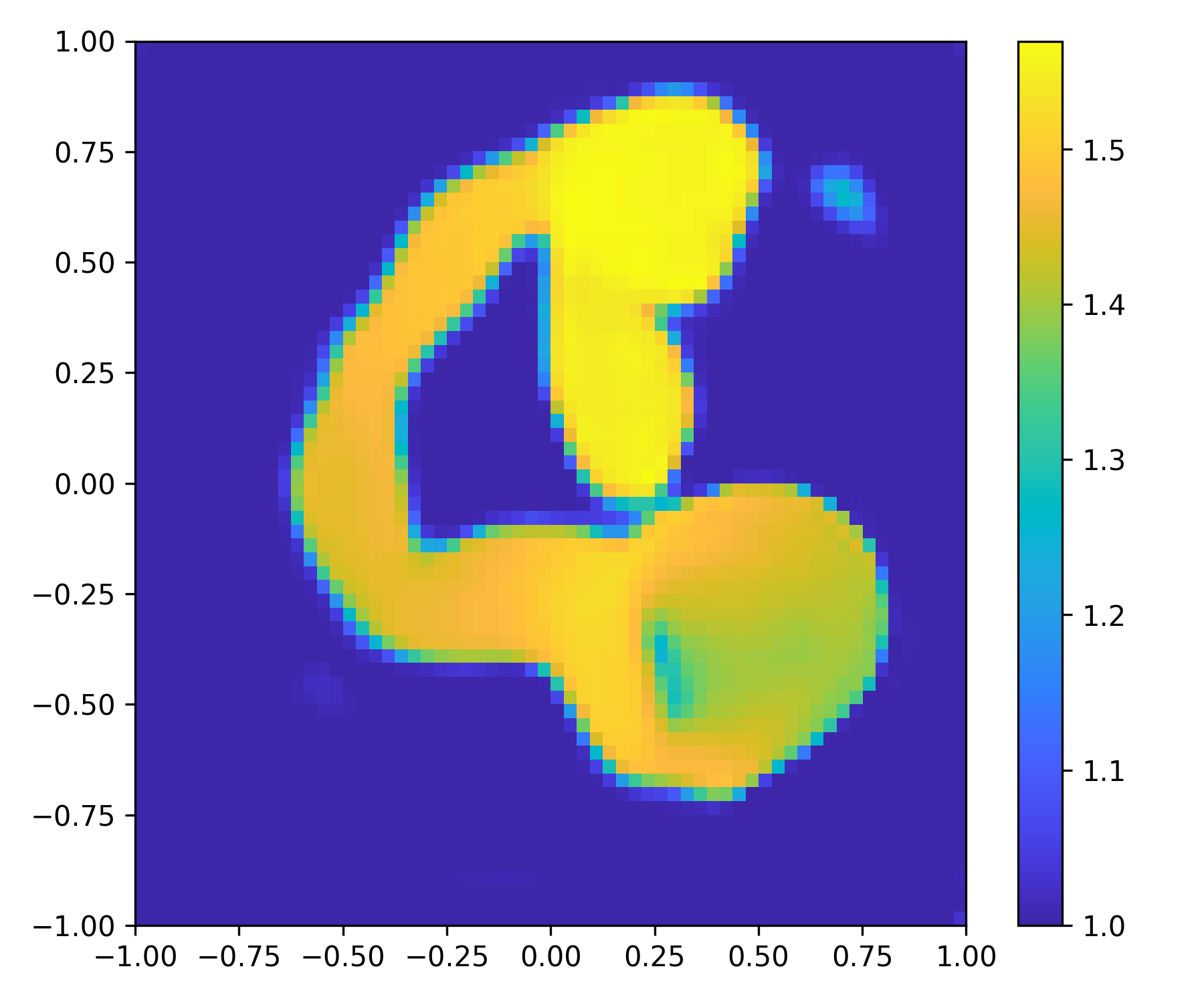}
			&\includegraphics[width=0.18\textwidth]{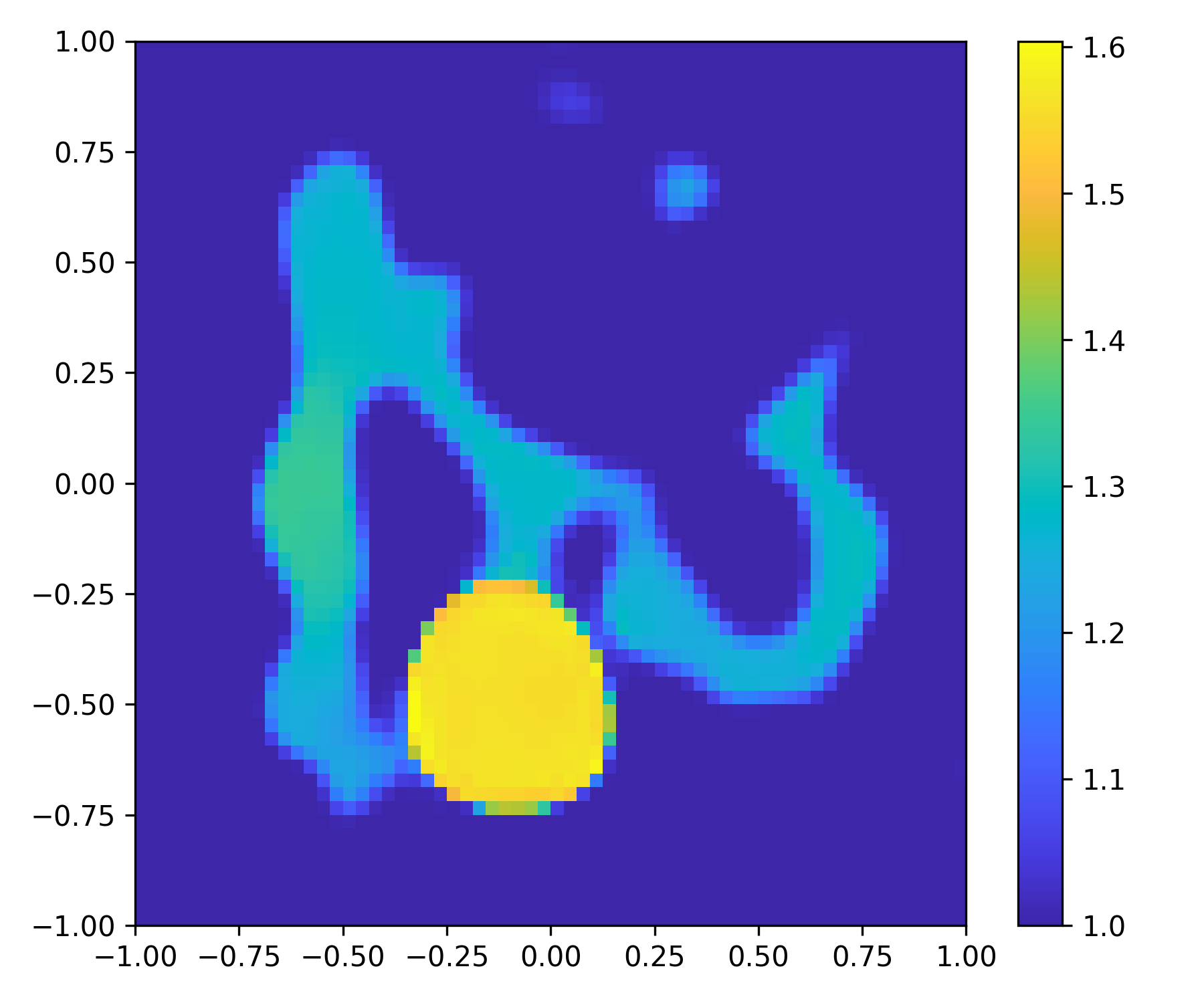}\\
			\end{tblr}
		
			\caption{Example  \ref{examp:mnist}: Reconstructed images by using the networks trained by the MNIST dataset. Row 1: true images; other rows: reconstructions with different incidences and noise levels.}
			\label{Mnist}
		\end{center}
\end{figure}

\subsubsubsection{Tests with Chinese characters.} 
\label{examp:chinese}To further test the generalization ability of the trained models, we consider the reconstruction of five Chinese characters as shown in Fig.\,\refeq{Chinese}, whose shapes are very different from those of the MNIST dataset. The coefficient value $n(x)$ of the scatterers is set to 1.5. The recovered images presented in  Fig.\,\refeq{Chinese} also show the ability of the DSM-DL to fully extract the hidden information in the measurement data and thus provide more accurate and stable reconstructions with more incidences used. The average relative error for the five examples is presented in Table.\,\ref{tab:MNIST}

\begin{figure}[htbp]\small
	\begin{center}
		\begin{tblr}
			{colspec = {X[-1,m]X[c,h]X[c,h]X[c,h]X[c,h]X[c,h]},
				stretch = 0,
				rowsep = 0pt,}
			{Ground\\ Truth}&
			\includegraphics[width=0.18\textwidth]{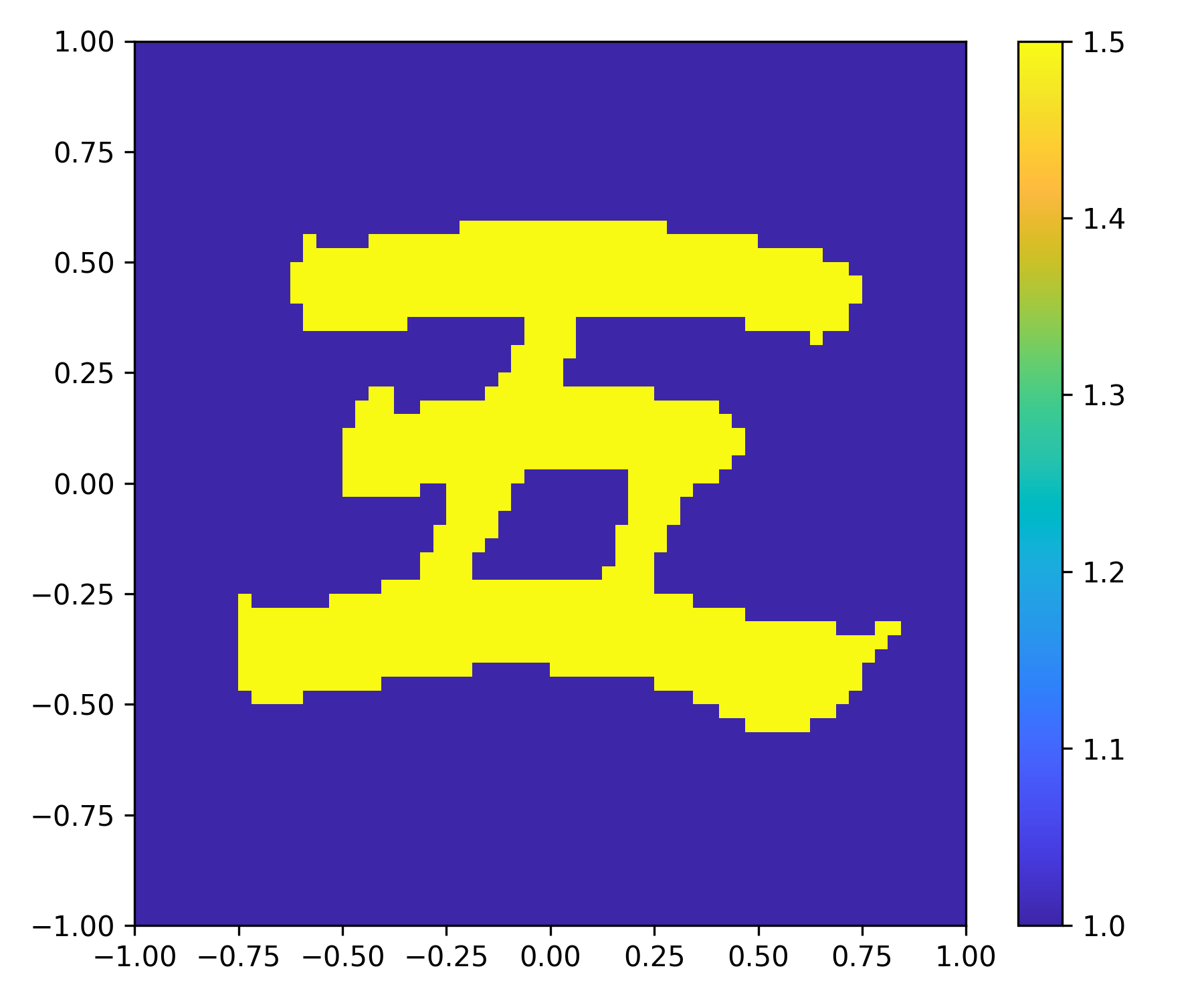}&\includegraphics[width=0.18\textwidth]{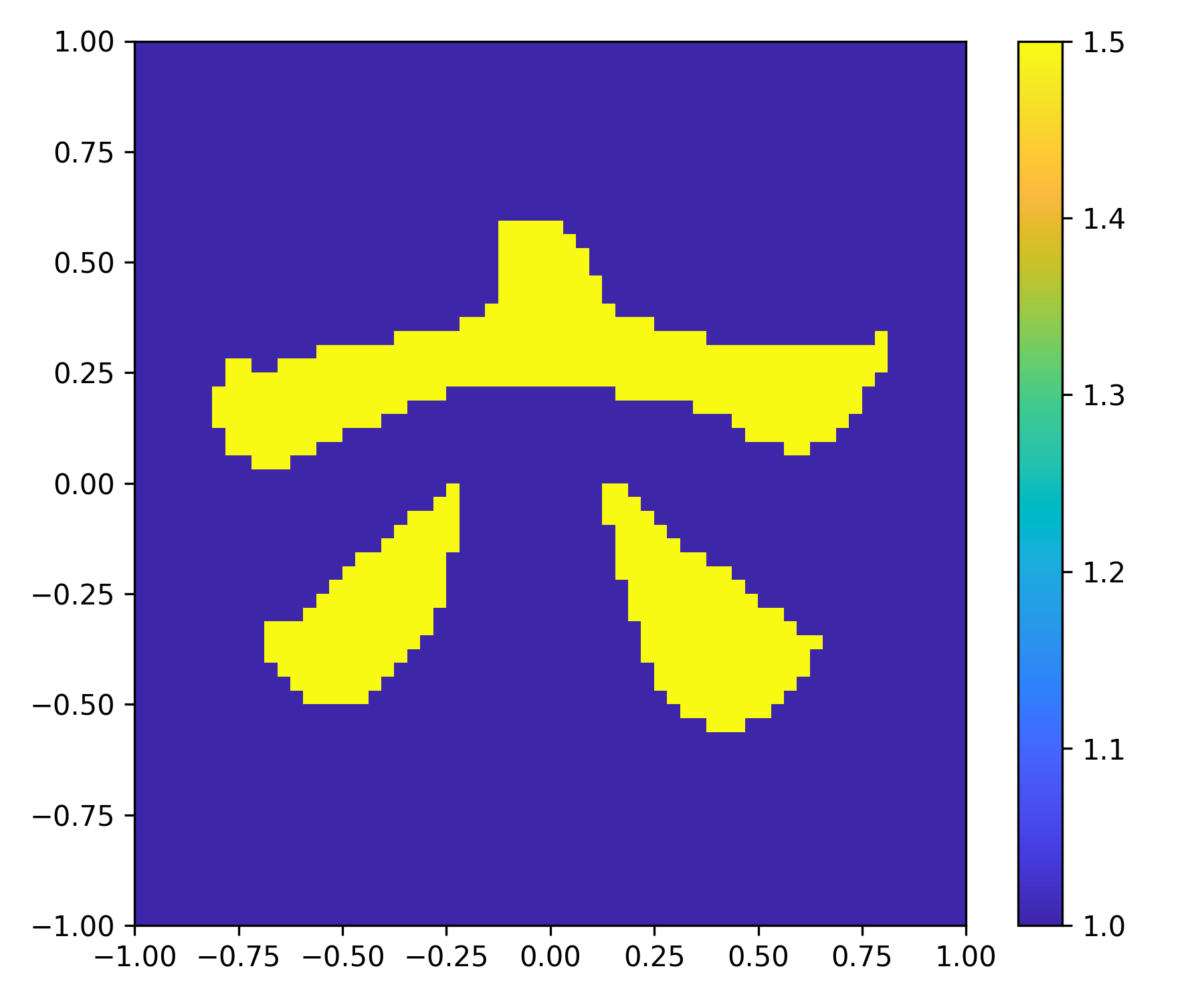} &\includegraphics[width=0.18\textwidth]{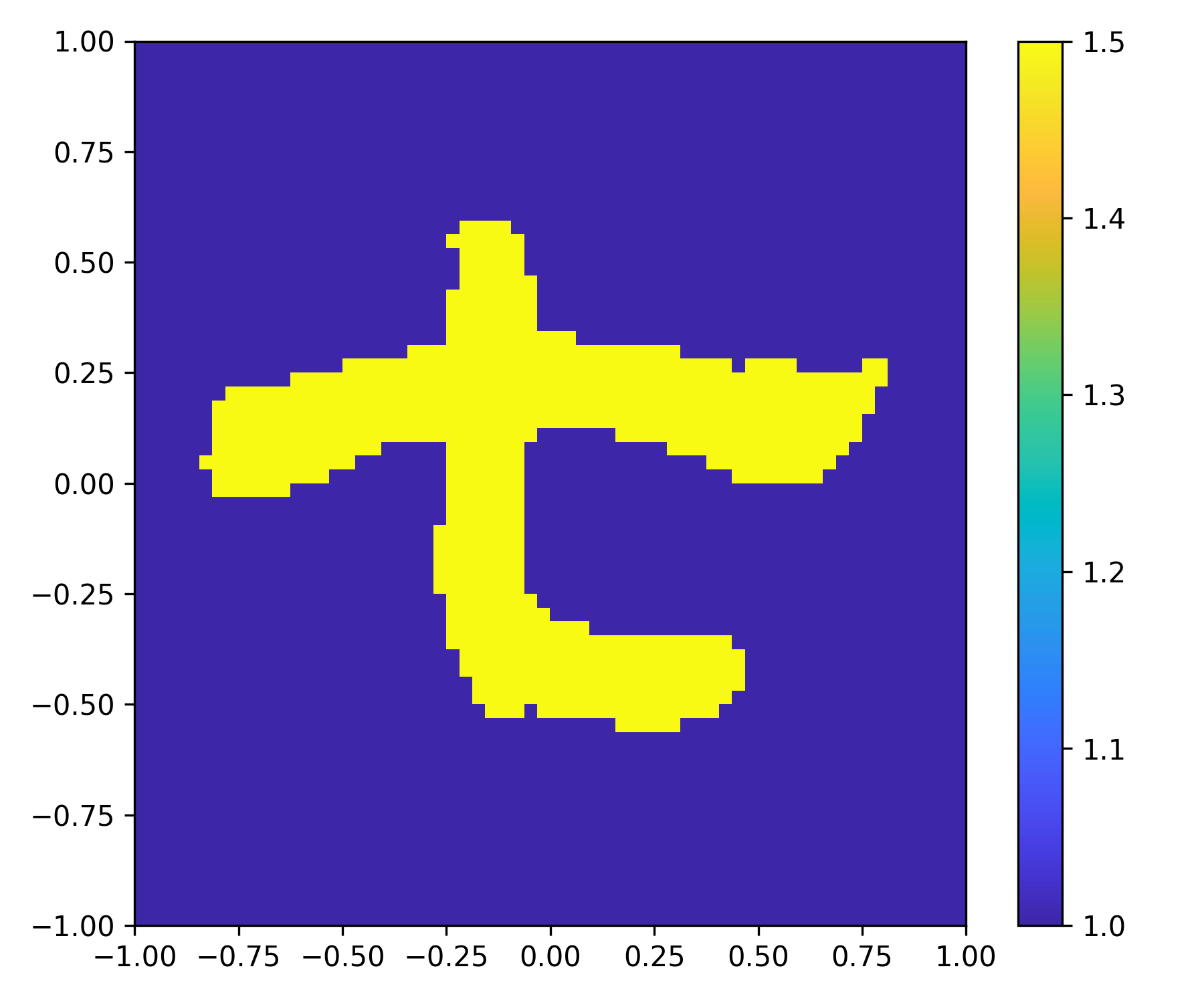}
			&\includegraphics[width=0.18\textwidth]{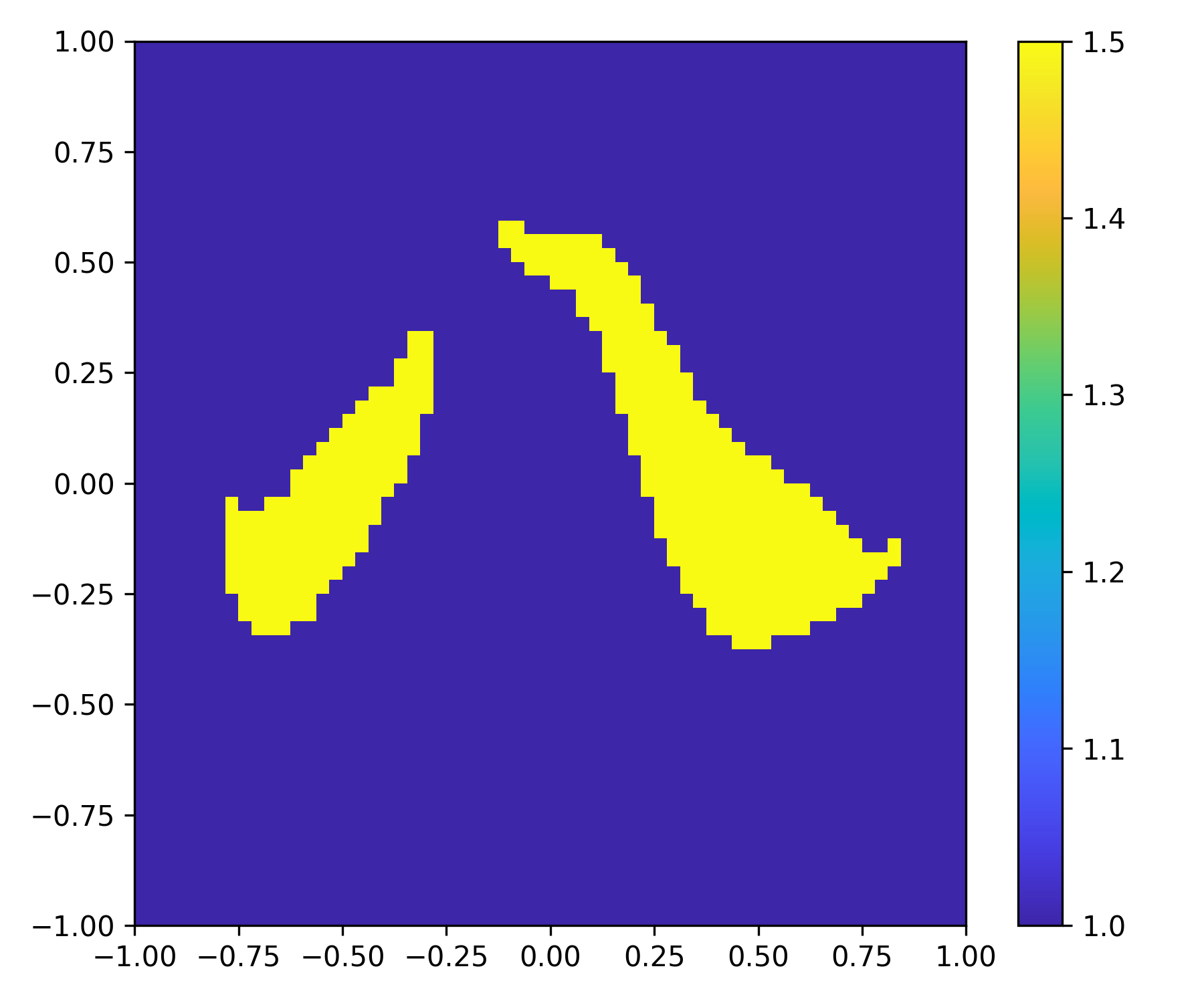}
			&\includegraphics[width=0.18\textwidth]{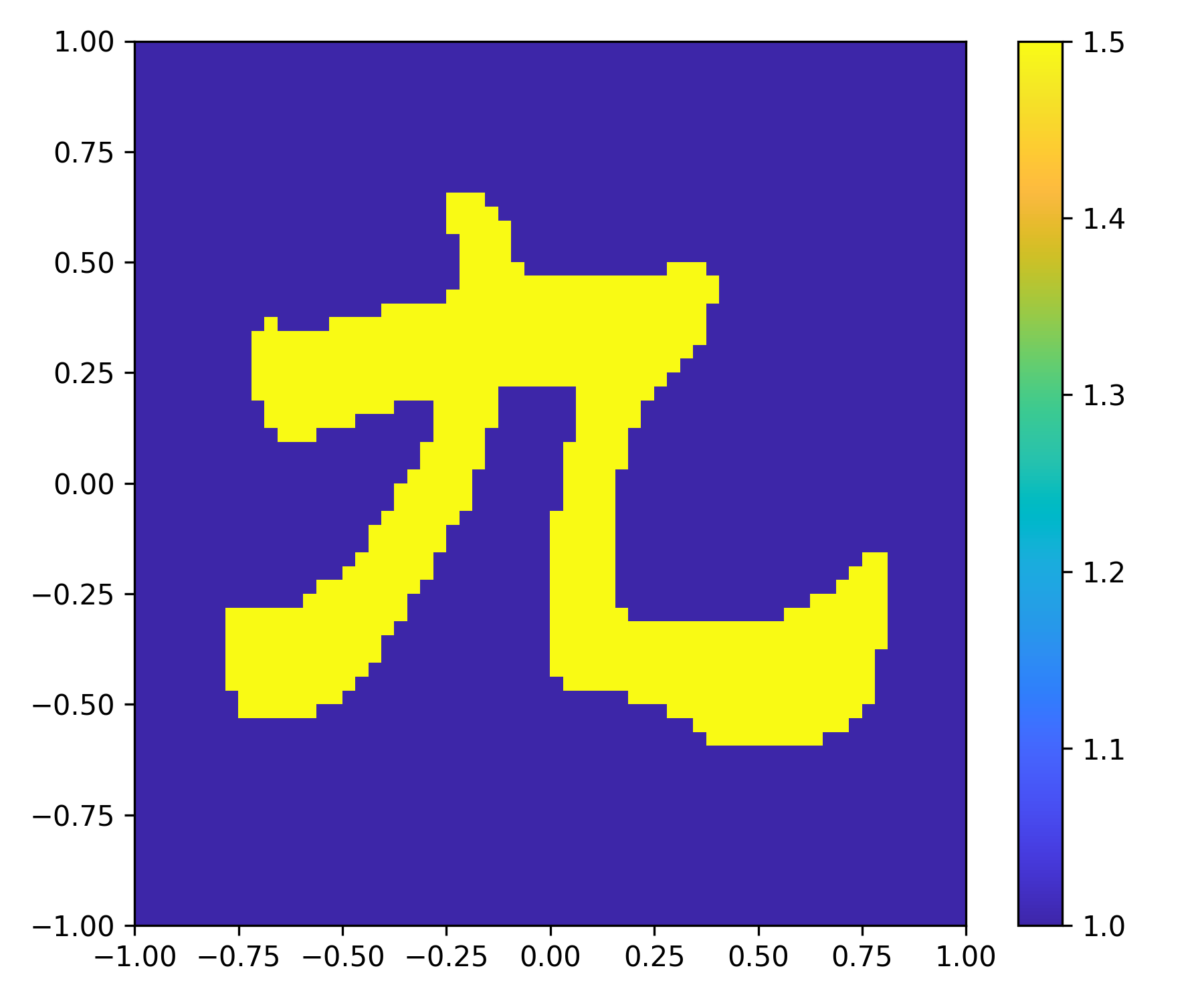}\\
			{$N_i=4$\\ $\delta=5\%$}&
			\includegraphics[width=0.18\textwidth]{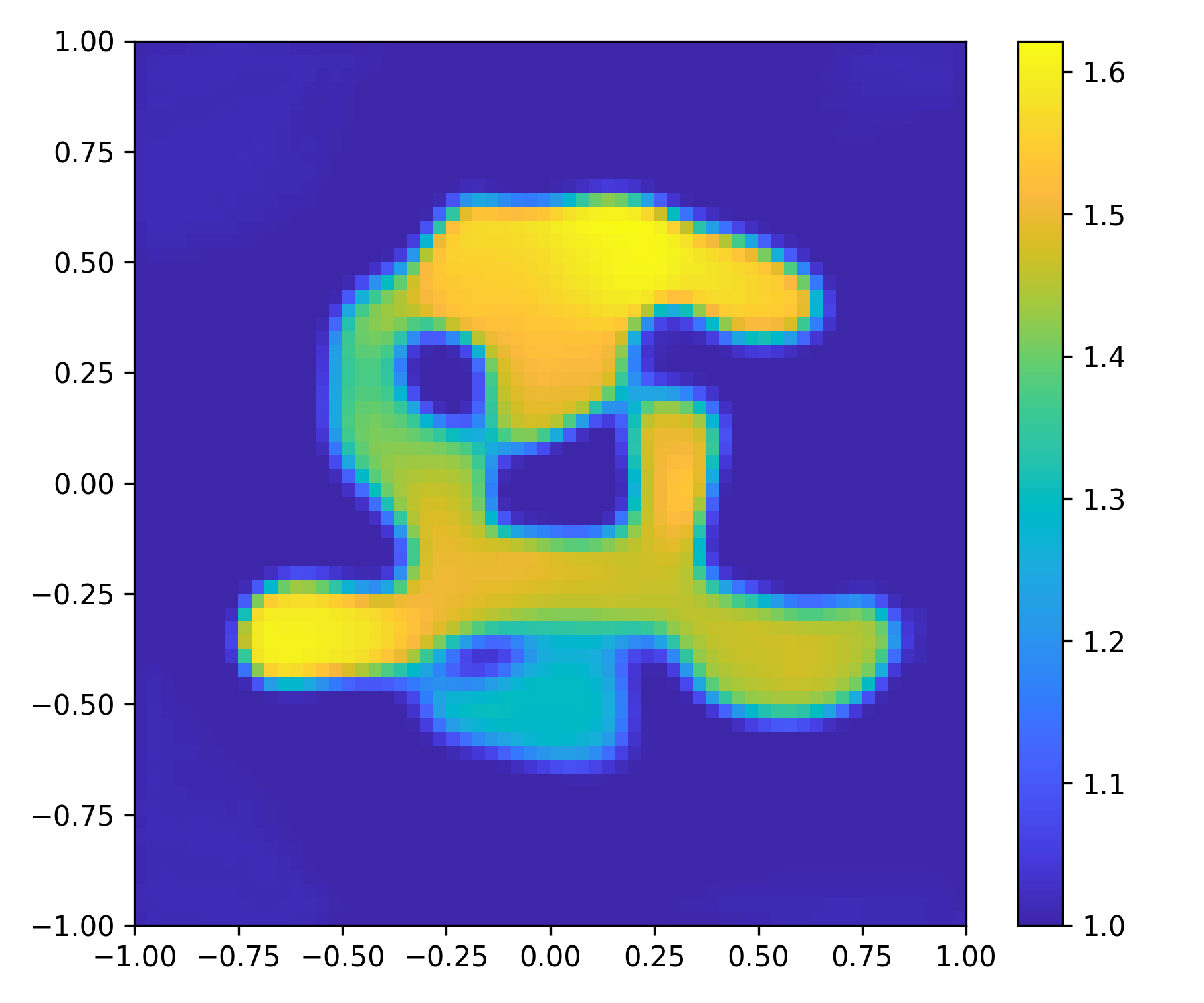}&\includegraphics[width=0.18\textwidth]{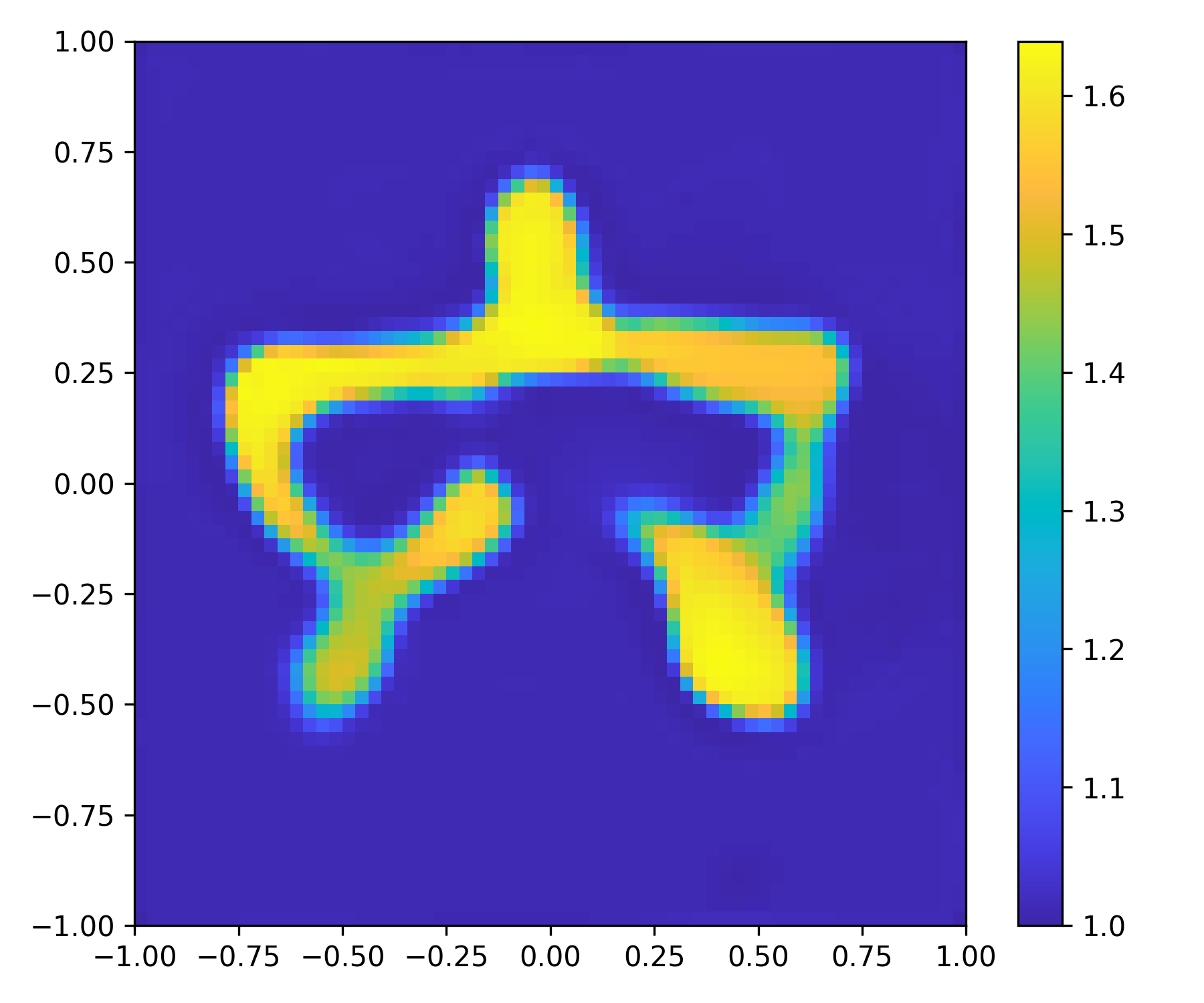} &\includegraphics[width=0.18\textwidth]{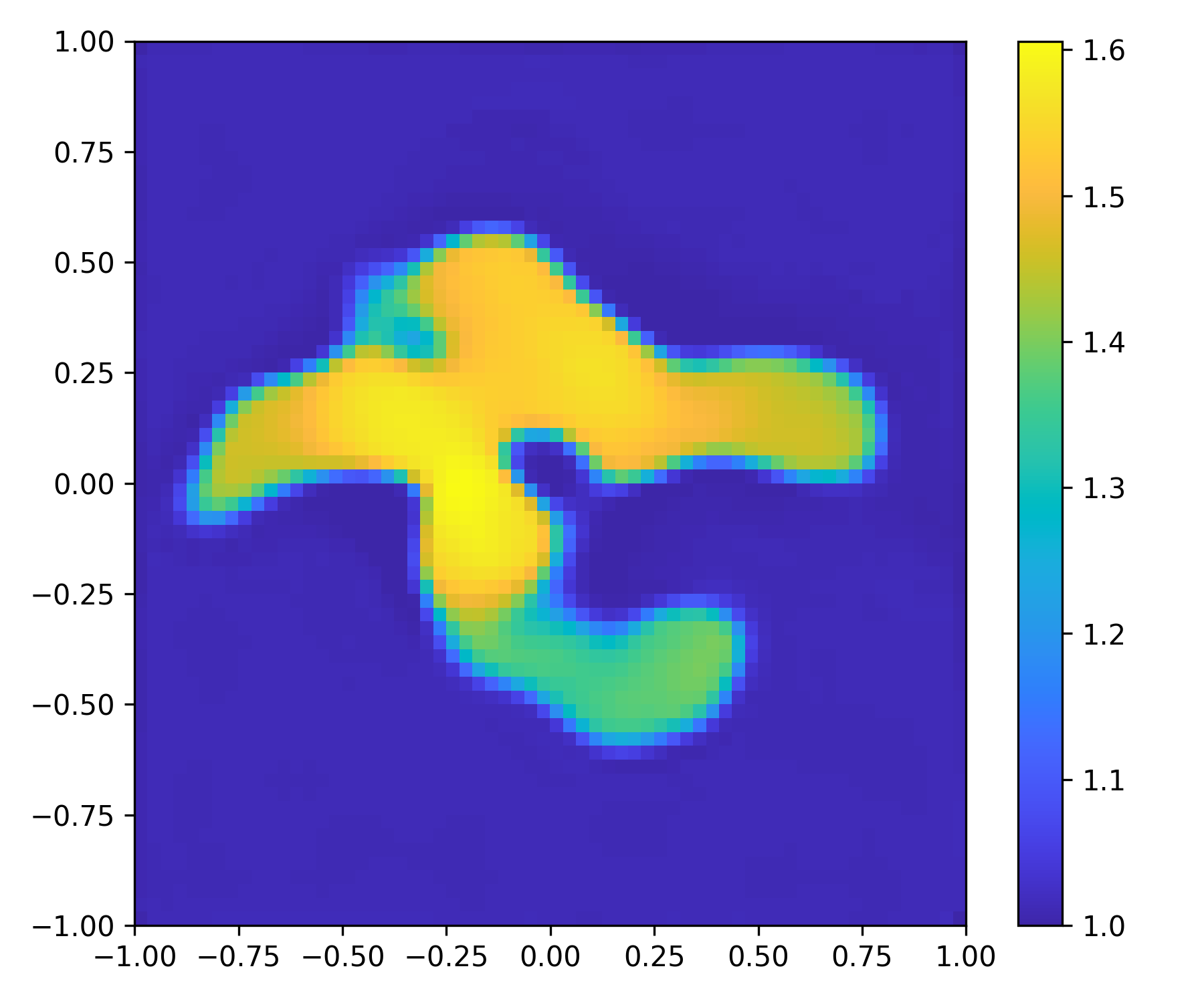}
			&\includegraphics[width=0.18\textwidth]{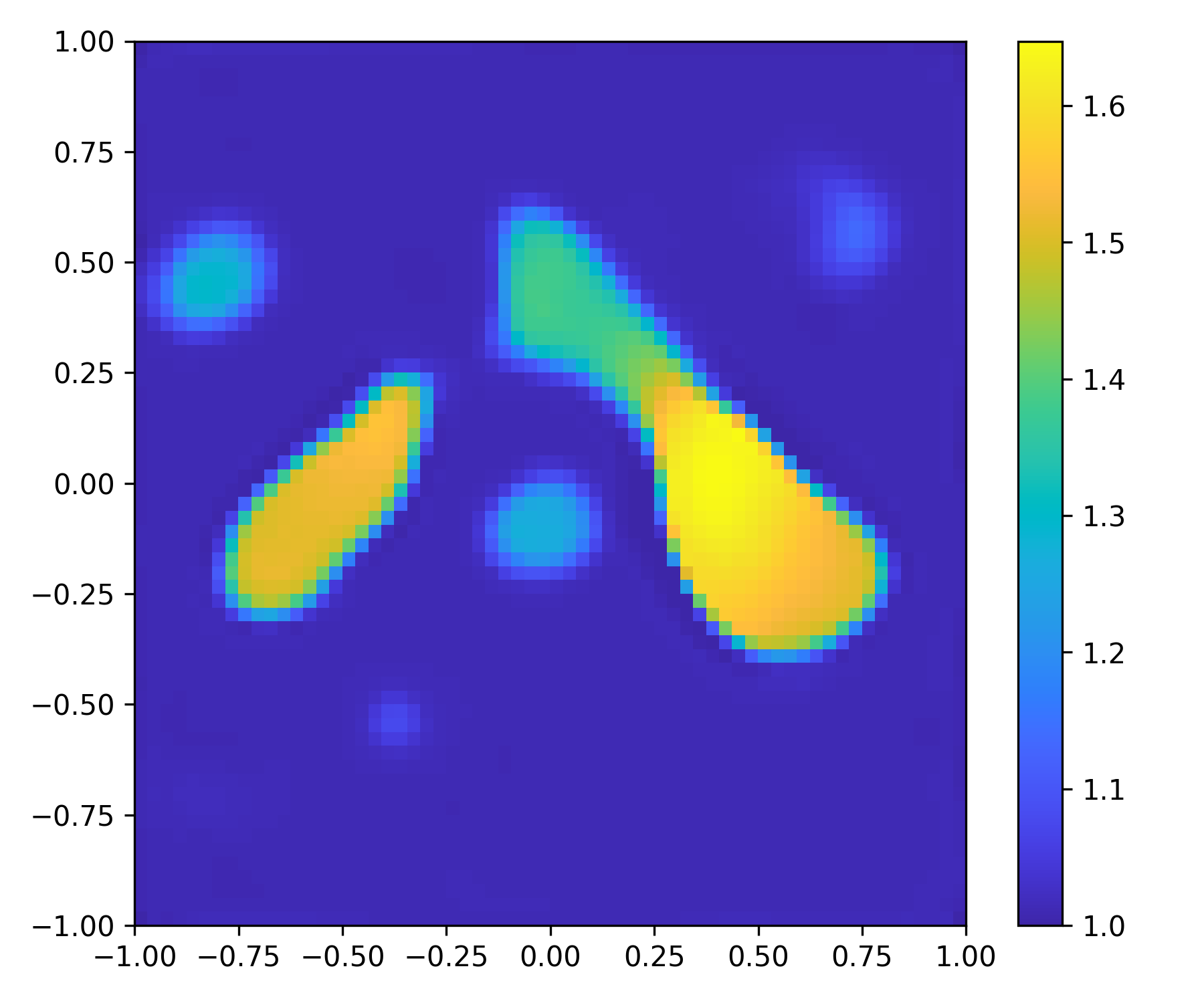}
			&\includegraphics[width=0.18\textwidth]{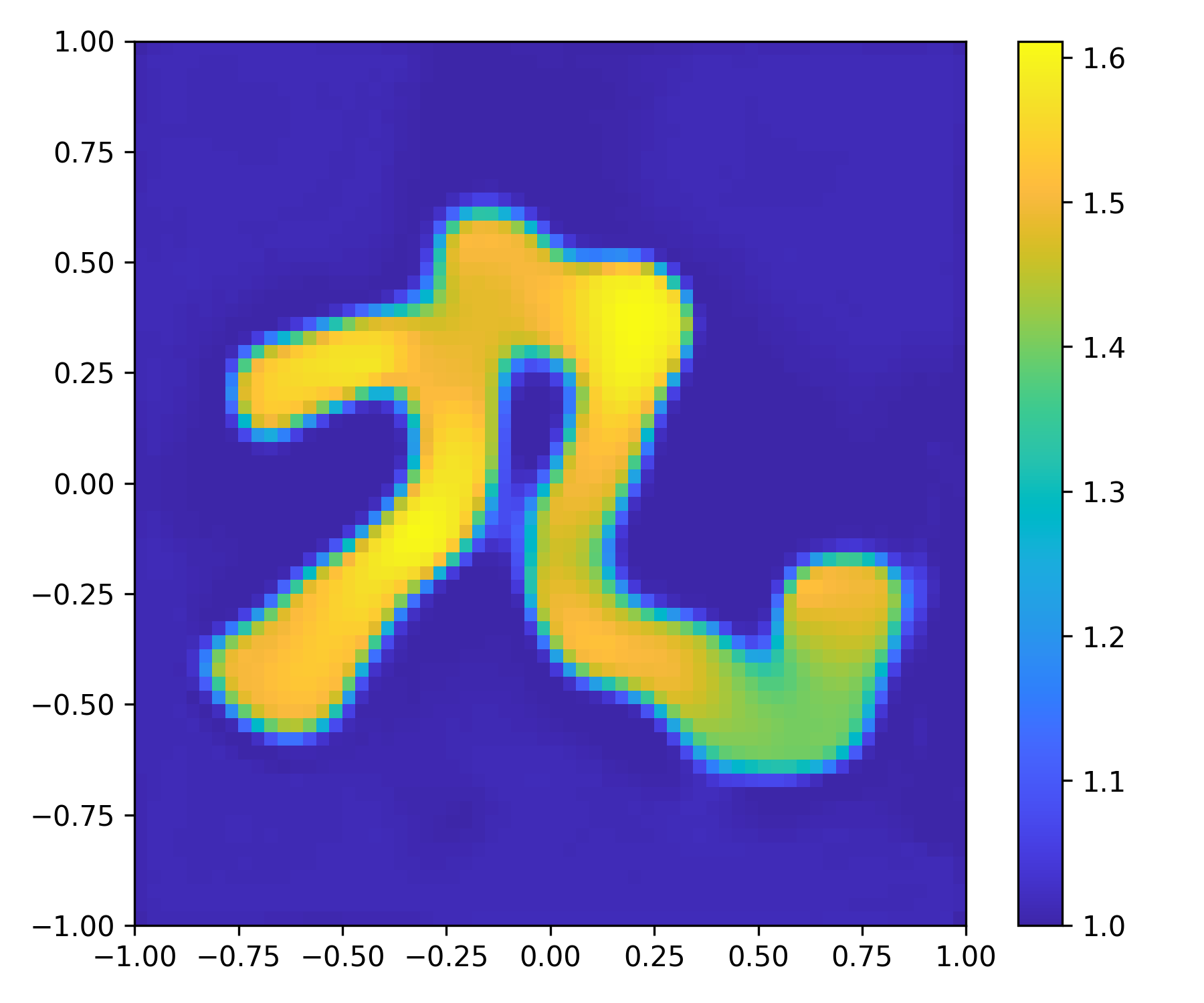}\\
			{$N_i=4$\\ $\delta=10\%$}&
			\includegraphics[width=0.18\textwidth]{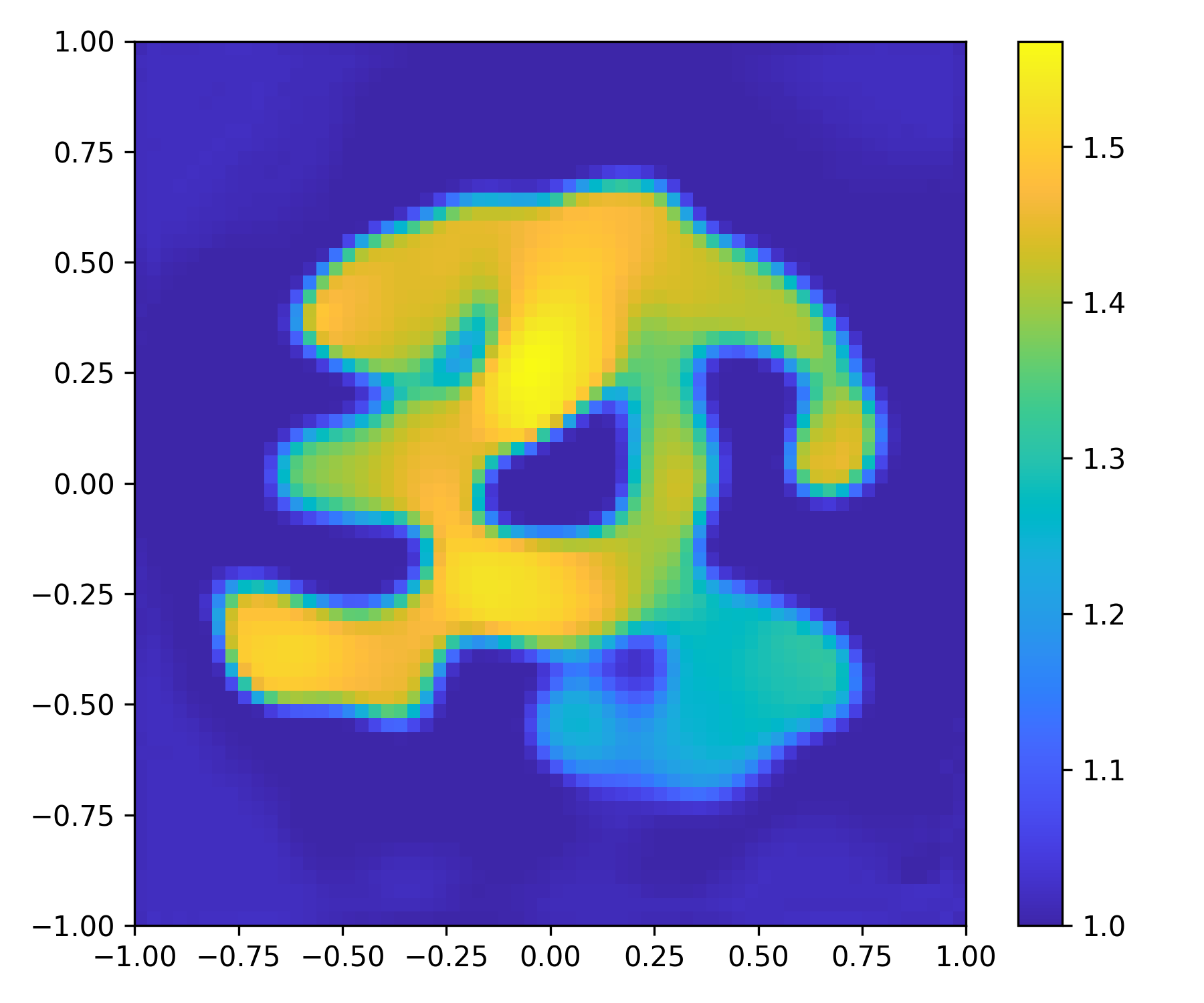}&\includegraphics[width=0.18\textwidth]{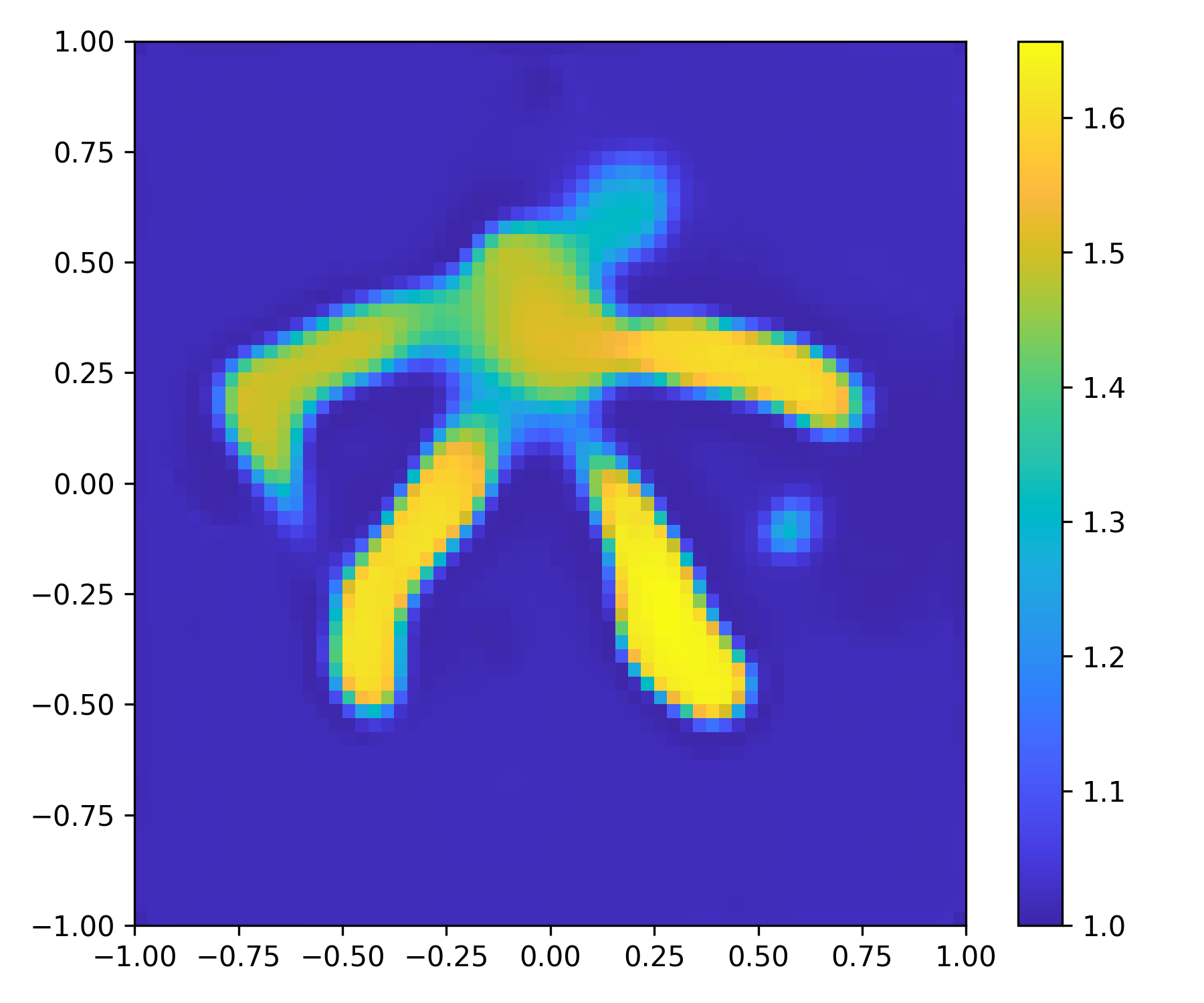} &\includegraphics[width=0.18\textwidth]{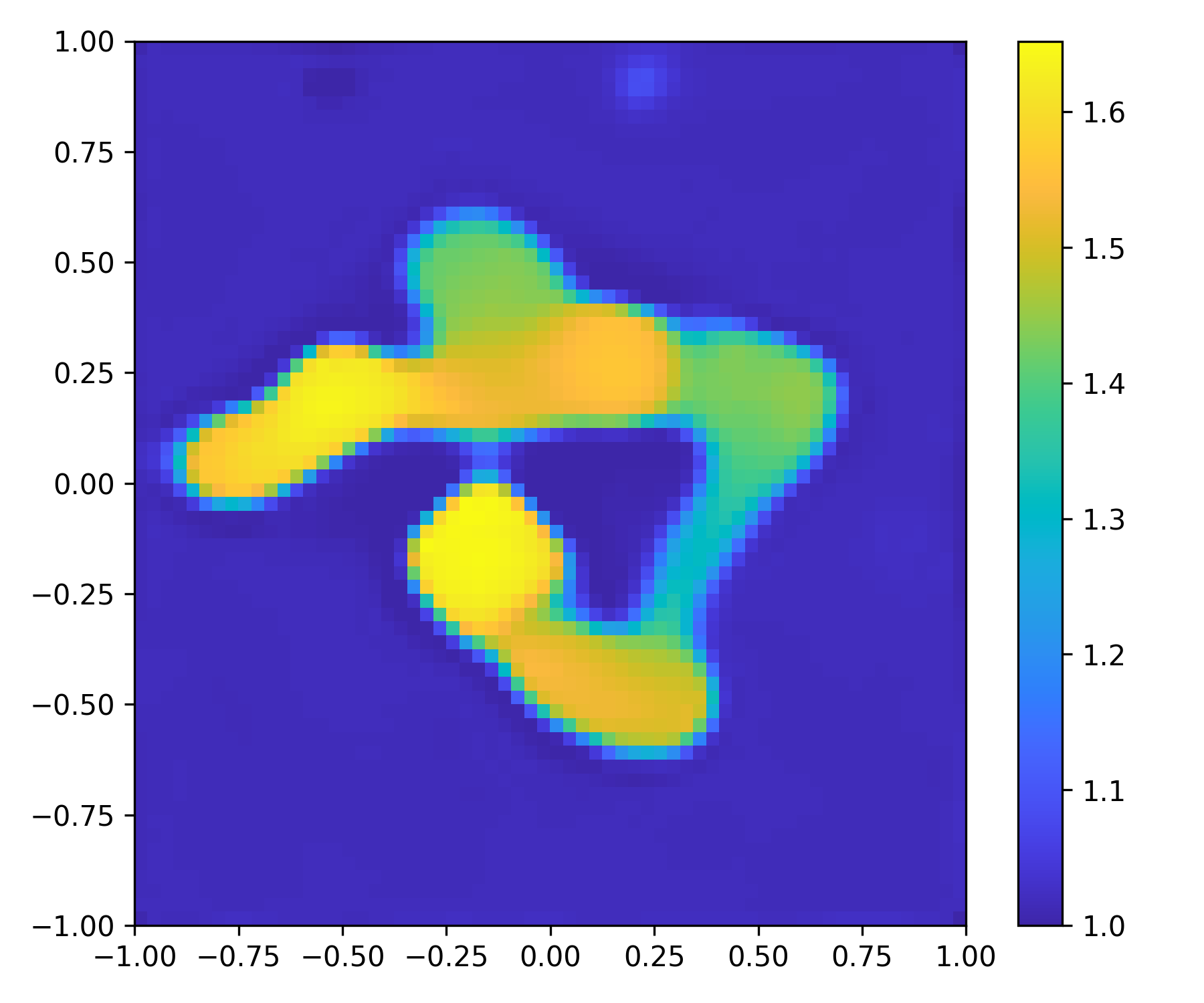}
			&\includegraphics[width=0.18\textwidth]{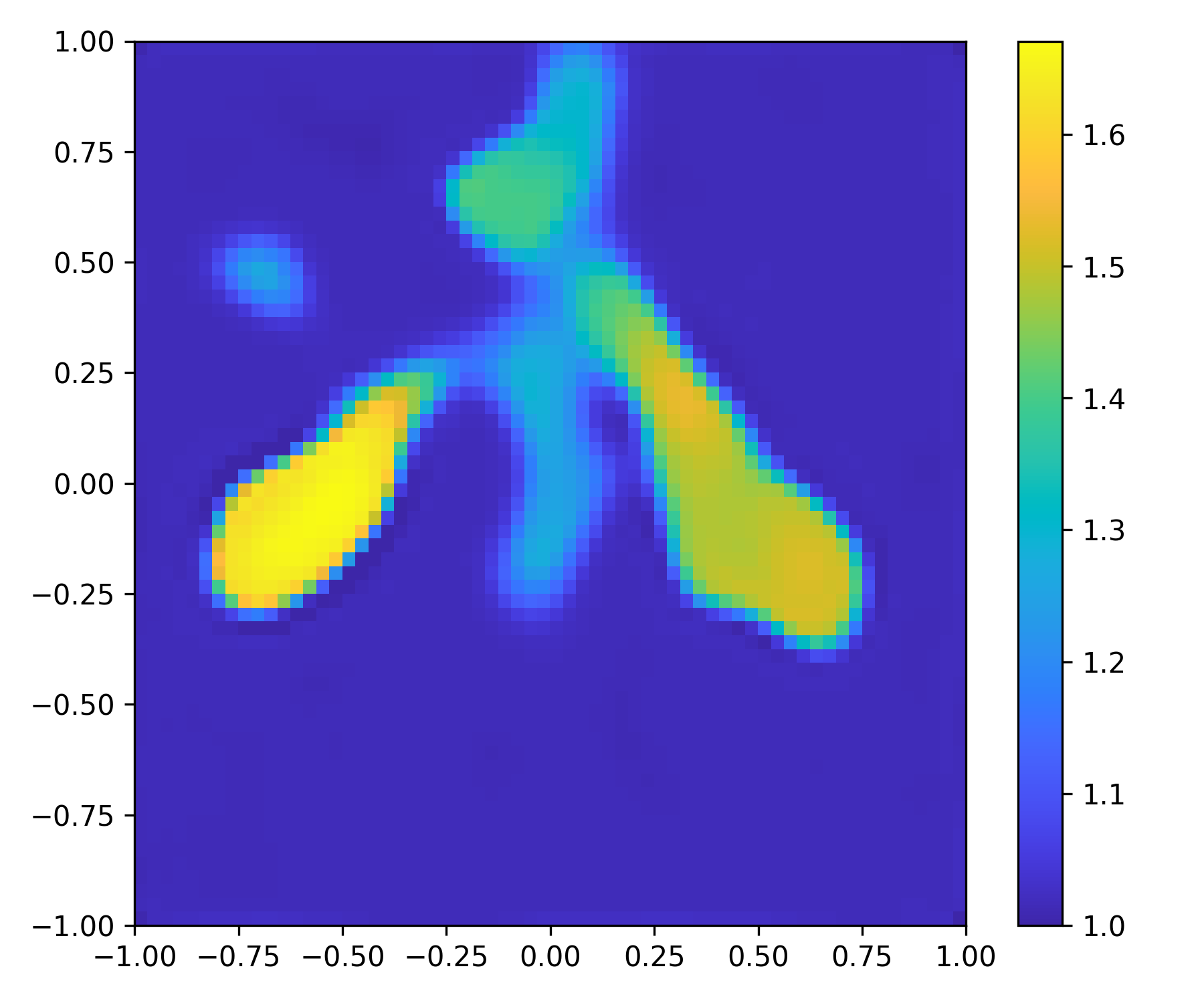}
			&\includegraphics[width=0.18\textwidth]{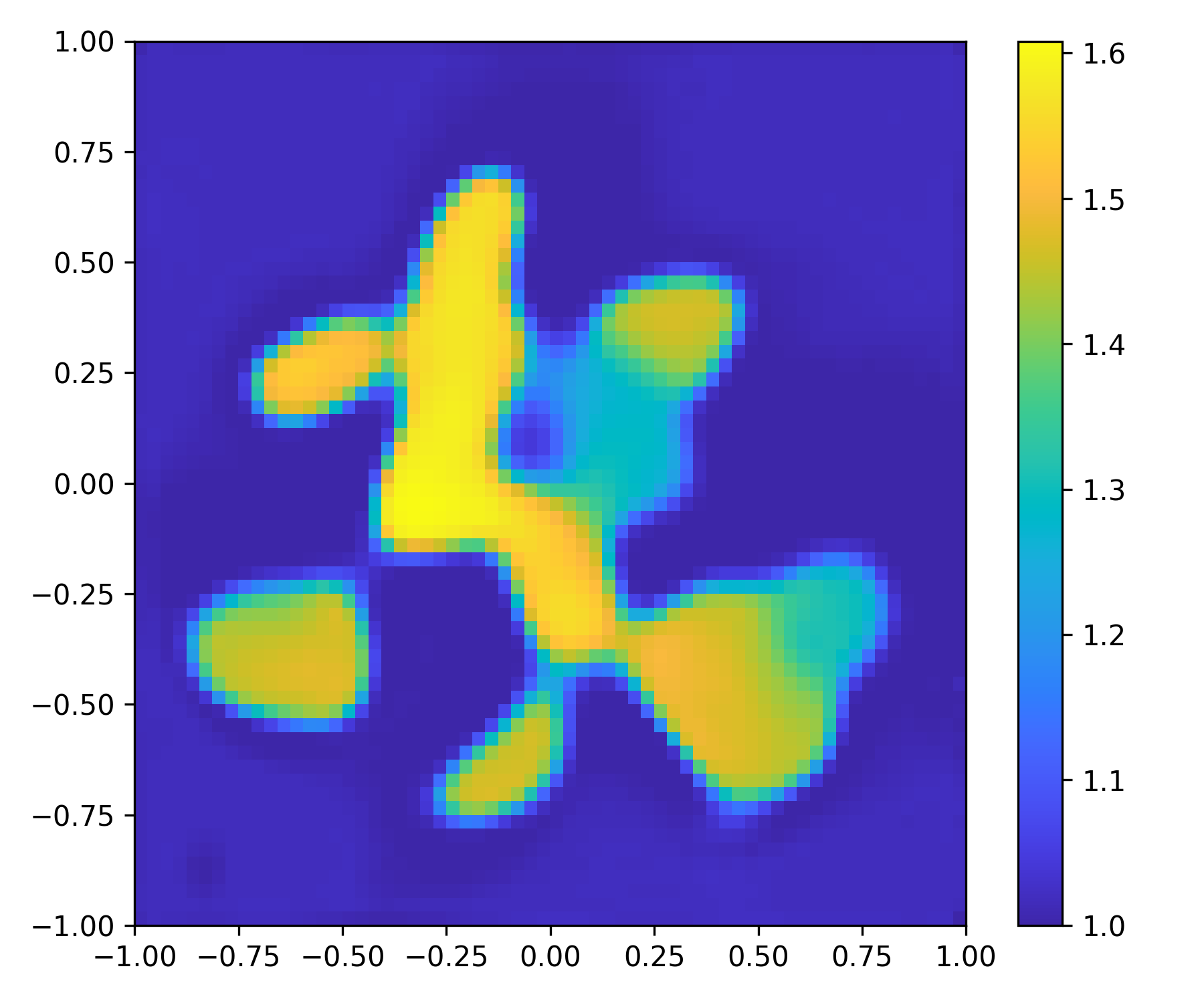}\\
			{$N_i=16$\\ $\delta=5\%$}&
			\includegraphics[width=0.18\textwidth]{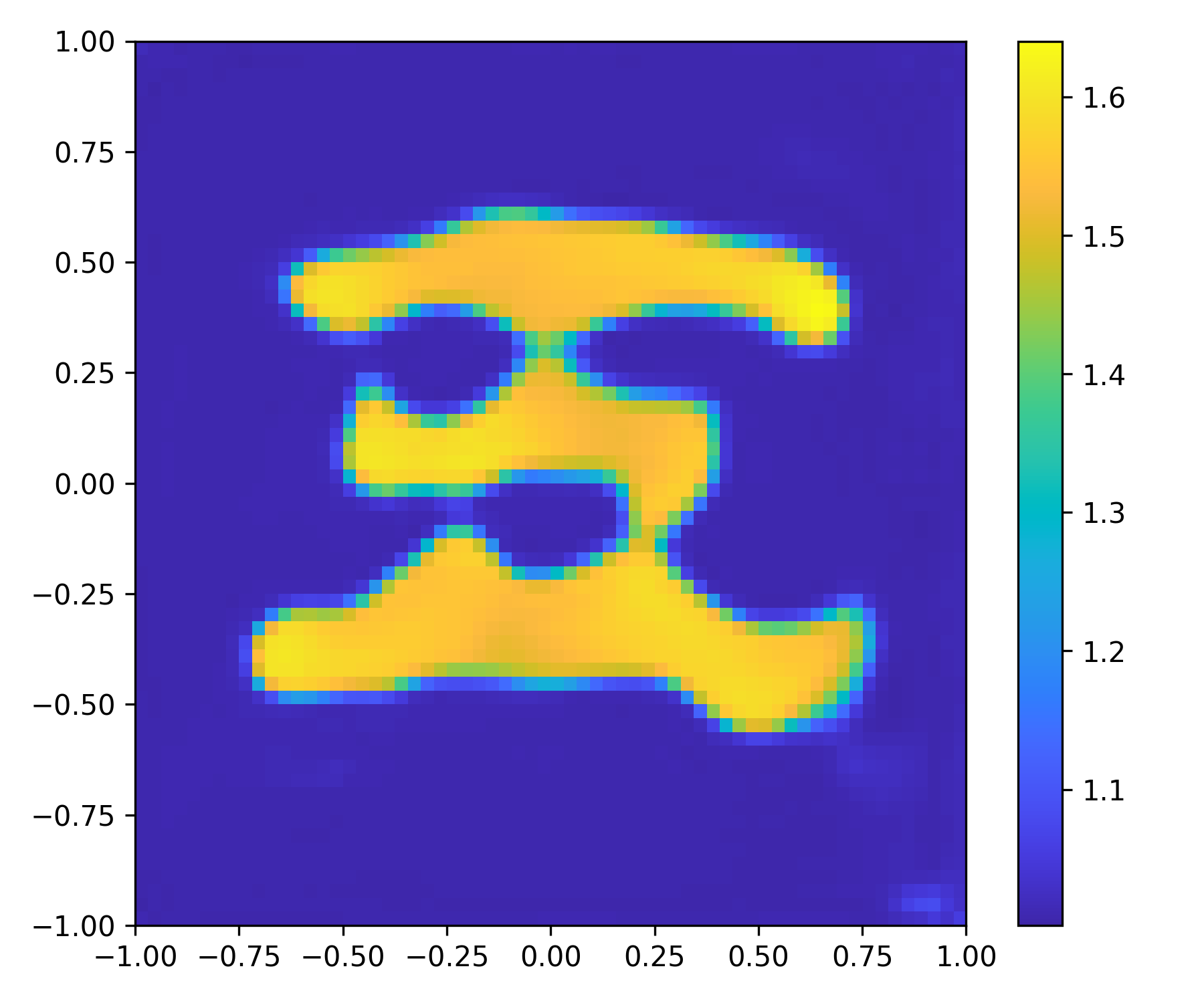}&\includegraphics[width=0.18\textwidth]{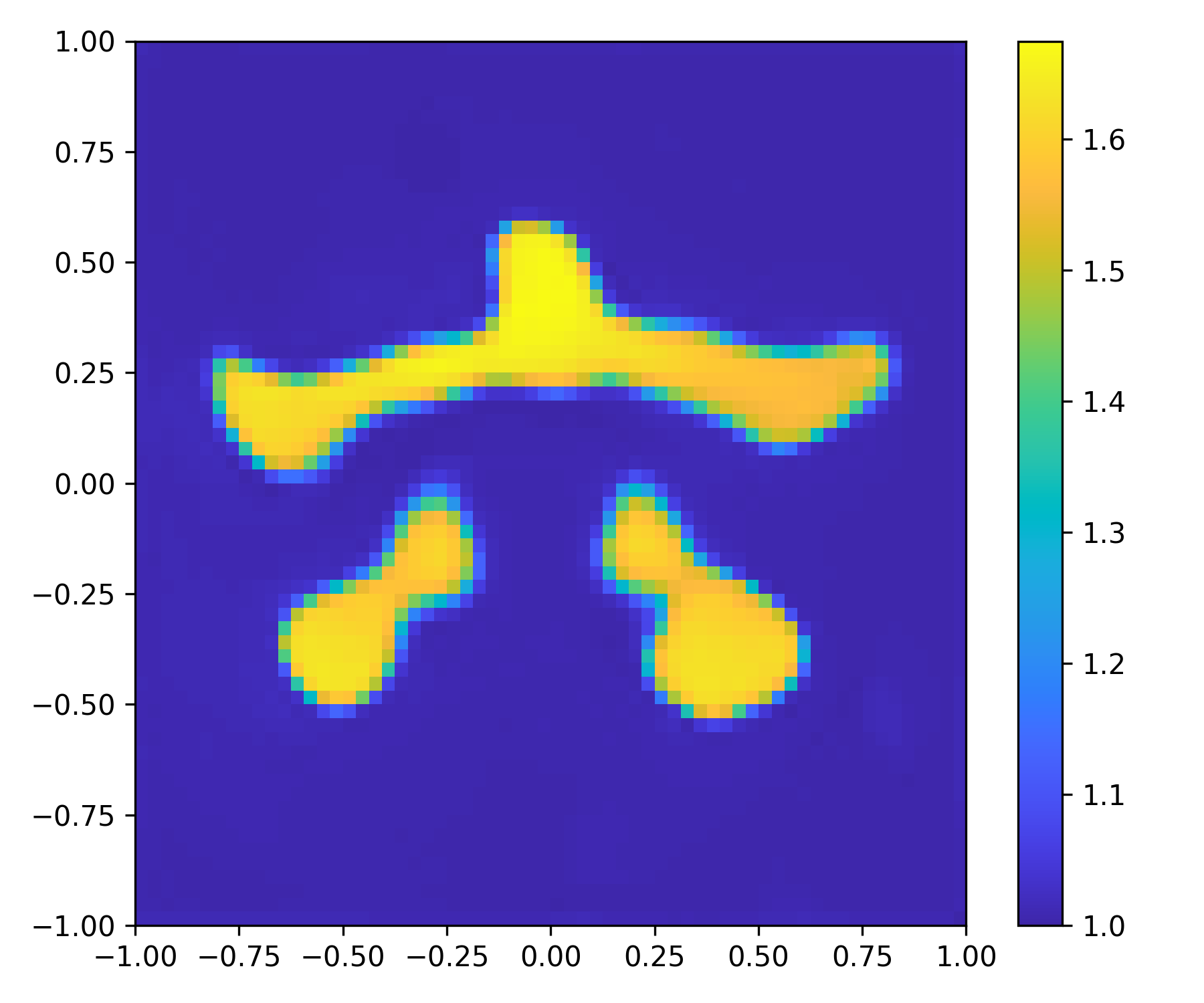} &\includegraphics[width=0.18\textwidth]{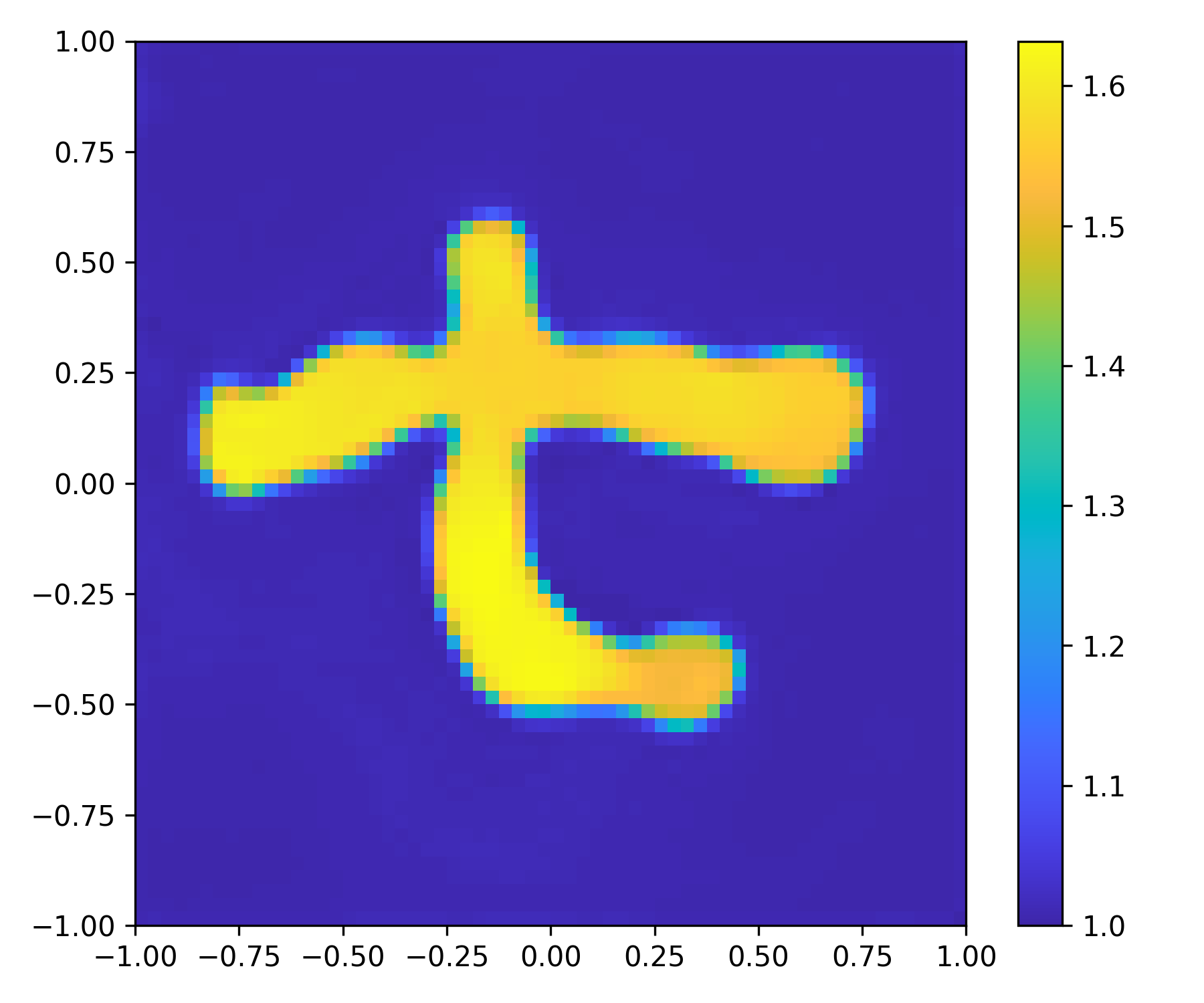}
			&\includegraphics[width=0.18\textwidth]{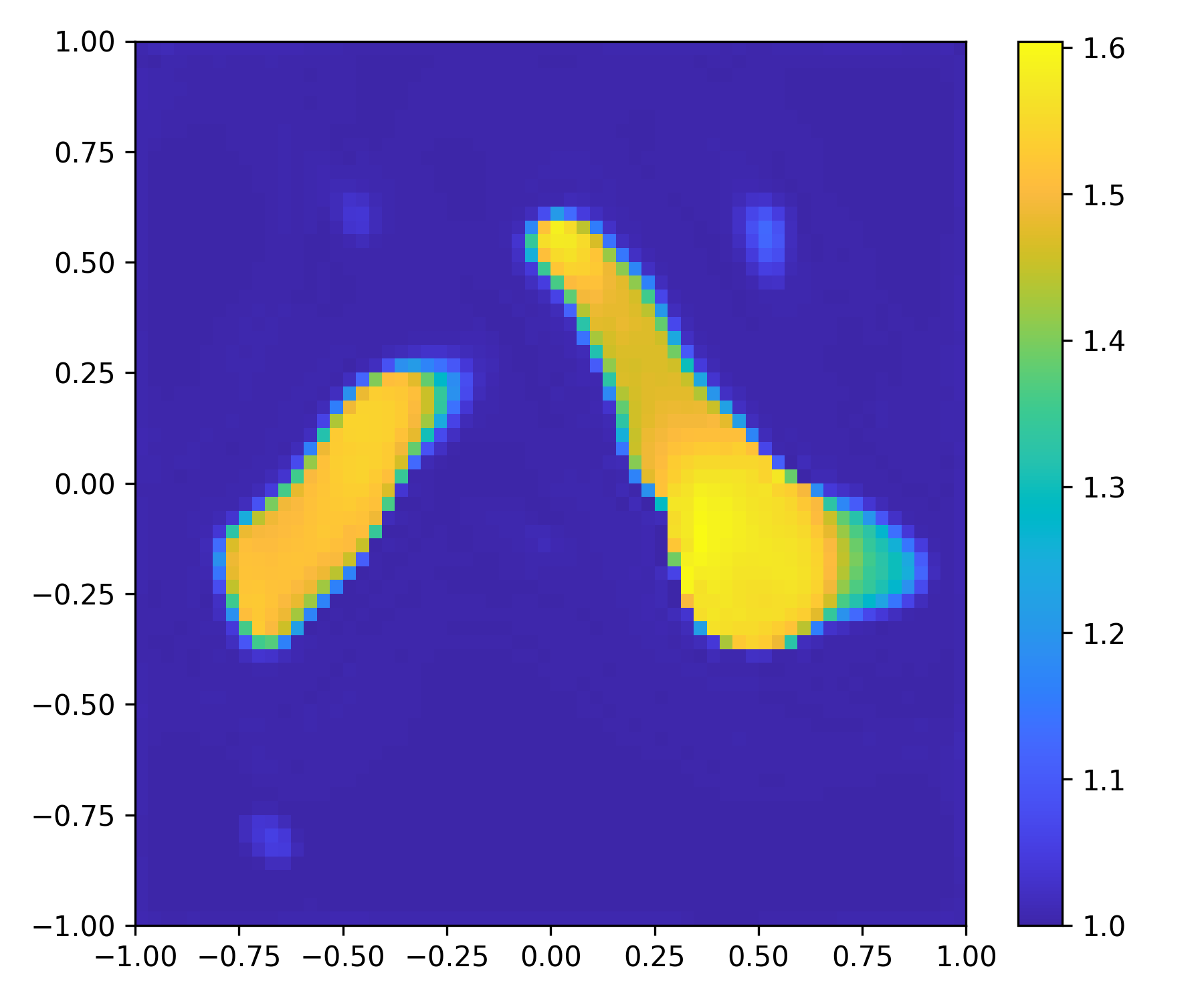}
			&\includegraphics[width=0.18\textwidth]{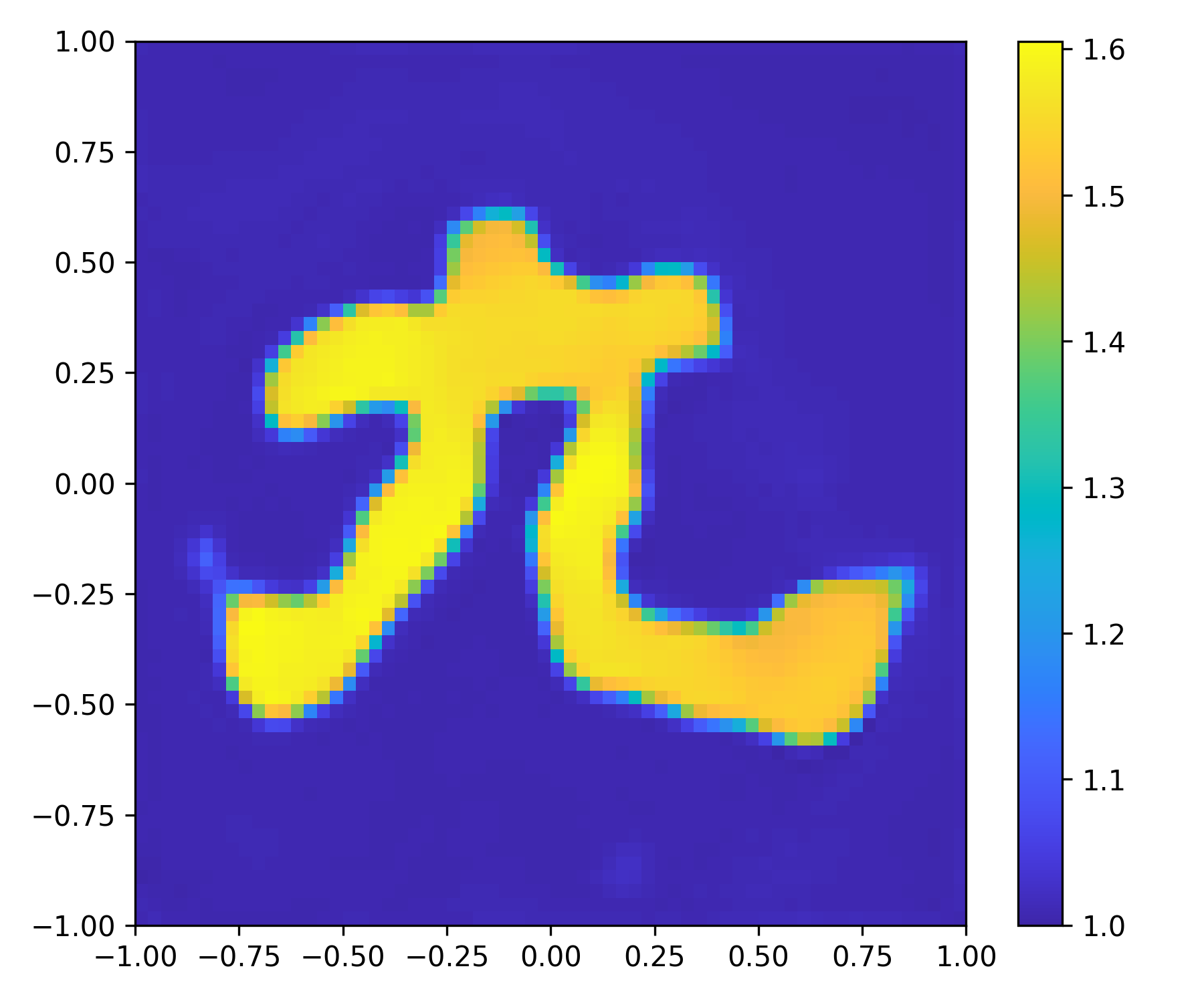}\\
			{$N_i=16$\\ $\delta=10\%$}&
			\includegraphics[width=0.18\textwidth]{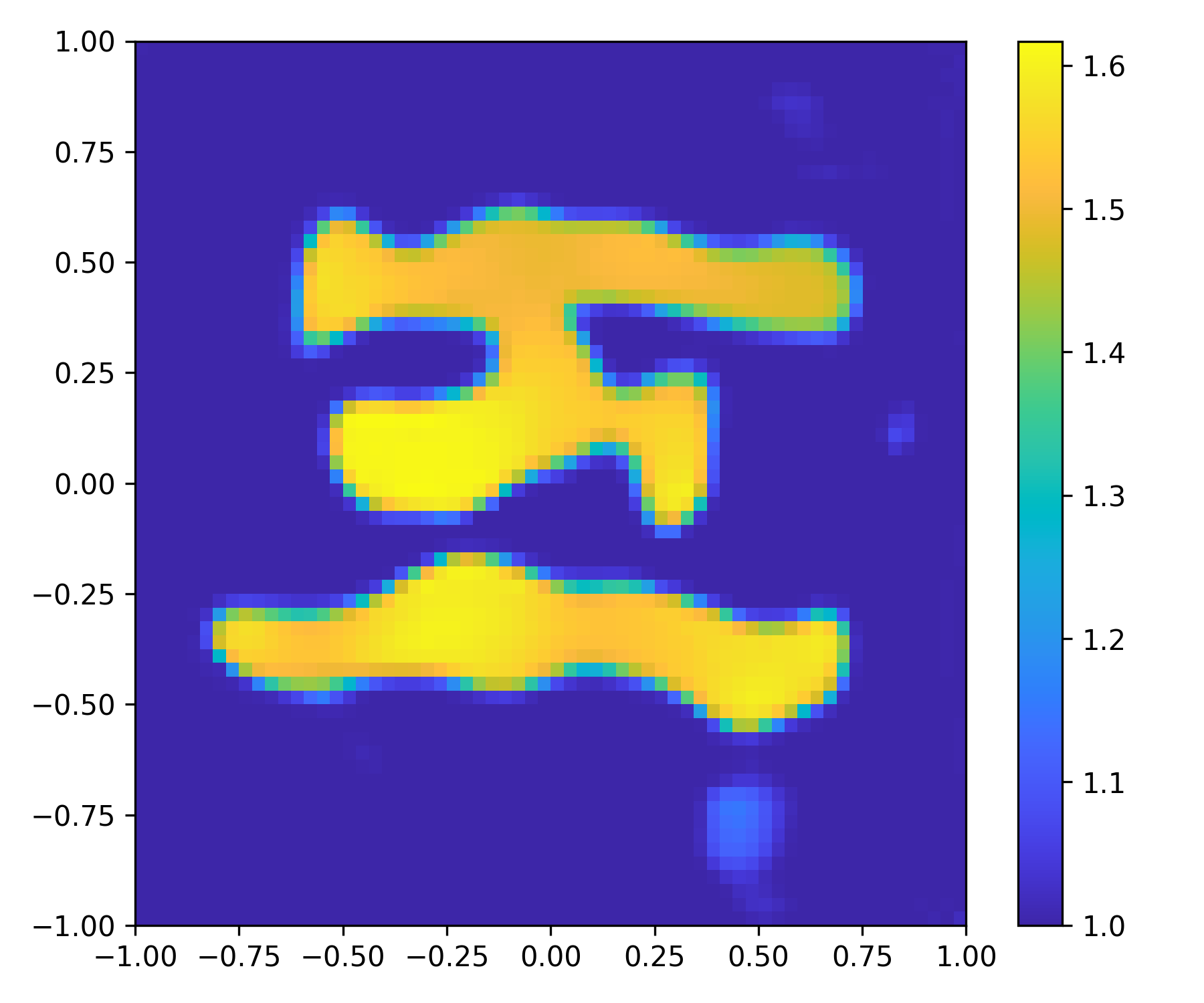}&\includegraphics[width=0.18\textwidth]{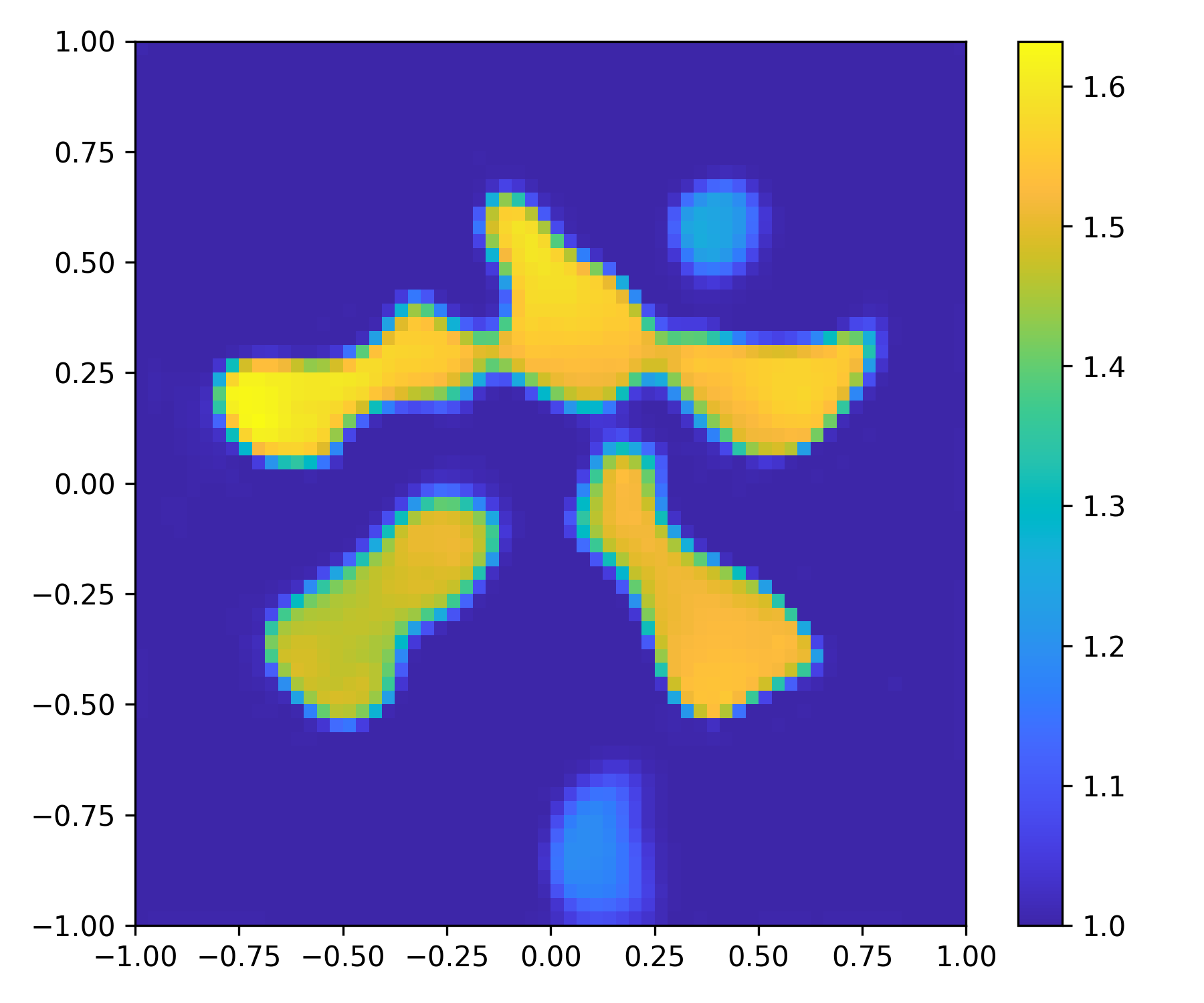} &\includegraphics[width=0.18\textwidth]{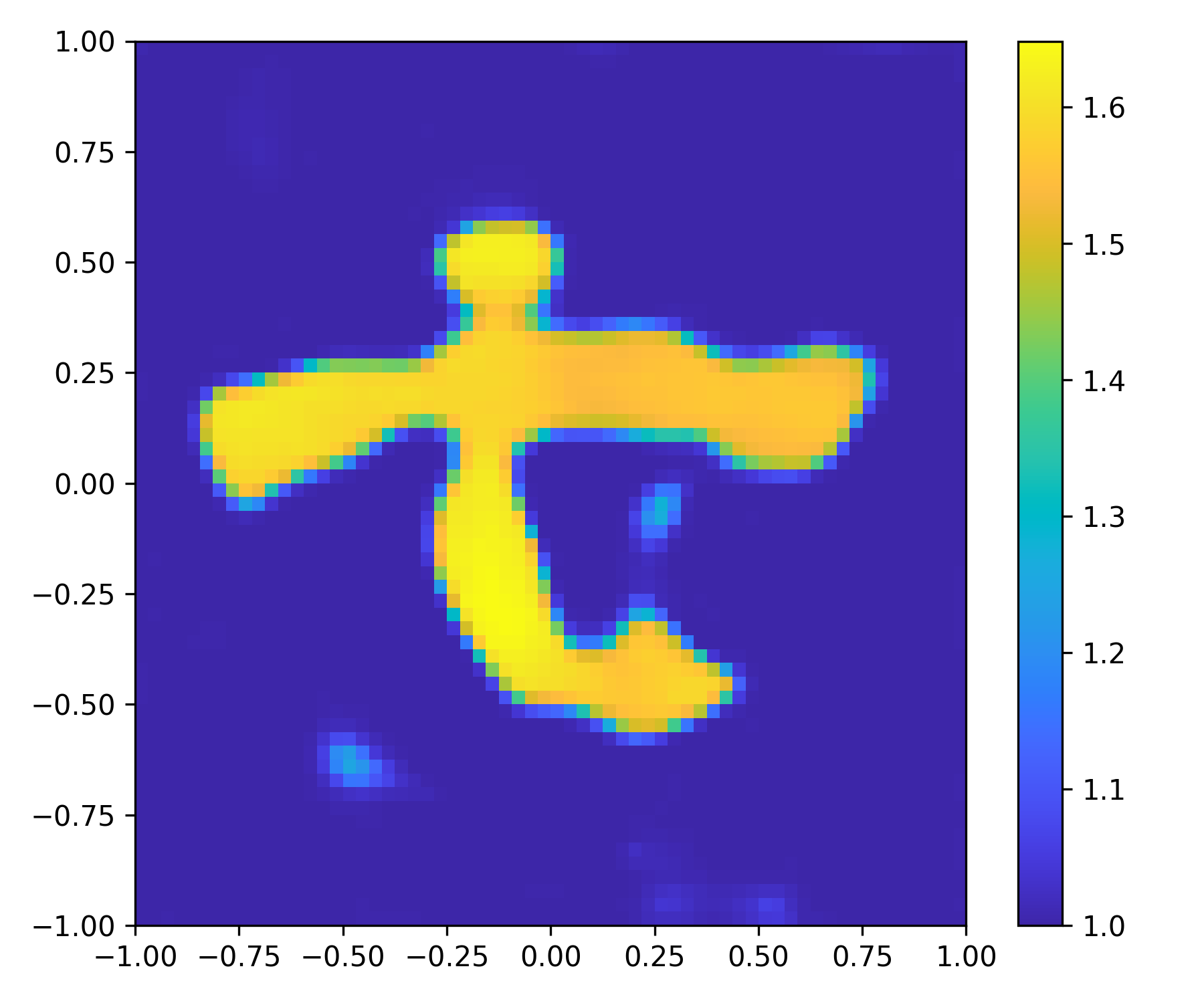}
			&\includegraphics[width=0.18\textwidth]{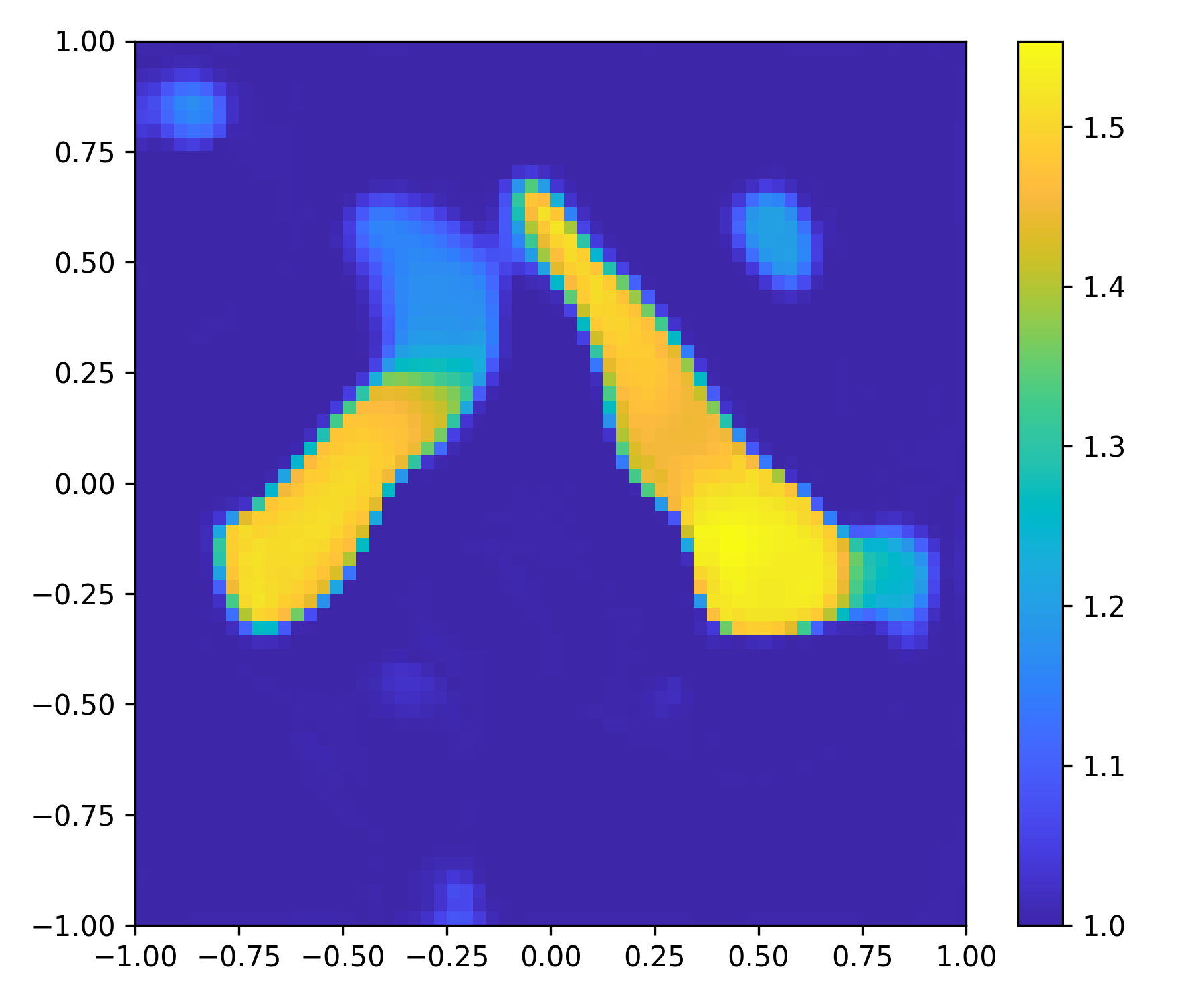}
			&\includegraphics[width=0.18\textwidth]{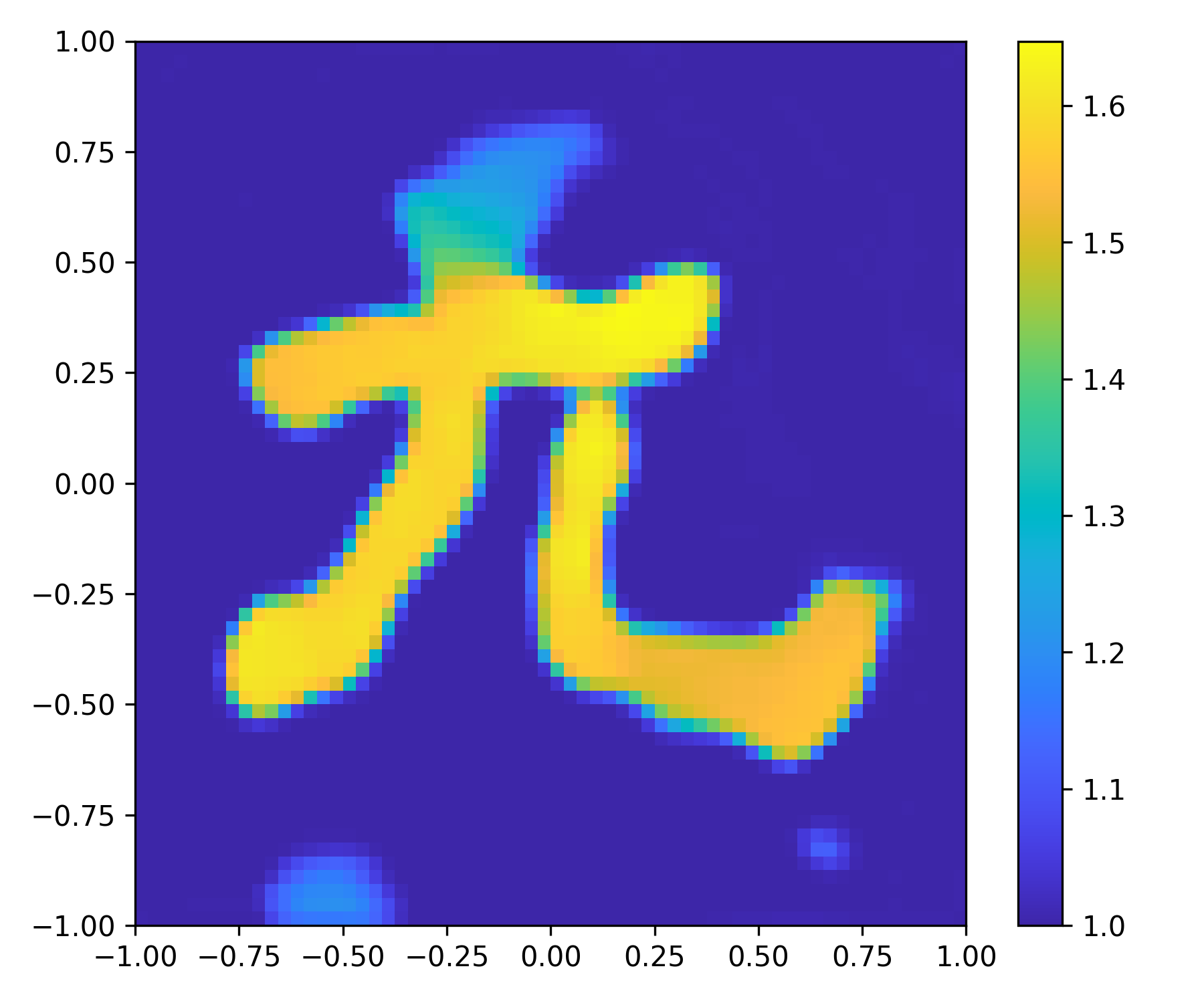}\\
			
		\end{tblr}
		
		\caption{Example  \ref{examp:chinese}: Reconstructed images for 5 Chinese characters by using the networks trained by the MNIST dataset. Row 1: true images; other rows: reconstructions with different incidences and noise levels.}
		\label{Chinese}
	\end{center}
\end{figure}

\subsubsubsection{Tests with “Austria Rings”.}
\label{examp:austria}
Finally,  we apply the trained neural networks to test two different “Austria rings” as shown in Fig.\,\ref{Austria}. The  “Austria ring” is known to be a challenging profile in inverse scattering problems. For the first “Austria” example, the coefficient $n(x)$ of the scatterers is set to 1.5. While for the second “Austria” example, the coefficient $n(x)$ of the two circles is changed to 2.0, which is out of the range in the training data. When $N_i=4$, the method can only provide very rough reconstructions, and the recovered images are heavily affected by high-level noises. By using $N_i=16$ incidences, the method can produce more accurate reconstructions and can distinguish the strong scatterers and the weak scatterer in the second example.

\begin{figure}[htbp]\small
	\begin{center}
		\begin{tblr}
			{colspec = {X[c,m]X[c,h]X[c,h]X[c,h]X[c,h]X[c,h]},
				stretch = 0,
				rowsep = 0pt,}
			Ground Truth& $N_i=4,\delta=5\%$ & $N_i=4,\delta=10\%$ &$N_i=16,,\delta=5\%$ &$N_i=16,,\delta=10\%$\\
			\includegraphics[width=0.18\textwidth]{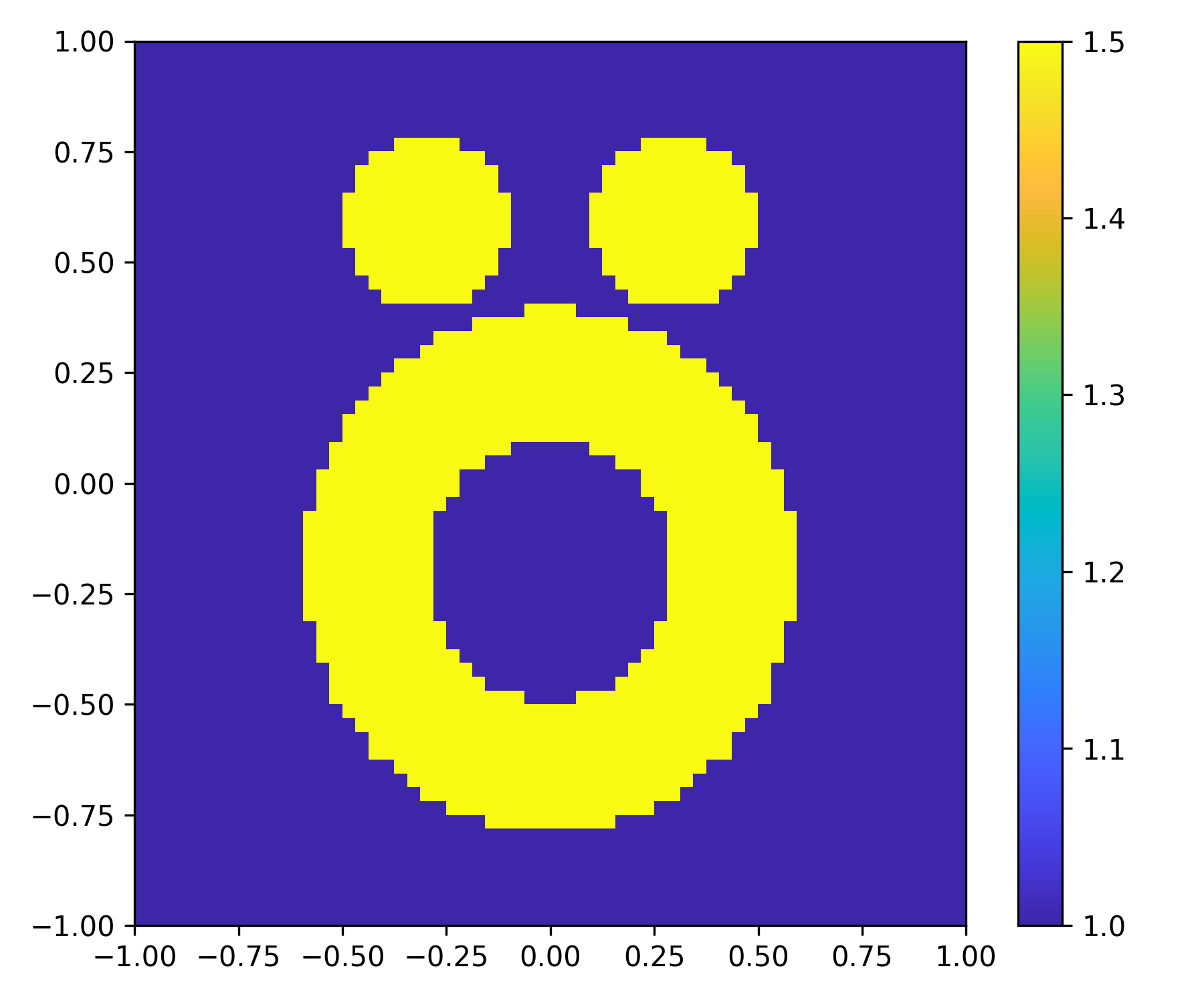}&
			\includegraphics[width=0.18\textwidth]{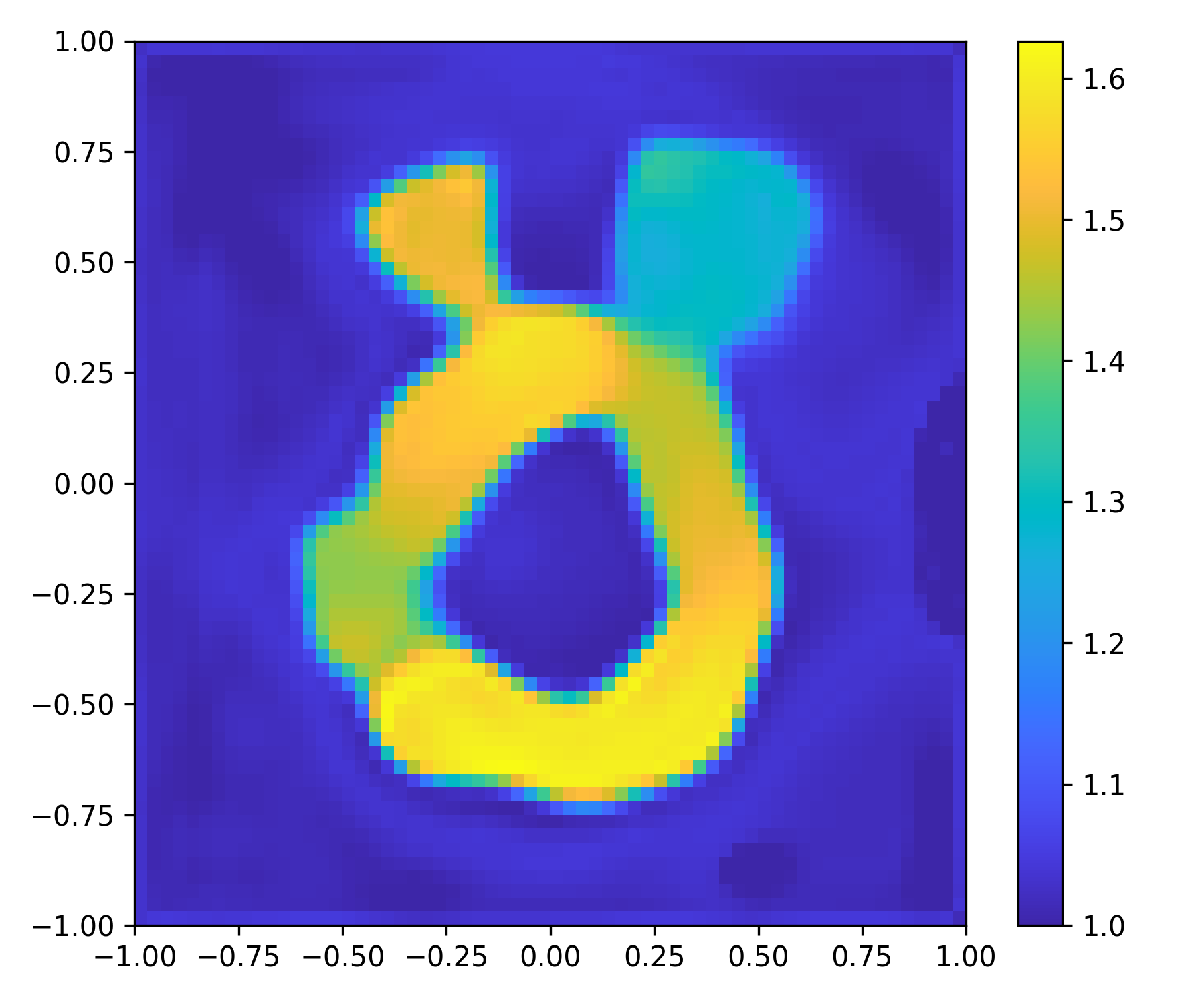}&
			\includegraphics[width=0.18\textwidth]{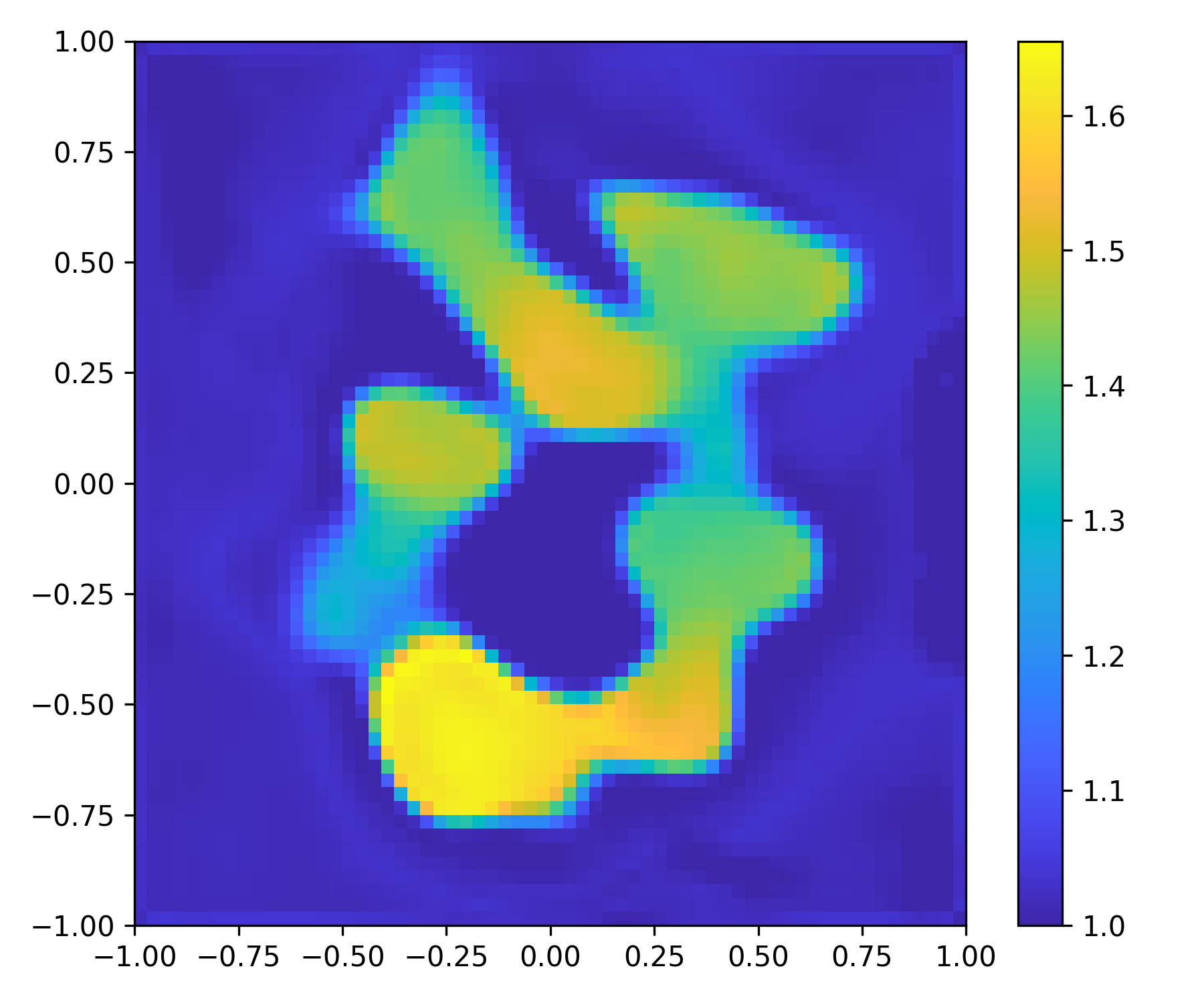}&
			\includegraphics[width=0.18\textwidth]{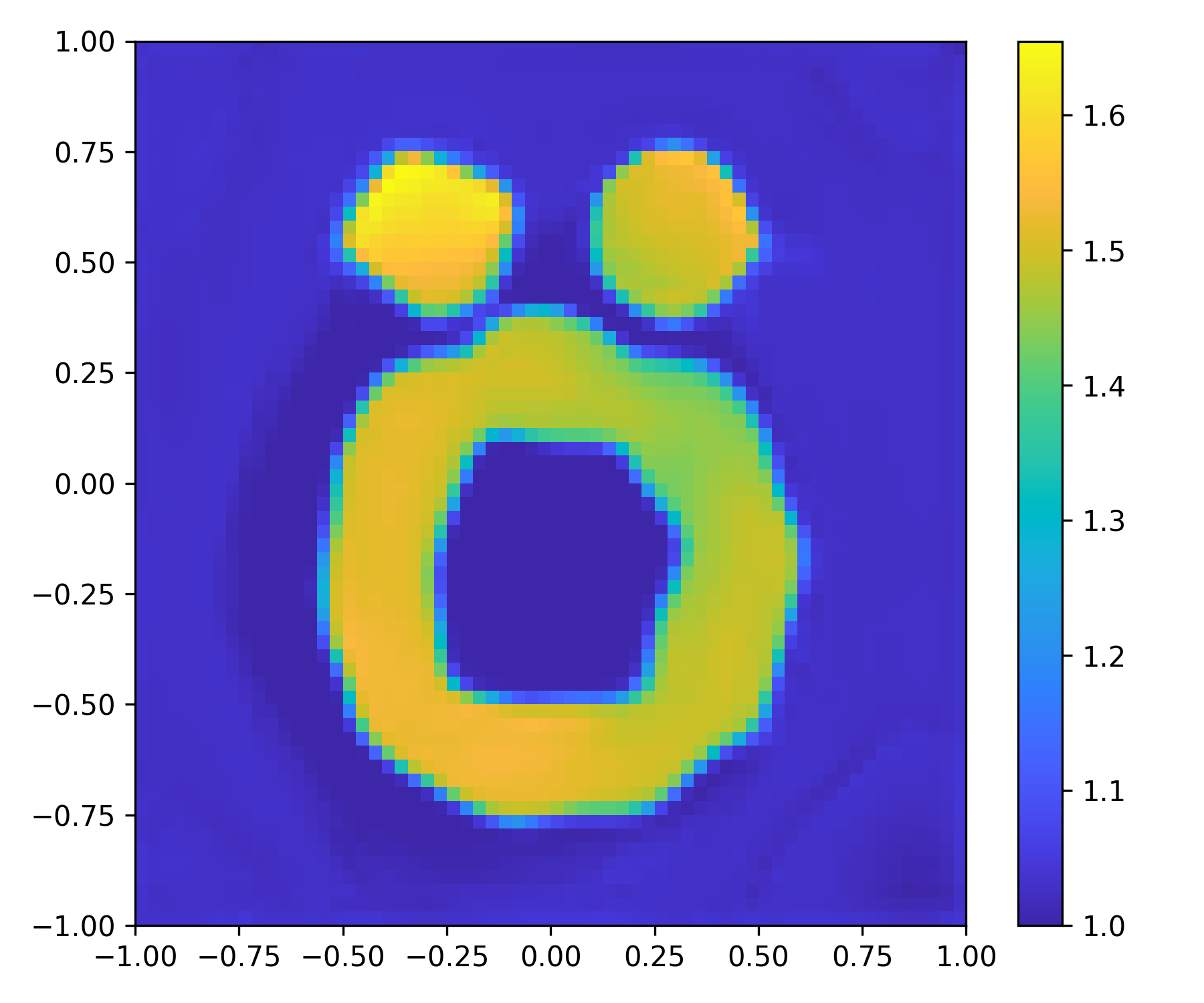}&
			\includegraphics[width=0.18\textwidth]{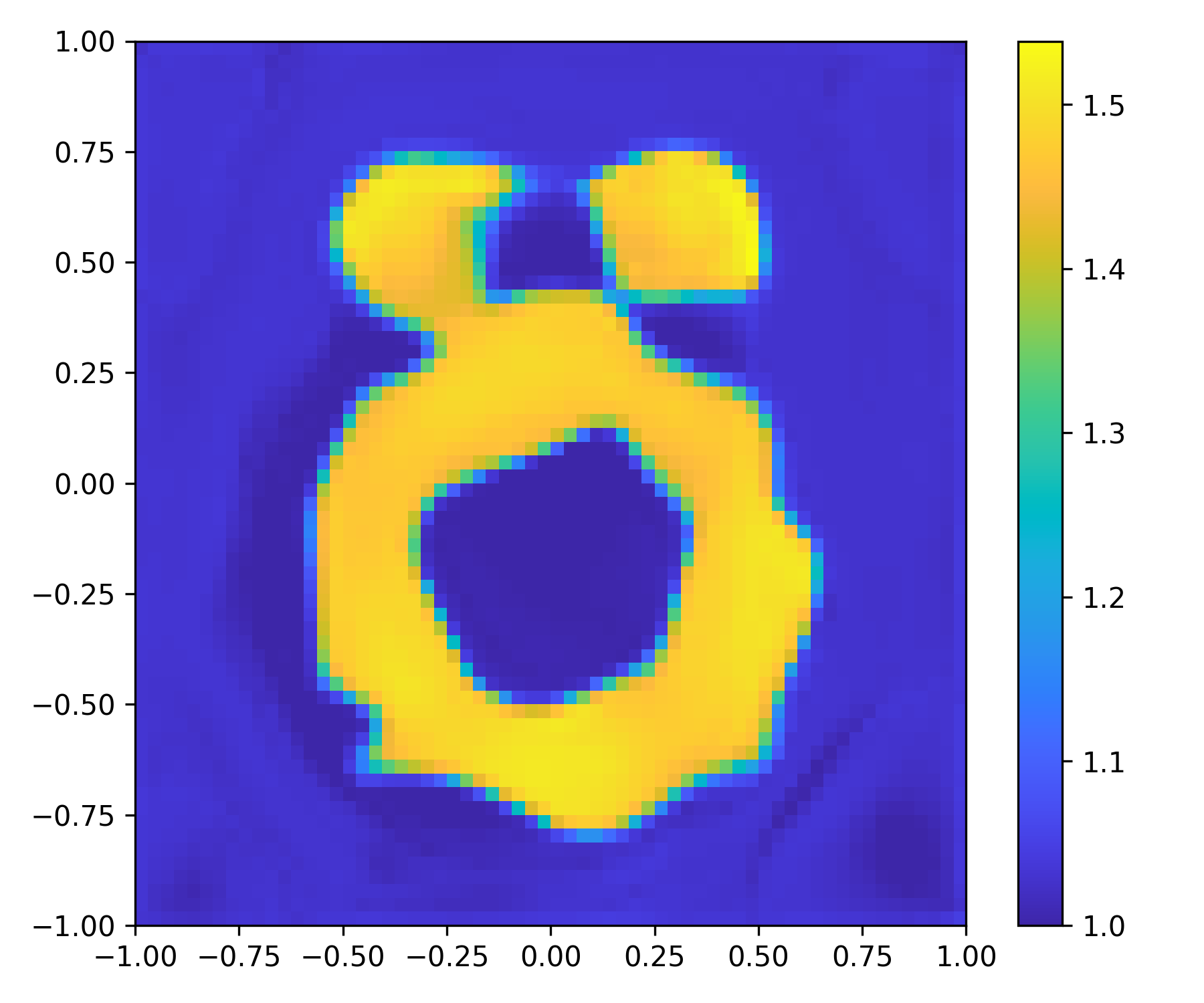}\\
			\includegraphics[width=0.18\textwidth]{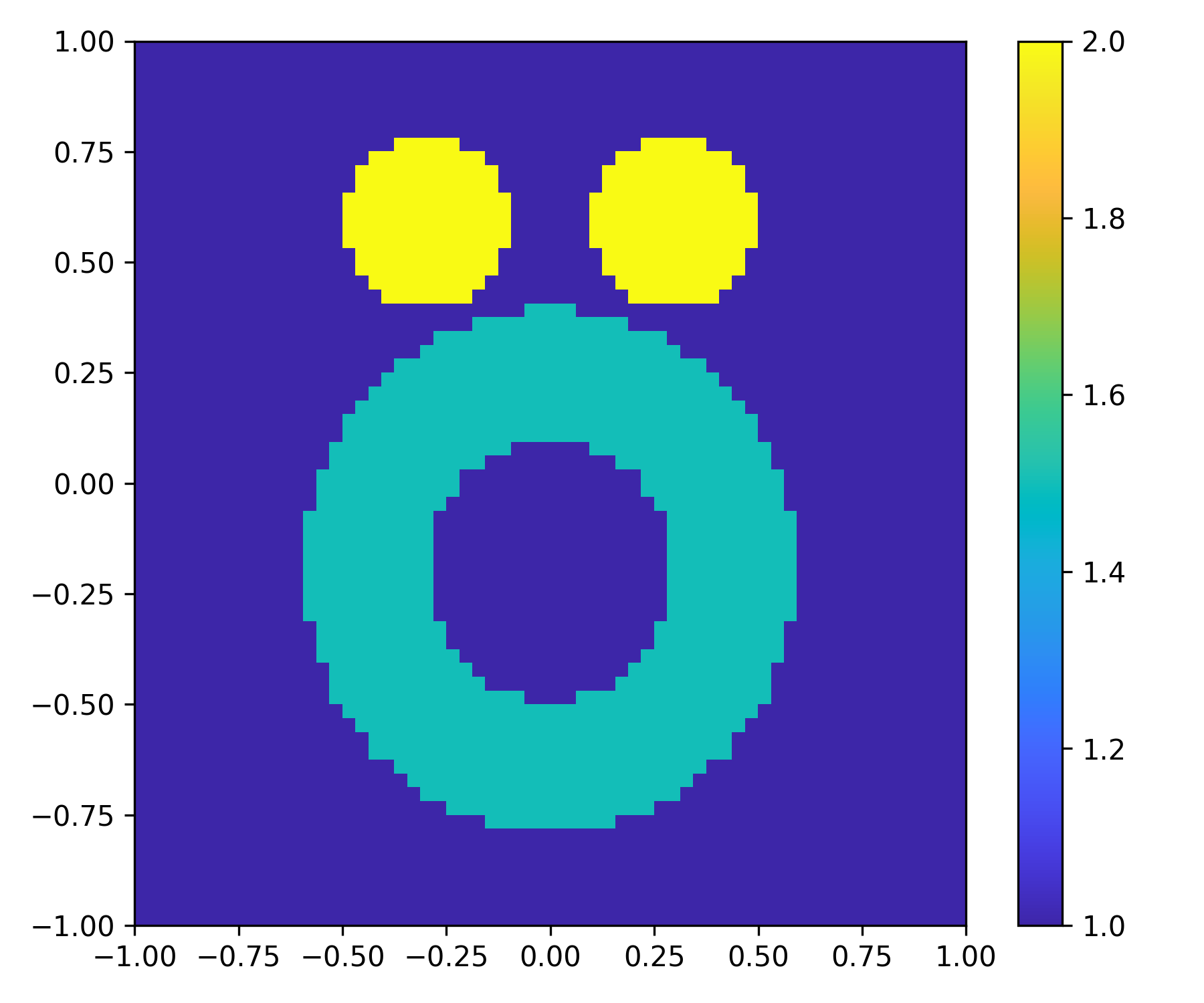}&
			\includegraphics[width=0.18\textwidth]{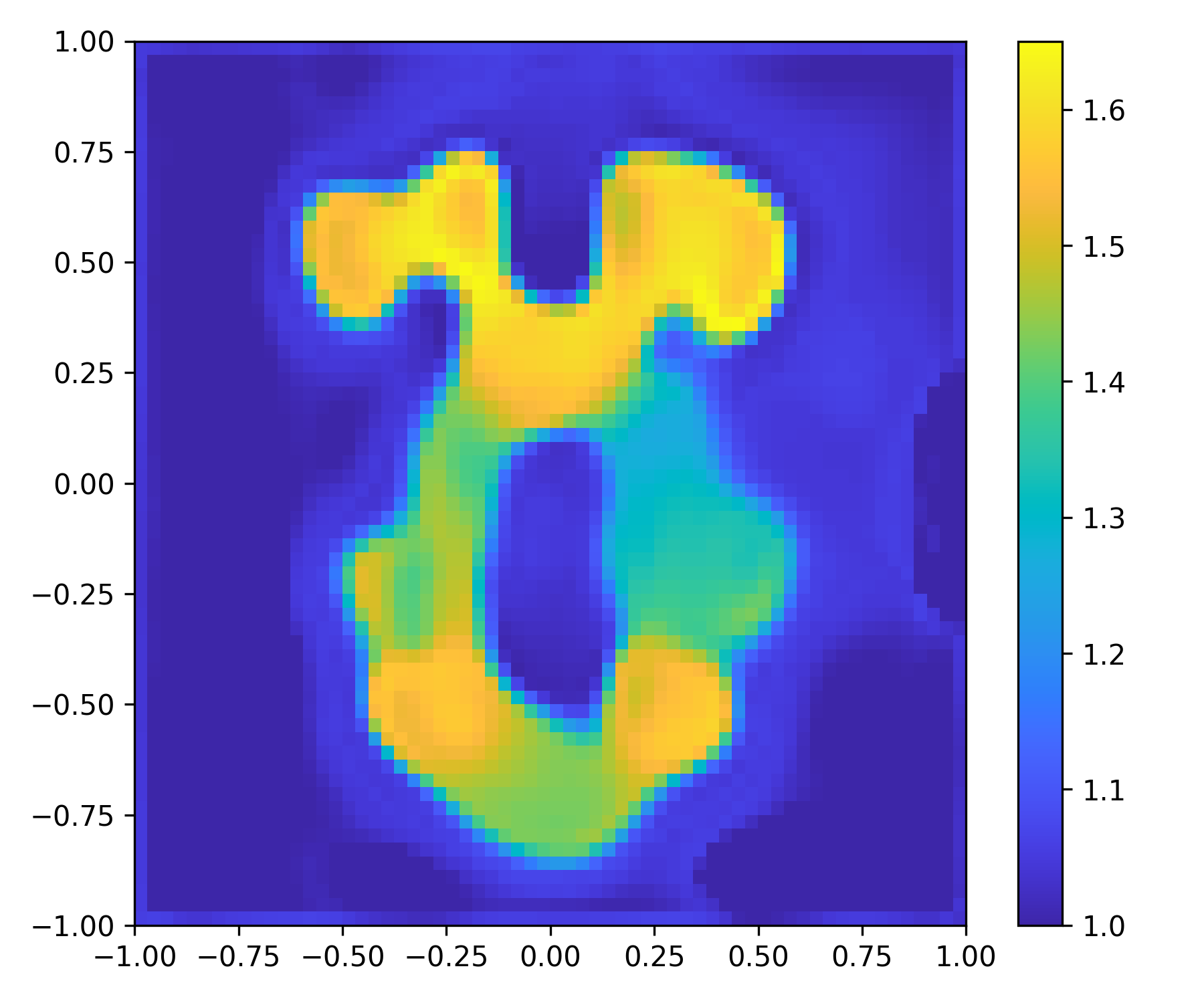}&
			\includegraphics[width=0.18\textwidth]{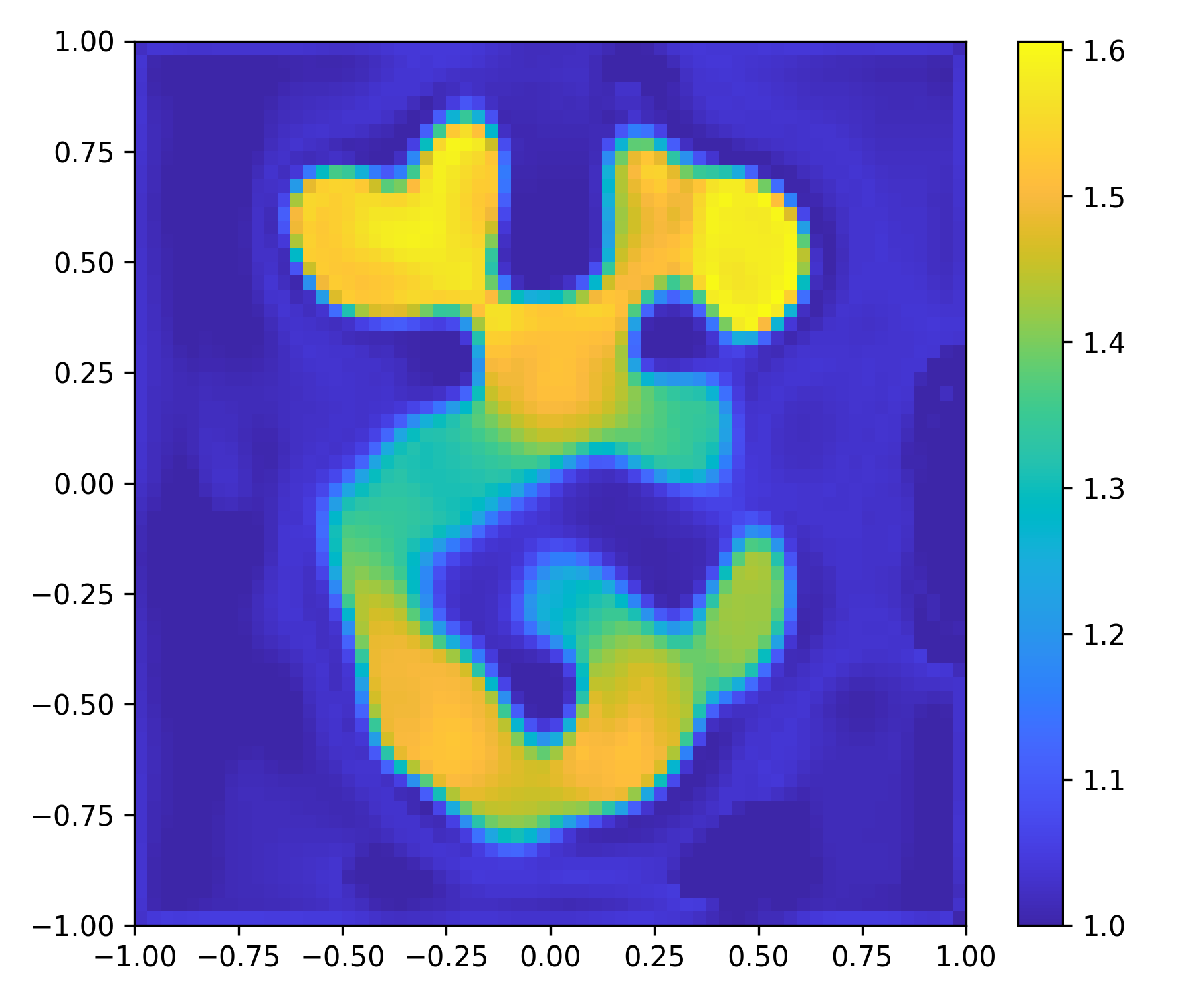}&
			\includegraphics[width=0.18\textwidth]{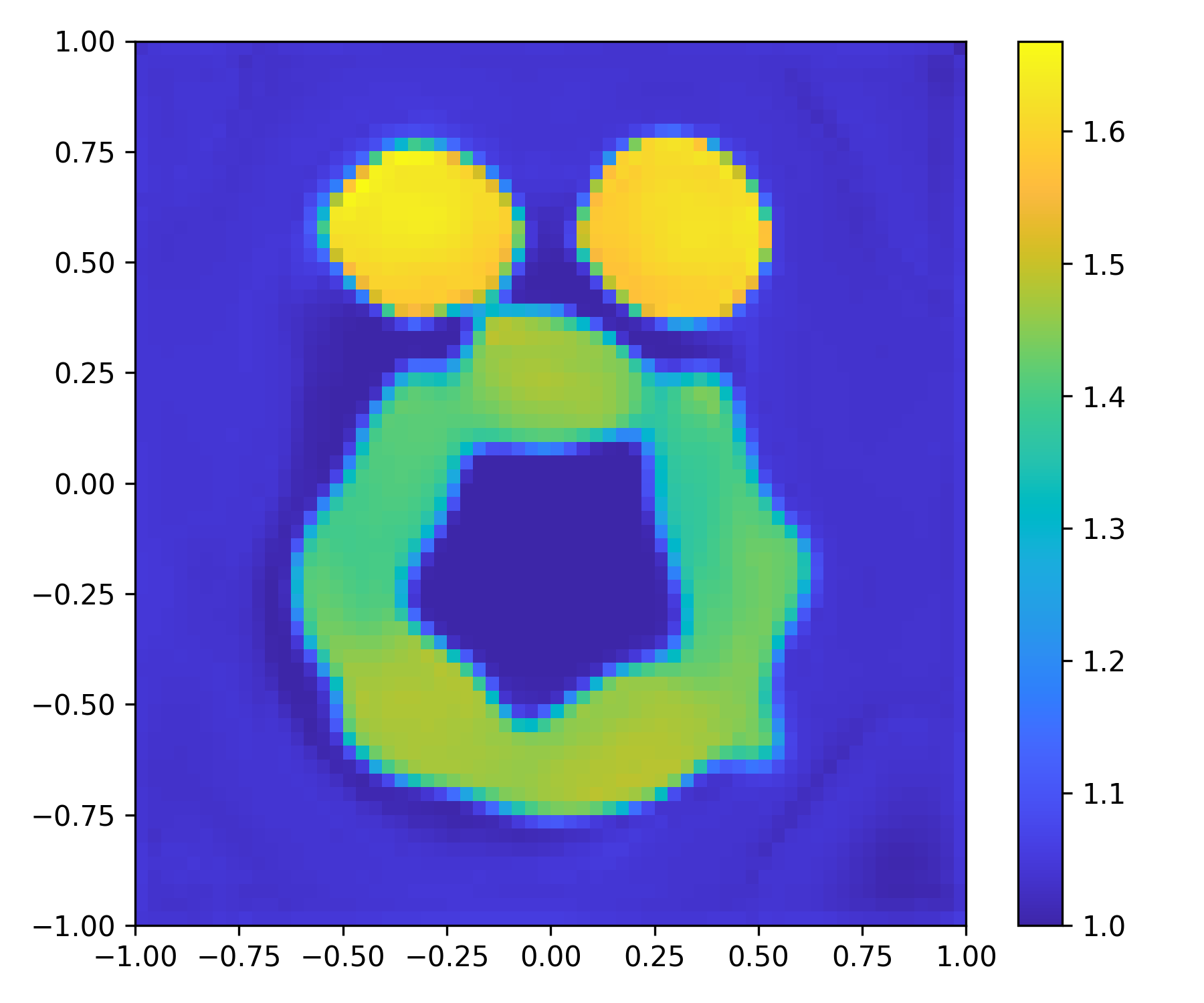}&
			\includegraphics[width=0.18\textwidth]{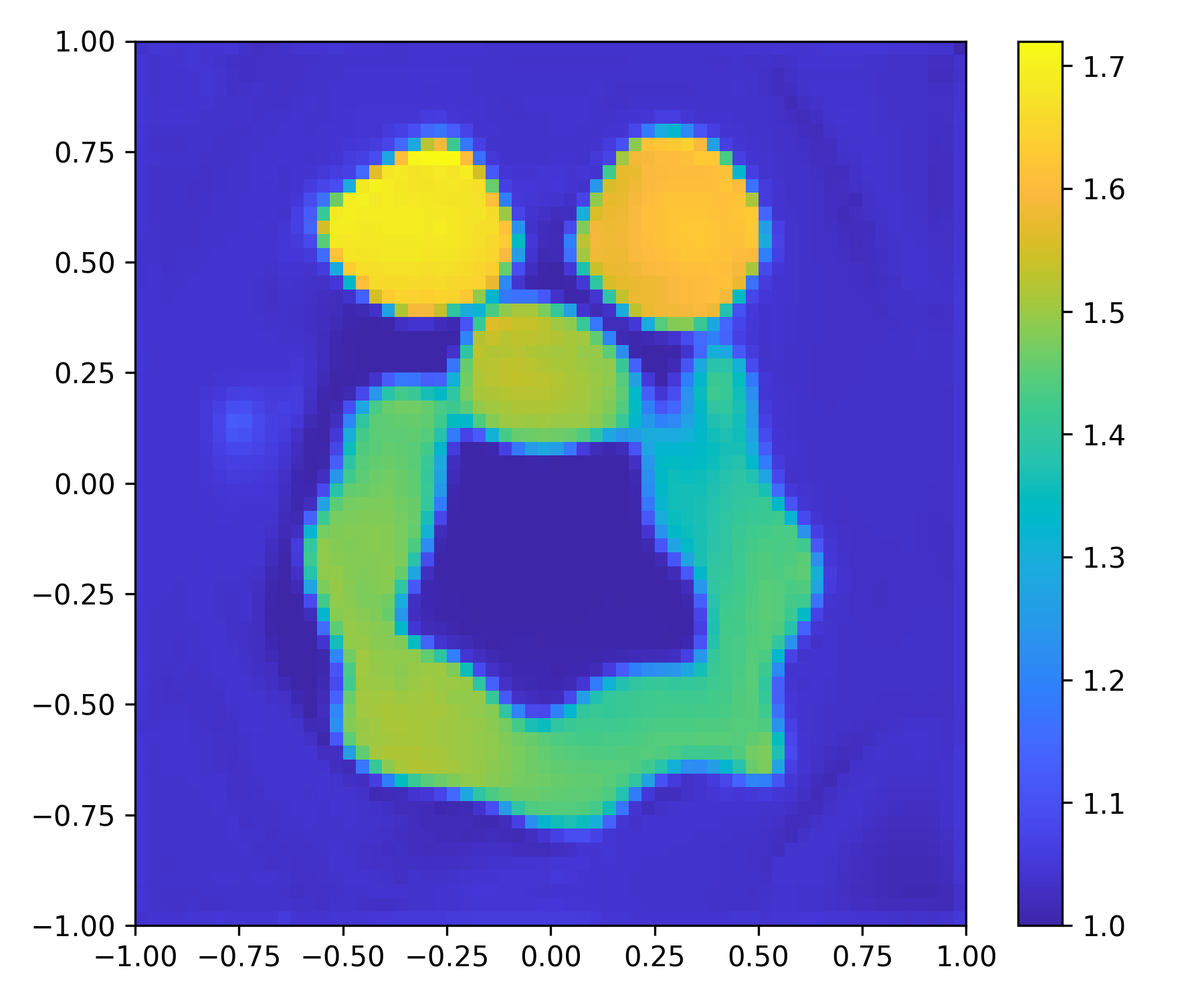}
			
		\end{tblr}
		
		\caption{Example \ref{examp:austria}: Reconstructed images for two “Austria profiles” by using the networks trained by the MNIST dataset. Column 1: true images; other columns: reconstructions with different incidences and noise levels.}
		\label{Austria}
	\end{center}
\end{figure}
\begin{table}[hbtp]
	\begin{center}
		\begin{tabular}{c|c|c|c|c}
			\hline
			\textbf{Example} & $\mathbf{N_i=4, \delta=5\%}$ & $\mathbf{N_i=4, \delta=10\%}$ &
			$\mathbf{N_i=16,\delta=5\%}$ &
			$\mathbf{N_i=16, \delta=10\%}$	
			\\			
			\hline
			\textbf{MNIST} & 0.0827 & $0.1043$& $0.0617$ &$0.0755$ \\
			\textbf{Chinese characters} & $0.1096$ & $0.1252 $& $0.0721$ &$0.0854$ \\
			\textbf{Austria Ring 1} & $0.1163$ & $0.1258$& $0.0851$ &$0.0922$ \\
			\textbf{Austria Ring 2} & $0.1897$ & $0.1810 $& $0.1260$ &$0.1367$ \\	
            \bottomrule
		\end{tabular}
	\end{center}
  \captionsetup{skip=1pt}
\caption{Examples \ref{examp:mnist}, \ref{examp:chinese} and \ref{examp:austria}: The relative L2 errors for different examples with the networks trained by the MNIST dataset.}
	\label{tab:MNIST}
\end{table}

\subsubsection{Training with a mixed circle dataset by phased data}\label{NE_mixed}

We now consider a mixed problem in which the domain of interest may consist of both medium scatterers and sound-soft obstacles. To further show the convergence property of the index functions, the network is trained by phased data, i.e, with the index functions\eqref{DSM} computed form phased data as the input of network, and then applied to solve problems with phaseless data. This is a very important advantage of the DSM-DL as it is usually difficult to recover phased information from phaseless data. In the training dataset, $1\sim 3$ circles with varying radius drawn from the uniform distribution $U(0.2,0.3)$ are randomly added to the domain to simulate scatterers. Scatterers are not allowed to overlap and each scatterer has an equal probability of being a medium or sound-soft obstacle. The coefficient value of the medium scatterer is randomly set from $U(1.5,3.0)$, and we still set the pixel value for points inside the sound-soft obstacles to be 0.  We will employ 10 incident plane waves and employ $180$ receivers that are equally distributed on a circle of radius 8 centered at the origin to collect both phased data and phaseless data. We use 20000 datasets to train the network and 200 datasets for testing.  The batch size is taken as 20 and a total of 30 epochs in the training stage. The learning rate starts at 0.001 and decreases by a factor of 0.5 every 3 epochs.

\subsubsubsection{Tests with mixed circle testing data by phaseless data.} 
\label{examp:mixed}
In this example, we employ the model trained by phased data to solve 200 mixed circle examples with phaseless data, and compare the results by adding different noises to the measurement data. The reconstructed images of five typical cases are presented in Fig.\,\ref{fig:MixCir}. The shown results not only prove the capability of the DSM-DL for dealing with inverse scattering problems with mixed scatterers in a unified framework, but also show that the DSM-DL network trained by phased data can also be applied to phaseless data. We observe that the method can accurately identify the boundaries, locations, sizes, and coefficient values of the scatterers. In particular, the results are still quite satisfactory when high-level noises are added to the data. We remark that although the DSM-DL network trained by phased data can be applied to phaseless data, the DSM-DL network trained by phaseless data actually has better performance for phaseless testing data. This is due to the difference between the index functions computed from phased data and phaseless data but the difference is minor when $R_r$ is sufficiently large.

\begin{figure}[htbp]\small
	\begin{center}
		\begin{tblr}
			{colspec = {X[-1,m]X[c,h]X[c,h]X[c,h]X[c,h]X[c,h]},
				stretch = 0,
				rowsep = 0pt,}
			{Ground\\ Truth}&
			\includegraphics[width=0.18\textwidth]{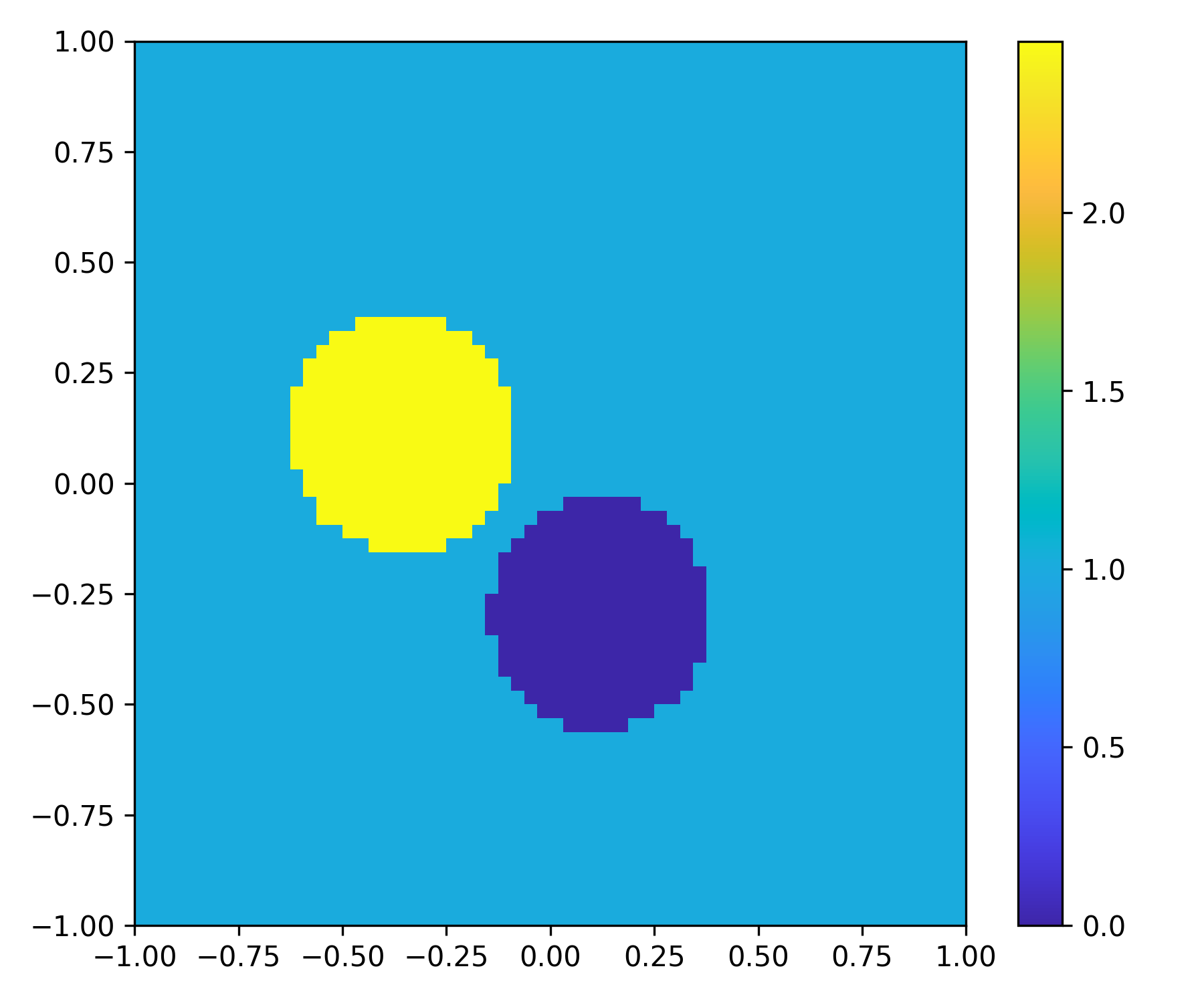}
			&\includegraphics[width=0.18\textwidth]{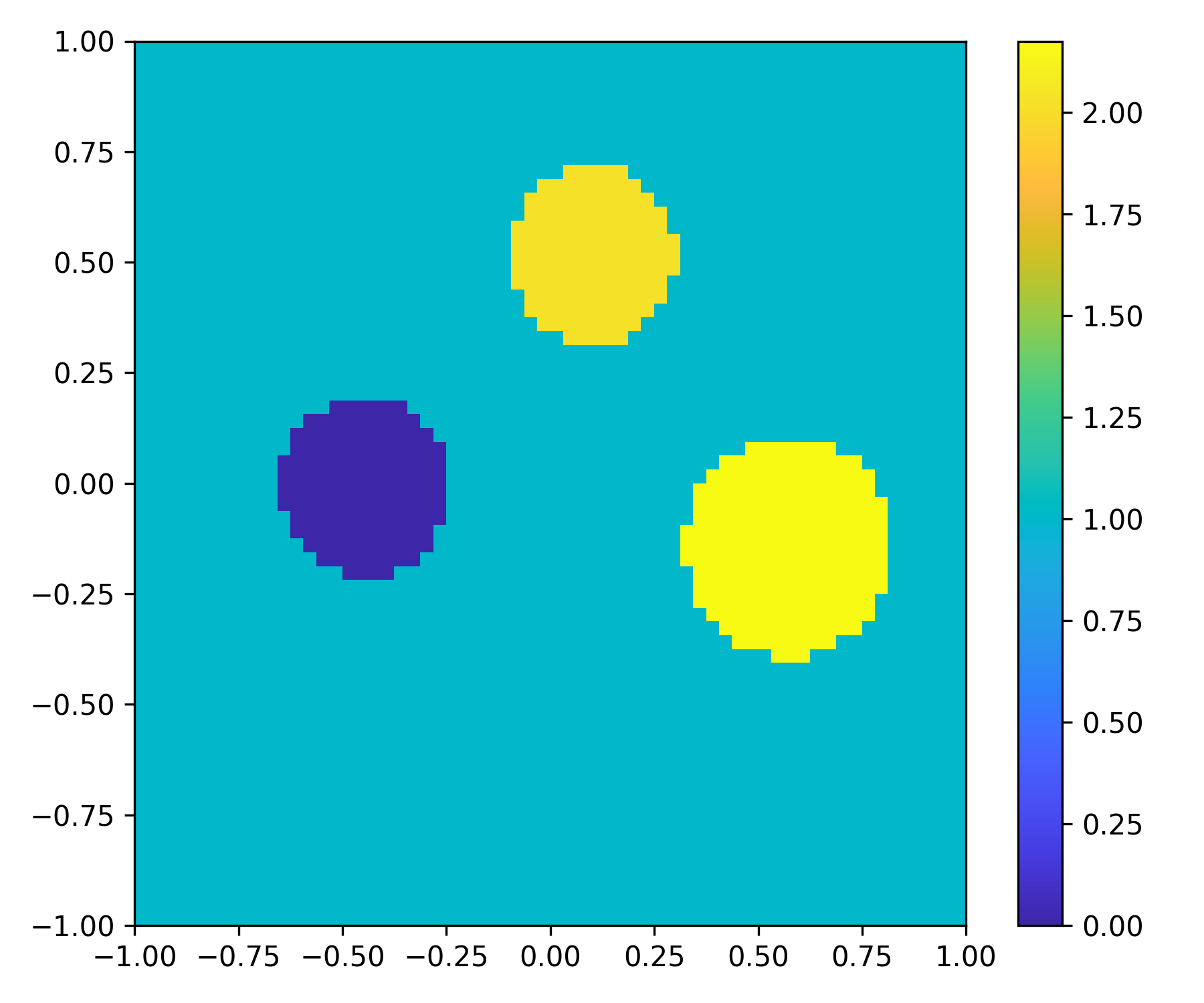} &\includegraphics[width=0.18\textwidth]{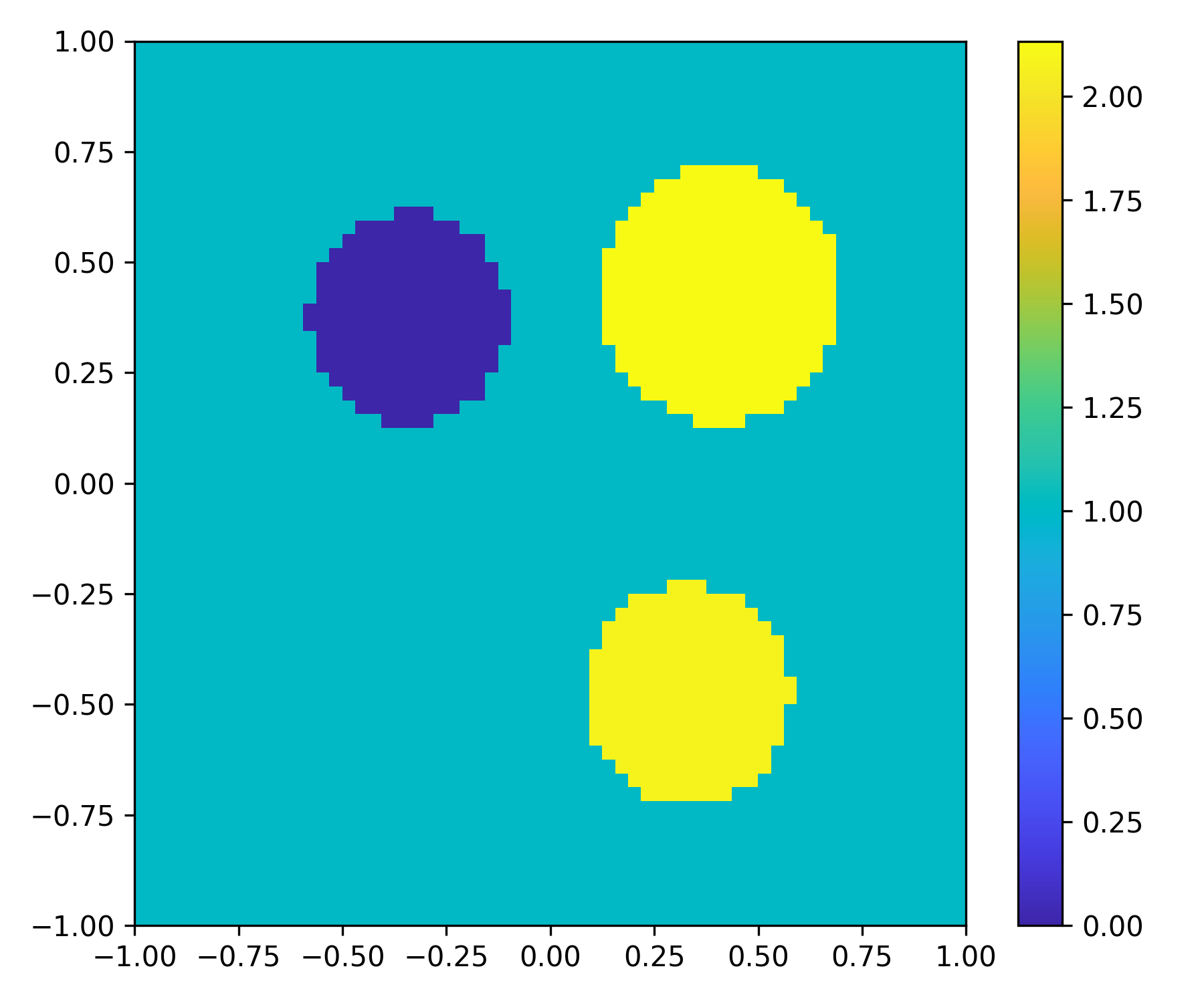}
			&\includegraphics[width=0.18\textwidth]{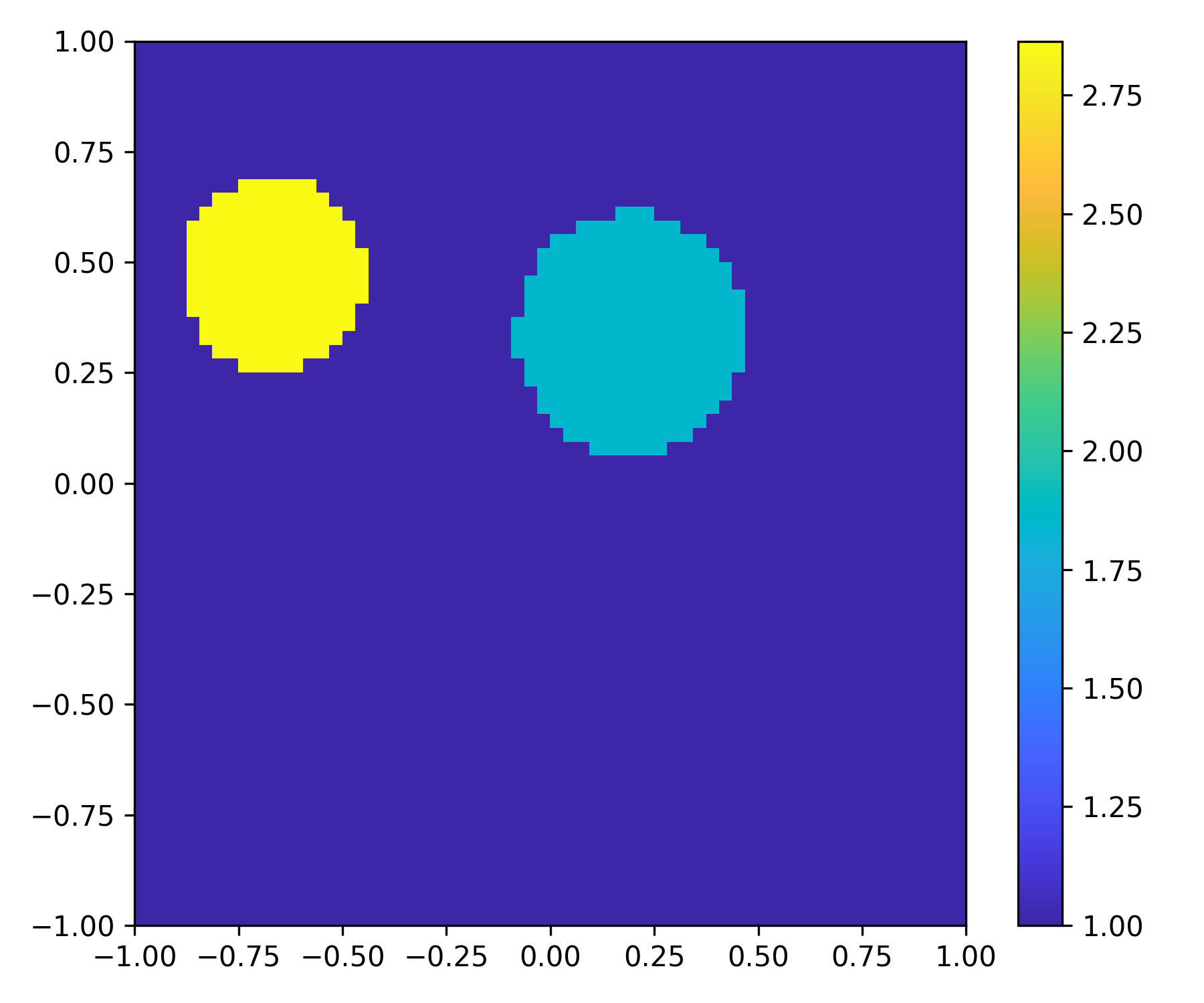}
			&\includegraphics[width=0.18\textwidth]{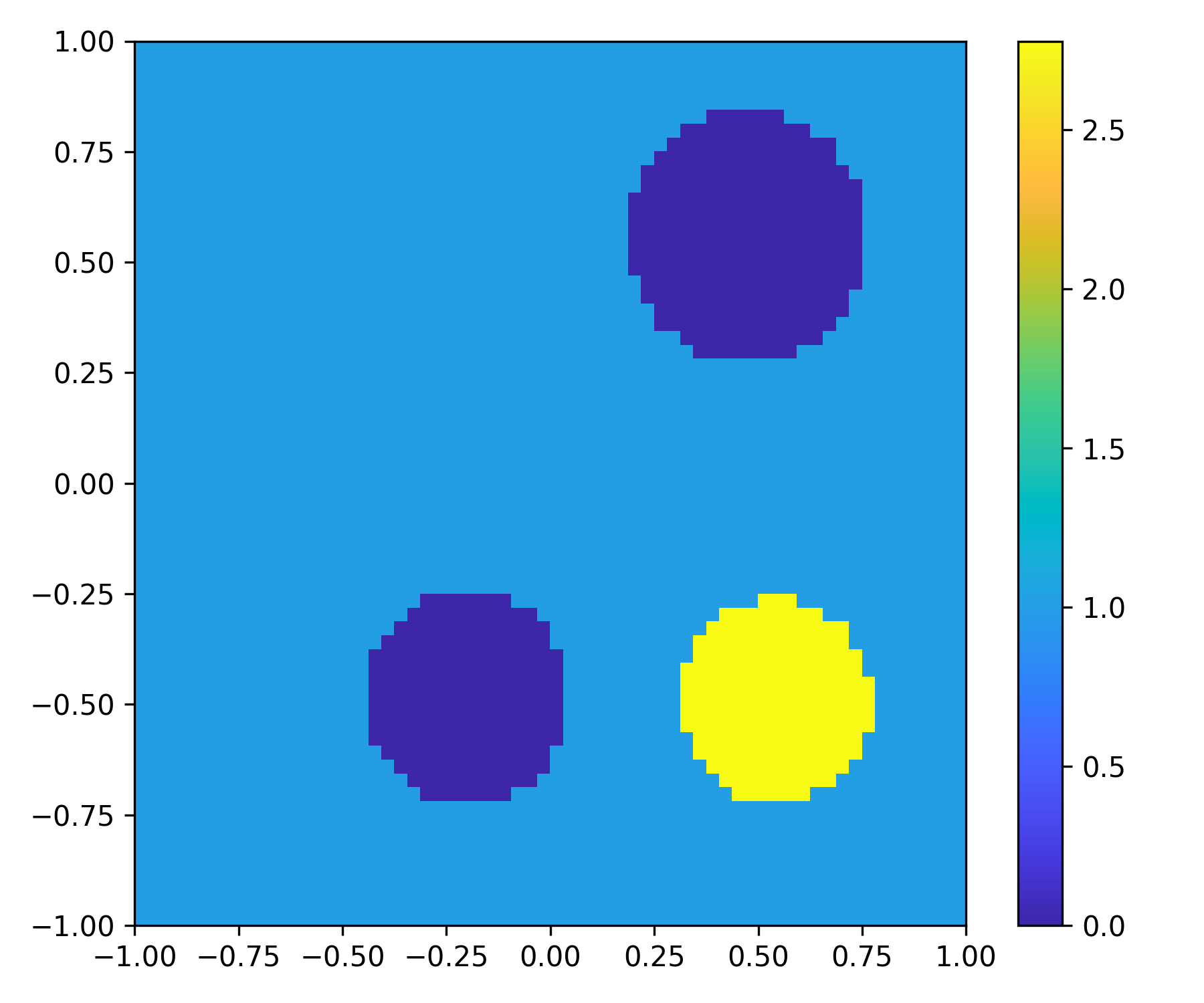}\\
			{$N_i=10$\\ $\delta=5\%$}&
			\includegraphics[width=0.18\textwidth]{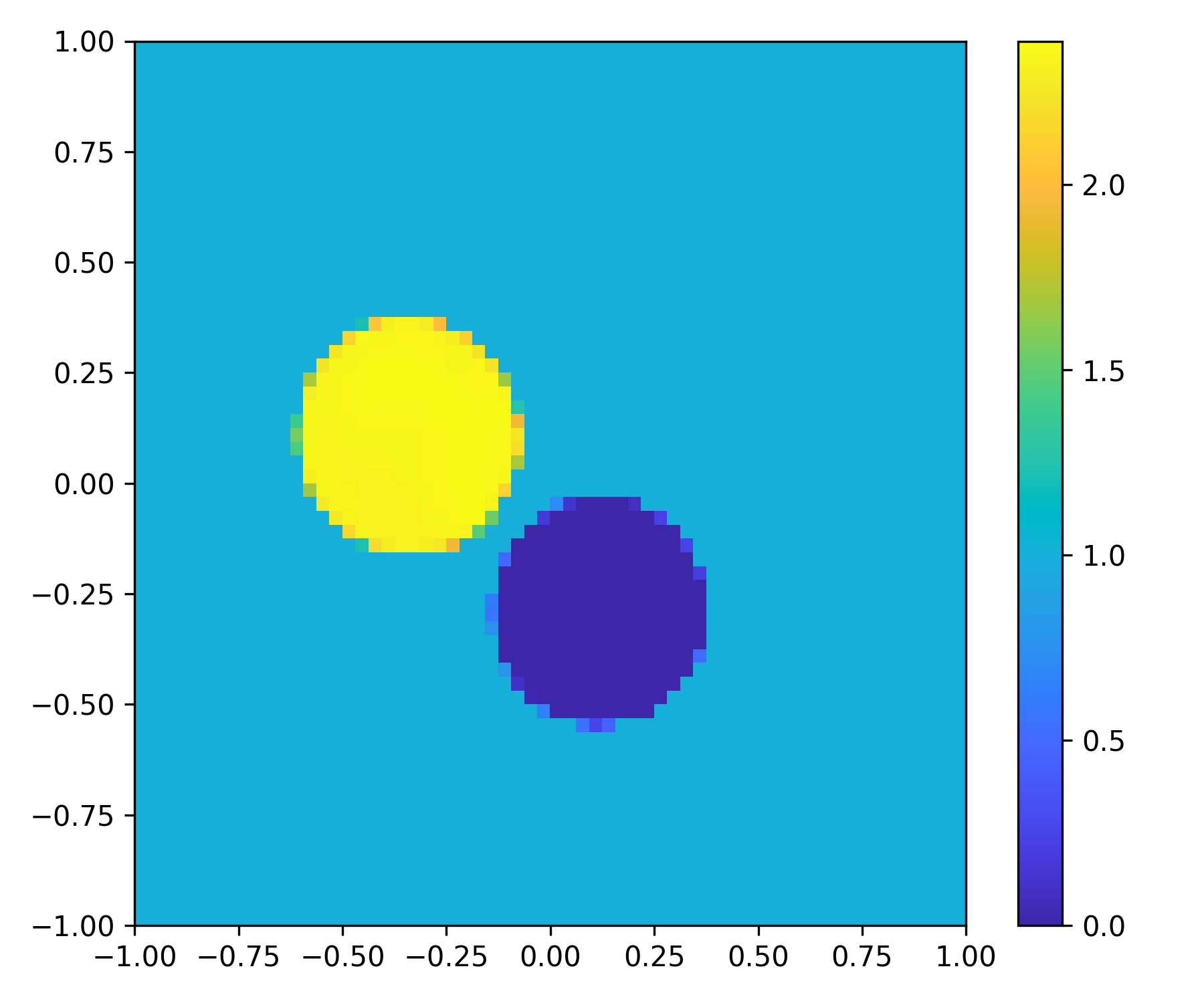}
	    	&\includegraphics[width=0.18\textwidth]{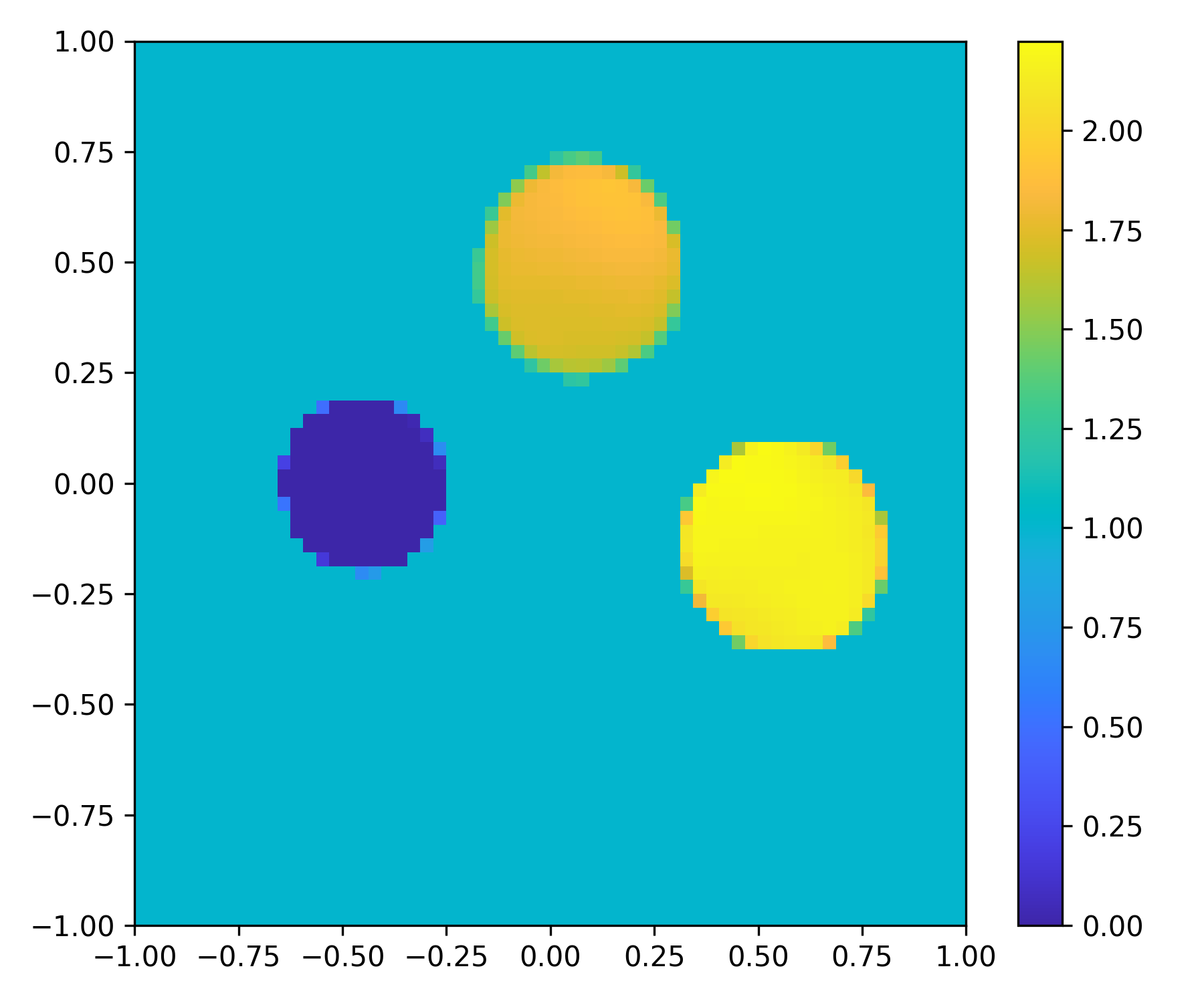} &\includegraphics[width=0.18\textwidth]{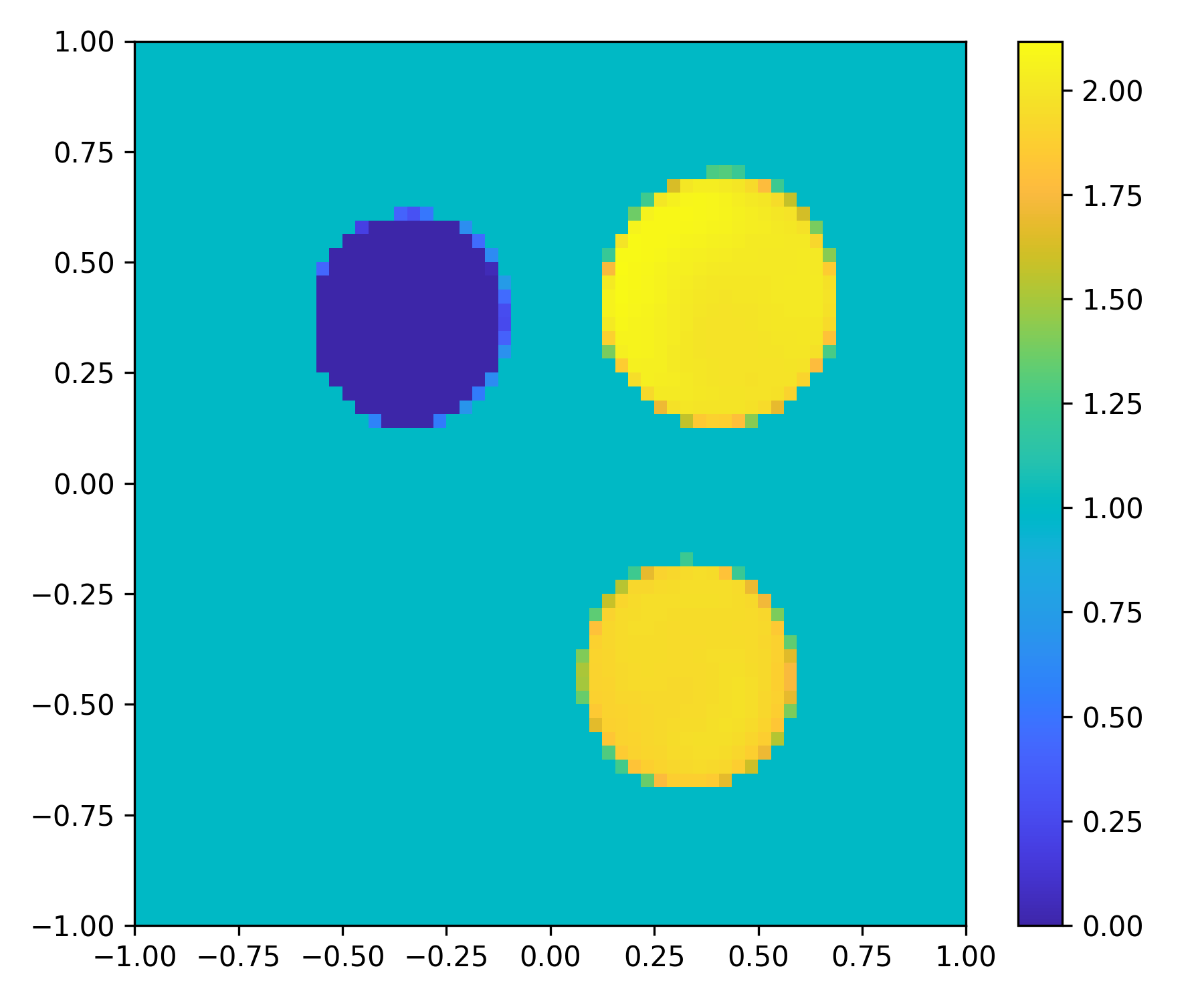}
	     	&\includegraphics[width=0.18\textwidth]{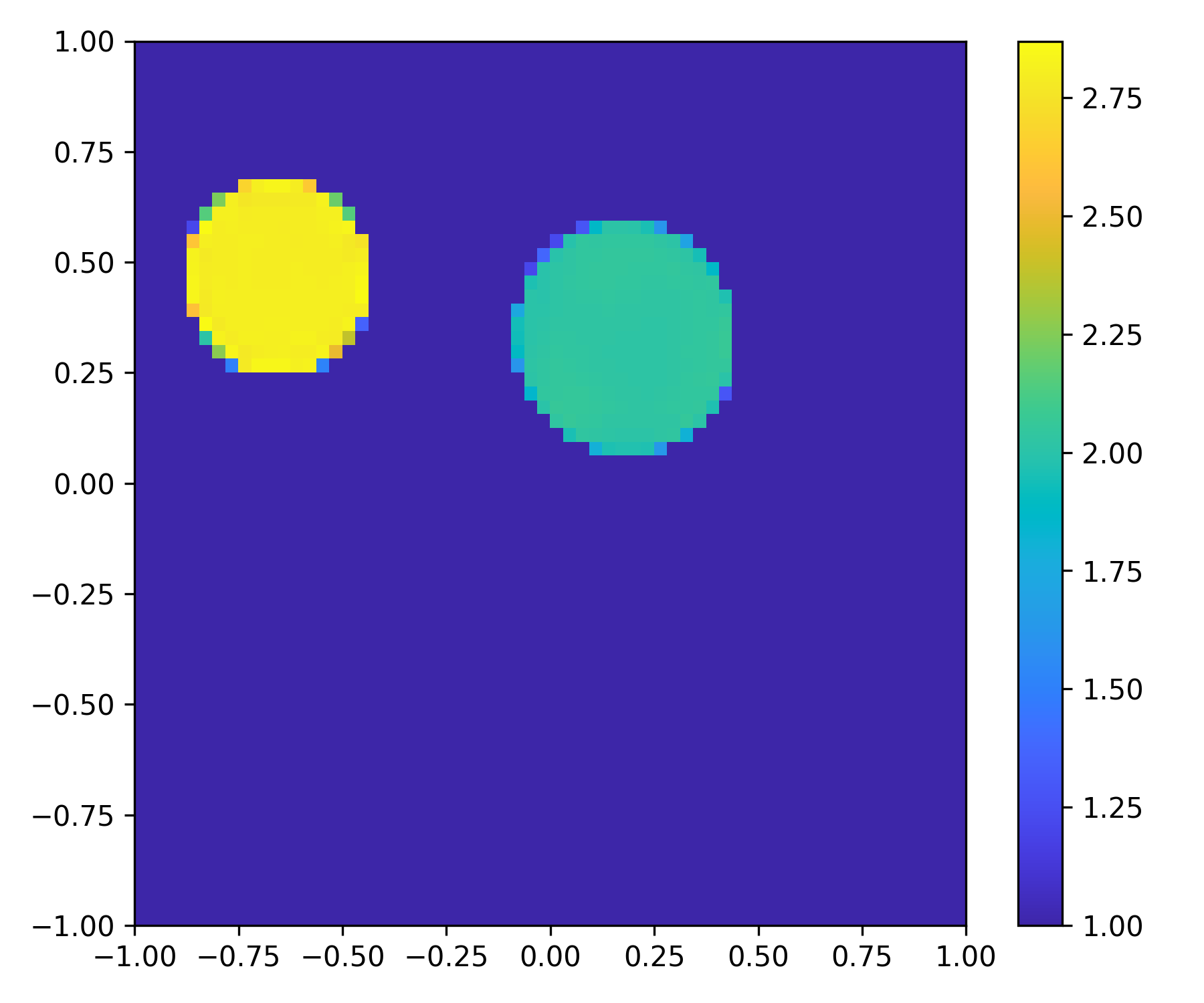}
	    	&\includegraphics[width=0.18\textwidth]{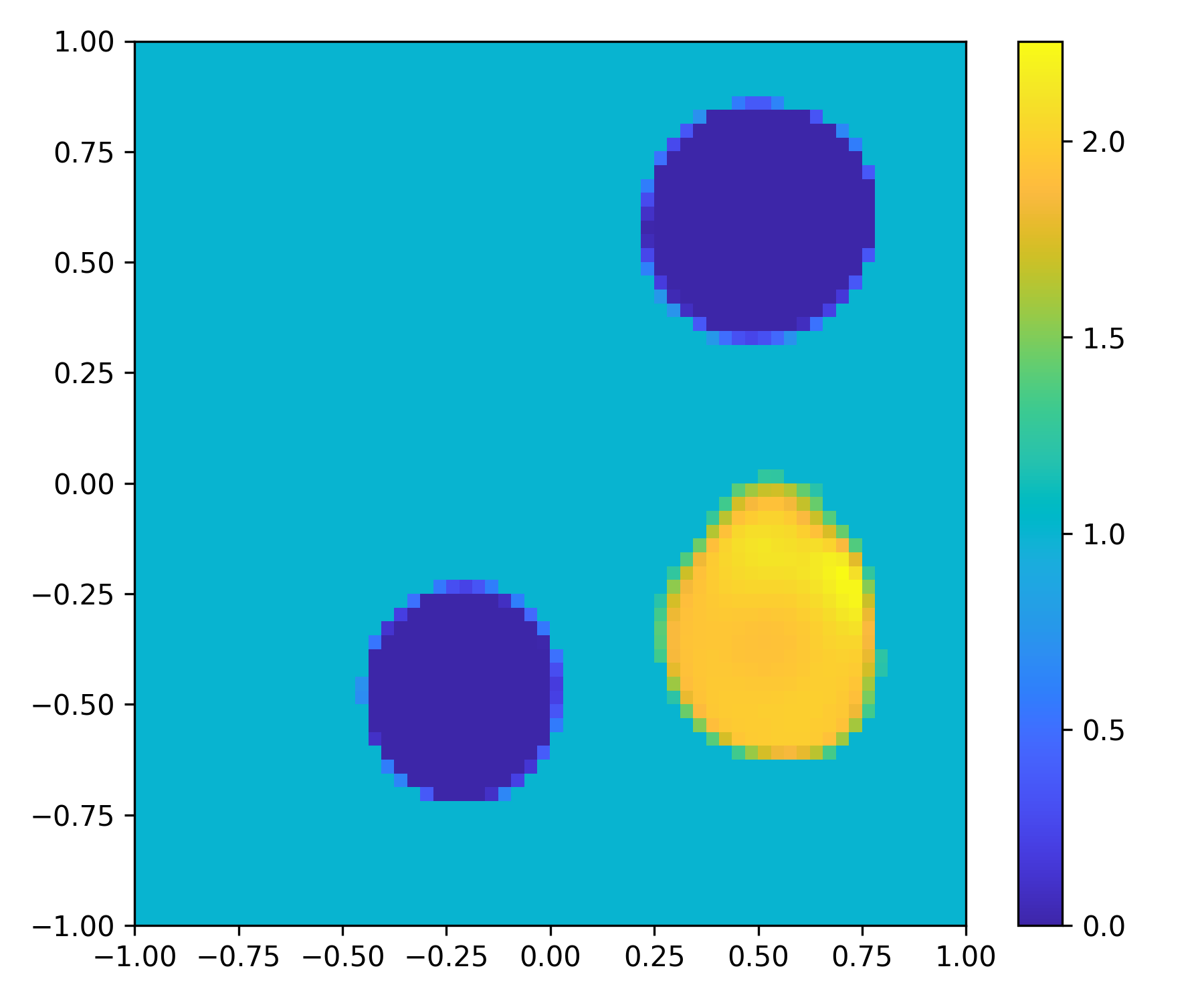}\\
			{$N_i=10$\\ $\delta=10\%$}&
			\includegraphics[width=0.18\textwidth]{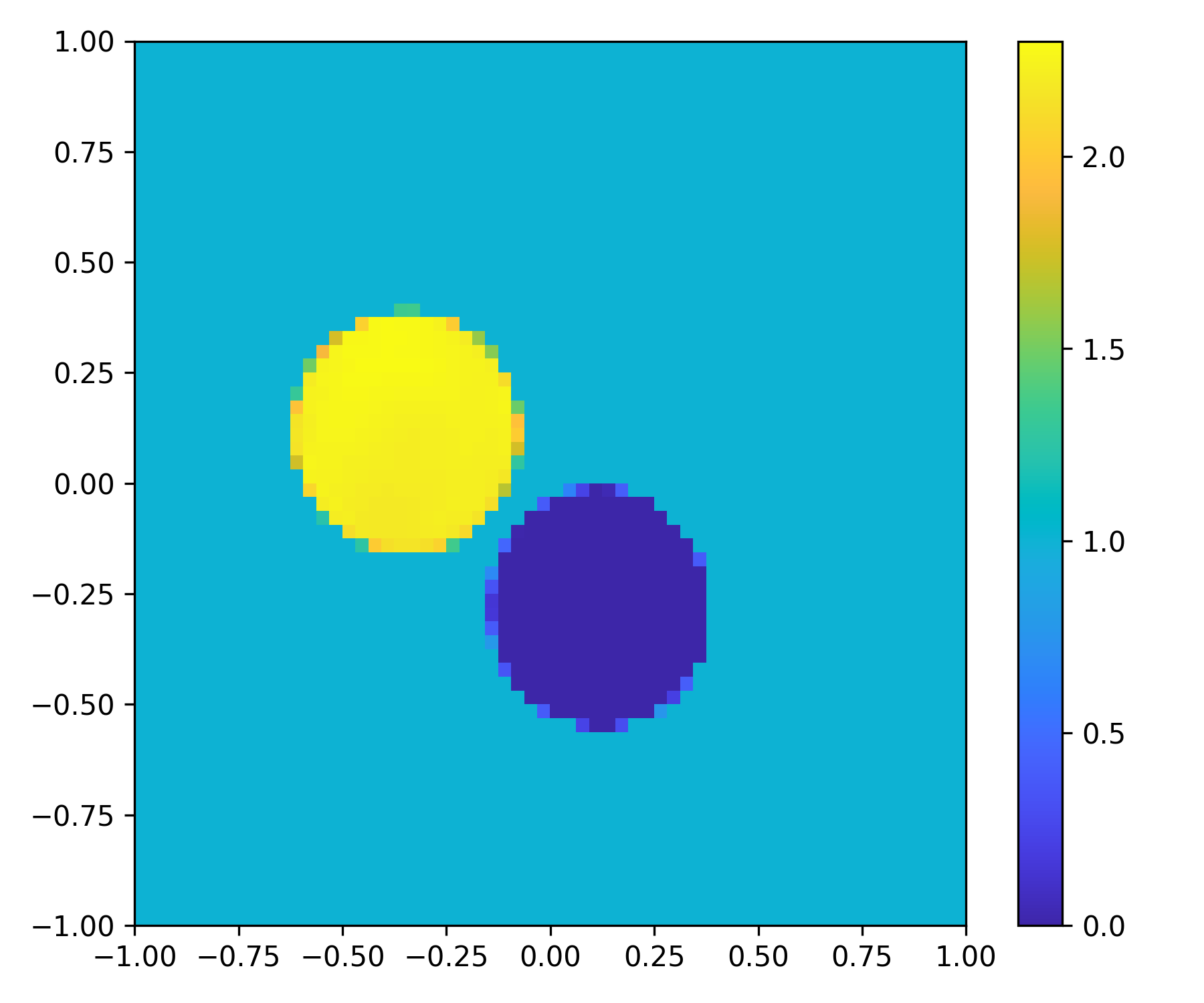}
	    	&\includegraphics[width=0.18\textwidth]{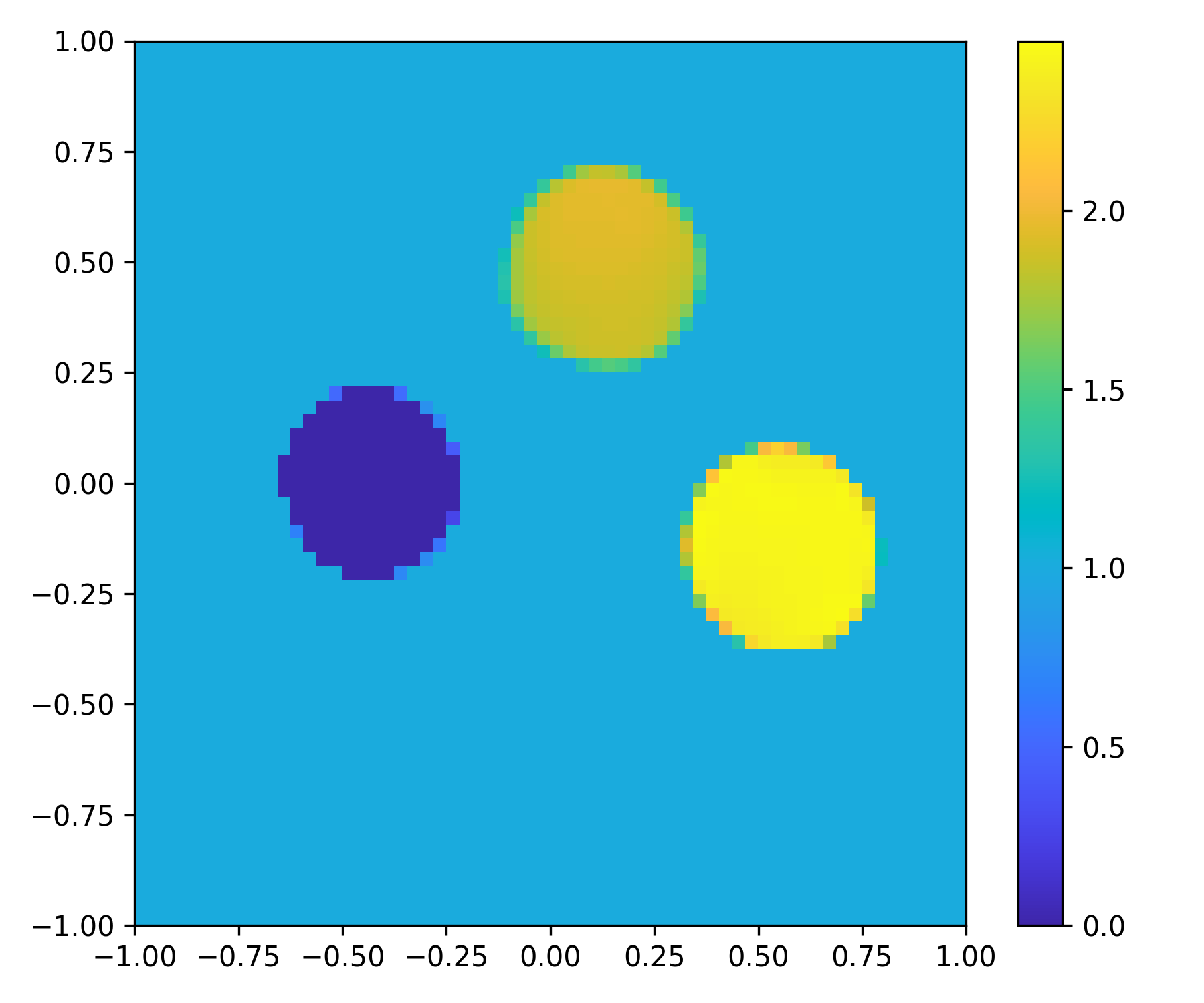} &\includegraphics[width=0.18\textwidth]{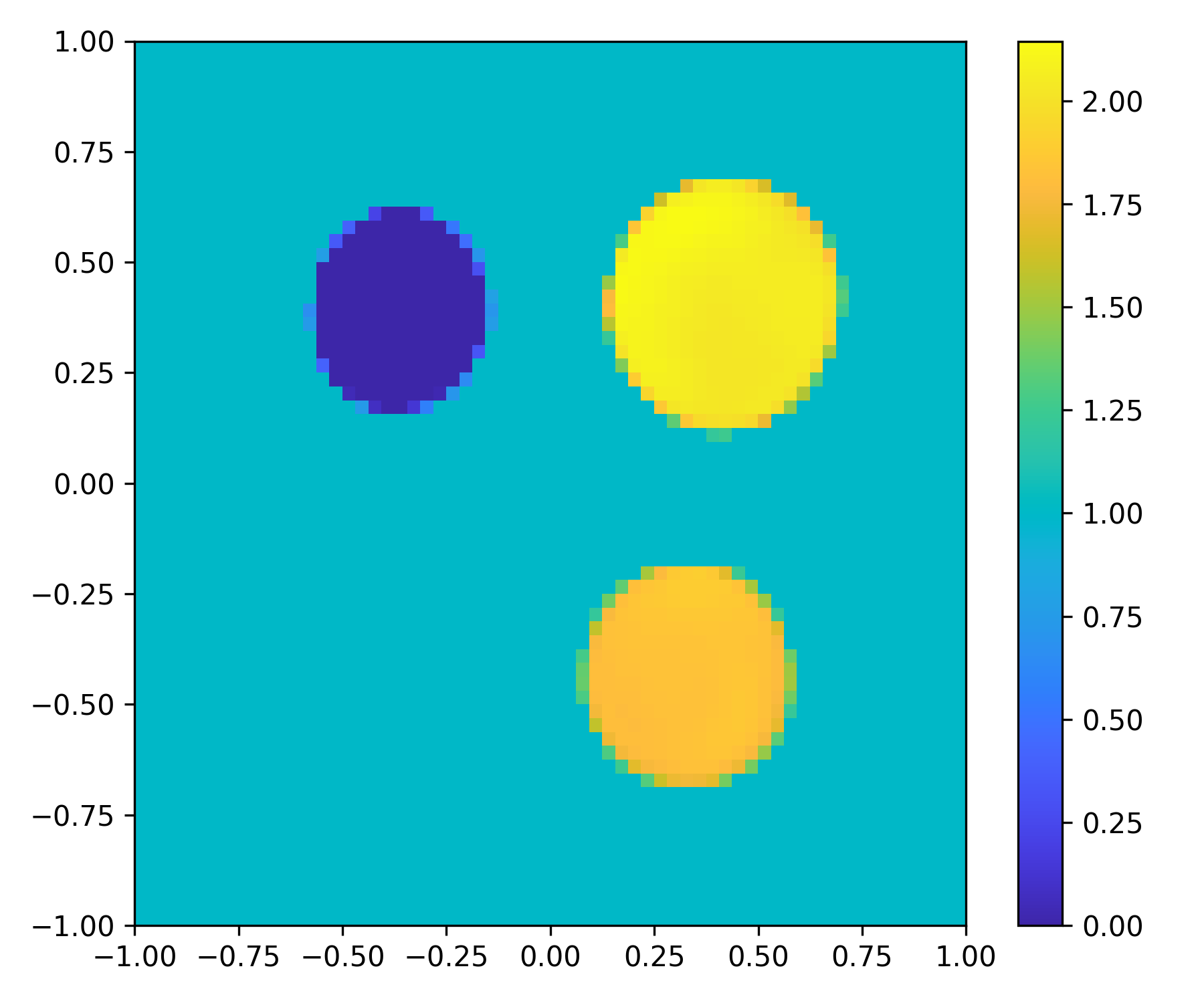}
	    	&\includegraphics[width=0.18\textwidth]{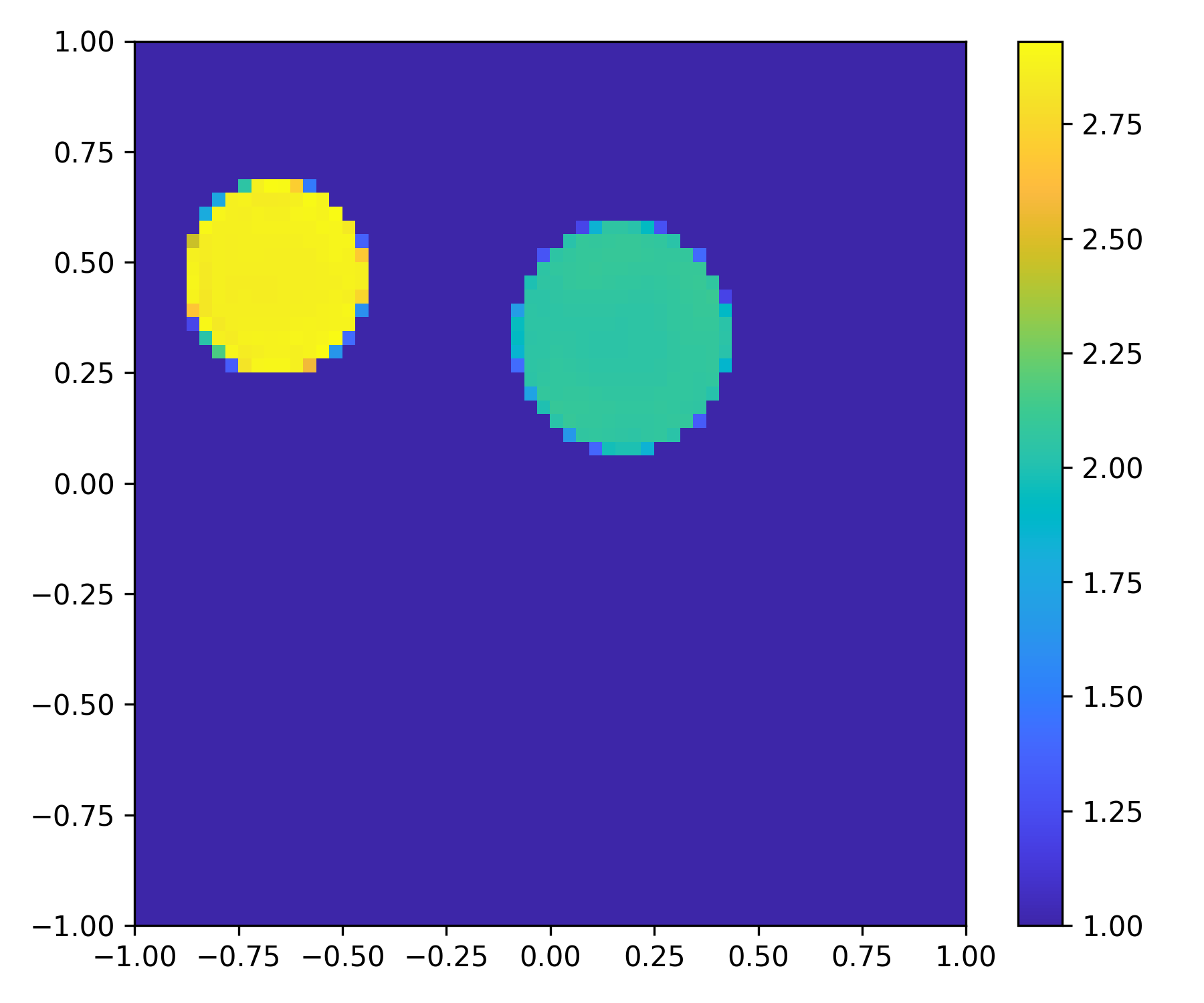}
	    	&\includegraphics[width=0.18\textwidth]{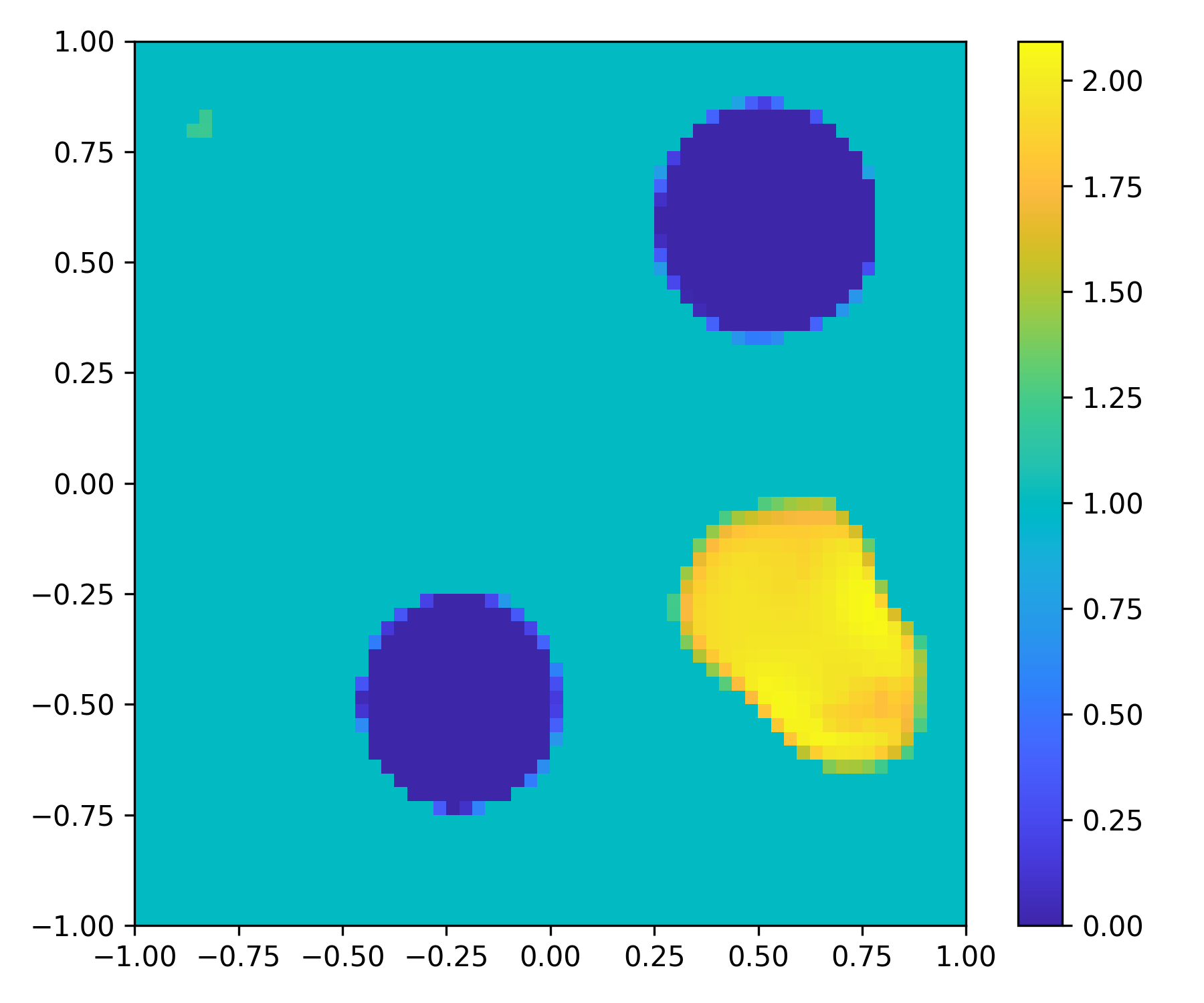}
			
		\end{tblr}
		
		\caption{Example  \ref{examp:mixed}: Reconstructed images from phaseless total fields with different Gaussian noises by using the networks trained by the Mixed circle dataset with phased data. The top row presents the true images, and the other rows are reconstructions with $N_i=10$ corresponding to different noise levels.}
		\label{fig:MixCir}
	\end{center}
\end{figure}

\section{Conclusions}
\label{sec:conclusion}
We have studied the inverse scattering problems under highly challenging conditions, where the measurement data are 
phaseless total fields and available only from one or a few incident waves. 
To address this demanding problem, we introduce a novel technique known as the Direct Sampling Method (DSM) for 
phaseless data. The DSM offers a robust and reasonable means of estimating the shapes and locations of unknown scatterers. Remarkably, our proposed DSM is applicable to both inverse medium and obstacle problems and does not necessitate prior knowledge of the physical properties of the scatterers.

To enhance the precision of our reconstructions, we present the DSM-DL, a fusion of the DSM with deep learning methodologies, offering high-quality reconstructions. The DSM-DL not only recovers the geometrical attributes and positions of the scatterers but also identifies their physical properties and the coefficient values of the medium scatterers. Moreover, it is capable of addressing mixed problems involving both medium and obstacle scatterers. It is worth noting that, supported by both theoretical analysis and numerical results, the DSM-DL network trained with phased data can also effectively tackle problems with phaseless data which implies a broad application of the algorithm.

In conjunction with our present work, there are several promising directions for future exploration. Firstly, a more rigorous theoretical treatment for DSM on phaseless near-field data would be an interesting direction, where we have observed robust results.
Secondly, in the context of DSM, it would be noteworthy to investigate the feasibility of employing a neural network to compute optimal probing functions under specific noise levels.
Lastly, our proposed DSM-DL has the potential to address a wide range of other challenging and practically critical inverse problems, including phaseless reconstruction without information about the incident field, limited aperture scattering problems, and electromagnetic inverse problems.
\\
\\
\textbf{Acknowledgment.}
We would like to thank two anonymous reviewers for many constructive comments and suggestions that have helped us improve our paper significantly.

\bibliography{bibfile}

\end{document}